\newcommand{\Ueberschrift}{Anabelian geometry with \'{e}tale homotopy types}
\newcommand{\Kurztitel}{Anabelian geometry with \'{e}tale homotopy types}
\definecolor{darklimegreen}{RGB}{31,142,8}
\newlist{enumer}{enumerate}{2}
\setlist[enumer]{label=(\roman*),align=left,labelindent=0pt,leftmargin=*,widest = (iii)}
\newlist{enumerar}{enumerate}{1}
\setlist[enumerar]{label=\arabic*.,align=left,labelindent=0pt,leftmargin=*,widest = 8.}
\newlist{enumera}{enumerate}{2}
\setlist[enumera]{label=(\arabic*),align=left,labelindent=0pt,leftmargin=*,widest = (8)}
\newlist{enumeral}{enumerate}{2}
\setlist[enumeral]{label=\rm (\alph*),align=left,labelindent=0pt,leftmargin=*,widest = (m)}
\DeclareMathOperator{\rE}{E}
\DeclareMathOperator{\rH}{H}
\DeclareMathOperator{\rc}{c}
\newcommand{\bA}{{\mathbb A}}
\newcommand{\bF}{{\mathbb F}}
\newcommand{\bP}{{\mathbb P}}
\newcommand{\bQ}{{\mathbb Q}}
\newcommand{\bZ}{{\mathbb Z}}
\newcommand{\cC}{{\mathscr C}}
\newcommand{\cM}{{\mathscr M}}
\newcommand{\cS}{{\mathscr S}}
\newcommand{\cU}{{\mathscr U}}
\newcommand{\cW}{{\mathscr W}}
\newcommand{\cX}{{\mathscr X}}
\newcommand{\cY}{{\mathscr Y}}
\newcommand{\dO}{{\mathcal O}}
\newtheorem{theorem}{Theorem}[section]
\newtheorem{corollary}[theorem]{Corollary}
\newtheorem{proposition}[theorem]{Proposition}
\newtheorem{lemma}[theorem]{Lemma}
\newtheorem{definition}[theorem]{Definition}
\theoremstyle{definition}
\newtheorem{remark}[theorem]{Remark}
\newtheorem{remarks}[theorem]{Remarks}
\newtheoremstyle{alexdef}
  {.8cm}
  {.8cm}
  {\rm }
  {}
  {\bf}
  {.}
  {.5em}
  {}
\theoremstyle{alexdef}
 \newcommand{\im}{\mathrm{im}}
 \DeclareMathOperator{\Spec}{Spec} 
 \newcommand{\Gal}{{G}}
 \newcommand{\pr}{\mathit{pr}}
 \newcommand{\Mor}{\operatorname{Hom}}
 \newcommand{\Aut}{\operatorname{Aut}}
 \newcommand{\colim}{\operatorname*{colim}}
 \newcommand{\Hom}{\operatorname{Hom}}
  \newcommand{\Map}{\operatorname{Map}}
 \newcommand{\Isom}{\operatorname{Isom}}
 \newcommand{\OutIsom}{\operatorname{Isom}^{\rm out}}
  \newcommand{\OutAut}{\operatorname{Aut}^{\rm out}}
   \newcommand{\OutHom}{\operatorname{Hom}^{\rm out}}
  \newcommand{\Tot}{\operatorname{Tot}}
\newcommand{\ssets}{{\sf ss}}
\newcommand{\semis}{{\rm ss}}
\newcommand{\pross}{{\sf {pro\text{-}ss}}}
\newcommand{\progp}{{\sf {pro\text{-}groups}}}
\newcommand{\Ho}{{\rm Ho}}
\newcommand{\piopen}{\pi_1\textrm{\rm -open}}
\newcommand{\open}{\textrm{\rm open}}
\newcommand{\dom}{{\rm dom}}
\newcommand{\et}{\text{\rm et}}
\newcommand{\tp}{\text{\rm top}}
\newcommand{\holim}{\operatorname{holim}}
 \newcommand{\A}{{\mathds  A}}
  \renewcommand{\P}{{\mathds  P}}
 \newcommand{\Q}{{\mathds  Q}}
 \newcommand{\Z}{{\mathds  Z}}
 \newcommand{\pro}{\mathrm{pro}\textrm{-}}
\newcommand{\lang}{\longrightarrow}
\newcommand{\ch}{\mathit{char}}
\newcommand{\F}{\mathds{F}}
\newcommand{\id}{\mathit{id}}
\newcommand{\ds}{\displaystyle}
\newcommand{\surj}{\twoheadrightarrow}
\newcommand{\inj}{\hookrightarrow}
\newcommand{\ph}{\varphi}
\newcommand{\Frob}{{\rm Frob}}
\DeclareMathOperator{\tr}{tr}
\newcommand{\R}{\mathrm R}
\DeclareMathOperator{\PGL}{PGL}
\newcommand{\hZ}{\hat{\bZ}}
\DeclareMathOperator{\Grass}{Grass}
\DeclareRobustCommand{\SkipTocEntry}[5]{}
\newcommand{\bp}[1]{\bar{#1}}
\newcommand{\defobjekt}[1]{\textbf{\boldmath #1}}
\begin{document}

\title[\Kurztitel]{\Ueberschrift}
\author{Alexander Schmidt}
\address{Alexander Schmidt, Mathematisches  Institut, Universit\"at Heidelberg, Im Neuenheimer Feld 205, 69120 Heidelberg, Germany}
\email{schmidt@mathi.uni-heidelberg.de}
\author{Jakob Stix}
\address{Jakob Stix, Institut f\"{u}r Mathematik, Goethe--Universit\"{a}t Frankfurt, Robert-Mayer-Stra{\ss}e~{6--8},
60325 Frankfurt am Main, Germany}
\email{stix@math.uni-frankfurt.de}

\date{\today}
\maketitle

\begin{quotation}
\noindent \small {\bf Abstract} --- Anabelian geometry with \'{e}tale homotopy types generalizes in a natural way classical anabelian geometry with \'{e}tale fundamental groups.
We show that, both in the classical and the generalized sense,
any point of a smooth variety over a field $k$ which is finitely generated over $\bQ$ has a fundamental system of (affine) anabelian Zariski-neighbourhoods. This was predicted by Grothendieck in his letter to Faltings \cite{grothendieck:letter}.
\end{quotation}

\section{Introduction}
\subsection{Higher anabelian geometry}
Grothendieck's anabelian philosophy \cite{grothendieck:letter} predicts the existence of a class of \emph{anabelian} varieties  $X$
that are reconstructible from their \'{e}tale fundamental group $\pi_1^\et(X, \bar{x})$. All examples of anabelian varieties known so far are of type $K(\pi,1)$, i.e., their higher \'{e}tale homotopy groups vanish.

For general varieties $X$, the homotopy theoretic viewpoint suggests to ask the modified question, whether they are reconstructible from their \emph{\'{e}tale homotopy type $X_\et$} instead of only $\pi_1^\et(X,\bar{x})$.
For varieties $X$ of type $K(\pi,1)$ this makes no difference since then $X_\et$ is weakly equivalent to the classifying space $B\pi_1^\et(X,\bar x)$.

Recall that the \emph{\'{e}tale topological type} $X_\et$ of a scheme $X$ is an object in  $\pross$, the pro-category of simplicial sets.
Any geometric point $\bp{x}$ of $X$ defines a point $\bp{x}_\et$ on $X_\et$. If $X$ is locally noetherian, the fundamental group $\pi_1(X_\et,\bp{x}_\et)$ is the usual (in the sense of \cite{sga3} X \S6) \'{e}tale fundamental group $\pi_1^\et(X,\bp{x})$ and the higher homotopy groups of $X_\et$ are the higher
\'{e}tale homotopy groups of $X$ by definition, cf.~\cite{AM}, \cite{Fr}. Isaksen \cite{Is} defined a model structure on $\pross$ and we denote the associated homotopy category by $\Ho(\pross)$. When considered as an object of $\Ho(\pross)$, we refer to $X_\et$ as the \emph{\'{e}tale homotopy type} of $X$.
For a pro-simplicial set $B$, we denote the category of morphisms to $B$ in $\Ho(\pross)$ by $\Ho(\pross) \downarrow B$.

\smallskip

In the language of \'{e}tale homotopy theory, the isomorphism form of Mochizuki's theorem on anabelian geometry of hyperbolic curves \cite{mochizuki:localpro-p} can be reformulated as follows (see Theorem~\ref{mochi} below for a more general version). Recall that a sub-$p$-adic field is a subfield of a finitely generated extension of $\Q_p$.

\begin{theorem} \label{intromoch}
Let $p$ be a prime number, $k$ a sub-$p$-adic field and $X$ and $Y$  smooth hyperbolic curves over $k$.
Then the natural map
\[
\Isom_k(X,Y) \longrightarrow \Isom_{\Ho(\pross) \downarrow k_\et}(X_\et,Y_\et)
\]
is bijective.
\end{theorem}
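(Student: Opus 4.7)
The plan is to reduce the statement to Mochizuki's classical anabelian theorem for hyperbolic curves over sub-$p$-adic fields, using the $K(\pi,1)$-property of smooth hyperbolic curves in characteristic zero. Since $k$ is sub-$p$-adic, hence of characteristic zero, both $X$ and $Y$ are of type $K(\pi,1)$ in the \'{e}tale sense (via comparison with the topological setting, where the universal cover of a hyperbolic Riemann surface is the contractible upper half-plane). Consequently their \'{e}tale homotopy types become equivalent in $\Ho(\pross)$ to classifying pro-spaces $B\pi_1^\et(X,\bp{x})$ respectively $B\pi_1^\et(Y,\bp{y})$, and these equivalences are compatible with the structure morphism to $k_\et \simeq BG_k$.

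I would then translate both sides of the asserted bijection into group-theoretic language. For $K(\pi,1)$ pro-spaces over $BG_k$, morphisms in the homotopy category over $BG_k$ correspond to outer homomorphisms of profinite groups commuting with the projection to $G_k$. In particular, one expects a natural bijection
\[
\Isom_{\Ho(\pross) \downarrow k_\et}(X_\et, Y_\et) \cong \OutIsom_{G_k}\bigl(\pi_1^\et(X,\bp{x}),\pi_1^\et(Y,\bp{y})\bigr),
\]
the right-hand side denoting isomorphisms of the profinite group extensions of $G_k$ taken modulo conjugation by $\pi_1^\et(Y_{\bar{k}},\bp{y})$. Mochizuki's theorem --- the isomorphism form of the Grothendieck conjecture for sub-$p$-adic fields --- identifies this right-hand side bijectively with $\Isom_k(X,Y)$ via the natural map, yielding the desired statement.

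The main obstacle will be the homotopical identification in the second step. Classically one has $[B\pi,B\rho] \cong \Hom(\pi,\rho)/(\rho\text{-conjugation})$ for $K(\pi,1)$-spaces, but realising this inside Isaksen's model structure on $\pross$, relatively over $k_\et$, requires some care: one must replace $X_\et$ (or $Y_\et$) by a suitable fibrant model in $\pross \downarrow k_\et$ so that the relative hom-set is genuinely derived, and then verify that $\pi_0$ of the corresponding mapping pro-space is computed by outer $G_k$-homomorphisms of the fundamental group extensions. Once this dictionary between relative \'{e}tale homotopy type and profinite group extensions is installed, the remainder of the proof reduces to a direct invocation of Mochizuki's theorem, combined with the $K(\pi,1)$-property of smooth hyperbolic curves in characteristic zero.
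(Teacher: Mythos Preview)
Your outline is correct and matches the paper's approach: reduce to Mochizuki's theorem via the $K(\pi,1)$-property of hyperbolic curves and a dictionary identifying $\Isom_{\Ho(\pross)\downarrow k_\et}(X_\et,Y_\et)$ with $\OutIsom_{G_k}\bigl(\pi_1^\et(X),\pi_1^\et(Y)\bigr)$.

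The one place where the paper's execution differs from what you sketch is the resolution of your ``main obstacle''. You propose working directly in the slice category $\pross\downarrow k_\et$ via a fibrant replacement. The paper instead passes through the \emph{pointed} category: Proposition~\ref{prop:BGisKpi1} gives a clean bijection $[(X_\et,\bp{x}_\et),(Y_\et,\bp{y}_\et)]_{\pross_*}\cong\Hom(\pi_1^\et(X,\bp{x}),\pi_1^\et(Y,\bp{y}))$ from the $K(\pi,1)$-property, and then Proposition~\ref{prop:pointed_schemes2}(c) (itself resting on Theorem~\ref{modpi1secondcase}) descends this to the unpointed slice over $k_\et$ by quotienting by the $\pi_1^\et(Y_{\bar k},\bp{y})$-action. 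The crucial ingredient you do not name is that $G_k$ is \emph{strongly center-free} for sub-$p$-adic $k$ (Mochizuki, Lemma~15.8): this is exactly what forces the stabilizer $S_X\subset G_k$ in Theorem~\ref{modpi1secondcase} to be trivial, so that the relevant orbit space is for $\pi_1^\et(Y_{\bar k})$ rather than a potentially larger group. Without this input the pointed-to-unpointed passage would only yield a surjection, not the bijection you need.
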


\subsection{Main results}
The aim of this paper is to prove Theorem~\ref{main} below, which constitutes a first step towards a generalisation of Theorem~\ref{intromoch} to higher dimensional varieties.

\begin{theorem}
\label{main}
Let $k$ be a finitely generated field extension of\/ $\Q$, and let $X$ and $Y$ be smooth, geometrically connected varieties over $k$ which can be embedded as locally closed subschemes into a product of hyperbolic curves over $k$. Then the natural map
\[
\Isom_k(X,Y) \longrightarrow \Isom_{\Ho(\pross) \downarrow k_\et}(X_\et,Y_\et)\leqno (\ast)
\]
is a split injection with a functorial retraction
\[
r : \Isom_{\Ho(\pross) \downarrow k_\et}(X_\et,Y_\et)  \longrightarrow   \Isom_k(X,Y).
\]
\end{theorem}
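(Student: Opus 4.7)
The plan is to reduce the construction of the retraction to the case of hyperbolic curves, where Mochizuki's theorem (Theorem~\ref{intromoch}, in the more general Hom-version alluded to as Theorem~\ref{mochi}) supplies a canonical passage from morphisms of \'{e}tale homotopy types to scheme-theoretic morphisms. The isomorphism $\phi \colon X_\et \riso Y_\et$ will be exploited only indirectly, via its effect on the projections onto the hyperbolic-curve factors of an ambient product embedding.

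As a preliminary, I would check that $k$ is sub-$p$-adic for some prime $p$: choosing a transcendence basis $t_1,\dots,t_d$ of $k$ over $\bQ$, the field $\bQ(t_1,\dots,t_d)$ embeds into $\bQ_p(t_1,\dots,t_d)$, and then $k$ is contained in a finitely generated extension of the latter. Hence Mochizuki's theorem applies to every hyperbolic curve over $k$. Now fix locally closed embeddings $\iota_X \colon X \hookrightarrow C = C_1 \times_k \cdots \times_k C_n$ and $\iota_Y \colon Y \hookrightarrow D = D_1 \times_k \cdots \times_k D_m$ into products of hyperbolic curves over $k$. To define $r(\phi)$ for $\phi \in \Isom_{\Ho(\pross) \downarrow k_\et}(X_\et, Y_\et)$, for each projection $p_j \colon D \to D_j$ form the composition
\[
\psi_j \colon X_\et \xrightarrow{\phi} Y_\et \xrightarrow{(p_j \circ \iota_Y)_\et} (D_j)_\et,
\]
and appeal to Theorem~\ref{mochi} to produce a unique $k$-morphism $h_j \colon X \to D_j$ with $(h_j)_\et = \psi_j$ in $\Ho(\pross) \downarrow k_\et$. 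Assembling gives $h = (h_1,\dots,h_m) \colon X \to D$. Symmetrically, using $\iota_X$ and $\phi^{-1}$, one constructs $g \colon Y \to C$.

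The main technical obstacle is to show that $h$ factors through the locally closed immersion $\iota_Y$, producing the desired morphism $r(\phi) \colon X \to Y$; a purely homotopy-theoretic factorization $X_\et \to Y_\et \to D_\et$ does not automatically descend to a scheme map factoring through $Y \hookrightarrow D$. I would attack this by playing the two constructions off each other: once $h$ and $g$ are shown to factor as $\bar h \colon X \to Y$ and $\bar g \colon Y \to X$, the compositions $\bar g \circ \bar h$ and $\bar h \circ \bar g$ induce the identities on $X_\et$ and $Y_\et$, and by applying Mochizuki's uniqueness factor by factor through the ambient products $C$ and $D$, they must coincide with the identities, so $\bar h$ and $\bar g$ are mutually inverse. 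The factorization itself should follow by analysing the image of $h$ and showing that it lies set-theoretically (and then, by smoothness and geometric connectedness, scheme-theoretically) in $\iota_Y(Y) \subseteq D$. This is where the substantive geometric input of the paper beyond Mochizuki is expected to lie, presumably via dominance arguments combined with the curve-wise rigidity encoded in Theorem~\ref{mochi}.

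Granting the factorization, set $r(\phi) := \bar h$. The retraction identity $r \circ (\ast) = \id$ is then immediate: for a $k$-isomorphism $f \colon X \riso Y$ and $\phi = f_\et$, Mochizuki's uniqueness yields $h_j = p_j \circ \iota_Y \circ f$, hence $h = \iota_Y \circ f$, which factors as $f \colon X \to Y$, giving $r(f_\et) = f$. Functoriality of $r$ (compatibility with compositions of isomorphisms) and its independence of the chosen embeddings $\iota_X, \iota_Y$ both reduce, via projection onto each hyperbolic-curve factor, to the corresponding naturality and uniqueness assertions in Mochizuki's theorem. In summary, the plan is (i) reduce to hyperbolic curves via the ambient embedding, (ii) invoke Mochizuki to produce scheme morphisms to each curve factor, and (iii) prove the factorization through the embedding --- the last step being the genuine difficulty.
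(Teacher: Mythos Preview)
Your overall architecture matches the paper's: reduce to the curve factors via Mochizuki (your steps (i) and (ii)), then prove the factorization through the embedded subvariety (your step (iii)). You correctly identify (iii) as the crux. However, your suggested attack on (iii) --- ``dominance arguments combined with the curve-wise rigidity encoded in Theorem~\ref{mochi}'' --- is not how the factorization is established, and I do not see how such an approach could succeed. Mochizuki's theorem controls maps \emph{to} hyperbolic curves; it gives you $h: X \to D$, but it says nothing about why the image of $h$ should land in the locally closed subscheme $Y \subset D$. No amount of curve-by-curve rigidity forces this, because $Y$ is cut out by conditions that mix the factors.

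The paper's argument for the factorization is arithmetic and is precisely the reason the hypothesis is strengthened from ``sub-$p$-adic'' to ``finitely generated over $\bQ$''. One spreads the whole situation out over a regular $\bZ$-scheme $S$ of finite type with function field $k$, and shows that every closed point of $X$ (automatically with finite residue field) lands in $Y$. For a closed point $x$ with residue field $\bF$, one uses Tamagawa's result that decomposition groups of distinct rational points on a curve over a finite field are non-conjugate to build an \'etale cover $W'_s \to W_s$ whose $\bF$-points all lie over $f(x)$. Then one compares $|X'_s(\bF)|$ and $|Y'_s(\bF)|$ for the induced covers: the isomorphism $X_\et \cong Y_\et$ in $\Ho(\pross)\downarrow k_\et$ yields, via base change along \'etale covers and Proposition~\ref{unpointedisom}, a $G_k$-equivariant isomorphism of $\ell$-adic cohomology of the geometric generic fibres, and the Lefschetz trace formula (Proposition~\ref{prop:countpointsclosedfibre}) converts this into an equality of point counts up to a power of $q$. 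Since $X'_s(\bF) \neq \varnothing$ (it contains a lift of $x$), one gets $Y'_s(\bF) \neq \varnothing$, hence $f(x) \in Y(\bF)$. This Tamagawa-style point-counting idea is the missing ingredient in your proposal.

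Two smaller points. First, to invoke Mochizuki for each $\psi_j$ you need $(p_j \iota_Y)_\et \circ \phi$ to be $\pi_1$-open; the paper arranges this by discarding any factor $D_j$ on which $p_j \iota_Y$ is constant (possible since $Y$ is geometrically connected), obtaining a \emph{factor-dominant} embedding. Second, your plan to show $r(\phi)$ is an isomorphism by constructing $\bar g$ from $\phi^{-1}$ and checking $\bar g \bar h = \id$, $\bar h \bar g = \id$ via Mochizuki on each factor is fine and essentially equivalent to the paper's route (which proves functoriality $r(\gamma_2\gamma_1) = r(\gamma_2)r(\gamma_1)$ and the retraction property, then deduces invertibility formally).
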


Theorem~\ref{main} will be proven in its refined version Theorem~\ref{thm:main-retraction}, which makes a more precise statement and, in particular, uniquely characterizes a retraction $r$. It will be this retraction $r$ that we discuss in Theorem~\ref{kernel-thm} below. Furthermore, Theorem~\ref{main-general} provides a version of Theorem~\ref{main} without the assumption of (geometrically) connectedness.

\smallskip

We stress the following weakly anabelian statement obtained as a trivial corollary.

\begin{corollary}
Let $k$ be a finitely generated field extension of\/ $\Q$ and let $X$ and $Y$ be smooth, geometrically connected varieties over $k$ which can be embedded as locally closed subschemes into a product of hyperbolic curves over $k$.

If $X_\et \cong Y_\et$ in $\Ho(\pross)\downarrow  k_\et $, then $X$ and\/ $Y$ are isomorphic as $k$-varieties.
\end{corollary}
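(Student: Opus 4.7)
The plan is to deduce the corollary directly from Theorem~\ref{main}. The hypothesis $X_\et \cong Y_\et$ in $\Ho(\pross)\downarrow k_\et$ says precisely that the set $\Isom_{\Ho(\pross) \downarrow k_\et}(X_\et, Y_\et)$ is non-empty; choose any element $f$ of it. Applying the retraction
\[
r : \Isom_{\Ho(\pross) \downarrow k_\et}(X_\et, Y_\et) \longrightarrow \Isom_k(X, Y)
\]
provided by Theorem~\ref{main} to $f$ yields an element $r(f) \in \Isom_k(X, Y)$. Hence $\Isom_k(X, Y)$ is non-empty, i.e. $X$ and $Y$ are isomorphic as $k$-varieties.

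It is worth emphasising why this is an essentially formal consequence and where the actual content sits. Mere injectivity of the natural map $(\ast)$ would leave open the possibility that $\Isom_k(X,Y) = \varnothing$ while $\Isom_{\Ho(\pross) \downarrow k_\et}(X_\et, Y_\et)$ is non-empty; what rescues the implication is the existence of the retraction $r$, which produces a $k$-isomorphism from \emph{any} given homotopy-type isomorphism. All of the substantive work, including the construction of $r$ and its functoriality, is carried out in the proof of Theorem~\ref{main} (refined in Theorem~\ref{thm:main-retraction}). Consequently there is no residual obstacle for the corollary: it records only the surjectivity-on-isomorphism-classes aspect of the split injection $(\ast)$.
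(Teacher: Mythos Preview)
Your proof is correct and matches the paper's approach: the paper explicitly calls this a ``trivial corollary'' of Theorem~\ref{main} and gives no separate proof, so your spelling-out of the argument via the retraction $r$ is exactly the intended reasoning. Your additional remark that mere injectivity of $(\ast)$ would not suffice is also on point.
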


\begin{remarks}
\begin{enumer}
\item
The reason that Theorem~\ref{main} is formulated for varieties over a finitely generated extension field of $\Q$  lies in the method of our proof, which uses techniques of Tamagawa~\cite{Ta} that we could not generalise to the more general context of sub-$p$-adic fields.

\item
By \cite{Is2}, the functor $ X\mapsto X_\et$  from smooth $k$-schemes to $\Ho(\pross)\downarrow k_\et$  factors through the $\bA^1$-homotopy category of Morel and Voevodsky \cite{MV}. In particular, it is not faithful.  However, this does not affect Theorem~\ref{main} since the schemes occurring there are $\bA^1$-local.
\end{enumer}
\end{remarks}

For \defobjekt{strongly hyperbolic Artin neighbourhoods} (see \ref{stronghyperbolicartin:def} for the definition), we can show that $(\ast)$ is a bijection.

\begin{theorem} \label{anabkpi1}
Let $X$ and $Y$ be strongly hyperbolic Artin neighbourhoods over a finitely generated field extension $k$ of\/ $\Q$.
Then the natural map
\[
\Isom_k(X,Y) \longrightarrow \Isom_{\Ho(\pross) \downarrow k_\et}(X_\et,Y_\et)
 \]
is bijective.
\end{theorem}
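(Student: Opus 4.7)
The plan is to combine Theorem~\ref{main} with the fact that strongly hyperbolic Artin neighbourhoods are of type $K(\pi,1)$. More precisely, by construction such a variety $X$ sits as the total space of an iterated tower of elementary fibrations by hyperbolic curves, so (after verifying the input hypothesis of Theorem~\ref{main}) $X$ embeds as a locally closed subscheme into the product of the hyperbolic curves appearing along the tower. Thus Theorem~\ref{main} applies to $X$ and $Y$ and supplies both the injectivity of $(\ast)$ and a canonical, functorial retraction
\[
r : \Isom_{\Ho(\pross) \downarrow k_\et}(X_\et,Y_\et) \longrightarrow \Isom_k(X,Y).
\]
It therefore suffices to prove surjectivity of $(\ast)$, that is, for every $\varphi \in \Isom_{\Ho(\pross) \downarrow k_\et}(X_\et,Y_\et)$ one must establish
\[
(\ast)(r(\varphi)) = \varphi.
\]

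\textbf{The $K(\pi,1)$-reduction.} The essential geometric input is that a strongly hyperbolic Artin neighbourhood is a $K(\pi,1)$-space \'{e}tale-homotopically, which follows by induction on the length of the Artin tower from the Leray spectral sequence for the étale fibrations in hyperbolic curves (each curve fiber is itself $K(\pi,1)$, and an iterated $K(\pi,1)$-fibration is again $K(\pi,1)$). Consequently $X_\et$ and $Y_\et$ are equivalent, in $\Ho(\pross)\downarrow k_\et$, to $B\pi_1^\et(X)$ and $B\pi_1^\et(Y)$ viewed over $B\Gal_k = k_\et$. Standard obstruction theory for maps between classifying pro-spaces then tells us that the natural map
\[
\Isom_{\Ho(\pross)\downarrow k_\et}(X_\et, Y_\et) \longrightarrow \OutIsom_{\Gal_k}\bigl(\pi_1^\et(X), \pi_1^\et(Y)\bigr)
\]
is \emph{bijective}; here the higher homotopy groups, which would otherwise contribute obstructions and indeterminacies, vanish.

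\textbf{Matching of $\pi_1$-data.} Given $\varphi$ and $f := r(\varphi)$, I would verify that the induced outer isomorphisms
\[
\pi_1(\varphi),\ \pi_1(f_\et) : \pi_1^\et(X) \riso \pi_1^\et(Y)
\]
over $\Gal_k$ coincide. This is precisely what the refined retraction in Theorem~\ref{thm:main-retraction} is designed to achieve: $r$ is constructed so as to unique­ly reproduce the given outer $\pi_1$-data via Mochizuki's theorem (Theorem~\ref{intromoch}) applied to the hyperbolic curve factors. Functoriality of $r$ with respect to the projections onto the curve stages of the Artin tower forces $\pi_1(f_\et)$ and $\pi_1(\varphi)$ to agree on every stage, hence globally. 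Combined with the bijection of the previous paragraph, this yields $\varphi = f_\et$, completing the proof.

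\textbf{Main obstacle.} The delicate point is step three, namely pinning down that the retraction $r$ returns a morphism of varieties whose effect on \'{e}tale fundamental groupoids equals $\pi_1(\varphi)$ as an outer map over $\Gal_k$. This reduces to the fine behaviour of $r$ with respect to the curve projections and to the uniqueness clause in Mochizuki's theorem; ensuring these compatibilities rigorously on each level of the Artin tower is where the real technical work lies.
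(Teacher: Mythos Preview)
Your overall architecture is sound: reduce to showing that the retraction $r$ is also a section, use the $K(\pi,1)$ property to translate this into a statement about outer $\Gal_k$-isomorphisms of fundamental groups, and then argue that $\pi_1(\varphi)$ and $\pi_1(r(\varphi)_\et)$ agree. But the third step, which you correctly flag as the ``main obstacle'', is a genuine gap, and your sketch of how to close it does not work.

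The only control on $r(\varphi)$ coming from Theorem~\ref{thm:main-retraction} is property~(c): $h_\et\circ r(\varphi)_\et = h_\et\circ\varphi$ for every dominant map $h$ to a hyperbolic curve. This says that $\pi_1(r(\varphi)_\et)$ and $\pi_1(\varphi)$ agree \emph{after} composing with each projection $\pi_1^\et(X)\to\pi_1^\et(C_i)$. It does \emph{not} say they agree on $\pi_1^\et(X)$ itself, because $\pi_1^\et(X)$ is far from being recovered from its images in the curve factors (the kernel of $\pi_1^\et(X)\to\prod_i\pi_1^\et(C_i)$ is typically enormous). Your phrase ``agree on every stage, hence globally'' is exactly the step that fails. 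Moreover, you cannot even get $\varphi$ to descend along the Artin tower $X=X_n\to X_{n-1}\to\cdots$ without further input: $\varphi$ is an abstract homotopy equivalence and has no a priori reason to respect the normal subgroup $\ker\big(\pi_1^\et(X)\to\pi_1^\et(X_{n-1})\big)$.

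The paper closes this gap in two substantial steps that your outline does not anticipate. First, Theorem~\ref{kernel-thm} (whose proof occupies all of Section~\ref{sec:controlpi1} and uses \v{C}ebotarev density and point-counting over finite fields) shows that any $\gamma$ with $r(\gamma)=\id_X$ induces a \emph{class-preserving} automorphism of $\pi_1^\et(X)$; in particular it preserves every normal subgroup, so it \emph{does} descend along the tower. Second, Theorem~\ref{thm:goodartineighbourhoods} runs an induction on $\dim X$: class preservation lets $\pi_1(\gamma)$ descend to $\pi_1^\et(X_{n-1})$, the inductive hypothesis forces this descent to be inner, and then one passes to the generic fibre $X_K$ (a hyperbolic curve over the function field $K$ of $X_{n-1}$), applies Mochizuki's theorem there to produce an actual $K$-automorphism, spreads it out over $X_{n-1}$ via the finiteness of the Isom-scheme, and finally identifies it with $\id_X$ using the retraction. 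None of this is visible from the curve-factor projections alone.
\medskip
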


We denote by $\Gal_k = \pi_1(k_\et,\bar k_\et)$ the absolute Galois group of the field $k$ with respect to a fixed  separable algebraic closure $\bar k$. For a connected variety $X$ over $k$ equipped with a geometric base point $\bp{x}$ over $\bar k/k$, there is a natural augmentation map $\pi_1^\et(X,\bp{x}) \to \Gal_k$. We denote by
$\Hom_{\Gal_k}\big(\pi_1^\et(X,\bp{x}),\pi_1^\et(Y,\bp{y})\big)$ the set of
those homomorphisms that are compatible with the augmentation. Further, $\sigma \in  \pi_1^\et(Y_{\bar k},\bp{y})$ acts by composition with the inner automorphism of $\pi_1^\et(Y,\bp{y})$ given by $\sigma$, and
\[
\OutHom_{\Gal_k}\big(\pi_1^\et(X,\bp{x}),\pi_1^\et(Y,\bp{y})\big):= \Hom_{\Gal_k}\big(\pi_1^\et(X,\bp{x}),\pi_1^\et(Y,\bp{y})\big)_{\pi_1^\et(Y_{\bar k},\bp{y})}
\]
denotes the set of orbits. For geometrically connected and geometrically unibranch varieties,
this set does not depend on the chosen base points (cf.\ section~\ref{subsecpunktiert}), and we omit them from the notation.
Then there is a natural map
\[
\Mor_k(X,Y)\to \OutHom_{\Gal_k}\big(\pi_1^\et(X),\pi_1^\et(Y)\big),
\]
which factors through $\Mor_{\Ho(\pross)\downarrow k_\et}(X_\et,Y_\et)$, see Corollary~\ref{inducedhom}.
Since strongly hyperbolic Artin neighbourhoods are of type $K(\pi,1)$, Theorem~\ref{anabkpi1} can be restated  in terms of fundamental groups:

\begin{corollary} \label{anabkpi1kor}
Let $X$ and $Y$ be strongly hyperbolic Artin neighbourhoods over a finitely generated field extension $k$ of\/ $\Q$. Then the natural map
\[
\Isom_k(X,Y)  \longrightarrow \OutIsom_{\Gal_k}\big(\pi_1^\et(X),\pi_1^\et(Y)\big)
\]
is bijective.
\end{corollary}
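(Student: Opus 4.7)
The plan is to derive Corollary~\ref{anabkpi1kor} from Theorem~\ref{anabkpi1}. Since the latter already provides the bijection $\Isom_k(X,Y) \riso \Isom_{\Ho(\pross)\downarrow k_\et}(X_\et,Y_\et)$, the remaining task is to show that the natural forgetful map
\[
\Phi\colon \Isom_{\Ho(\pross) \downarrow k_\et}(X_\et,Y_\et) \longrightarrow \OutIsom_{\Gal_k}\big(\pi_1^\et(X),\pi_1^\et(Y)\big)
\]
obtained from Corollary~\ref{inducedhom} is a bijection. Composing the two bijections will then yield the claim.

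The first step is to invoke the $K(\pi,1)$-hypothesis built into the definition of strongly hyperbolic Artin neighbourhood: both $X_\et$ and $Y_\et$ are weakly equivalent in $\pross$ to the classifying pro-simplicial sets $B\pi_1^\et(X)$ and $B\pi_1^\et(Y)$ of their profinite \'{e}tale fundamental groups. Under the augmentations to $k_\et \simeq B\Gal_k$ these become classifying spaces over $B\Gal_k$ of the short exact sequences
\[
1 \to \pi_1^\et(X_{\bar k}) \to \pi_1^\et(X) \to \Gal_k \to 1
\]
and its analogue for $Y$, so the question reduces to a purely group-theoretic computation of morphisms between relative classifying spaces over $B\Gal_k$.

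The second and main step is to establish that, for any two extensions $1 \to N_i \to G_i \to H \to 1$ of profinite groups, the canonical map
\[
\Mor_{\Ho(\pross) \downarrow BH}(BG_1,BG_2) \longrightarrow \Hom_H(G_1,G_2)/\Inn(N_2)
\]
is a bijection, and similarly on the subset of isomorphisms. This is the pro-finite, relative version of the classical identification $[BG_1,BG_2] \simeq \Hom(G_1,G_2)/\Inn(G_2)$. I would prove it by descent along the fibration to $B\Gal_k$, reducing to a $\Gal_k$-equivariant statement over $\bar k_\et$ about maps of classifying spaces of the geometric fundamental groups, and then combine this with the standard obstruction-theoretic classification of maps between classifying spaces applied levelwise in the pro-system.

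The main obstacle is making this last step rigorous in Isaksen's model structure on $\pross$: one must verify that homotopy classes over $k_\et$ really are classified by $\Gal_k$-compatible homomorphisms on fundamental groups, that the resulting ambiguity is conjugation by the geometric part $\pi_1^\et(Y_{\bar k})$ and not the full $\pi_1^\et(Y)$ (so that $\Inn(N_2)$ rather than $\Inn(G_2)$ appears), and that base-point issues are treated as in section~\ref{subsecpunktiert}, using the fact that $X$ and $Y$ are geometrically connected and geometrically unibranch. Once $\Phi$ is shown to be bijective, composition with Theorem~\ref{anabkpi1} completes the proof.
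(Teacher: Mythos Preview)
Your overall plan coincides with the paper's: deduce the corollary from Theorem~\ref{anabkpi1} by showing that the comparison map
\[
\Phi\colon \Isom_{\Ho(\pross)\downarrow k_\et}(X_\et,Y_\et)\longrightarrow \OutIsom_{\Gal_k}\big(\pi_1^\et(X),\pi_1^\et(Y)\big)
\]
is a bijection, using that $X$, $Y$ and $\Spec(k)$ are of type $K(\pi,1)$.

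Where you diverge is in the execution of the ``main step''. You propose to prove the identification $\Mor_{\Ho(\pross)\downarrow BH}(BG_1,BG_2)\cong \Hom_H(G_1,G_2)/\Inn(N_2)$ by descent along $BG\to BH$ and levelwise obstruction theory. The paper avoids this entirely and proceeds more directly: it first works in the \emph{pointed} relative category, where Proposition~\ref{prop:BGisKpi1} immediately gives
\[
\Mor_{\Ho(\pross_*)\downarrow(k_\et,\bar k_\et)}\big((X_\et,\bp{x}_\et),(Y_\et,\bp{y}_\et)\big)\xrightarrow{\ \sim\ }\Mor_{\Gal_k}\big(\pi_1^\et(X,\bp{x}),\pi_1^\et(Y,\bp{y})\big),
\]
then observes via Lemma~\ref{lem:monodromy_acts_by_conjugation} that the monodromy action of $\pi_1^\et(Y_{\bar k},\bp{y})$ corresponds to inner conjugation, and finally invokes Proposition~\ref{prop:pointed_schemes2}(c) (which uses that $\Gal_k$ is strongly center-free) to identify the orbit space with the unpointed $\Isom$-set. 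This bypasses any descent or obstruction-theoretic argument in the pro-category. Your reference to section~\ref{subsecpunktiert} is exactly the right pointer; following it would lead you straight to this cleaner route rather than the descent approach you sketch.
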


We remark that by different techniques Hoshi proves in \cite{hoshi} \S3 a  statement similar to Corollary~\ref{anabkpi1kor} but restricted to dimension $\leq 4$.
Corollary~\ref{anabkpi1kor} implies the following statement predicted by Grothendieck in his letter to Faltings \cite{grothendieck:letter}:

\begin{corollary}\label{basisofneighbourhoods}
Let $X$ be a smooth, geometrically connected variety over a finitely generated field extension $k$ of\/ $\Q$. Then every point of $X$ has a basis of Zariski-neighbourhoods consisting of anabelian varieties, in the sense that $k$-isomorphisms between any two of these are in bijection with outer $G_k$-isomorphisms of their respective \'{e}tale fundamental groups.
\end{corollary}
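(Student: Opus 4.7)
The plan is to reduce Corollary~\ref{basisofneighbourhoods} to Corollary~\ref{anabkpi1kor} by exhibiting, around every point of $X$, a cofinal system of Zariski-open neighbourhoods that are strongly hyperbolic Artin neighbourhoods in the sense of~\ref{stronghyperbolicartin:def}. For any two such neighbourhoods $U,V$ in the system, Corollary~\ref{anabkpi1kor} then supplies the bijection
\[
\Isom_k(U,V) \longrightarrow \OutIsom_{\Gal_k}\bigl(\pi_1^\et(U),\pi_1^\et(V)\bigr),
\]
which is precisely the anabelian property required in the statement.

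The starting point is Artin's local structure theorem (SGA~4, Exp.~XI): every point $x \in X$ admits arbitrarily small Zariski-open neighbourhoods $U$ fitting into a tower
\[
U = U_d \xrightarrow{f_d} U_{d-1} \xrightarrow{f_{d-1}} \cdots \xrightarrow{f_1} U_0 = \Spec(k),
\]
where $d = \dim_x X$ and each $f_i$ is an elementary fibration, i.e.\ the complement inside a smooth projective relative curve $\bp{f}_i\colon \bp{U}_i \to U_{i-1}$ with geometrically connected fibres of a relative divisor $D_i \subset \bp{U}_i$ that is finite \'{e}tale over $U_{i-1}$.

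It then remains to upgrade such a tower to a strongly hyperbolic one. Working inductively on $i$ from the bottom up, for each $i$ one can, after a further Zariski-shrinking of $U_{i-1}$ and hence of all higher stages, adjoin finitely many disjoint \'{e}tale sections to $D_i$ so that every geometric fibre of $f_i$ becomes a hyperbolic affine curve; concretely one ensures $2g_i - 2 + n_i > 0$ for the numerical invariants $(g_i,n_i)$ of the generic fibre, adding at least three disjoint sections in the genus-zero case. Such additional sections exist after a further shrinking of the base, since a smooth projective relative curve admits \'{e}tale quasi-sections locally. Combined with the cofinality supplied by Artin's theorem, this produces, for every $x \in X$, a cofinal system of Zariski-open neighbourhoods that are strongly hyperbolic Artin neighbourhoods over $k$, to which Corollary~\ref{anabkpi1kor} applies.

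The main technical point to verify is that the hyperbolicisation step can be carried out compatibly with the tower structure while still producing arbitrarily small neighbourhoods of $x$: enlarging $D_i$ forces a shrinking of $U_{i-1}$, which in turn may require revisiting the lower stages $f_{i-1},\ldots,f_1$. The cofinality of Artin's original system is what guarantees that this inductive shrinking terminates inside any prescribed initial neighbourhood of $x$, after which Corollary~\ref{anabkpi1kor} delivers the stated anabelian property.
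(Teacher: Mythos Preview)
Your reduction to Corollary~\ref{anabkpi1kor} is correct, and so is the idea of producing a cofinal system of strongly hyperbolic Artin neighbourhoods. But your construction does not actually produce such neighbourhoods: you have overlooked condition~(ii) of Definition~\ref{stronghyperbolicartin:def}. A strongly hyperbolic Artin neighbourhood requires not only that each $X_i \to X_{i-1}$ be an elementary fibration in hyperbolic curves, but also that each $X_i$ admit an immersion into a product of hyperbolic curves over $k$. Your hyperbolicisation step (adjoining extra sections to the boundary divisor) addresses only condition~(i); nothing in your argument produces the embeddings required by~(ii), and they do not come for free from an iterated hyperbolic fibration structure. Without~(ii), Theorem~\ref{main} and hence Corollary~\ref{anabkpi1kor} are not available.

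The paper's Lemma~\ref{lem:enoughstronglyhyperbolicArtinN} proceeds in the opposite order: it \emph{first} arranges, after a $\PGL_{n+1}(k)$-translation inside an ambient $\bP^n$, that a small affine open around the point embeds into $(\bP^1\smallsetminus\{0,1,\infty\})^n$, thereby securing condition~(ii) at the outset. The elementary fibration tower is then built following \cite{sga4}~XI~3.3, and condition~(i) --- hyperbolicity of the fibres --- follows automatically because the total space already sits inside a product of hyperbolic curves. An additional point you do not address is that \cite{sga4}~XI~3.3 is stated over an algebraically closed field; the paper has to argue separately (using rationality of the Grassmannian and the infinitude of $k$) that the relevant linear projection can be chosen $k$-rationally so that the given closed point, which need not be $k$-rational, lies in the resulting open.
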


The proof of Theorem~\ref{anabkpi1}, Corollary~\ref{anabkpi1kor} and Corollary~\ref{basisofneighbourhoods} will be completed in section~\ref{sec:strongArtin}.
Finally, in Theorem~\ref{main-absolute} we obtain the following absolute version of Theorem~\ref{main}.

\begin{theorem}
Let $k$ and $\ell$ be finitely generated extension fields of\/ $\Q$, and let $X/k$ and $Y/\ell$ be smooth geometrically connected varieties which can be embedded as locally closed subschemes into a product of hyperbolic curves over $k$ and $\ell$, respectively.

Then the natural map
\[
\Isom_\text{\rm Schemes}(X,Y) \longrightarrow \Isom_{\Ho(\pross)}(X_\et,Y_\et)
\]
is a split injection with a functorial retraction. If\/ $X$ and $Y$ are strongly hyperbolic Artin neighbourhoods, it is a bijection.
\end{theorem}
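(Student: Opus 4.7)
The plan is to reduce the absolute statement to the relative theorems already proved (Theorems~\ref{main} and~\ref{anabkpi1}) by decomposing both sides of the natural map according to isomorphisms of base data. On the scheme side, any smooth geometrically connected variety over a finitely generated extension $k/\bQ$ is geometrically integral (we are in characteristic zero), and by standard facts the base field $k$ is intrinsic to $X$ as a scheme. Consequently every scheme isomorphism $X \riso Y$ is semilinear with respect to a unique field isomorphism $\sigma \colon k \riso \ell$, yielding the decomposition
\[
\Isom_\text{\rm Schemes}(X,Y) \;=\; \coprod_{\sigma \colon k \riso \ell} \Isom_k\bigl(X,\,{}^\sigma Y\bigr),
\]
where ${}^\sigma Y$ denotes $Y$ regarded as a $k$-variety via~$\sigma$.

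On the \'{e}tale homotopy side, the first step is to extract from an abstract isomorphism $\phi \colon X_\et \riso Y_\et$ a compatible isomorphism $\tau \colon k_\et \riso \ell_\et$ of base homotopy types. When $X$ and $Y$ are strongly hyperbolic Artin neighbourhoods, $X_\et$ and $Y_\et$ are of type $K(\pi,1)$, so the augmentations correspond to the surjections $\pi_1^\et(X) \twoheadrightarrow \Gal_k$ and $\pi_1^\et(Y) \twoheadrightarrow \Gal_\ell$. I would characterise the kernels $\pi_1^\et(X_{\bar{k}})$ and $\pi_1^\et(Y_{\bar{\ell}})$ group-theoretically by induction along the iterated elementary fibration, reducing to hyperbolic curves over finitely generated extensions of~$\bQ$, where the required characterisation is the anabelian input underpinning Theorem~\ref{anabkpi1}. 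This produces a canonical $\tau$ underlying each $\phi$, and hence the parallel decomposition
\[
\Isom_{\Ho(\pross)}(X_\et,Y_\et) \;=\; \coprod_{\tau \colon k_\et \riso \ell_\et} \Isom_{\Ho(\pross)\downarrow k_\et}\bigl(X_\et,\,{}^\tau Y_\et\bigr).
\]

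To match the two indexing sets I would invoke the absolute anabelian theorem for finitely generated extensions of $\bQ$ (of Neukirch--Uchida--Pop type), which says that every outer isomorphism $\tau$ of absolute Galois groups comes from a unique field isomorphism $\sigma$. Combined with the relative bijection of Theorem~\ref{anabkpi1} applied piecewise, this yields the absolute bijection in the strongly hyperbolic Artin case. The split-injection-with-retraction assertion for the broader class of varieties locally closed in products of hyperbolic curves follows by the same bookkeeping, substituting the relative retraction of Theorem~\ref{thm:main-retraction} for the relative bijection; producing the retraction only requires the existence (rather than uniqueness) of a $\tau$ compatible with~$\phi$, which is accessible through the given embedding into products of hyperbolic curves and the curve-anabelian machinery already powering the proof of Theorem~\ref{main}.

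The main obstacle is the intrinsic, group-theoretic identification of the augmentation $X_\et \to k_\et$ from the bare homotopy type. It is this step that confines the unconditional absolute bijection to strongly hyperbolic Artin neighbourhoods and forces the reduction to hyperbolic-curve anabelian theorems via the iterated fibration structure; once it is secured, the field-theoretic anabelian input of Pop and the relative anabelian results of this paper fit together to give the statement.
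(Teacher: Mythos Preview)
Your overall strategy---decompose both sides over isomorphisms of the base, match the indexing sets via Pop's birational anabelian theorem, and then invoke the relative results (Theorems~\ref{main} and~\ref{anabkpi1}) fibre by fibre---is exactly the paper's. The divergence is in the one step you flag as the main obstacle: recovering the augmentation $X_\et \to k_\et$ intrinsically from the bare pro-space $X_\et$.

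The paper handles this step uniformly, with no case distinction and no induction on a fibration tower. One lifts $\gamma$ to a pointed isomorphism and reads off $\pi_1(\gamma_0)\colon \pi_1^\et(X,\bp{x}) \xrightarrow{\sim} \pi_1^\et(Y,\bp{y})$. Since $k$ is Hilbertian, $\Gal_k$ has no nontrivial finitely generated closed normal subgroup (Fried--Jarden), while the geometric fundamental group $\pi_1^\et(X_{\bar k},\bp{x})$ is topologically finitely generated (SGA~7). Hence $\Gal_k$ is the quotient of $\pi_1^\et(X,\bp{x})$ by its \emph{maximal} finitely generated closed normal subgroup, and likewise for $\Gal_\ell$. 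This description is purely group-theoretic, so $\pi_1(\gamma_0)$ descends canonically to an isomorphism $\Gal_k \xrightarrow{\sim} \Gal_\ell$, hence to a canonical $\gamma_{\rc}\colon k_\et \xrightarrow{\sim} \ell_\et$. This works for every $X$ in the class, not just for strongly hyperbolic Artin neighbourhoods; the latter hypothesis is used only to upgrade the relative retraction to a bijection at the very end.

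Your proposal, by contrast, treats the recovery of the augmentation as the hard part and splits into cases. For Artin neighbourhoods the inductive argument you sketch can be made to work but is unnecessary. For the general (merely embeddable) case there is a genuine gap: you assert that ``existence rather than uniqueness'' of a compatible $\tau$ suffices, but a \emph{functorial} retraction requires a canonical assignment $\gamma \mapsto \tau$; a mere existence statement does not define a map, and you do not explain how the embedding into a product of hyperbolic curves would produce one. The Hilbertian/finite-generation argument above closes this gap cleanly and is the point you are missing.
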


\subsection{On the kernel}
For general $X$, we have only partial information about the kernel of the retraction~$r$ of Theorem~\ref{main}.
In order to state our result in the general case, we need the following notation and terminology:

Let $1\to H \to G \to \Gamma \to 1$ be an exact sequence of groups. We say that $\ph\in \Aut_\Gamma(G)$ is \defobjekt{class-preserving by elements of $H$} if for every $g\in G$ there is an $h\in H$ such that $\ph(g)=hgh^{-1}$.
For a smooth, geometrically connected $k$-variety $X$, the question whether some element $\ph\in \OutAut_{\Gal_k}(\pi_1^\et(X))$ is class preserving by elements of $\pi_1^\et(X_{\bar k})$ does not depend on the chosen base point or representative in $\Aut_{\Gal_k}(\pi_1^\et(X,\bp{x}))$.

\begin{theorem}\label{kernel-thm}
Let $k$ be a finitely generated field extension of\/ $\Q$ and let $X$  be a smooth, geometrically connected variety over $k$ which can be embedded as a locally closed subscheme into a product of hyperbolic curves over $k$. Let $\gamma$ be in  the kernel of the retraction map of Theorem~\ref{main}:
\[
r : \Aut_{\Ho(\pross) \downarrow k_\et}(X_\et)  \longrightarrow   \Aut_k(X).
\]
Then the induced automorphism $\pi_1(\gamma)\in \OutAut_{\Gal_k}(\pi_1^\et(X))$ is class-preserving by elements of $\pi_1^\et(X_{\bar k})$.
\end{theorem}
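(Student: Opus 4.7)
The strategy is to unwind the construction of the retraction $r$ promised by the refined Theorem~\ref{thm:main-retraction} and to translate the condition $r(\gamma)=\id_X$ into conjugacy information at the level of $\pi_1^\et$.

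First, I would exploit the functoriality of $r$ with respect to base change of the ground field. For every finitely generated extension $K/k$ one has a compatible retraction $r_K$ for $X_K$, and the base change $\gamma_K$ of $\gamma$ lies in $\ker(r_K)$. Consequently $\gamma_K$ induces the identity automorphism on $X_K$; in particular $\gamma$ fixes every scheme-theoretic point of $X$ and every $K$-rational point, for every such extension $K/k$.

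Next, I would translate this fixed-point information into the language of sections of the fundamental exact sequence. Each $K$-rational point $x\in X(K)$ determines an outer section of $\pi_1^\et(X_K,\bp{x})\to \Gal_K$, i.e.\ a $\pi_1^\et(X_{\bar K},\bp{x})$-conjugacy class of splittings $s_x$. The fact that $\gamma_K$ fixes $x$ translates into: the outer automorphism $\pi_1(\gamma_K)$ stabilises this class. Choosing a representative $\phi$ of $\pi_1(\gamma)$ in $\Aut_{\Gal_k}(\pi_1^\et(X,\bp{x}))$, this yields an element $h_x\in\pi_1^\et(X_{\bar K},\bp{x})$ with
\[
\phi(s_x(\sigma)) = h_x\, s_x(\sigma)\, h_x^{-1}\qquad\text{for all } \sigma\in\Gal_K.
\]
So $\phi$ is already known to act by $\pi_1^\et(X_{\bar k})$-conjugation on every element lying in the image of a section attached to a rational point over some extension of $k$.

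The final step is to upgrade this section-wise conjugacy data to the full assertion that, for every $g\in\pi_1^\et(X,\bp{x})$, there exists $h\in\pi_1^\et(X_{\bar k},\bp{x})$ with $\phi(g)=hgh^{-1}$. For this I would invoke the embedding of $X$ into a product of hyperbolic curves: by passing to appropriate auxiliary finite \'{e}tale covers and geometric configurations of points in the curve factors, each element $g$ can be controlled by sections of the fundamental exact sequences of pullback varieties over suitable finitely generated extensions of $k$. Combined with the Mochizuki--Tamagawa-type rigidity already employed in the proof of Theorem~\ref{main}, this should convert section-wise $\pi_1^\et(X_{\bar K})$-conjugacy into element-wise $\pi_1^\et(X_{\bar k})$-conjugacy.

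I expect the main obstacle to be precisely this last passage. Class-preservation by elements of $H=\pi_1^\et(X_{\bar k})$ is not preserved under multiplication in $\pi_1^\et(X)$, so no formal group-theoretic bootstrap from sections to arbitrary elements can work; one must essentially use the abundance of rational points provided by the embedding into a product of hyperbolic curves, together with the anabelian rigidity of those curves, to detect every element of $\pi_1^\et(X,\bp{x})$ up to $H$-conjugation by section-level data.
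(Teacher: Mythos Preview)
Your strategy has a genuine gap at its very first substantive step, and the approach is fundamentally different from the paper's.

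The core problem is the inference ``$r(\gamma)=\id_X$, hence $\gamma$ fixes every $K$-rational point and therefore $\pi_1(\gamma)$ preserves the section class $[s_x]$''. This does not follow. The retraction $r$ of Theorem~\ref{thm:main-retraction} is constructed only from the compositions $(p_i\iota)_\et\gamma$ with the projections to the hyperbolic curve factors; the defining property~\ref{thmitem:r_and_map_to_hyperbolic_curve} says precisely that $r(\gamma)=\id_X$ is equivalent to $\iota_\et\gamma=\iota_\et$ in $\Ho(\pross)\downarrow k_\et$. This controls $\pi_1(\gamma)$ only modulo the kernel of $\pi_1(\iota)$, which is in general nontrivial. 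It says nothing about how $\pi_1(\gamma)$ acts on section classes attached to points of $X$. In fact, if your inference were valid, the theorem would be nearly immediate and the kernel of $r$ would be much more tightly constrained than the paper is able to show. A second difficulty: there is no canonical ``base change'' $\gamma_K$ of $\gamma$ to $X_K$. Lemma~\ref{basechange} gives only existence of some lift $\gamma'$, with no uniqueness, so speaking of ``the'' base change and its image under $r_K$ is not well-posed.

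The paper's proof proceeds along an entirely different route, essential to which is reduction to finite fields. One spreads $X\hookrightarrow W$ out over an integral model $S$ of finite type over $\bZ$; for each open normal $N\triangleleft\pi_1^\et(X)$ one compares the coverings $X_N$ and $X_{\varphi(N)}$ by counting $\bF_q$-points of suitable twists (as in Proposition~\ref{prop:tamagawa-argument}) and invokes \v{C}ebotarev density to conclude $\varphi(N)=N$. A second \v{C}ebotarev argument with Frobenius elements shows $\varphi(g)$ is conjugate to a power $g^{m(g)}$ of $g$; the cyclotomic quotient forces $m(g)=1$ on a dense set, hence everywhere. A covering trick (Lemma~\ref{goodcov-ell}) removes the restriction that no $C_i$ be rational, and a final compactness lemma upgrades ``class preserving'' to ``class preserving by elements of $\pi_1^\et(X_{\bar k})$''. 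None of this appears in your outline, and your section-based approach over characteristic-zero extensions does not access the Frobenius/point-counting information that actually drives the argument.
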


Theorem~\ref{kernel-thm} will be proven in the course of section \ref{sec:controlpi1}.

\subsection{Outline}
Our first goal is to reformulate the results on anabelian geometry of hyperbolic curves proven by Mochizuki and Tamagawa in terms of the homotopy category. This change of perspective
has a big technical advantage: to formulate the result in terms of fundamental groups one has to choose  base points, and then divide out the ambiguity introduced by this choice. The formulation in the homotopy category is intrinsically base point free and more natural.

To reach this goal, we have to overcome quite a number of technical difficulties. In particular, the relation between pointed and unpointed homotopy classes of maps, which is well understood for spaces, becomes quite subtle for maps between pro-spaces.
For example, there are connected pro-spaces whose fundamental group depends on the chosen base point. We have to show that such pathologies do not occur for \'{e}tale homotopy types. Further technical problems are related to the behaviour of \'{e}tale homotopy types under base change and to the existence of classifying spaces for pro-groups. We deal with these problems in section~\ref{basicprop}, developing the necessary theory of pro-spaces in the appendix. Then we prove Theorem~\ref{intromoch} in section~\ref{sec:mochi}.

\smallskip
In section~\ref{sec:proofofmain} we prove Theorem~\ref{main}, an anabelian principle for varieties which can be embedded into a product of hyperbolic curves. Here Mochizuki's theorem in its homotopy theoretic formulation Theorem~\ref{intromoch} constitutes the first important step. We obtain a scheme morphism, however only to the ambient product space. In order to show that the morphism factors through the embedded subvariety, we use reductions over finite fields of the given varieties in a systematic way. It is here where we have to strengthen the assumption on the base field from sub-$p$-adic to finitely generated over $\Q$.

\smallskip
Unfortunately, Theorem~\ref{main} does not provide a bijection, only an injection with functorial retraction. In section~\ref{sec:controlpi1} we investigate the kernel of this retraction. Of course, we hope that it is trivial. What we can show is that elements of the kernel induce class preserving automorphisms of the fundamental group. For strongly hyperbolic Artin neighbourhoods, this suffices to show triviality. Since these are of type $K(\pi,1)$, we conclude an anabelian isomorphism result for strongly hyperbolic Artin neighbourhoods in terms of fundamental groups in the classical style of formulation.

\smallskip
The final section~\ref{sec:absolute} provides an absolute version of Theorem~\ref{main}, merging our new result with the birational anabelian geometry of the base field.

\addtocontents{toc}{\SkipTocEntry}
\subsection*{Notation and conventions}
The set of orbits for a group $G$ acting on a set $M$ is denoted by $M_G$.

\smallskip

All schemes considered are separated and locally noetherian. For an $S$-scheme $X$, a base change $X \times_S T$ is denoted by $X_T$. An \defobjekt{immersion} of schemes is the composite of an open and a closed immersion, i.e., an embedding as a locally closed subscheme.
By the phrase \defobjekt{\'{e}tale covering}  we mean finite \'{e}tale morphism, i.e., rev\^{e}tement \'{e}tale in the sense of \cite{sga1}.

\smallskip
We use the term variety (over $k$) for a scheme of finite type over the field $k$. A \defobjekt{hyperbolic curve} over a field $k$ is a geometrically connected curve $C$ over $k$  with geometrically negative \'{e}tale $\ell$-adic Euler characteristic $\chi(C_{\bar k},\bQ_\ell) < 0$ for $\ell \in k^\times$. Here $\bar k$ is an algebraic closure of $k$.

\smallskip

A \defobjekt{pro-object} in a category $\cC$ is a contravariant functor $I^\mathrm{op} \to \cC$ from some small filtering category $I$ to $\cC$. One often writes a pro-object $X$ in the form $X=(X_i)_{i\in I}$. The pro-objects in $\cC$ form a category $\pro\cC$ by setting
\[
\Mor_{\pro\cC}(X,Y)= \lim_j \colim_i \Mor_\cC(X_i,Y_j).
\]

We denote the category of simplicial sets by $\ssets$, and by $\pross$ its pro-category. Similarly, we use the notation $\ssets_*$ and $\pross_*$ for the category of pointed simplicial sets and its pro-category. We consider the closed model structure on $\pross$ defined by Isaksen \cite{Is} and its
pointed variant. We use the word \defobjekt{space} synonymous for simplicial set. For a pointed pro-space $(X,x)$, we have the homotopy groups $\pi_n(X,x)$, which are pro-groups.

For a given pro-simplicial set $B$, we denote by $\Ho(\pross)\downarrow B$ the category of morphisms to $B$ in $\Ho(\pross)$.
For a model category $\cC$ we sometimes denote morphisms in $\Ho(\cC)$ by
\[
\Mor_{\Ho(\cC)}(X,Y) = [X,Y]_{\cC}.
\]
The \'{e}tale topological type $X_\et$ of a locally noetherian scheme $X$ is the pro-space obtained by applying the functor ``connected component'' to the filtered system of rigid \'{e}tale hypercovers of~$X$, see \cite{Fr}. When considered as an object of the homotopy category $\Ho(\pross)$, we refer to $X_\et$ as the \'{e}tale homotopy type of $X$.
A geometric point $\bp{x}: \Spec(\bar K) \to X$ defines a point $\bp{x}_\et$ on $X_\et$. The \'{e}tale homotopy groups of $(X,\bp{x})$ are defined by
\[
\pi_n^\et(X,\bp{x})= \pi_n(X_\et,\bp{x}_\et).
\]
These pro-groups are pro-finite groups if $X$ is noetherian and geometrically unibranch, see \cite{AM} Thm.~11.1.
For a field $k$ and a separably closed extension field $K/k$ (given, e.g., by a geometric point of a $k$-variety), we write
\[
\Gal_k=\Gal(\bar k /k)=  \pi_1^\et(k,K)=\pi_1(k_\et,K_\et),
\]
where $\bar k$ is the separable algebraic closure of $k$ in $K$.

\section{Basic properties of  \'{e}tale homotopy types}\label{basicprop}

In this section we collect basic properties of \'{e}tale homotopy types used in this paper.
\subsection{\'{E}tale base change}
\begin{lemma} \label{lem:finiteetale}
Assume that $p: W'\to W$ is a finite \'{e}tale morphism of schemes. Then $p_\et: W'_\et \to W_\et$ is a finite covering in $\pross$ (cf.\ section~\ref{covappendix}). For any morphism $X \to W$, the natural map
\[
(X\times_WW')_\et \lang X_\et \times_{W_\et} W'_\et
\]
is an isomorphism in $\pross$.
\end{lemma}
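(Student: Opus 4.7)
The argument naturally splits into two parts, corresponding to the two assertions.

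My plan for the first assertion is to unravel the definition of $W_\et$ and $W'_\et$ via rigid étale hypercovers and to verify the covering property levelwise before passing to the pro-limit. Given any rigid étale hypercover $U_\bullet \to W$, the base change $U_\bullet \times_W W' \to W'$ is again an étale hypercover (after rigidification), and each simplicial level $U_n \times_W W' \to U_n$ is finite étale; applying the connected component functor then yields a map of simplicial sets whose fibers over vertices are finite sets, realising a finite covering of simplicial sets in the classical sense. To pass from this levelwise statement to the desired statement in $\pross$, I would verify, guided by the appendix (section~\ref{covappendix}), that such pullbacks of rigid hypercovers of $W$ form a cofinal subsystem among rigid hypercovers of $W'$. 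This cofinality is plausible because any étale cover of $W'$ can be dominated by (the restriction to $W'$ of) an étale cover of $W$, using that $p$ is itself finite étale together with a standard Galois-theoretic refinement.

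For the second assertion, I would first construct the natural map via the universal property of fiber products in $\pross$ applied to the two projections $X \times_W W' \to X$ and $X \times_W W' \to W'$, which are compatible over $W$. To show it is an isomorphism, the strategy is to argue levelwise on a cofinal system. Starting from a rigid étale hypercover $V_\bullet \to W$ and a rigid étale hypercover $U_\bullet \to X$ refining $V_\bullet \times_W X$, the scheme-level identity $U_\bullet \times_W W' = U_\bullet \times_{V_\bullet} (V_\bullet \times_W W')$ translates, under the connected component functor, into the desired fiber product identity on simplicial sets, because $\pi_0$ commutes with fiber products of schemes that are finite étale over a connected base. Cofinality arguments analogous to those in the first part then assemble these levelwise identifications into the required isomorphism in $\pross$.

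The main obstacle will be the careful bookkeeping surrounding rigidifications and indexing systems: the pro-object $X_\et$ depends on fixing rigid structures so that its indexing category remains strictly small, and one must check that the pullback operations along $p$ are compatible with this framework, yielding functors between the relevant indexing categories that are cofinal in the appropriate sense. Granted these technicalities, the geometric core of the lemma --- that finite étale base change is transparent on étale homotopy types --- reduces to the elementary observation that the connected components of a finite étale scheme over a connected base are controlled by the associated monodromy representation of the fundamental groupoid, so everything is determined by data that is visible in $W_\et$ and its finite coverings.
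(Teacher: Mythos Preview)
Your plan for the first assertion is essentially the paper's approach: exploit cofinality of pulled-back rigid hypercovers and verify the lifting property levelwise. The paper sharpens this slightly by first reducing to the case where $W$, $W'$ are connected and $p$ has constant degree $d$, and by singling out the cofinal subsystem of those rigid hypercovers $U_\bullet$ of $W$ for which every connected component of every $U_n$ has exactly $d$ components after base change to $W'$; for those the lifting property is immediate.

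Your proposal for the second assertion, however, contains a genuine gap. The levelwise identity you assert,
\[
\pi_0(U_n \times_W W') \;=\; \pi_0(U_n) \times_{\pi_0(V_n)} \pi_0(V_n \times_W W'),
\]
is false: $\pi_0$ does \emph{not} commute with fibre products even when one leg is finite \'etale over a connected base. For a concrete counterexample take $V_n$ connected with a connected double cover $V_n \times_W W' \to V_n$, and let $U_n \to V_n$ be a map from a connected scheme over which that double cover becomes trivial (for instance $U_n = V_n \times_W W'$ itself). Then the left side has two elements and the right side has one. The point is that a connected component of $V_n \times_W W'$ can split further after pullback along $U_n \to V_n$, and nothing in your argument prevents this. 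One could try to repair this by passing to finer and finer hypercovers and arguing at the level of the pro-object rather than levelwise, but that is not what you wrote, and making it precise is not straightforward.

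The paper sidesteps this difficulty with a torsor argument. It first dominates $W' \to W$ by a connected \'etale Galois covering $W'' \to W$ with group $G$, so that $W''_\et \to W_\et$ and $(X \times_W W'')_\et \to X_\et$ are both $G$-torsors in $\pross$. The comparison map $(X \times_W W'')_\et \to X_\et \times_{W_\et} W''_\et$ is then a morphism of $G$-torsors over $X_\et$ and therefore automatically an isomorphism. The statement for $W'$ follows by taking $U$-orbits on both sides, where $U \subset G$ is the subgroup corresponding to $W'$. This avoids any need to compare $\pi_0$ with fibre products.
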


\begin{proof}
Since the functor \emph{\'{e}tale topological type} respects connected components, we can assume that $W$ and $W'$ are connected and that $p$ is surjective of degree, say $d$. Since an \'{e}tale covering of a strictly henselian scheme splits completely, the pull-back to $W'$ of a sufficiently small \'{e}tale neighbourhood of a geometric point $\bar w$ of $W$ has $d$ connected components. Furthermore, the rigid covers of $W'$ obtained by rigid pull-back from rigid covers of $W$ are cofinal among all rigid covers of $W'$.  By recursion, the same is true for rigid hypercovers. Moreover, among the rigid hypercovers $U_\bullet$ of $W$ those with the property that for all $n$ and every connected component $V_n$ of $U_n$, the base change $V_n\times_W W'$ has $d$ connected components are cofinal. For those $U_\bullet$ the map $\pi_0(U_\bullet \times_WW')\to \pi_0(U_\bullet)$ has the lifting property of the definition of a covering in $\ssets$ (cf.\ section~\ref{covappendix}). This shows the first statement.

In order to show the second statement, let $W''\to W$ be a connected \'{e}tale Galois covering with group $G=G(W''/W)$ dominating $W'\to W$ and let $U\subset G$ be the subgroup associated with $W'$.  Then $W''$ is an \'{e}tale $G$-torsor on $W$ and, in the obvious sense, $W''_\et$ is a $G$-torsor on $W_\et$. We conclude that the natural map
\[
(X\times_WW'')_\et \lang X_\et \times_{W_\et} W''_\et
\]
is a map of $G$-torsors on $X_\et$, hence an isomorphism. The statement for $W'$  instead of $W''$ is obtained by forming the orbits of the $U$-action on both sides.
\end{proof}

We will frequently use the fact that isomorphisms in $\Ho(\pross)$ between \'{e}tale homotopy types can be base changed along finite \'{e}tale morphisms. The precise statement is the following lemma. Note that no uniqueness assertion is made on the isomorphism $\gamma'$ below.

\begin{lemma}\label{basechange}
Let $W$, $X$, $Y$ be schemes and let $f:  X \to W$, $g:Y\to W$ be morphisms. Assume that there exists
$\gamma \in \Isom_{\Ho(\pross)}(X_\et,Y_\et)$
such that  $g_\et \gamma=f_\et$ in $\Ho(\pross)$. Let $W'\to W$ be finite \'{e}tale.
Then there exists
$
\gamma'\in  \Isom_{\Ho(\pross)}\big((X\times_WW')_\et,(Y\times_WW')_\et\big)
$
such that the diagram
\[
\begin{tikzcd}[column sep=large]
X_\et  \arrow{d}[swap]{\gamma}& (X\times_WW')_\et \arrow{d}[swap]{\gamma'}\arrow{r}{}  \lar& W'_\et \dar[-, double equal sign distance]\\
Y_\et & (Y\times_WW')_\et \arrow{r}{} \lar&W'_\et
\end{tikzcd}
\]
commutes in $\Ho(\pross)$.
\end{lemma}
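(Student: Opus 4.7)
The plan is to construct $\gamma'$ as the base change of $\gamma$ along the finite étale cover $p: W' \to W$, exploiting the fact that $p_\et: W'_\et \to W_\et$ is a covering of pro-spaces and that coverings behave sufficiently like fibrations in $\pross$ that strict pullback along them descends to the homotopy category.

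More precisely, I would first invoke Lemma~\ref{lem:finiteetale} to identify $(X\times_W W')_\et$ with $X_\et \times_{W_\et} W'_\et$ and $(Y\times_W W')_\et$ with $Y_\et \times_{W_\et} W'_\et$ as pro-spaces, and to guarantee that $p_\et$ is a covering in the sense of Section~\ref{covappendix}. The theory developed in the appendix ought to show that strict pullback along a covering in $\pross$ preserves weak equivalences, so that it induces a base-change functor
\[
p_\et^\ast: \Ho(\pross) \downarrow W_\et \longrightarrow \Ho(\pross) \downarrow W'_\et.
\]
The hypothesis $g_\et \circ \gamma = f_\et$ in $\Ho(\pross)$ promotes $\gamma$ to an isomorphism in $\Ho(\pross) \downarrow W_\et$, and its image under $p_\et^\ast$ is the desired $\gamma'$. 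The right-hand square then commutes tautologically, since objects in the image of $p_\et^\ast$ come equipped with their projection to $W'_\et$; the left-hand square commutes by naturality of the projection from a fibre product onto its first factor.

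The main obstacle I anticipate is the delicate distinction between the slice $\Ho(\pross) \downarrow W_\et$ of the homotopy category and the homotopy category $\Ho(\pross \downarrow W_\et)$ of the slice; these are not the same in general, whereas strict pullback is naturally defined only on the latter. Passing from one to the other would usually require choosing compatible fibrant-cofibrant replacements for $f_\et$, $g_\et$, and $\gamma$. It is precisely here that the covering hypothesis on $p_\et$ is decisive: since strict pullback along a covering preserves weak equivalences, the resulting functor on homotopy categories is independent of such choices, and $\gamma'$ is well-defined, though, consistent with the statement, not asserted to be unique.
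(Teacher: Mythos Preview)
Your overall strategy matches the paper's exactly: reduce via Lemma~\ref{lem:finiteetale} to a statement about pullback along a covering of pro-spaces, then invoke the appendix machinery. The paper packages that second step as Proposition~\ref{basechangepross}.

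However, your resolution of the obstacle you yourself flag is not correct as stated. The fact that strict pullback along a covering preserves weak equivalences (this is Lemma~\ref{hopull}) only yields a functor $\Ho(\pross \downarrow W_\et) \to \Ho(\pross \downarrow W'_\et)$; it does \emph{not} by itself produce a functor on $\Ho(\pross)\downarrow W_\et$. The difficulty is not one of choosing replacements: even after making everything fibrant, a representative of $\gamma$ is a genuine map $X_\et \to Y_\et$ in $\pross$, but it only satisfies $g_\et\gamma = f_\et$ \emph{up to homotopy}, not on the nose, so there is no strict map over $W_\et$ to pull back. Consequently there are a priori two different fibre products $X_\et \times_{W_\et}^{f_\et} W'_\et$ and $X_\et \times_{W_\et}^{g_\et\gamma} W'_\et$, and one must produce an identification between them.

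This is exactly what the proof of Proposition~\ref{basechangepross} supplies, and it uses more of the covering hypothesis than preservation of weak equivalences: one chooses a homotopy $F: X_\et \times \Delta[1] \to W_\et$ between $f_\et$ and $g_\et\gamma$, and then the \emph{unique lifting property} of the covering $W'_\et \to W_\et$ against the trivial cofibration $X_\et \times_{W_\et}^{f_\et} W'_\et \times \{0\} \hookrightarrow X_\et \times_{W_\et}^{f_\et} W'_\et \times \Delta[1]$ transports this homotopy to an isomorphism $\varphi: X_\et \times_{W_\et}^{f_\et} W'_\et \xrightarrow{\sim} X_\et \times_{W_\et}^{g_\et\gamma} W'_\et$ in $\pross$. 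Only then does Lemma~\ref{hopull} apply to $\gamma \times \id$, and $\gamma' = (\gamma \times \id)\circ \varphi$ is the desired map. So your diagnosis of where the work lies is right, but the cure is the lifting argument, not invariance under replacement.
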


\begin{proof}
By Lemma~\ref{lem:finiteetale}, a finite \'{e}tale morphism $W'\to W$ induces a finite covering $W'_\et\to W_\et$ of pro-spaces, and the natural map $(X\times_WW')_\et \to X_\et \times_{W_\et} W'_\et$ is an isomorphism. The same argument applies to $Y$ and therefore the assertion follows from Proposition~\ref{basechangepross}.
\end{proof}

Varieties whose \'{e}tale homotopy types are isomorphic in $\Ho(\pross)$ have isomorphic \'{e}tale cohomology groups. A subtle point (due to the non-canonicity of the map $\gamma'$ in Lemma~\ref{basechange}) is the question whether we obtain $G_k$-module isomorphisms between the \'{e}tale cohomology groups of the base changes to $\bar k$. The next proposition gives a positive answer.

\begin{proposition}\label{unpointedisom}
Let $X$ and $Y$ be  varieties over a field $k$ and let $\bar{k}$ be a separable closure of~\/$k$.
Assume there exists an isomorphism $ X_\et \cong Y_\et$  in $\Ho(\pross) \downarrow k_\et$.
Then there exist $\Gal_{k}$-isomorphisms
\[
\rH^i_\et(X_{\bar{k}},A) \cong \rH^i_\et(Y_{\bar{k}},A)
\]
for all $i\ge 0$ and every abelian group $A$, which are moreover functorial in $A$.
\end{proposition}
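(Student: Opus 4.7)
The plan is to reduce to finite Galois subextensions $k'/k$ inside $\bar k$, establish Galois equivariance on each finite level, and then pass to the colimit.

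For each finite Galois extension $k'/k$ contained in $\bar k$, write $G=\Gal(k'/k)$ and $W'=\Spec(k')$. Applying Lemma~\ref{basechange} with $W=\Spec(k)$ produces an isomorphism $\gamma_{k'}\colon(X_{k'})_\et\riso(Y_{k'})_\et$ in $\Ho(\pross)$ lying over $W'_\et$. Taking cohomology with coefficients in $A$ gives an isomorphism $(\gamma_{k'})^\ast\colon\rH^i_\et(Y_{k'},A)\riso\rH^i_\et(X_{k'},A)$, visibly functorial in~$A$.

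The crucial step is to show that $(\gamma_{k'})^\ast$ is $G$-equivariant. The $G$-action on $\rH^i_\et(X_{k'},A)$ is induced by the deck transformation action of $G$ on $W'/\Spec(k)$, which lifts to an action of $G$ on $(X_{k'})_\et$ over $k_\et$, covering the identity on $X_\et$. I expect to check that for every $\sigma\in G$ the square
\[
\begin{tikzcd}
(X_{k'})_\et \arrow{r}{\sigma_X} \arrow{d}[swap]{\gamma_{k'}} & (X_{k'})_\et \arrow{d}{\gamma_{k'}}\\
(Y_{k'})_\et \arrow{r}[swap]{\sigma_Y} & (Y_{k'})_\et
\end{tikzcd}
\]
commutes in $\Ho(\pross)$. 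Here the strengthened hypothesis, namely that the isomorphism already lives in $\Ho(\pross)\downarrow k_\et$ rather than merely fitting over $W'_\et$ as in Lemma~\ref{basechange}, becomes essential: since $\sigma$ acts as an automorphism of $W'_\et$ \emph{over} $k_\et$, functoriality of the homotopy base change (Proposition~\ref{basechangepross}) applied to the given isomorphism over $k_\et$ should identify $\sigma^\ast\gamma_{k'}$ with $\gamma_{k'}$, which rearranges to the desired commutative square.

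To finish, I pass to the colimit over finite Galois $k'\subset\bar k$. Using that étale cohomology with coefficients in $A$ commutes with cofiltered limits of schemes along the affine transition maps $\Spec(k')\to\Spec(k)$, one has $\rH^i_\et(X_{\bar k},A)=\colim_{k'}\rH^i_\et(X_{k'},A)$ compatibly with $\Gal_k=\lim_{k'}G(k'/k)$, and similarly for $Y$. Assembling the $G$-equivariant maps $(\gamma_{k'})^\ast$ in the limit yields the desired $\Gal_k$-equivariant isomorphism, still functorial in~$A$. The main obstacle will be the equivariance verification in the middle step: Lemma~\ref{basechange} asserts no uniqueness for $\gamma_{k'}$, so upgrading its mere existence to a $G$-compatibility statement requires unwinding the proof of the homotopy base change in Proposition~\ref{basechangepross} and carefully exploiting the fact that the source isomorphism lives in $\Ho(\pross)\downarrow k_\et$, not merely in $\Ho(\pross)$.
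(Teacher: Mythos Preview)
Your plan is workable but differs from the paper's argument, and there is one point you have not addressed.

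The paper does \emph{not} work level by level. Instead it forms the single pro-covering $(k'_\et)_{k'\subset\bar k}\to k_\et$ (running over all finite subextensions) and shows directly that the composite
\[
(X_{\bar k})_\et \;\longrightarrow\; \big((X_{k'})_\et\big)_{k'} \;\longrightarrow\; X_\et \times_{k_\et} (k'_\et)_{k'}
\]
is a weak equivalence in $\pross$ (checking $\pi_0$, $\pi_1$, and cohomology with local coefficients, then invoking the cohomological criterion). One application of Proposition~\ref{basechangepross} to this Galois pro-covering then gives the $\Gal_k$-equivariant cohomology isomorphism in one stroke. No colimit is taken.

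Your route---equivariance at each finite level plus a colimit---can be made to work, but it needs one more ingredient you did not mention: to take the colimit over $k'$ you must know that the isomorphisms $(\gamma_{k'})^\ast$ are compatible along the restriction maps $\rH^i_\et(X_{k'},A)\to\rH^i_\et(X_{k''},A)$ for $k'\subset k''$. Separate invocations of Lemma~\ref{basechange} will not give this, for exactly the non-uniqueness reason you flag. The fix is the same as for the $G$-equivariance: use the functoriality clause of Proposition~\ref{basechangepross} (``functorial in $W'$ with respect to morphisms of coverings of $W$'') once, after fixing a single model-level representative of $\gamma$ and a single homotopy $F$; then both the $\Gal(k'/k)$-equivariance at each level and the compatibility between levels follow simultaneously. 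In effect you end up re-deriving the paper's argument in disaggregated form. The paper's packaging is cleaner because the pro-covering absorbs both the Galois action and the tower into a single object.
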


\begin{proof}
Let $k'$ run through the finite subextensions of $k$ in $\bar k$. The projections $X\times_k \bar k \to X\times_k k'$ induce maps in $\pross$
\[
(X\times_k \bar k)_\et \stackrel{\alpha}{\lang} \big((X\times_k k')_\et\big)_{k'\subset \bar k} \stackrel{\beta}{\lang} \big(X_\et \times_{k_\et} k'_\et\big)_{k'\subset \bar k} \stackrel{\gamma}{\lang} X_\et \times_{k_\et} \big(k'_\et\big)_{k'\subset \bar k} \ .
\]
Because $X$ is of finite type over $k$, $\pi_0(X\times_k \bar k) \to \pi_0(X\times_k k')$ is bijective for any sufficiently large finite extension $k'$ of~$k$ in~$\bar k$. Hence the map $\alpha$ induces a bijection on $\pi_0$.
Because $X$ is quasi-compact, any \'{e}tale open cover of $X \times_k \bar k$ has a finite refinement which is defined over some finite extension $k'$ of~$k$. Hence, for any group $G$, any $G$-torsor over $X \times_k \bar k$ comes by base change from a $G$-torsor defined over some finite extension $k'$ of~$k$. Moreover,  this torsor is essentially unique modulo passage to some finite extension $k''$ of $k'$.  Therefore, for any choice of base point of $X\times_k\bar k$, the homomorphism on $\pi_1$ induced by $\alpha$ is an isomorphism. Compatibility of \'{e}tale cohomology with inverse limits (for systems of quasi-compact schemes with affine transition maps) shows that $\alpha$ also yields an isomorphism on the cohomology with values in local systems. Hence $\alpha$ is a weak equivalence by the cohomological criterion for weak equivalences \cite{Is} Prop.~18.4.

The map $\beta$ is an isomorphism by Lemma~\ref{lem:finiteetale}.
Finally, the map $\gamma$ is an isomorphism for trivial reasons. Therefore we obtain $G_k$-isomorphisms
\[
\rH^i(X_\et \times_{k_\et} \big(k'_\et\big)_{k'\subset \bar k},A) \cong \rH_\et^i(X\times_k \bar k, A)
\] for all $i$ and every abelian group $A$, which moreover are functorial in $A$. The same argument applies to  $Y$ and hence the statement of the proposition follows from Proposition~\ref{basechangepross} applied to the covering $(k'_\et)_{k'\subset \bar k}/k_\et$.
\end{proof}

\subsection{Pointed versus unpointed} \label{subsecpunktiert}

We usually consider \'{e}tale homotopy types of $k$-varieties as objects in the category of morphisms to $k_\et$ in the homotopy category of pro-spaces.
A subtle point is the relation between morphisms in the pointed and unpointed setting. We deduce from the results of Appendix~\ref{sec:pointedvsunpointed} that under suitable assumptions on the varieties this relation is essentially the same as in the classical topological situation -- at least if the base field $k$ has a  strongly center-free absolute Galois group.

\smallskip
Recall that a pro-finite group is called \defobjekt{strongly center-free} if every open subgroup has a trivial center. Important for our application is that  sub-$p$-adic fields have strongly center-free absolute Galois groups
by \cite{mochizuki:localpro-p} Lemma 15.8.

\begin{proposition} \label{prop:pointed_schemes2}
Let $X$ and $Y$ be  connected varieties over a field $k$, and assume that $Y$ is geometrically unibranch and geometrically connected.  Let $K/k$ be a separably closed extension field and let $\bp{x}: \Spec(K) \to X$ and $\bp{y}: \Spec(K)\to Y$ be geometric points (over $k$). Let $\bar k$ denote the separable closure of\/ $k$ in $K$.
\begin{enumeral}
\item
The map induced by forgetting the base points yields a surjection
\begin{equation} \label{pointedsurj}
\Mor_{\Ho(\pross_\ast) \downarrow (k_\et,\bar k_\et)}\big((X_\et,\bp{x}_\et),(Y_\et,\bp{y}_\et)\big)
\twoheadrightarrow
\Mor_{\Ho(\pross) \downarrow k_\et}(X_\et,Y_\et).
\end{equation}
In particular, if\/  $X_\et$ and $Y_\et$ are isomorphic in $\Ho(\pross) \downarrow k_\et$, then $(X_\et,\bp{x}_\et)$ and $(Y_\et,\bp{y}_\et)$ are isomorphic in $\Ho(\pross_*) \downarrow (k_\et,\bar k_\et)$.
\item
The map \eqref{pointedsurj} factors through the orbit space for the action of\/
$\pi_1^\et(Y_{\bar k},\bp{y})$ as defined in sections~\ref{sec:pointedvsunpointed1} and \ref{sec:pointedvsunpointed2} inducing a surjection
\begin{equation} \label{eq:pointedorbits}
\left(\Mor_{\Ho(\pross_\ast) \downarrow (k_\et,\bar k_\et)}\big((X_\et,\bp{x}_\et),(Y_\et,\bp{y}_\et)\big)\right)_{\pi_1^\et(Y_{\bar k},\bp{y})}
\twoheadrightarrow
\Mor_{\Ho(\pross) \downarrow k_\et}(X_\et,Y_\et).
\end{equation}
\item
If\/ $G_k$ is strongly center-free, then the map \eqref{eq:pointedorbits}  is a bijection.
\end{enumeral}
\end{proposition}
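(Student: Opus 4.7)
The plan is to deduce this proposition from the general pointed-versus-unpointed comparison results proved in Appendix~\ref{sec:pointedvsunpointed}, transported into the slice categories over $k_\et$ respectively $(k_\et,\bar k_\et)$. First I would verify the standing hypotheses: by Lemma~\ref{lem:finiteetale} and the preceding discussion, the base changes $(X_{\bar k})_\et$ and $(Y_{\bar k})_\et$ are the honest homotopy fibers of $X_\et \to k_\et$ and $Y_\et \to k_\et$ over $\bar k_\et$; since $Y$ is geometrically connected and geometrically unibranch, $(Y_{\bar k})_\et$ is connected with profinite fundamental group $\pi_1^\et(Y_{\bar k},\bp{y})$ by \cite{AM} Thm.~11.1. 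This is the input the appendix will require.

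For (i), I would argue surjectivity by lifting base points. Given an unpointed morphism $f \colon X_\et \to Y_\et$ in $\Ho(\pross)\downarrow k_\et$, compatibility with the augmentation to $k_\et$ forces the image of $\bp{x}_\et$ to land in the fiber $(Y_{\bar k})_\et$. Connectedness of this fiber provides a path in $\Ho(\pross_*)\downarrow(k_\et,\bar k_\et)$ from $f(\bp{x}_\et)$ to $\bp{y}_\et$; conjugating $f$ along this path produces a pointed representative. This is the slice-category incarnation of the general surjectivity statement for connected pro-spaces from the appendix.

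For (ii), two pointed representatives $\tilde f_1,\tilde f_2$ of the same unpointed $f$ differ by a self-equivalence of $(Y_\et,\bp{y}_\et)$ in the slice over $(k_\et,\bar k_\et)$ that becomes the identity upon forgetting base points. Such self-equivalences are exactly the inner automorphisms given by loops at $\bp{y}_\et$ lying over the trivial loop of $\bar k_\et$, i.e.\ by elements of $\pi_1^\et(Y_{\bar k},\bp{y})$; this yields the factorization through the quotient in~\eqref{eq:pointedorbits} and the asserted surjectivity.

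For (iii), the task is injectivity of the quotient map when $G_k$ is strongly center-free. If $\tilde f_1$ and $\tilde f_2$ become equal after forgetting base points, the ambiguity between them on $\pi_1$ is, a priori, conjugation by some $\gamma \in \pi_1^\et(Y,\bp{y})$. Compatibility with the augmentation to $G_k$ forces the image of $\gamma$ in $G_k$ to commute with the image of $\pi_1^\et(X,\bp{x})\to G_k$; passing to a cofinal system of open subgroups and invoking strong center-freeness of $G_k$ forces that image to be trivial, so $\gamma \in \pi_1^\et(Y_{\bar k},\bp{y})$ as desired. I expect the main obstacle to be this last step: one has to promote the classical ``center-free implies pointed equals unpointed'' argument from groups to the pro-simplicial homotopy category, simultaneously in a compatible way over all open subgroups of $G_k$. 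This is precisely the reason the appendix is needed and why \emph{strong} center-freeness (rather than plain center-freeness) enters the hypothesis.
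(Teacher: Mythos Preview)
Your overall strategy---reduce to the appendix results and then argue that strong center-freeness collapses the ambiguity to $\pi_1^\et(Y_{\bar k},\bp{y})$---is the paper's strategy, and your argument for~(c) is essentially the paper's. Two points, however, are not in order.

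First, you do not verify the actual hypotheses of Theorem~\ref{modpi1secondcase}. That theorem needs $Y_\et$ and $k_\et$ to be \emph{path-connected} (i.e.\ $\pi_0^\tp=\ast$) and $\pi_1^\tp(Y_\et,\bp{y}_\et)\to\pi_1^\tp(k_\et,\bar k_\et)$ to be \emph{surjective}. These are statements about topological homotopy groups of the total pro-spaces, not about the fibre $(Y_{\bar k})_\et$, and they do not follow from connectedness alone. The paper obtains them from Theorem~\ref{top=prof}, which in turn requires the pro-homotopy groups of $Y_\et$ and $k_\et$ to be pro-finite; this is exactly where the hypothesis that $Y$ is geometrically unibranch (so that \cite{AM}~Thm.~11.1 applies to $Y$, not just to $Y_{\bar k}$) is used. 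Your appeal to Lemma~\ref{lem:finiteetale} for the fibre over $\bar k_\et$ is also off: that lemma treats \emph{finite} \'etale base change, whereas $\bar k/k$ is infinite. Your informal picture in~(a) of ``the image of $\bp{x}_\et$ landing in the fibre and then choosing a path'' does not make rigorous sense in $\Ho(\pross)$; the paper instead invokes Theorem~\ref{modpi1secondcase} directly once its hypotheses are checked.

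Second, your explanation of~(b) overstates what is true. Theorem~\ref{modpi1secondcase}\ref{modpi1secondcase:b} says the fibres of the forgetful map are the orbits of the group $\Delta_{X,Y}$, the preimage in $\pi_1^\tp(Y_\et,\bp{y}_\et)$ of the stabilizer $S_X\subset G_k$ of the structure map $(X_\et,\bp{x}_\et)\to(k_\et,\bar k_\et)$. In general $\Delta_{X,Y}$ properly contains $\pi_1^\et(Y_{\bar k},\bp{y})=\ker(\pi_1^\tp(Y_\et)\to G_k)$; it is precisely the content of~(c) that they coincide when $G_k$ is strongly center-free. So in~(b) you should only say that $\pi_1^\et(Y_{\bar k},\bp{y})\subset\Delta_{X,Y}$, which yields the factorization and keeps surjectivity; your claim that the relevant self-equivalences are ``exactly'' those coming from $\pi_1^\et(Y_{\bar k},\bp{y})$ is what you must \emph{prove} in~(c), by showing $S_X=1$ via the center-free argument you correctly sketch.
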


\begin{proof} By Theorem~\ref{top=prof}, $Y_\et$ and $k_\et$ are path-connected and their topological fundamental groups are the underlying abstract groups of their pro-finite \'etale fundamental groups. Since $Y$ is geometrically connected, $\pi_1^\et(Y,\bar y)\to \pi_1^\et(k,K)$ is surjective. Hence the assumptions of
Theorem~\ref{modpi1secondcase} hold with $(B,b) = (k_\et,K_\et)$.
Let $\Delta_{X,Y}$ be the group defined in Theorem~\ref{modpi1secondcase} \ref{modpi1secondcase:a}.
Since
\[
\pi_1^\et(Y_{\bar k},\bp{y}) = \ker(\pi_1^\et(Y,\bar y)\to \pi_1^\et(k,K))
\]
is a subgroup of $\Delta_{X,Y}$, the assertions (a) and (b) follow from
Theorem~\ref{modpi1secondcase}. Moreover, in order to prove (c), it remains to show that
$\pi_1^\et(Y_{\bar k},\bp{y}) = \Delta_{X,Y}$.

Let $S_X \subset G_k=\pi^\tp_1(k_\et,K_\et)$ be the stabilizer of the map $(X_\et,\bp{x}_\et) \to (k_\et,K_\et)$ in $\Ho(\pross_*)$ with respect to the $G_k$-action as defined in section~\ref{sec:pointedvsunpointed1}.
Since by Lemma~\ref{lem:monodromy_acts_by_conjugation} the induced action on $\pi^\tp_1(k_\et,K_\et)=G_k$ is by conjugation, it follows that any $g \in S_X$ centralizes the image $U$ of  $\pi_1^\et(X,\bp{x}) \to \Gal_k$. But $U$ is open and thus $g$ lies in the center of the open subgroup $\langle U, g \rangle$. Because $G_k$ is strongly center-free we conclude $g=1$, and thus $S_X= 1$.

Since by definition $\Delta_{X,Y}$ is the preimage of $S_X$,  the claim $\pi_1^\et(Y_{\bar k},\bp{y}) = \Delta_{X,Y}$ follows.
\end{proof}

We keep the assumptions of Proposition~\ref{prop:pointed_schemes2}, in particular, $\pi_1^\et(Y,\bp{y})$ is pro-finite.
Therefore, any homomorphism $\ph:  \pi_1^\et(X,\bp{x})\to \pi_1^\et(Y,\bp{y})$  factors through the pro-finite completion $\pi_1^\et(X,\bp{x})^\wedge$ of the pro-group $\pi_1^\et(X,\bp{x})$. The pro-finite completion $\pi_1^\et(X,\bp{x})^\wedge$ is the \'{e}tale fundamental group of $X$ in $\bp{x}$ in the sense of \cite{sga1}, the dependence of the base point of which is well understood. Hence
\[
\OutHom_{\Gal_k}\big(\pi_1^\et(X,\bp{x}),\pi_1^\et(Y,\bp{y})\big):= \Hom_{\Gal_k}\big(\pi_1^\et(X,\bp{x}),\pi_1^\et(Y,\bp{y})\big)_{\pi_1^\et(Y_{\bar k},\bp{y})}
\]
is independent of the chosen base points (which we will omit from the notation). There is a natural map
\[
\Mor_k(X,Y)\lang \OutHom_{\Gal_k}(\pi_1^\et (X),\pi_1^\et(Y)).
\]
If $G_k$ is strongly center-free, this map factors through the unpointed homotopy category:

\begin{corollary}\label{inducedhom} Let $k$ be a field such that $G_k$ is strongly center-free. Then, under the assumptions of Proposition~\ref{prop:pointed_schemes2}, the natural map
\[
\Mor_{\Ho(\pross_*)\downarrow (k_\et,\bar k_\et)}((X_\et,\bp{x}_\et),(Y_\et,\bp{y}_\et)) \lang \Mor_{\Gal_k}(\pi_1^\et (X,\bp{x}),\pi_1^\et(Y,\bp{y}))
\]
induces a map
\[
\Mor_{\Ho(\pross)\downarrow k_\et}(X_\et,Y_\et) \lang \OutHom_{\Gal_k}(\pi_1^\et (X),\pi_1^\et(Y)).
\]
\end{corollary}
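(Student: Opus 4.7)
The plan is to use Proposition~\ref{prop:pointed_schemes2} (c) to identify the target of the desired unpointed map with an orbit space of pointed morphisms, and then to descend the pointed $\pi_1$-functor along compatible quotients. Concretely, under the strongly center-free hypothesis, Proposition~\ref{prop:pointed_schemes2} (c) produces a canonical bijection
\[
\Mor_{\Ho(\pross)\downarrow k_\et}(X_\et,Y_\et) \;\cong\;  \bigl(\Mor_{\Ho(\pross_\ast)\downarrow(k_\et,\bar k_\et)}\bigl((X_\et,\bp{x}_\et),(Y_\et,\bp{y}_\et)\bigr)\bigr)_{\pi_1^\et(Y_{\bar k},\bp{y})},
\]
and by definition of $\OutHom_{\Gal_k}$ the target of the map we want to construct is
\[
\OutHom_{\Gal_k}\bigl(\pi_1^\et(X),\pi_1^\et(Y)\bigr) \;=\;  \Hom_{\Gal_k}\bigl(\pi_1^\et(X,\bp{x}),\pi_1^\et(Y,\bp{y})\bigr)_{\pi_1^\et(Y_{\bar k},\bp{y})}.
\]
It therefore suffices to check that the pointed map in the statement is equivariant for the two $\pi_1^\et(Y_{\bar k},\bp{y})$-actions on source and target, so that it descends to a map between orbit spaces.

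The first step is thus to recall the action on each side. On the right-hand side, an element $\sigma\in \pi_1^\et(Y_{\bar k},\bp{y})$ acts on $\Hom_{\Gal_k}\bigl(\pi_1^\et(X,\bp{x}),\pi_1^\et(Y,\bp{y})\bigr)$ by post-composing with the inner automorphism $\Inn(\sigma)$ of $\pi_1^\et(Y,\bp{y})$, which is well-defined on the $\Gal_k$-compatible subset since $\pi_1^\et(Y_{\bar k},\bp{y})$ is normal in $\pi_1^\et(Y,\bp{y})$. On the left-hand side, the action is the one of Proposition~\ref{prop:pointed_schemes2} (b) coming from sections \ref{sec:pointedvsunpointed1}--\ref{sec:pointedvsunpointed2}, which may be described as pre- and post-composing pointed maps with the path-automorphisms used to change base point by $\sigma$.

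The key step is then to verify equivariance: applying $\pi_1$ to the change-of-base-point automorphism of $(Y_\et,\bp{y}_\et)$ associated to $\sigma\in \pi_1^\et(Y_{\bar k},\bp{y})$ yields the inner automorphism $\Inn(\sigma)$ of $\pi_1^\et(Y,\bp{y})$, and the analogous statement holds on the $X$-side; but on $X$ the action is trivial because $\sigma$ fixes $\bp{x}_\et$ (it lies in $\pi_1$ of $Y$, not $X$). This is the same classical fact that underlies Lemma~\ref{lem:monodromy_acts_by_conjugation} of the appendix, applied now to $\pi_1^\et(Y_{\bar k},\bp{y})\subset \pi_1^\et(Y,\bp{y})$ instead of to the Galois group. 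Hence the pointed-$\pi_1$ map intertwines the two $\pi_1^\et(Y_{\bar k},\bp{y})$-actions.

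The main obstacle is precisely this equivariance check: one must match the formal action on pointed morphisms coming from the appendix (defined via path-objects/monodromy in $\pross_*$) with the concrete action by inner automorphisms on fundamental groups. Once this identification is in hand, passing to orbits yields the desired map
\[
\Mor_{\Ho(\pross)\downarrow k_\et}(X_\et,Y_\et) \;\lang\; \OutHom_{\Gal_k}\bigl(\pi_1^\et(X),\pi_1^\et(Y)\bigr),
\]
and functoriality in $X$ and $Y$ follows from functoriality of both quotient constructions.
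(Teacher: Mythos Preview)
Your proposal is correct and follows essentially the same approach as the paper: form $\pi_1^\et(Y_{\bar k},\bp{y})$-orbits on both sides of the pointed map, invoke Lemma~\ref{lem:monodromy_acts_by_conjugation} to see that the action on the target is by inner automorphisms (hence the map is equivariant), and identify the orbit space on the left with the unpointed morphisms via Proposition~\ref{prop:pointed_schemes2}(c). The only remark is that your aside about an ``$X$-side action'' is superfluous and slightly muddled---the monodromy action here is purely on the target $(Y_\et,\bp{y}_\et)$ and there is no separate action on $X$ to worry about---but this does not affect the argument.
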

\begin{proof}
We form  orbits for the natural $\pi_1^\et(Y_{\bar k},\bp{y})$-action on both sides and use Lemma~\ref{lem:monodromy_acts_by_conjugation}.
\end{proof}
Keeping the assumptions, we denote by
\[
\Mor_{\Ho(\pross)\downarrow k_\et}^{\piopen}(X_\et,Y_\et)
\]
the subset of those $\gamma$ such that $\pi_1(\gamma)\in \OutHom_{\Gal_k}(\pi_1^\et (X),\pi_1^\et(Y))$ has open image.
We use a similar notation in the pointed case. The bijection of Proposition~\ref{prop:pointed_schemes2} respects $\pi_1$-open maps, hence we deduce the following.

\begin{corollary} \label{cor:pointed_schemes3} Let $k$ be a field such that $G_k$ is strongly center-free. Then,
under the assumptions of Proposition~\ref{prop:pointed_schemes2},  we obtain a bijection
\[
\left(\Mor_{\Ho(\pross_\ast) \downarrow (k_\et,K_\et)}^{\piopen}((X_\et,\bp{x}_\et),(Y_\et,\bp{y}_\et))\right)_{\pi_1^\et(Y_{\bar k},\bp{y})}
\xrightarrow{\sim}
\Mor_{\Ho(\pross) \downarrow k_\et}^{\piopen}(X_\et,Y_\et).
\]
\end{corollary}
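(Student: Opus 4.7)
The plan is to deduce the corollary as a direct restriction of the bijection provided by Proposition~\ref{prop:pointed_schemes2}(c) to the subsets cut out by the $\pi_1$-open condition on either side. Proposition~\ref{prop:pointed_schemes2}(c) already supplies, without any constraint on the induced $\pi_1$-map, a bijection
\[
\left(\Mor_{\Ho(\pross_\ast)\downarrow(k_\et,\bar k_\et)}\big((X_\et,\bp{x}_\et),(Y_\et,\bp{y}_\et)\big)\right)_{\pi_1^\et(Y_{\bar k},\bp{y})} \xrightarrow{\sim} \Mor_{\Ho(\pross)\downarrow k_\et}(X_\et,Y_\et).
\]
It therefore suffices to verify that the $\pi_1$-open condition is stable under the $\pi_1^\et(Y_{\bar k},\bp{y})$-action on pointed morphisms and that the forgetful-bijection identifies the $\pi_1$-open subsets on both sides.

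First, I would check the $\pi_1$-open condition is well-defined on the orbit space. By Lemma~\ref{lem:monodromy_acts_by_conjugation}, the $\pi_1^\et(Y_{\bar k},\bp{y})$-action on a pointed morphism $\gamma\colon (X_\et,\bp{x}_\et)\to(Y_\et,\bp{y}_\et)$ induces on $\pi_1(\gamma)\colon \pi_1^\et(X,\bp{x})\to\pi_1^\et(Y,\bp{y})$ composition with an inner automorphism of the target. Since conjugation in a profinite group maps open subgroups to open subgroups, the property that $\pi_1(\gamma)$ has open image depends only on the orbit of $\gamma$.

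Second, I would match this property with the $\pi_1$-open condition on the unpointed side. By Corollary~\ref{inducedhom}, the forgetful map sends a pointed $\gamma$ to an unpointed morphism whose associated outer homomorphism in $\OutHom_{\Gal_k}(\pi_1^\et(X),\pi_1^\et(Y))$ is precisely the class of $\pi_1(\gamma)$ modulo the inner action of $\pi_1^\et(Y,\bp{y})$. Hence the image of $\pi_1(\gamma)$ is determined by the outer class up to conjugation in $\pi_1^\et(Y,\bp{y})$, and openness of the image is therefore an intrinsic invariant of the outer class. Consequently the forgetful map preserves and reflects the $\pi_1$-open condition.

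Combining these two observations, the bijection of Proposition~\ref{prop:pointed_schemes2}(c) restricts to a bijection between the $\pi_1$-open subsets on either side, which is the assertion. I do not anticipate a genuine obstacle: once Proposition~\ref{prop:pointed_schemes2}(c), Corollary~\ref{inducedhom}, and Lemma~\ref{lem:monodromy_acts_by_conjugation} are in hand, the argument is purely formal and reduces to the stability of openness under conjugation.
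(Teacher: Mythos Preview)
Your proposal is correct and follows essentially the same approach as the paper, which simply states that the bijection of Proposition~\ref{prop:pointed_schemes2} respects $\pi_1$-open maps. You have spelled out in more detail why this is so, invoking Lemma~\ref{lem:monodromy_acts_by_conjugation} and Corollary~\ref{inducedhom} to justify that openness of the image is stable under conjugation and hence well-defined on both orbit spaces; this is exactly the content the paper leaves implicit.
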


\subsection{\texorpdfstring{Varieties of type \boldmath $K(\pi,1)$}{Varieties of type K(π,1)}}
We say that a geometrically pointed, connected locally noetherian scheme $(X,\bp{x})$ is of \defobjekt{type $K(\pi,1)$} if $\pi_n^\et(X,\bp{x})$ vanishes  for all $n\geq 2$.
This is equivalent to the statement that the classifying morphism
\[
 (X_\et,\bp{x}_\et) \longrightarrow B\pi_1(X_\et,\bp{x}_\et)
\]
is an isomorphism in $\Ho(\pross_*)$, cf.\ Appendix~\ref{sec:kpi1}.
If $X$ is geometrically unibranch, then the question whether $X$ is of type $K(\pi,1)$ does not depend on the chosen base point by
Corollary~\ref{cor:pointed-unpointed-abs}.

The following lemma provides basic examples of varieties of type $K(\pi,1)$.

\begin{lemma}\label{kpi1s}
\
\begin{enumeral}
\item
\label{lemitem:curveskpi1}
Let $k$ be a field and let $C$ be a connected smooth curve over $k$. If $C$ is affine or if $C$ has genus $g(C)>0$, then $C$ is of type $K(\pi,1)$.
\item
\label{lemitem:productsofkpi1}
Assume that $k$ has characteristic zero and let $X_i$, $i=1,\ldots,n$, be geometrically connected and geometrically unibranch varieties over $k$.  If all $X_i$ are of type $K(\pi,1)$, then so is their product.
\end{enumeral}
\end{lemma}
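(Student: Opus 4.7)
For part (i), the plan is to reduce to the case of an algebraically closed base field. After replacing $k$ with the algebraic closure of $k$ inside the function field $k(C)$---a step that does not change the scheme $C$ or $\pi_n^\et(C,\bar x)$---we may assume $C$ is geometrically connected. The structure map $C\to\Spec(k)$ then yields a long exact sequence of homotopy pro-groups with fibre $C_{\bar k}$; since $\Spec(k)_\et$ is a $K(G_k,1)$ and hence has trivial higher homotopy, this sequence gives $\pi_n^\et(C,\bar x)\cong \pi_n^\et(C_{\bar k},\bar x)$ for $n\geq 2$. Over an algebraically closed field, the vanishing of higher \'{e}tale homotopy groups is classical. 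For a smooth affine curve it follows from cohomological dimension $\leq 1$ with arbitrary torsion coefficients (combining the $\ell$-adic case with Artin--Schreier theory for the $p$-part) via the cohomological $K(\pi,1)$-criterion of \cite{AM}. For a smooth projective curve of genus $\geq 1$, the result follows in characteristic zero from the (profinitely completed) contractibility of the topological universal cover, and in positive characteristic by specialization of \'{e}tale fundamental groups from characteristic zero.

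For part (ii), by induction on the number of factors it is enough to treat the case of two varieties $X$ and $Y$. Applying the same fibration argument to the projection $X\times_k Y\to\Spec(k)$ reduces the problem to showing that $X_{\bar k}\times_{\bar k}Y_{\bar k}$ is of type $K(\pi,1)$, so we may assume $k$ is algebraically closed. Using the stability of \'{e}tale homotopy types under extensions of algebraically closed fields in characteristic zero, an embedding $\bar k\hookrightarrow\bC$ (Lefschetz principle) allows us to assume $k=\bC$. The Artin--Mazur comparison theorem \cite{AM} then identifies $X_\et$ and $Y_\et$ with the pro-finite completions of the singular homotopy types of $X(\bC)$ and $Y(\bC)$, which are topological $K(\pi,1)$ spaces by hypothesis. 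The equality $X(\bC)\times Y(\bC)=(X\times_{\bC} Y)(\bC)$ shows the product is again topologically $K(\pi_1\times\pi_2,1)$; passing to pro-finite completions, using that this operation sends $K(\Gamma,1)$ to $K(\hat\Gamma,1)$ and commutes with finite products of groups, we conclude that $(X\times_{\bC} Y)_\et$ is of type $K(\pi,1)$.

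The main obstacle is the existence of a well-behaved homotopy fibration sequence for the structure morphisms $X\to\Spec(k)$ in the non-proper setting used in both reductions. The corresponding short exact sequence for \'{e}tale fundamental groups is classical (SGA~1, Exp.~IX) under the geometric unibranch assumption, but the full long exact sequence on homotopy pro-groups requires invoking Friedlander's theory of \'{e}tale geometric fibrations \cite{Fr} applied to a smooth morphism. Once that input is available, the remainder of the argument is formal, the essential geometric content being concentrated in the classical $K(\pi,1)$ statement for curves and in the comparison with topological homotopy types in characteristic zero.
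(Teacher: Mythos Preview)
Your argument for part~(ii) has a genuine gap. After reducing to $k=\bC$ and invoking the Artin--Mazur comparison, you assert that $X(\bC)$ and $Y(\bC)$ ``are topological $K(\pi,1)$ spaces by hypothesis.'' But the hypothesis says only that $X_\et$ and $Y_\et$ --- i.e.\ the \emph{profinite completions} of the singular homotopy types --- are $K(\pi,1)$'s. Profinite completion can annihilate higher homotopy (any uniquely divisible piece, for example), so there is no reason $X(\bC)$ itself must be aspherical. Without that, the product $X(\bC)\times Y(\bC)$ need not be a topological $K(\pi_1\times\pi_2,1)$, and since profinite completion does not commute with products of \emph{spaces}, the final step ``pass to profinite completions'' does not go through. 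Your remark that completion commutes with finite products of \emph{groups} is correct but irrelevant here: you would need it for spaces.

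The paper avoids this entirely by working with the cohomological $K(\pi,1)$-criterion: $X$ is of type $K(\pi,1)$ if and only if for every finite \'etale $X'\to X$, every finite abelian group $A$, and every $i\ge 2$, each class in $\rH^i_\et(X',A)$ dies on a further \'etale cover. This criterion is visibly invariant under replacing $k$ by $\bar k$, and over $\bar k$ the product case follows from the K\"unneth formula for \'etale cohomology together with $\pi_1^\et(\prod_i X_i)\cong\prod_i\pi_1^\et(X_i)$. No comparison with complex topology is needed, and the argument works for geometrically unibranch (not necessarily smooth) varieties, where your appeal to Friedlander's smooth fibration theory would also require justification.

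Your sketch of part~(i) is broadly reasonable; the paper simply cites \cite{schmidt96}. The one vague spot is the positive-characteristic projective case: ``specialization of \'etale fundamental groups from characteristic zero'' does not by itself yield the $K(\pi,1)$ property --- one needs to combine lifting to characteristic zero with proper base change for cohomology, or argue directly via the cohomological criterion.
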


\begin{proof}
For statement \ref{lemitem:curveskpi1} see \cite{schmidt96} Prop.~15. For the second statement, let $X$ be a connected and geometrically unibranch variety over $k$.  Then the
cohomological criterion for weak equivalences (\cite{AM}, Thm.~4.3) shows that $X$ is of type $K(\pi,1)$ if and only if the following holds for any finite abelian group $A$ and any integer $i\ge 2$:

\begin{quote}
\emph{For every \'{e}tale covering $X'\to X$ and every $\alpha\in \rH^i_\et(X',A)$ there exists an \'{e}tale covering $X''\to X'$ such that the restriction of\/ $\alpha$ to $X''$ vanishes.}
\end{quote}

\noindent
In particular, $X$ is of type $K(\pi,1)$ if and only if $X\times_k \bar k$ is, where $\bar k$ is an algebraic closure of~$k$. Hence we may assume that $k$ is algebraically closed (and of characteristic zero).
The stated fact that $X_1\times_k \cdots \times_k X_n$ is of type $K(\pi,1)$, if all $X_i$ are, now easily follows from the fact that
($k$ algebraically closed and characteristic zero)
\[
\pi_1^\et(X_1\times_k \cdots \times_k X_n)\cong \pi_1^\et(X_1)\times \cdots \times  \pi_1^\et(X_n)
\]
(see \cite{sga1}, Exp.\ XIII, Prop.~4.6) and from the K\"{u}nneth-formula for \'{e}tale cohomology (\cite{sga4h} (Th.\ finitude),
Cor.~1.11).
\end{proof}

In characteristic zero, the $K(\pi,1)$-property is preserved in elementary fibrations. Recall that an elementary fibration $X\to Y$ is the complement in a smooth proper curve $\bar X \to Y$ with geometrically connected fibres of a divisor $D\subset \bar X$ which is finite and \'{e}tale over $Y$, and such that the fibres of $X \to Y$ are affine curves.

\begin{proposition} \label{prop:elemfibrationpreservesKpi1}
Let $f: X \to Y$ be an elementary fibration of smooth varieties over a field $k$ of characteristic zero. If\/ $Y$
is of type $K(\pi,1)$, then so is $X$.
\end{proposition}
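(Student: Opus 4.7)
The plan is to verify the cohomological criterion for $K(\pi,1)$ used in the proof of Lemma~\ref{kpi1s}: for every finite \'{e}tale cover $X' \to X$, every finite abelian group $A$, every integer $i \geq 2$, and every $\alpha \in \rH^i_\et(X',A)$, one must produce a finite \'{e}tale cover $X'' \to X'$ on which $\alpha$ restricts to zero. My first step will be a reduction to the case $X' = X$. Writing the elementary fibration as the complement of a relative divisor $D \subset \bar X$ (with $D \to Y$ finite \'{e}tale) in a smooth proper curve bundle $\bar f \colon \bar X \to Y$, I will apply Abhyankar's lemma, available since $\ch k = 0$, to obtain a finite \'{e}tale cover $Y' \to Y$ such that the normalisation of $\bar X \times_Y Y'$ in $X' \times_Y Y'$ is smooth and proper over $Y'$ with $X' \times_Y Y'$ its complement of a finite \'{e}tale relative divisor. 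Then $X' \times_Y Y' \to Y'$ is again an elementary fibration (after passing to connected components if necessary), and $Y'$ is of type $K(\pi,1)$ because finite \'{e}tale covers inherit this property from the same cohomological criterion. Since $X' \times_Y Y' \to X'$ is finite \'{e}tale, it suffices to kill the pullback of $\alpha$, so I may replace $(X, Y, f)$ by the new elementary fibration and assume $X' = X$.

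Next, I will analyse $\rH^i(X, A)$ via the Leray spectral sequence
\[
E_2^{p,q} = \rH^p_\et(Y, R^q f_* A) \Longrightarrow \rH^{p+q}_\et(X, A).
\]
The geometric fibres of $f$ are smooth affine curves, so $R^q f_* A = 0$ for $q \geq 2$. Since $f$ extends to the smooth proper $\bar f$ with $D \to Y$ finite \'{e}tale, proper smooth base change for $\bar f$ together with the open-closed distinguished triangle for $(\bar X, D)$ force $f_* A$ and $R^1 f_* A$ to be locally constant constructible sheaves on $Y$. The spectral sequence therefore equips $\rH^i(X, A)$ with a two-step filtration whose graded pieces are a quotient of $\rH^i(Y, f_* A)$ and a subgroup of $\rH^{i-1}(Y, R^1 f_* A)$.

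Finally, I will kill $\alpha$ in two stages. First I will kill the image of $\alpha$ in the subgroup of $\rH^{i-1}(Y, R^1 f_* A)$: for $i \geq 3$ this uses the $K(\pi,1)$ hypothesis on $Y$ applied to the locally constant sheaf $R^1 f_* A$, and for $i = 2$ it uses that any class in $\rH^1$ of a locally constant finite sheaf is trivialised by some finite \'{e}tale cover, as it classifies a torsor under a finite group. Pulling back along the resulting finite \'{e}tale $Y'' \to Y$, the remaining class lifts to $\rH^i(Y'', f''_* A)$ for the base-changed fibration $f'' \colon X \times_Y Y'' \to Y''$, where $f''_* A$ is again locally constant; since $Y''$ is also of type $K(\pi,1)$, a further finite \'{e}tale cover $Y''' \to Y''$ will annihilate this lift, and its pullback to $X$ is the desired $X'' \to X$. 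The main obstacle will be the opening reduction via Abhyankar's lemma: it requires a careful interplay between the compactification of $f$ and the ramification structure of $X' \to X$ over $D$, and is where the assumption $\ch k = 0$ enters in an essential manner.
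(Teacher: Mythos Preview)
Your proof is correct and takes a genuinely different route from the paper's. The paper argues with homotopy groups directly: it invokes the long exact homotopy sequence of \cite{Fr}, Thm.~11.5 for the tower of pointed Galois covers of $Y$ pulled back to $X$, identifies the resulting pro-homotopy groups with $\pi_n^\et(X,\bp{x})$ and $\pi_n^\et(Y,\bp{y})$ for $n\geq 2$, and since both the fibre $X_{\bar y}$ (an affine curve, hence $K(\pi,1)$ by Lemma~\ref{kpi1s}\ref{lemitem:curveskpi1}) and $Y$ have vanishing higher homotopy, the sequence yields $\pi_n^\et(X,\bp{x}) = 0$ for all $n \geq 2$ in one stroke. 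Your cohomological approach via the Leray spectral sequence is more elementary in that it avoids Friedlander's pro-homotopy fibre sequence, at the price of the opening reduction and the two-stage killing argument. One clarification on that reduction: Abhyankar's lemma (in its \'etale-local Kummer form in characteristic~$0$) is what guarantees that the normalisation $\bar X'$ of $\bar X$ in $X'$ is already smooth over $Y$ with boundary finite \'etale over $Y$, \emph{before} any base change; the finite \'etale cover $Y' \to Y$ you actually need is the Stein factorisation of $\bar X' \to Y$, which makes the fibres of $X' \to Y'$ geometrically connected. With that adjustment your reduction to $X' = X$ goes through exactly as intended.
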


\begin{proof}
Choose geometric points $\bp{x}$ and $\bp{y}$ of $X$ and $Y$ with $\bp{y}=f(\bp{x})$. By  assumption, the schemes $X$ and $Y$ and $X_{\bar y}$ are smooth, in particular, geometrically unibranch. Therefore, by \cite{AM}, Thm.~11.1, their \'{e}tale homotopy types are pro-finite.
Let $\cY = (Y(i))_{i \in I}$ be the inverse system  of the pointed (finite) \'{e}tale Galois coverings $Y(i)\to (Y,\bp{y})$ and let
$\cX = \big((X,\bp{x})\times_{(Y,\bp{y})} Y(i)\big)_{i\in I}$
 be its pull-back to $(X,\bp{x})$.  Then, by \cite{Fr}, Thm.~11.5 (with $L$ the set of all prime numbers), we have a long exact sequence
\[
\cdots \to \pi_n^\et(X_{\bar y}, \bp{x}) \to \pi_n^\et(\cX) \to \pi_n^\et(\cY) \to \pi_{n-1}^\et(X_{\bar y}, \bp{x}) \to \cdots.
\]
By Lemma~\ref{lem:finiteetale}, the natural maps
\[
\pi_n^\et(\cX) \to \pi_n^\et(X,\bp{x}), \quad \pi_n^\et(\cY) \to \pi_n^\et(Y,\bp{y})
\]
are isomorphisms for $n\geq 2$.
For $n=1$ we have $\pi_1^\et(\cY)=1$ and the short exact sequence
\[
1 \to \pi_1^\et(\cX) \to \pi_1^\et(X,\bp{x}) \to \pi_1^\et(Y,\bp{y})\to 1.
\]
We therefore obtain the long exact homotopy sequence
\[
\cdots \to \pi_n^\et(X_{\bar y}, \bp{x}) \to \pi_n^\et(X,\bp{x}) \to \pi_n^\et(Y,\bp{y})  \to \pi_{n-1}^\et(X_{\bar y}, \bp{x})  \to \cdots,
\]
showing the statement of the proposition in view of Lemma~\ref{kpi1s} \ref{lemitem:curveskpi1}.
\end{proof}

The following lemma shows that morphisms in the homotopy category to products of varieties of type $K(\pi,1)$ can be given component-wise. We will use this fact in an essential way in the case of morphisms to products of hyperbolic curves.
\begin{lemma}\label{prod_in_homcat}
Let $k$ be a field of characteristic zero such that $\Gal_k$ is strongly center-free. Let $X$ be a  variety over $k$ and let $Y_1$, $Y_2$ be geometrically connected, geometrically unibranch varieties of type $K(\pi,1)$ over~$k$. Then the natural map
\[
\Hom_{\Ho(\pross)\downarrow k_\et}(X_\et, (Y_1\times_k Y_2)_\et) \to \Hom_{\Ho(\pross)\downarrow k_\et}(X_\et, Y_{1,\et}) \times \Hom_{\Ho(\pross)\downarrow k_\et}(X_\et, Y_{2,\et})
\]
is bijective.
\end{lemma}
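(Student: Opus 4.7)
The plan is to reduce to the pointed case using the strongly center-free hypothesis, and then identify pointed Hom-sets into a product of $K(\pi,1)$-varieties with pairs of pointed Hom-sets by exploiting $Y_{1,\et} \simeq B\pi_1^\et(Y_1)$, etc.

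First I observe that by Lemma~\ref{kpi1s}\ref{lemitem:productsofkpi1}, the product $Y_1 \times_k Y_2$ is again geometrically connected, geometrically unibranch, and of type $K(\pi,1)$, so Proposition~\ref{prop:pointed_schemes2}(c) applies uniformly to all three $\Hom$-sets in question. Fix a geometric point $\bar x$ of $X$ and geometric points $\bar y_i$ of $Y_i$ lying over a common geometric point of $\Spec(k)$, and set $\bar y = (\bar y_1, \bar y_2)$. After choosing a separably closed $K/k$ containing these, taking $\pi_1^\et((Y_1\times_kY_2)_{\bar k},\bar y)$-orbits on the pointed side computes the unpointed Hom-set, and analogously for each $Y_i$. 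Using Lemma~\ref{kpi1s} (proof) together with SGA1~XIII.4.6 in characteristic zero, one has $\pi_1^\et((Y_1\times_kY_2)_{\bar k},\bar y) = \pi_1^\et(Y_{1,\bar k},\bar y_1)\times \pi_1^\et(Y_{2,\bar k},\bar y_2)$; so after reducing to the pointed case, it suffices to prove a bijection that is equivariant for the coordinatewise action of this product.

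For the pointed statement, since each of $Y_1,Y_2,Y_1\times_kY_2$ is $K(\pi,1)$ with pro-finite $\pi_1^\et$ (geometrical unibranchness), the classifying morphism gives isomorphisms $Y_{i,\et}\simeq B\pi_1^\et(Y_i,\bar y_i)$ and $(Y_1\times_kY_2)_\et\simeq B\pi_1^\et(Y_1\times_kY_2,\bar y)$ in $\Ho(\pross_*)\downarrow(k_\et,\bar k_\et)$ (see Appendix~\ref{sec:kpi1}). Pointed maps in this comma category into a $BG$ correspond to homomorphisms $\pi_1^\et(X,\bar x)\to G$ compatible with the augmentation to $\Gal_k$. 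Combining this with the five-lemma applied to the homotopy exact sequences for the geometrically connected $Y_i$ and $Y_1\times_kY_2$ gives the identification
\[
\pi_1^\et(Y_1\times_kY_2,\bar y) \;=\; \pi_1^\et(Y_1,\bar y_1)\times_{\Gal_k}\pi_1^\et(Y_2,\bar y_2),
\]
and by the universal property of the fibre product, a $\Gal_k$-compatible homomorphism into the right-hand side is the same data as a pair of $\Gal_k$-compatible homomorphisms into each factor. This establishes the pointed version of the bijection.

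The final step is to verify that the action of $\pi_1^\et((Y_1\times_kY_2)_{\bar k},\bar y)$ by inner conjugation on the target of pointed homomorphisms matches, under the above fibre-product decomposition, the product of the inner conjugation actions of the $\pi_1^\et(Y_{i,\bar k},\bar y_i)$ on each factor; this is essentially the statement that inner automorphisms of a direct product split coordinatewise, applied to the geometric kernels. Forming orbits therefore commutes with the product decomposition, and Proposition~\ref{prop:pointed_schemes2}(c) translates the bijection from the pointed setting into the desired bijection in $\Ho(\pross)\downarrow k_\et$. The main (and quite mild) obstacle is bookkeeping: matching base points, the Galois action, and the inner-conjugation action across all three identifications; once this compatibility is spelled out, the statement is formal.
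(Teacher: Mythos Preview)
Your proof is correct and follows essentially the same route as the paper's: reduce to the pointed setting via Proposition~\ref{prop:pointed_schemes2}(c), use the $K(\pi,1)$ property together with Proposition~\ref{prop:BGisKpi1} to translate pointed Hom-sets into $\Gal_k$-homomorphisms of fundamental groups, invoke the fibre-product decomposition $\pi_1^\et(Y_1\times_kY_2,\bp{y})\cong\pi_1^\et(Y_1,\bp{y}_1)\times_{\Gal_k}\pi_1^\et(Y_2,\bp{y}_2)$, and then pass to orbits for the coordinatewise action of the geometric $\pi_1$. The only small point you leave implicit is the preliminary reduction to $X$ connected, which the paper states explicitly.
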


\begin{proof}
We may assume that $X$ is connected.
Let $\bar k/k$ be an algebraic closure and choose geometric points of $\bp{x} \in X$, $\bp{y}_i \in Y_i$ over $\bar k /k$ and set $\bp{y} = (\bp{y}_1, \bp{y}_2) \in Y = Y_1 \times_k Y_2$.  Since $k$ has characteristic zero,
we have $\pi_1^\et(Y_{\bar k},\bp{y})\cong \pi_1^\et(Y_{1,\bar k},\bp{y}_1) \times \pi_1^\et(Y_{2,\bar k},\bp{y}_2)$, see \cite{sga1} Exp.\ XIII Prop.~4.6, hence
\[
\pi_1^\et(Y,\bp{y})\cong \pi_1^\et(Y_1,\bp{y}_1) \times_{\Gal_k}\pi_1^\et(Y_2,\bp{y}_2).
\]
We conclude that for every pro-group $\pi$ with augmentation $\pi \to \Gal_k$ the natural map
\[
\Hom_{\Gal_k}\big(\pi, \pi_1^\et(Y,\bp{y})\big) \lang \Hom_{\Gal_k}\big(\pi,\pi_1^\et(Y_1,\bp{y}_1)\big) \times \Hom_{\Gal_k}\big(\pi,\pi_1^\et(Y_2,\bp{y}_2)\big)
\]
is bijective. Hence, because $Y$ is also of type $K(\pi,1)$ by Lemma~\ref{kpi1s}~\ref{lemitem:productsofkpi1},
Proposition~\ref{prop:BGisKpi1} implies that
\begin{multline*}
\Hom_{\Ho(\pross_*) \downarrow(k_\et,\bar k_\et)}\big((X_\et,\bp{x}_\et),(Y_\et,\bp{y}_\et)\big) \lang\\
\Hom_{\Ho(\pross_*) \downarrow(k_\et,\bar k_\et)} \big((X_\et,\bp{x}_\et),(Y_{1,\et}, \bp{y}_{1,\et})\big) \times
\Hom_{\Ho(\pross_*) \downarrow(k_\et,\bar k_\et)} \big((X_\et,\bp{x}_\et),(Y_{2,\et}, \bp{y}_{2,\et})\big)
\end{multline*}
is bijective. Considering sets of orbits for the $\pi_1^\et(Y_{\bar k},\bp{y})\cong \pi_1^\et(Y_{1,\bar k},\bp{y}_1)\times \pi_1^\et(Y_{2,\bar k},\bp{y}_2)$-action on both sides, we obtain the result by Proposition~\ref{prop:pointed_schemes2}.
\end{proof}

\section{Homotopy theoretic formulation of Mochizuki's theorem}\label{sec:mochi}

For smooth, connected $k$-varieties $X$ and $Y$ let
\[
\Mor_k^\dom(X,Y)
\]
denote the set of dominant $k$-morphisms from $X$ to $Y$. Every dominant morphism $X\to Y$ defines a morphism $X_\et\to Y_\et$
that is $\pi_1$-open.  Similarly, for $G_k$-augmented pro-finite groups $\Gamma$ and $\Delta$ we let
\[
\Mor^{\open}_{\Gal_k}( \Gamma, \Delta)
\]
denote the set of continuous $G_k$-homomorphisms $\Gamma \to \Delta$ with open image. Mochizuki proved the following:

\begin{theorem}[\cite{mochizuki:localpro-p} Thm.~A] \label{mochi0}
Let $p$ be a prime number, $k$ a sub-$p$-adic field, $X$ a smooth,  connected $k$-variety and\/ $Y$  a smooth hyperbolic curve over $k$.
Then, for any choice of geometric base points, the natural map
\[
\Mor_k^\dom(X,Y) \longrightarrow \Mor^\open_{\Gal_k}(\pi_1^\et(X,\bp{x}),\pi_1^\et(Y,\bp{y}))_{\pi_1^\et(Y_{\bar k},\bp{y})}
\]
is bijective.
\end{theorem}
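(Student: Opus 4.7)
The statement is precisely Mochizuki's Theorem~A from \cite{mochizuki:localpro-p}, so my ``proof'' amounts to sketching his strategy and identifying where each hypothesis is used. The map splits into injectivity and surjectivity, which I would handle by very different techniques.

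For injectivity, given two dominant morphisms $f_1, f_2 \colon X \to Y$ inducing conjugate $G_k$-homomorphisms on $\pi_1^\et$, I would form the product $(f_1, f_2) \colon X \to Y \times_k Y$ and try to show it factors through the diagonal. By hypothesis, the image of $\pi_1^\et(X, \bp{x})$ in $\pi_1^\et(Y \times_k Y, \bp{y} \times \bp{y})$ lies, up to conjugation, inside the diagonal subgroup. Centralizer-triviality of the geometric fundamental group of a hyperbolic curve (a hyperbolic curve is ``slim'') then rigidifies this, forcing the scheme-theoretic image of $(f_1, f_2)$ to lie in the diagonal, whence $f_1 = f_2$.

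For surjectivity, given an open $G_k$-equivariant $\phi \colon \pi_1^\et(X, \bp{x}) \to \pi_1^\et(Y, \bp{y})$, my plan is to construct the graph $\Gamma_f \subset X \times_k Y$ of the sought morphism as a closed subscheme. From $\phi$ one extracts a section of the homotopy exact sequence
\[
1 \to \pi_1^\et((X \times_k Y)_{\bar k}) \to \pi_1^\et(X \times_k Y) \to \pi_1^\et(X, \bp{x}) \to 1,
\]
and one needs to show that this ``group-theoretic graph'' is algebraic, i.e., that it corresponds to an irreducible closed subscheme projecting dominantly and birationally to $X$. The hard part will be precisely this algebraicity step, which is the content of Mochizuki's Hom-conjecture. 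It is attacked via $p$-adic Hodge theory applied to fibrations by hyperbolic curves, using Faltings' theory of almost-\'etale extensions together with the rigid Hodge--Tate weight structure on $p$-adic Galois representations.

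The main obstacle is this last algebraicity step, and it is here that the sub-$p$-adic assumption on $k$ is indispensable: sub-$p$-adic Galois groups are strongly center-free (cf.\ \cite{mochizuki:localpro-p} Lemma~15.8) and, more importantly, carry Hodge--Tate weight information strong enough to rigidify continuous sections. Once injectivity and surjectivity are established in the pointed setting, the passage to the orbit set in the statement is formal, obtained by quotienting both sides by the inner action of $\pi_1^\et(Y_{\bar k}, \bp{y})$, which acts freely on the target because centralizers are trivial.
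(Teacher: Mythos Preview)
The paper gives no proof of this statement at all: Theorem~\ref{mochi0} is simply quoted as Mochizuki's Theorem~A with a bare citation to \cite{mochizuki:localpro-p}, and is then used as a black box in the proof of Theorem~\ref{mochi}. So there is nothing in the paper to compare your argument against.

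That said, your sketch goes well beyond what the paper does and is a broadly faithful outline of Mochizuki's own strategy. The injectivity argument via the diagonal in $Y \times_k Y$ together with slimness of $\pi_1^\et(Y_{\bar k})$ is standard and correct. For surjectivity, your description --- extracting from $\phi$ a section of the projection $\pi_1^\et(X \times_k Y) \to \pi_1^\et(X)$ and then invoking $p$-adic Hodge theory (Faltings' almost-\'etale theory, Hodge--Tate weights) to show this group-theoretic section is algebraic --- captures the architecture of Mochizuki's proof, though of course the actual execution in \cite{mochizuki:localpro-p} is long and technical. One small imprecision: the role of the sub-$p$-adic hypothesis is primarily to make the $p$-adic Hodge-theoretic machinery available, not merely to guarantee that $G_k$ is strongly center-free (the latter is a useful side fact, used elsewhere in the present paper, but is not the crux of Mochizuki's argument).
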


We reformulate Theorem~\ref{mochi0} in the language of homotopy theory as follows.

\begin{theorem} \label{mochi}
Let $p$ be a prime number, $k$ a sub-$p$-adic field, $X$ a smooth, connected $k$-variety and\/ $Y$  a smooth hyperbolic curve over $k$.
Then the natural map
\[
\Mor_k^\dom(X,Y) \longrightarrow \Mor_{\Ho(\pross) \downarrow k_\et}^{\piopen}(X_\et,Y_\et)
\]
is bijective.
\end{theorem}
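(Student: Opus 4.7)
The plan is to deduce the statement from Mochizuki's original Theorem~\ref{mochi0} by passing through pointed homotopy classes and translating them into $G_k$-equivariant homomorphisms of fundamental groups. The sub-$p$-adic hypothesis enters twice: once in Theorem~\ref{mochi0} itself, and once via the fact that sub-$p$-adic fields have strongly center-free absolute Galois groups (cited from \cite{mochizuki:localpro-p}), which enables the pointed/unpointed comparison developed in Section~\ref{basicprop}.

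First I would fix geometric base points $\bp{x}$ on $X$ and $\bp{y}$ on $Y$ over a common separably closed extension $K/k$. Because $Y$ is a smooth hyperbolic curve it is either affine or of genus $\geq 1$, so Lemma~\ref{kpi1s}\ref{lemitem:curveskpi1} gives that $Y$ is of type $K(\pi,1)$. The classifying map $(Y_\et,\bp{y}_\et)\to B\pi_1^\et(Y,\bp{y})$ is therefore an isomorphism in $\Ho(\pross_*)\downarrow(k_\et,\bar k_\et)$, and hence by Proposition~\ref{prop:BGisKpi1} (applied to the augmented pro-finite group $\pi_1^\et(Y,\bp{y})\to G_k$) there is a natural bijection
\[
\Mor_{\Ho(\pross_*)\downarrow(k_\et,\bar k_\et)}\bigl((X_\et,\bp{x}_\et),(Y_\et,\bp{y}_\et)\bigr) \;\xrightarrow{\sim}\; \Hom_{G_k}\bigl(\pi_1^\et(X,\bp{x}),\pi_1^\et(Y,\bp{y})\bigr),
\]
and this bijection restricts to the subset of $\pi_1$-open morphisms on each side.

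Next I would invoke Corollary~\ref{cor:pointed_schemes3}, whose hypotheses are satisfied since $Y$ is smooth, geometrically connected, and $G_k$ is strongly center-free. Taking $\pi_1^\et(Y_{\bar k},\bp{y})$-orbits of both sides of the bijection above and combining with Corollary~\ref{cor:pointed_schemes3} yields a natural bijection
\[
\Mor^{\piopen}_{\Ho(\pross)\downarrow k_\et}(X_\et,Y_\et) \;\xrightarrow{\sim}\; \Mor^{\open}_{G_k}\bigl(\pi_1^\et(X,\bp{x}),\pi_1^\et(Y,\bp{y})\bigr)_{\pi_1^\et(Y_{\bar k},\bp{y})}.
\]
Finally, Theorem~\ref{mochi0} identifies the right-hand side with $\Mor_k^\dom(X,Y)$. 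By construction the composite bijection agrees with the map in the statement of Theorem~\ref{mochi}, which proves the claim.

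The main obstacle is the pointed-versus-unpointed comparison: a priori the forgetful map from pointed to unpointed homotopy classes need not be a bijection onto $\pi_1^\et(Y_{\bar k},\bp{y})$-orbits for pro-spaces. The arguments in Section~\ref{basicprop} (and the appendix) show that for \'etale homotopy types of geometrically unibranch varieties over a field with strongly center-free Galois group this pathology is absent, and hence the two formulations are really equivalent. Beyond that, the proof is essentially a bookkeeping assembly of Mochizuki's theorem, the $K(\pi,1)$-property of hyperbolic curves, and the classifying-space formalism for pro-finite groups.
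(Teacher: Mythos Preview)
Your proof is correct and follows essentially the same approach as the paper: both arguments assemble Mochizuki's Theorem~\ref{mochi0}, the $K(\pi,1)$-property of hyperbolic curves (Lemma~\ref{kpi1s}), the classifying-space bijection of Proposition~\ref{prop:BGisKpi1}, and the pointed/unpointed comparison (Proposition~\ref{prop:pointed_schemes2}(c), equivalently Corollary~\ref{cor:pointed_schemes3}) enabled by the strongly center-free Galois group of a sub-$p$-adic field. The paper organizes these ingredients into a single commutative square whose top edge is Theorem~\ref{mochi0}, but the logical content is the same as your chain of bijections.
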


\begin{proof}
We choose an algebraic closure $\bar{k}$ of $k$,
and further choose base points $\bp{x} \in X(\bar{k})$ and $\bp{y} \in Y(\bar{k})$ compatible with the base point $\bar k_\et$ of $k_\et$.
In the commutative diagram
\[
\begin{tikzcd}[column sep=small]
\Hom_k^\dom(X,Y)  \arrow{d} \arrow{rr} && \Hom^\open_{\Gal_k}(\pi_1^\et(X,\bp{x}),\pi_1^\et(Y,\bp{y}))_{\pi_1^\et(Y_{\bar k},\bp{y})}
 \\
\Hom_{\Ho(\pross) \downarrow  k_\et}^{\piopen}(X_\et,Y_\et)
 \arrow{rr}   &&  \Hom_{\Ho(\pross_\ast) \downarrow (k_\et, \bar k_\et)}^{\piopen}\big((X_\et,\bp{x}_\et),(Y_\et,\bp{y}_\et)\big)_{\pi_1^\et(Y_{\bar k},\bp{y})}
   \arrow[swap]{u}{\pi_1(-)}
\end{tikzcd}
\]
the bottom arrow is the inverse to the bijection of Proposition~\ref{prop:pointed_schemes2}. The arrow marked $\pi_1(-)$ is a bijection
because  hyperbolic curves are of type $K(\pi,1)$ by Lemma~\ref{kpi1s}~\ref{lemitem:curveskpi1}, hence Proposition~\ref{prop:BGisKpi1} applies. The restrictions to ($\pi_1$-)open maps are compatible.

Now the claim of the theorem is equivalent to the bijectivity of the top arrow, which is the statement of Theorem~\ref{mochi0}.
\end{proof}

\section{The retraction}\label{sec:proofofmain}

In this section we prove Theorem~\ref{main}.
\subsection{Counting points in closed fibres}
\label{sec:countingpoints}
We consider a normal noetherian scheme $S$ with geometric generic point  $\bar \eta$ over the generic point $\eta$. Let $s \in S$ be a closed point with finite residue field $\bF_s = \kappa(s)$ of cardinality $N(s) = |\bF_s|$, and let $\bar s$ be a geometric point over $s$. A choice of an \'{e}tale path between $\bar s$ and $\bar \eta$ leads to a homomorphism
\[
\Gal_{\kappa(s)} = \pi_1^\et(s,\bar s) \to \pi_1^\et(S,\bar s) \cong \pi_1^\et(S,\bar \eta),
\]
by means of which the arithmetic Frobenius $\ph_s \in \Gal_{\kappa(s)}$ acts on $\pi_1^\et(S,\bar \eta)$-modules.

\begin{proposition} \label{prop:countpointsclosedfibre}
In the above situation, let $\ell$ be a prime number invertible on $S$.
Let $\bar f: \bar X \to S$  be a proper, smooth, equidimensional morphism of relative dimension $d$, and let  $X \subseteq \bar X$ be the open complement of a strict normal crossing divisor $D  = \bigcup_{\alpha=1}^n D_\alpha \inj \bar X$ relative to $S$ with $D_\alpha/S$ smooth relative divisors for all\/ $\alpha =1, \ldots, n$.

Then the $\Gal_{\kappa(\eta)}$-action on $\rH^i(X_{\bar \eta},\bQ_\ell)$ factors through $\pi_1^\et(S,\bar \eta)$ for all $i \geq 0$, and the resulting action of\/ $\ph_s$ computes the number of\/ $\bF_s$-rational points of the fibre $X_s$ by
\[
|X_s(\bF_s)| = N(s)^d \cdot \sum_{i = 0}^{2d}  (-1)^i \tr \big(\ph_s | \rH^i_\et(X_{\bar \eta},\bQ_\ell)\big).
\]
\end{proposition}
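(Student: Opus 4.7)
The plan is to combine the Grothendieck--Lefschetz trace formula for $X_s/\bF_s$ with a specialization isomorphism coming from lisseness of $R^i f_\ast \bQ_\ell$ on $S$, and to absorb the factor $N(s)^d$ via Poincar\'e duality. Writing $f \colon X \to S$ for the restriction of $\bar f$, the key geometric input is that the pair $(\bar X, D)/S$ is a \emph{relative} smooth pair, so the cohomology of fibres is locally constant in a precise sense.

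The technical core will be to show that each $R^i f_\ast \bQ_\ell$ is a lisse $\bQ_\ell$-sheaf on $S$. By proper smooth base change, both $R^q \bar f_\ast \bQ_\ell$ for the proper smooth $\bar f \colon \bar X \to S$ and $R^q(f|_{D_I})_\ast \bQ_\ell$ for each closed stratum $D_I := \bigcap_{\alpha \in I} D_\alpha$ (which is smooth and proper over $S$, by transversality of the $D_\alpha$'s) are lisse. The classical purity computation of the higher direct images $R^q j_\ast \bQ_\ell$ along the open immersion $j \colon X \inj \bar X$ expresses them as extensions of pushforwards from the $D_I$ with Tate twists $\bQ_\ell(-q)$; plugging into the Leray spectral sequence for $\bar f \circ j = f$ writes $R^i f_\ast \bQ_\ell$ as an iterated extension of lisse sheaves, hence lisse. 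Lisseness gives the factorisation of the $\Gal_{\kappa(\eta)}$-action through $\pi_1^\et(S, \bar\eta)$ and a specialisation isomorphism $\rH^i(X_{\bar\eta}, \bQ_\ell) \cong \rH^i(X_{\bar s}, \bQ_\ell)$ intertwining the natural Galois actions of $\ph_s$ on both sides (via the chosen \'etale path).

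Granted that, the Grothendieck--Lefschetz trace formula for the smooth $d$-dimensional variety $X_s/\bF_s$ (with $\ell$ invertible on $S$) gives
\[
|X_s(\bF_s)| = \sum_{i=0}^{2d} (-1)^i \tr\big(F_s \mid \rH^i_c(X_{\bar s}, \bQ_\ell)\big),
\]
where $F_s = \ph_s^{-1}$ is the geometric Frobenius. Poincar\'e duality for the smooth $X_{\bar s}$ provides $\rH^i_c(X_{\bar s}, \bQ_\ell) \cong \rH^{2d-i}(X_{\bar s}, \bQ_\ell(d))^\vee$. Using $\tr(F_s \mid V^\vee) = \tr(F_s^{-1} \mid V) = \tr(\ph_s \mid V)$ and the fact that $\ph_s$ acts on $\bQ_\ell(d)$ as multiplication by $N(s)^d$, each term becomes $N(s)^d \cdot \tr(\ph_s \mid \rH^{2d-i}(X_{\bar s}, \bQ_\ell))$. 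Re-indexing by $j = 2d - i$ and using $(-1)^{2d-j} = (-1)^j$, together with the specialisation isomorphism from the previous step, yields the asserted formula.

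The main obstacle is the lisseness assertion: proper smooth base change does not apply directly to the non-proper morphism $f \colon X \to S$, so one must genuinely use the relative SNCD structure and the smoothness of every intersection $D_I$ over $S$ to reduce to the proper case via the Leray/purity spectral sequence. Everything else is either the standard trace formula or a routine bookkeeping of Tate twists under Poincar\'e duality.
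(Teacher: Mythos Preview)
Your overall strategy matches the paper's: purity for $\R^b j_\ast \bQ_\ell$ plus the Leray spectral sequence for $f = \bar f j$ to get lisseness of the $\R^i f_\ast \bQ_\ell$, then the Lefschetz trace formula and Poincar\'e duality.

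There is, however, a gap in your specialisation step. Lisseness of $\R^i f_\ast \bQ_\ell$ alone does \emph{not} yield $\rH^i_\et(X_{\bar\eta},\bQ_\ell) \cong \rH^i_\et(X_{\bar s},\bQ_\ell)$: the stalk $(\R^i f_\ast\bQ_\ell)_{\bar s}$ computes the cohomology of $X$ pulled back to the strict henselisation of $S$ at $\bar s$, not of the fibre $X_{\bar s}$, and since $f$ is not proper you cannot invoke proper base change to identify them. The paper sidesteps this by passing to compact supports: relative Poincar\'e duality converts lisseness of the $\R^i f_\ast\bQ_\ell$ into lisseness of the $\R^i f_!\,\bQ_\ell$, and for $\R f_!$ proper base change holds unconditionally, so $(\R^i f_!\,\bQ_\ell)_{\bar s} = \rH^i_{\rc}(X_{\bar s},\bQ_\ell)$ and cospecialisation gives the $\Gal_{\kappa(s)}$-equivariant isomorphism $\rH^i_{\rc}(X_{\bar s},\bQ_\ell) \cong \rH^i_{\rc}(X_{\bar\eta},\bQ_\ell)$; fibrewise Poincar\'e duality then transports this back to ordinary cohomology. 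Your route can be salvaged by arguing that the entire Leray spectral sequence commutes with base change in $S$ (both the purity description of $\R^b j_\ast\bQ_\ell$ and proper smooth base change for $\bar f$ do), but that sentence is missing and is exactly where the non-properness of $f$ bites.
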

\begin{proof}
We denote by $j: X \inj \bar X$ the open immersion. For a finite subset $J \subseteq \{1,\ldots,n\}$, we set
\[
\begin{tikzcd}
\bar f_J : D_J := \bigcap_{\alpha \in J} D_\alpha  \arrow[hook]{r}{i_J} & \bar X \arrow{r}{\bar f} & S
\end{tikzcd}
\]
which is proper and smooth. By  \cite{workofgabber}, Expos\'{e} XVI, Cor.\ 3.1.3, there are isomorphisms for all $b \geq 0$
\[
\R^b j_\ast \bQ_\ell = \bigwedge^b \big(\R^1 j_\ast \bQ_\ell \big) \cong \bigoplus_{|J| = b} i_{J,\ast} \bQ_\ell(-b).
\]
Therefore all sheaves occurring in the $\rE_2$-page of the Leray spectral sequence for $f = \bar f  j : X \to S$
\[
\rE_2^{ab} =  \R^a \bar f_\ast \big( \R^b j_\ast \bQ_\ell \big) \cong \bigoplus_{|J| = b} \R^a \bar f_{J,\ast} \bQ_\ell(-b)
\Longrightarrow \R^{a+b} f_\ast(\bQ_\ell)
\]
are smooth \'{e}tale sheaves on $S$ by \cite{sga4h} (Arcata V) Thm.~3.1.
Hence also the limit terms $\rE_\infty^{ab}$ and, furthermore, all $\R^i f_\ast \bQ_\ell$ are smooth $\bQ_\ell$-sheaves on $S$.
Relative Poincar\'{e} duality shows that also the sheaves $\R^i f_{!} \bQ_\ell$ are smooth sheaves on $S$. Hence proper base change and cospecialisation yield a $\Gal_{\kappa(s)} \to \pi_1^\et(S,\bar \eta)$-equivariant isomorphism
\[
\rH^i_{\rc}(X_{\bar s},\bQ_\ell)  = \big(\R^i f_{!} \bQ_\ell\big)_{\bar s} \xrightarrow{\sim} \big(\R^i f_{!} \bQ_\ell\big)_{\bar \eta} = \rH^i_{\rc}(X_{\bar \eta},\bQ_\ell).
\]
Poincar\'{e}-duality yields a $\Gal_{\kappa(s)}$-equivariant perfect pairing
\[
\rH^i_\et(X_{\bar s},\bQ_\ell) \times \rH^{2d-i}_{\rc}(X_{\bar s},\bQ_\ell) \lang \bQ_\ell(-d),
\]
and similarly for $X_{\bar \eta}$, which leads to $\Gal_{\kappa(s)}$-module isomorphisms
\[
 \rH^i_\et(X_{\bar s},\bQ_\ell) \cong \Hom\big(\rH^{2d - i}_{\rc}(X_{\bar s},\bQ_\ell),\bQ_\ell(-d)\big) \cong \Hom\big(\rH^{2d - i}_{\rc}(X_{\bar \eta},\bQ_\ell),\bQ_\ell(-d)\big) \cong  \rH^i_\et(X_{\bar \eta},\bQ_\ell).
\]
The arithmetic Frobenius $\ph_s$ acts by  transport of structure on \'{e}tale cohomology (with compact support)
as the inverse of the action by the geometric Frobenius $\Frob_s$.
The Lefschetz trace formula for the number of rational points on $X_s$ therefore implies
\begin{align*}
|X_s(\bF_s)| & = \sum_{i = 0}^{2d} (-1)^i \tr\big(\Frob_s | \rH^i_{\rc}(X_{\bar s},\bQ_\ell) \big)  \\
& =  N(s)^d \cdot \sum_{i = 0}^{2d} (-1)^i \tr\big(\ph_s | \rH^i_\et(X_{\bar s},\bQ_\ell)\big)
 = N(s)^d \cdot \sum_{i = 0}^{2d}  (-1)^i \tr\big(\ph_s | \rH^i_\et(X_{\bar \eta},\bQ_\ell)\big). \qedhere
\end{align*}
\end{proof}

\subsection{Factor-dominant embeddings}

Let $Y$ be a locally closed subscheme in a product of smooth, geometrically connected curves $C_i$ over~$k$
\[
\iota : Y \inj W=C_1\times \cdots \times C_n.
\]
We denote the projections by $p_i: W \to C_i$.
\begin{definition}
We say that $\iota$ is \defobjekt{factor-dominant} if $p_i \iota$ is dominant for all~$i=1,\ldots, n$.
\end{definition}

Assume that $Y$ is geometrically connected and geometrically reduced over $k$. Then the composition $p_i  \iota : Y \to C_i$   is either dominant or constant. If $p_i \iota$ is constant,  the image of $Y$ in $C_i$ is a $k$-rational point. Hence we can remove all factors $C_i$ with $p_i \iota$ constant from $W$ to obtain a factor-dominant immersion.

\begin{proposition} \label{prop:geometryfromMochizuki}
Let $p$ be a prime number and let $k$ be a sub-$p$-adic field.

Let\/  $\iota:  Y \hookrightarrow W=C_1\times \cdots \times C_n$ be a factor-dominant immersion of a geometrically connected and geometrically unibranch variety\/ $Y$ over $k$ into a product of hyperbolic curves $C_i$ and let $X$ be a smooth connected variety over $k$.

Then, for any $\pi_1$-open morphism $\gamma: X_\et \to Y_\et $
in $\Ho(\pross) \downarrow k_\et$  there is a unique morphism of\/ $k$-varieties $f : X \to W$
such that the following diagram commutes in $\Ho(\pross) \downarrow k_\et$:
\[
\begin{tikzcd}[column sep=small]
& X_\et \arrow{dl}[swap]{\gamma}\arrow[dashrightarrow]{dr}{f_\et} & \\
Y_\et \arrow{rr}[swap]{\iota_\et} & &  W_\et .
\end{tikzcd}
\]
\end{proposition}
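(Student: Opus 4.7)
The plan is to reduce to Theorem~\ref{mochi} by projecting to each factor $C_i$, and then reassemble the resulting curve-morphisms using the product decomposition available for maps into $W_\et$.

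First, for each $i = 1, \ldots, n$, I would form the composite
\[
\gamma_i := p_{i,\et} \circ \iota_\et \circ \gamma : X_\et \longrightarrow C_{i,\et}
\]
in $\Ho(\pross) \downarrow k_\et$. Since $\iota$ is factor-dominant, $p_i\iota : Y \to C_i$ is dominant and therefore induces a $\pi_1$-open map on étale realizations; combined with the $\pi_1$-openness of $\gamma$, this makes $\gamma_i$ itself $\pi_1$-open. Applying Theorem~\ref{mochi} to $X$ and the hyperbolic curve $C_i$ would then yield a unique dominant $k$-morphism $f_i : X \to C_i$ with $f_{i,\et} = \gamma_i$ in $\Ho(\pross) \downarrow k_\et$. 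I assemble these into
\[
f = (f_1, \ldots, f_n) : X \longrightarrow W.
\]

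The central step is to verify that $f_\et = \iota_\et \circ \gamma$. By construction, post-composition of either side with $p_{i,\et}$ gives $f_{i,\et} = \gamma_i$. Since sub-$p$-adic fields have characteristic zero and strongly center-free absolute Galois group, and each $C_i$ is geometrically connected, geometrically unibranch and of type $K(\pi,1)$ by Lemma~\ref{kpi1s}\ref{lemitem:curveskpi1}, an induction on $n$ using Lemma~\ref{prod_in_homcat} (with Lemma~\ref{kpi1s}\ref{lemitem:productsofkpi1} guaranteeing that partial products $C_j \times_k \cdots \times_k C_n$ are still geometrically unibranch and of type $K(\pi,1)$) shows that morphisms from $X_\et$ to $W_\et$ in $\Ho(\pross) \downarrow k_\et$ are determined by their composites with the $p_{i,\et}$. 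This forces $f_\et = \iota_\et \circ \gamma$.

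Uniqueness proceeds along the same decomposition. If $f' : X \to W$ also satisfies $f'_\et = \iota_\et \circ \gamma$, then each $(p_i f')_\et = \gamma_i$ is $\pi_1$-open, so $p_i f'$ is dominant, and the unicity statement in Theorem~\ref{mochi} yields $p_i f' = f_i$ for every $i$, whence $f' = f$. The only mild obstacle I anticipate is the bookkeeping required to iterate Lemma~\ref{prod_in_homcat} from binary to $n$-fold products; this is routine because the three relevant hypotheses (geometric connectedness, geometric unibranchness, and $K(\pi,1)$) are stable under products in characteristic zero, and the strongly center-free hypothesis is imposed on $\Gal_k$ only, independently of $n$.
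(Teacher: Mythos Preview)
Your proof is correct and follows essentially the same approach as the paper: project to each factor, invoke Theorem~\ref{mochi} to obtain the $f_i$, reassemble, and use an inductive application of Lemma~\ref{prod_in_homcat} to identify $f_\et$ with $\iota_\et\gamma$. You are in fact more explicit than the paper in justifying the $\pi_1$-openness of the composites $\gamma_i$ and in spelling out the uniqueness argument; the only item you omit is the trivial degenerate case $n=0$.
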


\begin{proof}
In the degenerate case $n=0$, we have $Y = \Spec(k) = W$ and the structure map $f: X \to \Spec(k)$ is the required morphism.  Otherwise, by Theorem~\ref{mochi}, there are unique $k$-morphisms $f_i: X \to C_i$, for $i = 1, \ldots, n$,  with
\[
(f_i)_\et =  (p_i \iota)_\et  \gamma
\]
in $\Ho(\pross)\downarrow k_\et$. These together define a $k$-morphism $f = (f_i) : X  \to W$. An inductive application of Lemma~\ref{prod_in_homcat} shows that  $f_\et = \iota_\et  \gamma$ in $\Ho(\pross)\downarrow k_\et$. The uniqueness of such an $f$ is obvious.
\end{proof}

\subsection{The key argument}

Next we show that the morphism constructed in Proposition~\ref{prop:geometryfromMochizuki} factors through the subvariety $Y \inj W$  if $\gamma$ is an isomorphism in $\Ho(\pross)$.

\begin{proposition} \label{aufX}
Let $k$ be a finitely generated extension field of\/ $\Q$.  Let
$\iota:  Y \hookrightarrow W=C_1\times \cdots \times C_n$ be a smooth, locally closed subscheme in a product of hyperbolic curves over $k$, $X$ a smooth variety over $k$ and $f:  X \to W$ a $k$-morphism.

Assume there exists $\gamma \in \Isom_{\Ho(\pross)}(X_\et,Y_\et)$ such that the diagram
\begin{equation*} \label{eq:firstdiagram}
\begin{tikzcd}[column sep=small]
& X_\et \arrow{dl}[swap]{\gamma}\arrow{dr}{f_\et} & \\
Y_\et \arrow{rr}[swap]{\iota_\et} & & W_\et
\end{tikzcd}
\end{equation*}
commutes in $\Ho(\pross)$. Then $f$ factors through $\iota$,  i.e., there exists a unique
morphism $g:X \to Y$ such that the diagram
\[
\begin{tikzcd}[column sep=small]
& X \arrow[dashrightarrow]{dl}[swap]{g}\arrow{dr}{f} & \\
Y \arrow[hook]{rr}[swap]{\iota} & & W
\end{tikzcd}
\]
commutes.
\end{proposition}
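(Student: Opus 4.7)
The strategy is to spread out the set-up over $\Z$ and compare $\bF_q$-point counts in closed fibres via Proposition~\ref{prop:countpointsclosedfibre}. First, $\gamma$ is automatically a morphism in $\Ho(\pross)\downarrow k_\et$: composing $f_\et = \iota_\et \circ \gamma$ with $W_\et \to k_\et$ identifies the structure morphism $X_\et \to k_\et$ with $(Y_\et \to k_\et) \circ \gamma$. Proposition~\ref{unpointedisom} therefore produces, for every prime $\ell$ and every $i\geq 0$, a $\Gal_k$-equivariant isomorphism $\rH^i_\et(X_{\bar k}, \bQ_\ell) \cong \rH^i_\et(Y_{\bar k}, \bQ_\ell)$. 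Since $X$ and $Y$ are smooth, looking at the top non-vanishing cohomological degree forces $\dim X = \dim Y =: d$.

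Next, choose an integral normal $\Z$-scheme $S$ of finite type with function field $k$. After shrinking $S$, the varieties $X, Y, W$ extend to smooth $S$-schemes $\cX, \cY, \cW$, the morphism $f$ extends to $\cX \to \cW$, and $\iota$ extends to a locally closed immersion $\cY \hookrightarrow \cW$. Using Hironaka in characteristic zero and spreading out, we may additionally fix smooth proper compactifications $\bar\cX, \bar\cY$ over $S$ whose complements are relative strict normal crossing divisors. Smoothness together with these compactifications makes the relevant higher direct images of $\bQ_\ell$ (with and without compact support) lisse sheaves on $S$; consequently the $\Gal_k$-actions on their stalks at $\bar\eta$ factor through $\pi_1^\et(S, \bar\eta)$. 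Applying Proposition~\ref{prop:countpointsclosedfibre} to both $\cX \to S$ and $\cY \to S$ and comparing Frobenius traces then yields, for every closed point $s \in S$ and every $q = N(s)^r$, the equality
\[
|X_s(\bF_q)| = |Y_s(\bF_q)|.
\]

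It remains to deduce $f(X) \subseteq Y$ set-theoretically; since $\iota$ is a locally closed immersion and $X$ is reduced, this will yield the desired unique factorisation $g : X \to Y$. This is the main obstacle of the proof: the equality above alone is too coarse, as it says nothing about the image $f_s(X_s)$ inside $W_s$. The essential further input is the full isomorphism $\gamma$, not merely its cohomological shadow. Via Lemma~\ref{basechange}, we can base-change $\gamma$ along any finite \'etale cover $W' \to W$, producing analogous equalities
\[
|(X \times_W W')_s(\bF_q)| = |(Y \times_W W')_s(\bF_q)|.
\]
Running this over a sufficiently rich system of such covers---in the spirit of Tamagawa~\cite{Ta}---one can detect, for each $w \in W_s(\bF_q)$, whether $w \in \iota_s(\cY_s)$, by comparing the cardinalities of the fibres of $X \times_W W' \to W'$ and $Y \times_W W' \to W'$ over $\bF_q$-points of $W'$ above $w$; this pins down $f_s(X_s) \subseteq \iota_s(\cY_s)$ for sufficiently many closed points $s$, from which $f(X) \subseteq Y$ in the generic fibre follows by spreading out the constructible locus $f^{-1}(W \setminus Y)$ and using Zariski-density of closed points. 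It is precisely here that the hypothesis ``$k$ finitely generated over $\Q$'' enters crucially, as it guarantees closed points of $S$ with arbitrarily large residue characteristic.
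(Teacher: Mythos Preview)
Your outline follows the paper's strategy---spread out, compare Frobenius traces via Proposition~\ref{prop:countpointsclosedfibre} and Proposition~\ref{unpointedisom}, then refine by passing to \'etale covers of $W$ using Lemma~\ref{basechange}---but two essential steps are missing, and without them the argument does not go through.

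First, you never reduce to the case where $W$ is \emph{proper}. The paper begins by replacing each $C_i$ by an \'etale cover (over a finite extension of $k$) whose smooth compactification has genus $\geq 2$, and then by that compactification itself, so that $W$ becomes a product of proper curves of genus $\geq 2$. This is not cosmetic: when you later pass to $X' = X\times_W W'$ for an \'etale cover $W'\to W$, you need $X'/S$ to again admit a nice relative compactification in order to apply Proposition~\ref{prop:countpointsclosedfibre}. This works precisely because the cover $W'\to W$ is a cover of a \emph{proper} $S$-scheme, so that the given compactification $\bar\cX \to \cW$ pulls back to a compactification of $\cX'$. In your setup, with $W$ possibly affine, there is no reason for $X'$ to have such a compactification, and your asserted equality $|(X\times_W W')_s(\bF_q)| = |(Y\times_W W')_s(\bF_q)|$ is unjustified.

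Second, ``a sufficiently rich system of covers in the spirit of Tamagawa'' hides the entire content of the argument. The precise input is \cite{Ta}~Cor.~2.10: for a smooth proper curve of genus $\geq 1$ over a finite field, the decomposition subgroups of distinct rational points are non-conjugate in $\pi_1^\et$. This is what lets one build, for a given $x\in \cX_s(\bF)$, an \'etale cover $W'_s\to W_s$ whose $\bF$-points lie \emph{only} over $f(x)$. That cover lives over the special fibre and must then be extended to a Nisnevich neighbourhood of $s$ via Artin approximation before Lemma~\ref{basechange} can be invoked on the generic fibre. Once this is done, the point-count equality forces $Y'_s(\bF)\neq\varnothing$, hence $f(x)\in Y_s(\bF)$; running over all closed points of the Jacobson scheme $\cX$ gives the factorisation directly. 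Your phrasing (``detect whether $w\in\iota_s(\cY_s)$ by comparing cardinalities of fibres'') does not capture this mechanism and leaves the crucial implication unexplained.
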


\begin{remark}
The first diagram in Proposition~\ref{aufX} remains commutative after replacing $\gamma$ by $g_\et$, however, we do not claim  that $\gamma=g_\et$ in $\Ho(\pross)$.
\end{remark}

\begin{proof}[Proof of Proposition~\ref{aufX}] The question whether $f$ factors through $Y$ can be checked after base change to an \'{e}tale covering of $W$. Note that the assumption on the existence of $\gamma$ is preserved by such a base change due to Lemma~\ref{basechange}.

Since the $C_i$ are hyperbolic, there are  hyperbolic curves $C'_i$ over a common finite separable extension $k'/k$ with smooth compactification of genus $\geq 2$ and \'{e}tale coverings $C'_i \to C_i \times_k k' \to C_i$. With $W' = C'_1 \times_{k'} \ldots \times_{k'} C'_n$ we base change by the natural product covering $W' \to W$ and replace $k$ by $k'$.
We therefore may assume that all $C_i$ have compactifications of genus $\ge 2$ and then replace the $C_i$ by their smooth compactifications.

Since $k$ has characteristic $0$, we find smooth compactifications $\bar X$ and $\bar Y$ of $X$ and of $Y$ such that the boundaries are simple normal crossing divisors and $f$ and $\iota$ extend to morphisms from $\bar X$ and $\bar Y$ to $W$. Now choose a regular connected scheme $S$ of finite type over $\bZ$ with function field $k$ such that the whole situation extends over $S$. Then everything follows from Proposition~\ref{prop:tamagawa-argument} below.
\end{proof}

\begin{proposition}\label{prop:tamagawa-argument}
Let $S$ be a regular connected scheme of finite type over $\bZ$ with generic point $\eta \in S$, and let $C_i\to S$,  for $i=1,\ldots,n$, be proper smooth relative curves with geometrically connected fibres of genus $\ge 1$.

Let $\iota : Y \hookrightarrow W =C_1 \times_S \ldots \times_S C_n$ be a locally closed subscheme which is smooth as an $S$-scheme, and let  $f: X \to W$ be an $S$-morphism with $X/S$ smooth. Furthermore, assume that $X \to  S$ and $Y \to S$ have nice relative compactifications as used in Proposition~\ref{prop:countpointsclosedfibre}, which even map to $W$.

Assume there exists $\gamma\in \Isom_{\Ho(\pross)}((X_\eta)_\et,(Y_\eta)_\et)$ such that $(f_\eta)_\et = (\iota_\eta)_\et  \gamma$ in $\Ho(\pross)$. Then $f$ factors through\/ $Y$:
\[
\begin{tikzcd}[column sep=small]
& X \arrow[dashrightarrow]{dl}[swap]{g}\arrow{dr}{f} & \\
Y \arrow[hook,swap]{rr}{\iota} & & W.
\end{tikzcd}
\]
\end{proposition}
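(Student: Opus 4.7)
The strategy is a counting-points argument in the style of Tamagawa~\cite{Ta}; write $k = \kappa(\eta)$. Applying Proposition~\ref{unpointedisom} to $\gamma$ produces a $\Gal_k$-equivariant isomorphism $\rH^i(X_{\bar\eta},\bQ_\ell) \cong \rH^i(Y_{\bar\eta},\bQ_\ell)$ for every $i \geq 0$ and every prime $\ell$. Under the smoothness and nice-compactification hypotheses for $X \to S$ and $Y \to S$, these cohomology groups underlie smooth $\bQ_\ell$-sheaves on $S$ (as in the proof of Proposition~\ref{prop:countpointsclosedfibre}), so the $\Gal_k$-action factors through $\pi_1^\et(S,\bar\eta)$ and the isomorphism is automatically $\pi_1^\et(S,\bar\eta)$-equivariant. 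Comparing traces of arithmetic Frobenius at each closed point $s \in S$ via Proposition~\ref{prop:countpointsclosedfibre} gives
\[
|X_s(\bF_{s^n})| = |Y_s(\bF_{s^n})| \qquad \text{for every closed } s \in S \text{ and every } n \geq 1.
\]

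Next, introduce the schematic preimage $Z := f^{-1}(Y) = X \times_W Y$, a locally closed subscheme of $X$ via the projection $\pi_X : Z \hookrightarrow X$. By its universal property, $f$ factors through $\iota$ if and only if $\pi_X$ is an isomorphism, in which case the unique factorization $g : X \to Y$ is $\pi_Y \circ \pi_X^{-1}$ (uniqueness because $\iota$ is a monomorphism). Moreover, at each closed point $s \in S$ and each $n \geq 1$,
\[
Z_s(\bF_{s^n}) = \{\, x \in X_s(\bF_{s^n}) \,:\, f_s(x) \in Y_s\,\} \ \subseteq\ X_s(\bF_{s^n}),
\]
so the task reduces to upgrading this inclusion to equality for a rich enough family of pairs $(s,n)$ and then spreading out.

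To achieve the equality, the plan is to show that, after possibly shrinking $S$, the map $Z \to S$ is itself smooth with a nice relative compactification (extracted, for instance, from $\bar X \times_W \bar Y$ after suitable modification), and that $\rH^i(Z_{\bar\eta},\bQ_\ell) \cong \rH^i(X_{\bar\eta},\bQ_\ell)$ as $\pi_1^\et(S,\bar\eta)$-modules. The underlying heuristic is that $(Z_\eta)_\et$ realises the homotopy pullback of $(f_\eta)_\et$ along $(\iota_\eta)_\et$ in $\Ho(\pross)$; because the hypothesis $(f_\eta)_\et = (\iota_\eta)_\et \circ \gamma$ with $\gamma$ invertible makes that homotopy pullback weakly equivalent to $(X_\eta)_\et$, the desired cohomological equality ensues. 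Applying Proposition~\ref{prop:countpointsclosedfibre} to $Z \to S$ then yields $|Z_s(\bF_{s^n})| = |X_s(\bF_{s^n})|$, forcing $Z_s(\bF_{s^n}) = X_s(\bF_{s^n})$ as subsets. Zariski density of $\bigcup_{n\geq 1} X_s(\bF_{s^n})$ in $X_s$, valid for closed points $s$ in a dense subset of $S$, forces $Z_s = X_s$ on these fibres, and a standard spreading-out argument using the constructibility of $\pi_X$ concludes $Z = X$ globally.

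The main obstacle is the homotopy-theoretic identification $(Z_\eta)_\et \simeq (X_\eta)_\et$. Étale homotopy types do not commute with arbitrary scheme-theoretic fibre products, and $\iota$ is not a fibration in $\pross$, so the passage from the schematic fibre product $Z_\eta$ to a genuine homotopy pullback demands more than a formal appeal. I expect the resolution to come either from a proper/smooth base change result adapted to locally closed immersions between smooth $S$-schemes with SNCD compactifications, or, should that route resist, from a direct Leray spectral sequence computation of $\rH^\ast(Z_{\bar\eta},\bQ_\ell)$ that exploits the explicit geometry of $Y$ inside the product of hyperbolic curves $W = C_1 \times_S \cdots \times_S C_n$.
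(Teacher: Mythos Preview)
Your proposal contains a genuine gap that you yourself flag but do not resolve: the identification $(Z_\eta)_\et \simeq (X_\eta)_\et$. This fails on two counts. First, the \'etale homotopy type of the scheme-theoretic fibre product $Z_\eta = X_\eta \times_{W_\eta} Y_\eta$ has no reason to agree with the homotopy pullback $(X_\eta)_\et \times^h_{(W_\eta)_\et} (Y_\eta)_\et$, since $\iota$ is a locally closed immersion rather than an \'etale map, and the only comparison result available (Lemma~\ref{lem:finiteetale}) is for finite \'etale base change. Second, even granting that step, the homotopy pullback of $(\iota_\eta)_\et$ along itself is not $(Y_\eta)_\et$: the strict pullback $Y_\eta \times_{W_\eta} Y_\eta$ equals $Y_\eta$ because $\iota$ is a monomorphism of schemes, but $(\iota_\eta)_\et$ is not a fibration in $\pross$, so strict and homotopy pullbacks differ. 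Your heuristic ``$f \simeq \iota\gamma$ makes the pullback equivalent to $X$'' only computes a strict pullback after deforming $f$ by a homotopy, and that is not a homotopy invariant. A secondary problem is that $Z/S$ need not be smooth (no transversality is given), so Proposition~\ref{prop:countpointsclosedfibre} does not apply to it.

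The paper avoids the fibre-product comparison entirely. Rather than relating $Z = X \times_W Y$ to $X$, it fixes a single closed point $x \in X(\bF)$ and constructs a \emph{finite \'etale} covering $W' \to W$ (over a Nisnevich neighbourhood of the image $s \in S$) with the property that $f(x)$ is the only $\bF$-point of $W_s$ lying under an $\bF$-point of $W'_s$. This is where Tamagawa's input \cite{Ta}~Cor.~2.10 enters: distinct rational points on a curve of genus $\geq 1$ over a finite field have non-conjugate decomposition groups, so one can build such a cover factor by factor on $W_s = C_{1,s} \times \cdots \times C_{n,s}$. Because $W' \to W$ is finite \'etale, Lemma~\ref{basechange} applies and yields an isomorphism $(X'_\eta)_\et \cong (Y'_\eta)_\et$ for the pullbacks $X' = X \times_W W'$ and $Y' = Y \times_W W'$; moreover $X'$ and $Y'$ inherit nice compactifications from those of $X$ and $Y$. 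The point-count comparison via Propositions~\ref{prop:countpointsclosedfibre} and~\ref{unpointedisom} then gives $Y'_s(\bF) \neq \varnothing$ (since $X'_s(\bF)$ contains a lift of $x$), and by construction any $\bF$-point of $Y'_s$ sits over $f(x)$ in $W_s$, forcing $f(x) \in Y_s(\bF)$. The replacement of the non-\'etale immersion $Y \hookrightarrow W$ by a finite \'etale cover of $W$ isolating $f(x)$ is precisely the idea your outline is missing.
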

\begin{proof}
The assertion is trivial in the degenerate case $n=0$, since then $Y = W = S$. We therefore may assume $n \geq 1$.
The morphism $f$ factors through $Y$ if and only if the immersion $\iota_X: Y \times_W X \inj X$ is an isomorphism.
Because $X$ is reduced and $\iota_X$ is an immersion, it suffices to show that $\iota_X$ is surjective (on points). Since $X$ is of finite type over $\bZ$ and therefore Jacobson by \cite{EGA4} Cor.~10.4.6, the map $\iota_X$ is surjective if all closed points are in the image.

By \cite{EGA4} Lem.~10.4.11.1, every closed point of $X$ has a finite residue field. We therefore have to show that for every finite field $\bF$ (more precisely $\Spec(\bF) \to S$) and every point $x \in X(\bF)$ we have
\[
f(x) \in \im\big(\iota: Y(\bF) \to W(\bF)\big).
\]
Let $s \in S$ be the closed point of the base under $x$.  The residue field $\kappa(s)$ is a subfield of $\bF$. Hence there exists an \'{e}tale morphism $S'\to S$ with $S'$ connected and a point $s'\in S'$ mapping to $s$ and with $\kappa(s')=\bF$.
The question whether $f(x)$ lies in the image of $\iota: Y(\bF) \to W(\bF)$ can be decided after base change along $S'\to S$.

We denote by $f' : X' \to W'$ and $\iota' : Y' \inj W'$ the base change of $f$ and $\iota$ along $S' \to S$.
Since the function field $k'$ of $S'$ is a finite separable extension of $k=\kappa(\eta)$,
the existence of  $\gamma: (X_\eta)_\et \stackrel{\sim}{\to} (Y_\eta)_\et$ in $\Ho(\pross)$ with $(f_\eta)_\et = (\iota_\eta)_\et \gamma$ in $\Ho(\pross)$ implies the existence of  an isomorphism
\[
\gamma': (X_{k'})_\et \to  (Y_{k'})_\et
\]
in $\Ho(\pross)$ with $(f'_\eta)_\et = (\iota'_\eta)_\et  \gamma'$ in $\Ho(\pross)$ by Lemma~\ref{basechange}. Hence the base change along $S'\to S$ preserves all assumptions and we may assume that $\kappa(s)=\bF$ without loss of generality.

Let $f(x)=(w_1,\ldots,w_n)$, $w_i \in C_{i,s}(\F)$. Fix a decomposition group $H_i$ of $w_i$ in $\pi_1^\et(C_{i,s})$ and let $w_i'$ be another rational point of $C_{i,s}$.
By \cite{Ta} Cor.~2.10, the decomposition subgroups in $\pi_1^\et(C_{i,s})$ of different rational points of $C_{i,s}$ are not conjugate. Hence we find an open subgroup $U_i(w_i') \subset \pi_1^\et(C_{i,s})$ such that $U_i(w_i')$ contains $H_i$ but none of the (conjugate) decomposition groups of $w_i'$. Let $U_i$ be the intersection of the groups $U_i(w_i')$, where $w_i'$ runs through the finitely many rational points of $C_{i,s}$ different from $w_i$. Then the connected \'{e}tale covering of $C_{i,s}$ corresponding to $U_i$ has a rational point over $w_i$ but no rational point lying over any other rational point of $C_{i,s}$.
Taking the product of these coverings,
we find an \'{e}tale covering $h:W_s' \to W_s$  such that
\[
h(W'_s(\F))=\{ f(x)\}.
\]
Hence, if the base change $Y'_s = Y_s \times_{W_s} W'_s$ has an $\F$-rational point, then $f(x) \in Y_s(\bF)$.

\smallskip

By \cite{Art} Thm.~3.1, we can find a Nisnevich neighbourhood $(T,t) \to (S,s)$ and an \'{e}tale covering $h_T: W' \to W_T$ extending $h: W'_s \to W_s$.
Then, by the same arguments as above, the  base change along $T\to S$ preserves our assumptions
and we may assume that $S=T$.

Let us denote by $X' \to X$ (resp.\ $Y' \to Y$) the induced \'{e}tale coverings by means of $f$ (resp.\ $\iota$) from $W' \to W$. We still have an isomorphism $\gamma' : (X'_\eta)_\et \to (Y'_\eta)_\et$ in $\Ho(\pross)$ with
$(f'_\eta)_\et = (\iota'_\eta)_\et  \gamma'$ in $\Ho(\pross)$
by Lemma~\ref{basechange} and $X' \to S$ (resp.\ $Y' \to S$) keep having a nice relative compactification, because the respective \'{e}tale coverings are induced by an \'{e}tale covering of the proper scheme $W/S$. Note that because of $(f'_\eta)_\et = (\iota'_\eta)_\et  \gamma'$ the morphism $\gamma'$ lies in fact in $\Ho(\pross) \downarrow k'_\et$. 

\smallskip

Let $\bF$ have $q$ elements and let $d_X$ be the relative dimension of $X/S$, and $d_Y$ the relative dimension of $Y/S$.
Propositions~\ref{prop:countpointsclosedfibre} and \ref{unpointedisom} show that
\begin{align*}
|Y'_s(\bF)| & = q^{d_Y} \cdot \sum_{i = 0}^{\infty}  (-1)^i \tr\big(\ph_s | \rH^i(Y'_{\bar \eta},\bQ_\ell) \big)  = q^{d_Y} \cdot \sum_{i = 0}^{\infty}  (-1)^i \tr\big(\ph_s | \rH^i(X'_{\bar \eta},\bQ_\ell) \big) = q^{d_Y-d_X} \cdot |X'_s(\bF)|.
\end{align*}
Since  $X'_s(\bF)= X_s(\bF) \times_{W_s(\bF)} W'_s(\bF)$ is non-empty, we obtain  $Y'_s(\bF)\neq \varnothing$.
\end{proof}

\subsection{Independence, functoriality and retraction}

We now complete the proof of Theorem~\ref{main}. We state a more precise version of it.

\begin{theorem} \label{thm:main-retraction}
Let $k$ be a finitely generated field extension of\/ $\Q$ and let $X$ and $Y$ be smooth geometrically connected varieties over $k$ which can be embedded as locally closed subschemes into a product of hyperbolic curves over~$k$. Then the natural map
\[
(-)_\et : \Isom_k(X,Y) \longrightarrow \Isom_{\Ho(\pross) \downarrow k_\et}(X_\et,Y_\et)
\]
admits a unique functorial retraction
\[
r : \Isom_{\Ho(\pross) \downarrow k_\et}(X_\et,Y_\et) \longrightarrow  \Isom_k(X,Y),
\]
with the following properties:
\begin{enumeral}
\item
\label{thmitem:retraction}
\emph{Retraction:} for all $k$-isomorphisms $g: X \stackrel{\sim}{\to} Y$ we have
\begin{equation*} \label{eq:retractproperty}
r(g_\et) = g.
\end{equation*}

\item
\label{thmitem:r_functorial}
\emph{Functoriality:} Let $Z$ be a further geometrically connected variety over $k$ which can be embedded as a locally closed subscheme into a product of hyperbolic curves over~$k$.
Then for isomorphisms $\gamma_1: X_\et \stackrel{\sim}{\to} Y_\et$ and $\gamma_2 : Y_\et \stackrel{\sim}{\to} Z_\et$ in $\Ho(\pross) \downarrow k_\et$ we have
\begin{equation*} \label{eq:functorialityofretraction}
r(\gamma_2 \gamma_1) = r(\gamma_2)r(\gamma_1).
\end{equation*}

\item
\label{thmitem:r_and_map_to_hyperbolic_curve}
\emph{Maps to hyperbolic curves:}
If\/ $\gamma: X_\et\stackrel{\sim}{\to} Y_\et$ is an isomorphism in $\Ho(\pross) \downarrow k_\et$ and $h: Y \to C$ is a dominant $k$-morphism to a hyperbolic curve $C$, then
\[
h_\et  r(\gamma)_\et = h_\et  \gamma
\]
in $\Ho(\pross) \downarrow k_\et$.
\end{enumeral}
\end{theorem}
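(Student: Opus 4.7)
The plan is to construct $r$ explicitly by reducing to Mochizuki's theorem via embeddings into products of hyperbolic curves, and then verify properties~(i)--(iii) and uniqueness in turn. Given $\gamma : X_\et \riso Y_\et$ in $\Ho(\pross) \downarrow k_\et$, I would fix a locally closed immersion $\iota_Y : Y \hookrightarrow W_Y = C_1 \times_k \cdots \times_k C_n$ into a product of hyperbolic curves. Using that $Y$ is geometrically connected, I would delete those factors $C_i$ on which $Y$ projects to a closed point, so that $\iota_Y$ becomes factor-dominant. Since $\gamma$ is invertible and hence $\pi_1$-open, Proposition~\ref{prop:geometryfromMochizuki} supplies a \emph{unique} $k$-morphism $f : X \to W_Y$ with $f_\et = (\iota_Y)_\et \circ \gamma$, and Proposition~\ref{aufX} factors it uniquely as $f = \iota_Y \circ g$ for a $k$-morphism $g : X \to Y$. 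I set $r(\gamma) := g$.

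Independence of $\iota_Y$ and property~(iii) both fall out of one trick. For two choices of factor-dominant immersions $\iota_Y, \iota'_Y$, or when adding a dominant $h : Y \to C$ to a hyperbolic curve to form the new factor-dominant immersion $(\iota_Y, h) : Y \hookrightarrow W_Y \times C$, the uniqueness of $f$ in Proposition~\ref{prop:geometryfromMochizuki} forces the constructions for the combined and the separate embeddings to agree on $g$. Reading off the projection to $C$ in the combined case yields $(h \circ r(\gamma))_\et = h_\et \circ \gamma$, i.e.\ property~(iii).

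Property~(i) is immediate from uniqueness: if $\gamma = g_\et$ for a $k$-isomorphism $g$, then $\iota_Y \circ g$ is itself the $f$ of Proposition~\ref{prop:geometryfromMochizuki}, so $r(g_\et) = g$. For functoriality~(ii), fix a factor-dominant $\iota_Z : Z \hookrightarrow \prod_i C_i$ and set $g_j := r(\gamma_j)$. Applying (iii) to $\gamma_2$ with each $p_i \circ \iota_Z$ gives $(p_i \iota_Z g_2)_\et = (p_i \iota_Z)_\et \circ \gamma_2$, and in particular forces $p_i \iota_Z g_2 : Y \to C_i$ to be dominant: $Y$ is geometrically connected, and any non-dominant $k$-morphism to a hyperbolic curve factors through a closed point whose \'{e}tale realisation cannot be $\pi_1$-open. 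Applying (iii) to $\gamma_1$ with $h = p_i \iota_Z g_2$ then yields $(p_i \iota_Z g_2 g_1)_\et = (p_i \iota_Z)_\et \circ \gamma_2 \circ \gamma_1$. Since $k$ is finitely generated over $\Q$, its absolute Galois group is strongly center-free, so Lemma~\ref{prod_in_homcat} assembles these componentwise equalities into $(\iota_Z g_2 g_1)_\et = (\iota_Z)_\et \circ \gamma_2 \circ \gamma_1$; uniqueness of $f$ and of the factorisation through $\iota_Z$ then forces $g_2 g_1 = r(\gamma_2 \circ \gamma_1)$. Combining (i) and (ii) with $\gamma^{-1} \circ \gamma$ and $\gamma \circ \gamma^{-1}$ shows that $r(\gamma)$ is a $k$-isomorphism with inverse $r(\gamma^{-1})$. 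Uniqueness of $r$ follows by the same componentwise argument: any functorial retraction satisfying~(iii) is completely pinned down by its compositions with the $p_i \iota_Y$.

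The main obstacle to anticipate is that in general $r(\gamma)_\et \neq \gamma$ in $\Ho(\pross) \downarrow k_\et$---this non-equality is precisely why $(-)_\et$ is not known to be bijective in the present generality---so functoriality cannot be checked by naively composing \'{e}tale realisations. Instead $r(\gamma)$ must be accessed only indirectly, via its interaction with dominant maps to hyperbolic curves, where Mochizuki's theorem provides the necessary rigidity. Lemma~\ref{prod_in_homcat} is then essential in order to reassemble the componentwise information into a statement about maps to the ambient product variety, which is what drives the functoriality proof.
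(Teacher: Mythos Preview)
Your proposal is correct and follows essentially the same approach as the paper: construct $r(\gamma)$ via Propositions~\ref{prop:geometryfromMochizuki} and~\ref{aufX} applied to a factor-dominant embedding, establish independence of the embedding by passing to the product of two embeddings, and deduce that $r(\gamma)$ is an isomorphism formally from (i) and (ii). The only organizational difference is that you prove property~(iii) early (via the enlarged embedding $(\iota_Y,h)$) and then use it componentwise to derive functoriality~(ii), whereas the paper first proves~(ii) directly from the basic compatibility $(\iota\, r(\gamma))_\et = \iota_\et\,\gamma$ using the modified embedding $\iota_Y = (\iota,\iota_Z r(\gamma_2))$, and only afterwards deduces~(iii) by choosing an embedding with $h = p_1\iota$; both routes rest on the same identity and on Lemma~\ref{prod_in_homcat}.
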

\begin{proof}
Let $\iota : Y \inj W = C_1 \times \ldots \times C_n$
be an embedding into a product of hyperbolic curves. After removing factors, we can assume that $\iota$ is factor-dominant. Starting from an isomorphism
\[
\gamma : X_\et \xrightarrow{\sim} Y_\et
\]
in $\Ho(\pross) \downarrow k_\et$, Proposition~\ref{prop:geometryfromMochizuki} shows the existence of a unique $k$-morphism $f: X \to W$ such that $f_\et=\iota_{\et} \gamma$. By Proposition~\ref{aufX}, the map $f$ factors as
$
f= \iota r(\gamma)
$
for a unique $k$-morphism
\[
r(\gamma): X \to Y.
\]
This constructs the $k$-morphism $r(\gamma)$ of which we will  later show that it is an isomorphism. Immediately from the construction we deduce
\begin{equation} \label{eq:basic_compatibility_with_pi1}
(\iota r(\gamma))_\et = f_\et = \iota_{\et} \gamma .
\end{equation}
Denoting the projections by $p_i: W\to C_i$, we obtain
\[
(p_i\iota r(\gamma))_\et = (p_i\iota)_\et \gamma,
\]
hence $(p_i\iota r(\gamma))_\et$ is $\pi_1$-open and $p_i\iota r(\gamma): X \to C_i$ is dominant for $i=1,\ldots,n$.
\smallskip

We first prove property \ref{thmitem:retraction}. Let us assume $\gamma = g_\et : X_\et \to Y_\et$ arises from a $k$-isomorphism $g : X \stackrel{\sim}{\to} Y$. By construction, the auxiliary map is $f = \iota g : X \to W$, which factors through $Y$ as $g: X \to Y$. Uniqueness of the auxiliary map and of the factorization shows
\[
r(g_\et) = g.
\]

We secondly show that $r(\gamma)$ is independent of the immersion $\iota$.
Let  $\iota' : Y \inj W' = \prod_j C'_j$ be another factor-dominant  embedding  into a product of hyperbolic curves. We obtain from the construction above  a unique map $f' : X \to W'$ and a factorization $g' : X \to Y$. Applying the construction a third time, namely to the product  $(\iota,\iota') : Y \inj W \times W'$, yields the map $(f,f') : X \to W \times W'$ and a further factorization $h: X \to Y$.
Projecting to both factors in $(f,f')$ we deduce $g = h = g'$, and $r(\gamma)$ is indeed independent of the chosen immersion $\iota$.

\smallskip

We next show property \ref{thmitem:r_functorial}.
We choose a factor-dominant embedding $\iota_Z : Z \inj V=D_1\times \cdots \times D_m$  into a product of hyperbolic curves. It suffices to show that
\[
\iota_Z r(\gamma_2\gamma_1)= \iota_Z r(\gamma_2)r(\gamma_1).
\]
Since $V$ is a product of hyperbolic curves and $p_i\iota_Zr(\gamma_2\gamma_1): X\to D_i$ is dominant for all $i$,  Lemma~\ref{prod_in_homcat} and Theorem~\ref{mochi} show that it suffices to prove that
\[
(\iota_Z r(\gamma_2\gamma_1))_\et= (\iota_Z r(\gamma_2)r(\gamma_1))_\et.
\]
We modify a given factor-dominant immersion $\iota: Y \inj W$,  to
\[
\iota_Y =(\iota,\iota_Zr(\gamma_2)) : Y  \inj W \times V.
\]
We set $\pr_2: W \times V \to V$ for the second projection. Using \eqref{eq:basic_compatibility_with_pi1} we can compute
\begin{align*}
(\iota_Z  r(\gamma_2 \gamma_1))_\et & = (\iota_Z)_\et(\gamma_2 \gamma_1)
 = ((\iota_Z)_\et \gamma_2)  \gamma_1  = (\iota_Z  r(\gamma_2))_\et  \gamma_1 \\
& = (\pr_2 \iota_Y)_\et \gamma_1  = (\pr_2)_\et (\iota_Y)_\et \gamma_1 \\
& = (\pr_2)_\et  (\iota_Y  r(\gamma_1))_\et = (\pr_2 \iota_Y  r(\gamma_1))_\et
 = (\iota_Z r(\gamma_2) r(\gamma_1))_\et
\end{align*}
and this shows \ref{thmitem:retraction}.

\smallskip

The established  retract property  \ref{thmitem:retraction} and functoriality \ref{thmitem:r_functorial} of $r(\gamma)$  show formally that $r(\gamma)$ is an isomorphism: the inverse $\gamma^{-1}$ gives rise to a map $r(\gamma^{-1})$ which is the inverse of $r(\gamma)$.

\smallskip

In order to show property~\ref{thmitem:r_and_map_to_hyperbolic_curve}, put $C_1=C$ and choose hyperbolic curves $C_2,\ldots,C_n$ together with a factor-dominant immersion $\iota: Y \hookrightarrow W=C_1\times \cdots \times C_n$ with $h=p_1 \iota$. Then, by \eqref{eq:basic_compatibility_with_pi1}
we have $\iota_\et r(\gamma)_\et=\iota_\et \gamma$ and composing with $(p_1)_\et$ yields the result.

\smallskip
To finish the proof we show the asserted uniqueness of $r$. Let $\iota : Y \inj W = C_1 \times \cdots \times C_n$
be a factor-dominant  immersion into a product of hyperbolic curves, denote the composite with the $i$-th projection by $f_i: Y \to C_i$, and let $\gamma : X_\et \to Y_\et$ be an isomorphism in $\Ho(\pross) \downarrow k_\et$. Clearly $r(\gamma)$ is uniquely determined by $\iota  r(\gamma)$, and even by $f_i  r(\gamma)$ for $i=1,\ldots, n$. This is uniquely determined by
Theorem~\ref{mochi} by the map
\[
(f_i  r(\gamma))_\et = f_{i,\et}  r(\gamma)_\et =  f_{i,\et}  \gamma
\]
where we used statement~\ref{thmitem:r_and_map_to_hyperbolic_curve}. This shows uniqueness of $r(\gamma)$.
\end{proof}

Another functoriality property of the retraction $r$ is the following.

\begin{proposition}\label{basiswechselfuerr}
Let $X$, $X'$, $Y$ and $Y'$ be smooth geometrically connected varieties over $k$ which can be embedded as locally closed subschemes into a product of hyperbolic curves. Assume we are given dominant $k$-morphisms $f: X'\to X$, $g: Y'\to Y$ and isomorphisms $\gamma': X'_\et \stackrel{\sim}{\to} Y'_\et$, $\gamma: X_\et \stackrel{\sim}{\to} Y_\et$ in $\Ho(\pross)\downarrow k_\et$ such that $\gamma  f_\et=g_\et  \gamma'$.
Then the following diagram commutes:
\[
\begin{tikzcd}
X'\arrow{r}{r(\gamma')}[swap]{\sim}
\arrow{d}[swap]{f}
& Y'\dar{g}\\
X\arrow{r}{r(\gamma)}[swap]{\sim}
& Y.
\end{tikzcd}
\]
\end{proposition}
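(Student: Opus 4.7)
The plan is to prove the asserted equality of $k$-morphisms $\alpha := g  r(\gamma')$ and $\beta := r(\gamma)  f$ from $X'$ to $Y$ by embedding $Y$ into a product of hyperbolic curves, projecting to each factor, and invoking Mochizuki's theorem. The two key ingredients are Theorem~\ref{mochi} and the compatibility property \ref{thmitem:r_and_map_to_hyperbolic_curve} of Theorem~\ref{thm:main-retraction}.

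First I would choose a factor-dominant immersion $\iota : Y \hookrightarrow W = C_1 \times \cdots \times C_n$ into a product of hyperbolic curves (possible by the hypothesis on $Y$ and by removing constant factors) and write $h_i = p_i  \iota : Y \to C_i$, which are dominant. Since $\iota$ is a monomorphism and $W$ is the product of the $C_i$, checking $\alpha = \beta$ is equivalent to checking $h_i  \alpha = h_i  \beta$ for every $i = 1, \ldots, n$. Note that both composites are dominant $k$-morphisms $X' \to C_i$: $r(\gamma)$ and $r(\gamma')$ are isomorphisms by Theorem~\ref{thm:main-retraction}, and $f$, $g$, $h_i$ are dominant by assumption.

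Next I would use Theorem~\ref{mochi}, which tells us that a dominant $k$-morphism $X' \to C_i$ is determined by the underlying morphism in $\Ho(\pross) \downarrow k_\et$, to reduce the problem to the equality $(h_i  \alpha)_\et = (h_i  \beta)_\et$. To compute both sides, I invoke property \ref{thmitem:r_and_map_to_hyperbolic_curve} of Theorem~\ref{thm:main-retraction} twice: once applied to the dominant morphism $h_i  g : Y' \to C_i$ together with the isomorphism $\gamma'$, and once applied to the dominant morphism $h_i : Y \to C_i$ together with the isomorphism $\gamma$. This yields
\[
(h_i  \alpha)_\et = (h_i  g)_\et  r(\gamma')_\et = (h_i  g)_\et  \gamma' = h_{i,\et}  g_\et  \gamma',
\]
and symmetrically
\[
(h_i  \beta)_\et = h_{i,\et}  r(\gamma)_\et  f_\et = h_{i,\et}  \gamma  f_\et.
\]
The standing hypothesis $\gamma  f_\et = g_\et  \gamma'$ identifies the two right-hand sides, so $(h_i  \alpha)_\et = (h_i  \beta)_\et$ for all $i$, which gives $\alpha = \beta$ as desired.

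There is no genuine obstacle here: the argument is essentially formal once one realises that the characterisation of $r$ through its behaviour on projections to hyperbolic curves (property \ref{thmitem:r_and_map_to_hyperbolic_curve}) is precisely designed to make $r$ compatible with any pair of dominant $k$-morphisms whose étale realizations intertwine two given isomorphisms in $\Ho(\pross) \downarrow k_\et$. The only point requiring care is bookkeeping of dominance, so that Mochizuki's theorem can be applied at each step.
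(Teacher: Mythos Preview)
Your proof is correct and follows essentially the same approach as the paper's: both reduce to checking equality after composing with the projections $h_i: Y \to C_i$ coming from a factor-dominant embedding, then apply Theorem~\ref{mochi} to pass to the \'etale homotopy category, and finally use property~\ref{thmitem:r_and_map_to_hyperbolic_curve} of Theorem~\ref{thm:main-retraction} twice (once for $h_ig$ with $\gamma'$, once for $h_i$ with $\gamma$) together with the hypothesis $\gamma f_\et = g_\et \gamma'$. The computations are identical.
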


\begin{proof}
Since $Y$ has an embedding into a product of hyperbolic curves, it suffices to show that
\[
h g  r(\gamma') = h  r(\gamma)  f
\]
for every dominant morphism $h: Y \to C$ to a hyperbolic curve. By Theorem~\ref{mochi}, it suffices to show that
\[
h_\et g_\et  r(\gamma')_\et = h_\et  r(\gamma)_\et  f_\et.
\]
This follows from Theorem~\ref{thm:main-retraction}~\ref{thmitem:r_and_map_to_hyperbolic_curve}:
\[
h_\et  g_\et  r(\gamma')_\et = (h g)_\et  r(\gamma')_\et
= (h g)_\et  \gamma'  = h_\et  g_\et  \gamma'
= h_\et  \gamma  f_\et = h_\et  r(\gamma)_\et  f_\et. \qedhere
\]
\end{proof}

\section{Class preservation}
\label{sec:controlpi1}

In this section we investigate the kernel of the  retraction
\[
r: \Isom_{\Ho(\pross) \downarrow k_\et}(X_\et,Y_\et) \to \Isom_k(X,Y)
\]
constructed in the proof  of Theorem~\ref{thm:main-retraction}.
Because the retraction is functorial, we may pass by composing $\gamma$ with $(r(\gamma)^{-1})_\et$ to the situation where $X = Y$ and $r(\gamma) = \id_X$.
Let $\ph=\pi_1(\gamma) \in \OutAut_{\Gal_k}(\pi_1^\et(X))$. We are going to show that $\ph$ is class preserving by elements of $\pi_1^\et(X_{\bar k})$.

\subsection{Preservation of open normal subgroups}

For this we first show that $\ph$ is a \defobjekt{normal} automorphism, i.e., every open normal subgroup is mapped by $\ph$ to itself. Note that the property of being normal is independent of the particular representative  of $\ph$ in $\Aut_{\Gal_k}\big(\pi_1^\et(X,\bp{x})\big)$.

\begin{proposition} \label{prop:NR-normal}
Let $k$ be a finitely generated field extension of\/ $\Q$ and let $X$ be a smooth geometrically connected variety over $k$
with a factor dominant embedding as a locally closed subscheme
\[
\iota : X \inj W = C_1\times \cdots \times C_n
\]
 of a product of hyperbolic curves over $k$. We assume that the following holds:
\begin{quote}
{\rm (NR):} \qquad none of the $C_i$ is rational.
\end{quote}

Then, for $\gamma \in \Aut_{\Ho(\pross) \downarrow k_\et}(X_\et)$ with $r(\gamma) = \id_X$, the induced map $\ph = \pi_1(\gamma) \in \OutAut_{\Gal_k}(\pi_1(X))$ is a normal automorphism.
\end{proposition}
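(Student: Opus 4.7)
\medskip

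\noindent\textbf{Proof plan.} The plan is to translate the hypothesis $r(\gamma)=\id_X$ into a statement about many open subgroups of $\pi_1^\et(X,\bar x)$ being preserved by $\ph$, and then to exhaust all open normal subgroups by varying the factor-dominant embedding.

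\smallskip

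\noindent\emph{Step 1 (key consequence of $r(\gamma)=\id$).} By Theorem~\ref{thm:main-retraction}~\ref{thmitem:r_and_map_to_hyperbolic_curve}, for every dominant morphism $h:X\to C$ to a hyperbolic curve $C$ we have $h_\et\gamma=h_\et$ in $\Ho(\pross)\downarrow k_\et$. Applied to each projection $p'_j\iota':X\to C'_j$ of an arbitrary factor-dominant embedding $\iota':X\hookrightarrow W'=\prod_j C'_j$ into a product of hyperbolic curves, and combined via Lemma~\ref{prod_in_homcat}, this yields
\[
\iota'_\et\gamma=\iota'_\et\quad\text{in}\ \Ho(\pross)\downarrow k_\et.
\]

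\noindent\emph{Step 2 (lifting along finite \'etale covers of $W'$).} Lemma~\ref{basechange} applied with $f=g=\iota'$ then produces, for every finite \'etale $W''\to W'$, an isomorphism of $(X\times_{W'}W'')_\et$ compatible with $\gamma$ over $W''_\et$. Translated to fundamental groups this says that $\ph$ stabilises the open subgroup $\iota'^{-1}_\ast\bigl(\pi_1^\et(W'',\bar w')\bigr)\subseteq\pi_1^\et(X,\bar x)$. Letting $W''\to W'$ run through connected pointed Galois covers of $W'$, we conclude that $\ph$ preserves every open normal subgroup of $\pi_1^\et(X,\bar x)$ containing the closed subgroup
\[
K(\iota'):=\ker\bigl(\iota'_\ast:\pi_1^\et(X,\bar x)\to\pi_1^\et(W',\bar w')\bigr).
\]

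\noindent\emph{Step 3 (reduction to a separation statement).} Given two factor-dominant embeddings $\iota_\alpha,\iota_\beta$, the product $(\iota_\alpha,\iota_\beta):X\hookrightarrow W_\alpha\times_k W_\beta$ is again a factor-dominant embedding into a product of hyperbolic curves, with $K(\iota_\alpha,\iota_\beta)=K(\iota_\alpha)\cap K(\iota_\beta)$. Hence the family $\{K(\iota')\}$ is downward directed. By compactness of the profinite group $\pi_1^\et(X,\bar x)$, it therefore suffices to establish
\[
\bigcap_{\iota'} K(\iota')=\{1\},
\]
the intersection running over all factor-dominant embeddings into products of hyperbolic curves; for then every open normal $N$ contains some $K(\iota')$ and Step 2 yields $\ph(N)=N$.

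\smallskip

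\noindent\emph{Step 4 (the main obstacle: separation).} Because any dominant morphism $h:X\to C$ to a hyperbolic curve can be appended to $\iota$ to form the factor-dominant embedding $(\iota,h):X\hookrightarrow W\times_k C$, the separation statement of Step 3 is equivalent to asserting that the intersection $\bigcap_h\ker h_\ast$ over all dominant morphisms $h:X\to C$ to hyperbolic curves is trivial. This is the technical heart of the proof and is precisely where hypothesis (NR) is used: because each factor curve $C_i$ has genus $\geq 1$, it has a nontrivial Jacobian, and one has a rich supply of further dominant maps $X\to C$ to hyperbolic curves, for example by composing the given projections $p_i\iota$ with \'etale covers of the $C_i$, or via the Albanese $X\to\operatorname{Alb}(X)$ combined with projections to Jacobians of suitable hyperbolic curves. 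A careful geometric construction along these lines, exploiting that none of the $C_i$ is rational, separates an arbitrary nontrivial $g\in\pi_1^\et(X,\bar x)$ and completes the proof.
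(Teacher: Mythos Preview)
Your Steps~1--3 are a coherent reduction, but Step~4 is not a proof: it is a statement of hope. The separation claim $\bigcap_{\iota'} K(\iota') = 1$ carries the entire weight of the argument, and none of the mechanisms you list actually shrinks the kernel. Precomposing with an \'etale cover $C'_i \to C_i$ does not produce a map $X \to C'_i$; it produces a map from a cover of $X$, so it does not give a new factor-dominant embedding of $X$ itself. The Albanese only detects the abelianisation of $\pi_1^\et(X)$. More fundamentally, for $X$ of dimension $\geq 2$ the dominant maps from $X$ to proper hyperbolic curves are finite in number by de Franchis--Severi, and there is no reason for an arbitrary connected \'etale Galois cover $X_N \to X$ to be pulled back from any product of hyperbolic curves in which $X$ embeds; so $N$ need not contain any $K(\iota')$. (Your Step~2 also overclaims: from $\iota'_\et\gamma = \iota'_\et$ one gets that $\iota'_\ast\varphi_0$ and $\iota'_\ast$ differ by conjugation by some $\sigma \in \pi_1^\et(W'_{\bar k})$, and $\sigma$ need not normalise every open normal subgroup of $\im(\iota'_\ast)$, only those of the form $\im(\iota'_\ast) \cap M$ with $M \trianglelefteq \pi_1^\et(W')$.)

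The paper's argument is entirely different and makes no separation claim. For a fixed open normal $N$, one uses $\iota_\et\gamma = \iota_\et$ (your Step~1, for the single given embedding $\iota$) together with Proposition~\ref{prop:covinhocat} to obtain an isomorphism $(X_N)_\et \cong (X_{\varphi(N)})_\et$ in $\Ho(\pross)$ over $W_\et$. Hypothesis~(NR) is then used to replace each $C_i$ by its smooth compactification of genus $\geq 1$, so that Tamagawa's non-conjugacy of decomposition groups of distinct rational points applies. One spreads the whole configuration over a regular $S$ of finite type over $\bZ$ and invokes \v{C}ebotarev density: the covers $\cX_N \to \cX$ and $\cX_{\varphi(N)} \to \cX$ agree if they have the same completely-split closed points. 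This last statement is established by the point-counting mechanism of Proposition~\ref{prop:tamagawa-argument}: for a closed point $x$ one constructs an \'etale cover of $W_s$ isolating $\iota(x)$, and then Propositions~\ref{prop:countpointsclosedfibre} and~\ref{unpointedisom} show that the relevant fibres over $x$ have the same number of $\bF$-points. Thus (NR) enters through the geometry needed for the Tamagawa/Lefschetz-trace argument, not through a separation property of maps to curves.
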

\begin{proof}
Let $N\subset \pi_1^\et(X)$ be an open normal subgroup. Then also $\ph(N)$ is an open normal subgroup and we want to show that $\ph(N)=N$. We denote the connected \'{e}tale covering of $X$ associated with $N$ by $X_N$.
By Lemma~\ref{lem:finiteetale}, $(X_N)_\et$ is the covering pro-space of $X_\et$ associated with $N$.

\smallskip
Since $r(\gamma) = \id_X$, we deduce from Theorem~\ref{thm:main-retraction} \ref{thmitem:r_and_map_to_hyperbolic_curve} and  Lemma~\ref{prod_in_homcat} that in $\Ho(\pross)$
\begin{equation} \label{eq:compatibelwithiota}
\iota_\et  \gamma = \iota_\et.
\end{equation}

By Proposition~\ref{prop:covinhocat}, there  exists an isomorphism $\gamma_N: (X_N)_\et \to (X_{\ph(N)})_\et$ in $\Ho(\pross)$
such that
\begin{equation}\label{eq:phiNdiagram}
\begin{tikzcd}
(X_N)_\et \rar\dar{\gamma_N}[swap]{\wr} & X_\et \rar{\iota_\et}\dar{\gamma}[swap]{\wr}  & W_\et\dar[-, double equal sign distance]\\
(X_{\ph(N)})_\et \rar & X_\et \rar{\iota_\et}  & W_\et
\end{tikzcd}
\end{equation}
commutes in $\Ho(\pross)$.
In order to show $N=\varphi(N)$, it suffices to find an arrow such that
\[
\begin{tikzcd}
X_N \rar\dar[dashrightarrow] & X \rar{\iota}\dar[-, double equal sign distance]  & W\dar[-, double equal sign distance]\\
X_{\ph(N)} \rar & X \rar{\iota}  & W
\end{tikzcd}
\]
commutes: indeed, this implies $N\subset \varphi(N)$, hence $N=\varphi(N)$ since both have the same index in $\pi^\et_1(X)$.

Since none of the $C_i$ are rational, we can replace the $C_i$ by their smooth compactifications. After this replacement, $W$ is the product of smooth proper curves of positive genus.

Now we choose a regular connected scheme $S$ of finite type over $\bZ$ with function field $k$ such that the whole situation extends over $S$, i.e., we obtain the diagram
\[
\begin{tikzcd}
\cX_N \rar\dar[dashrightarrow] & \cX \rar{\iota}\dar[-, double equal sign distance] & \cW\dar[-, double equal sign distance]\\
\cX_{\ph(N)} \rar & \cX \rar{\iota}  & \cW.
\end{tikzcd}
\]
By generalized \v Cebotarev density \cite{Ser}, Thm.~7, the dotted arrow exists if and only if the set of closed points of $\cX$ which split completely in $\cX_N$ coincides with the set of closed points of $\cX$ which split completely in $\cX_{\ph(N)}$. We thus have to show that for any finite field $\bF$ and every $x\in \cX(\bF)$ there exists a point in $\cX_N(\bF)$ over $x$ if and only if there exists a point in $\cX_{\ph(N)}(\bF)$ over $x$.

This will be deduced as in Proposition~\ref{prop:tamagawa-argument}: Let $s\in S(\bF)$ be the image of $x$ in $S$.
We denote the fibre of $\cW$ over $s$ by $W_s$.
As in the proof of Proposition~\ref{prop:tamagawa-argument},  we may first assume that $k(s)=\bF$, and secondly
we can choose a connected \'{e}tale covering $h: W'_s \to W_s$ of $\iota(x) \in W_s$ such that $h(W'_s(\bF))=\{\iota(x)\}$. After replacing $S$ by a Nisnevich neighbourhood of $s\in S$, we can lift $h:W_s'\to W_s$ to an \'{e}tale covering $\cW'\to \cW$.

Denote the fibre products by
\[
\cX'= \cX \times_\cW \cW', \quad  \cX'_N= \cX_N \times_\cW \cW',\quad \cX'_{\ph(N)}= \cX_{\ph(N)} \times_\cW \cW'.
\]
Applying Lemma~\ref{basechange} to the commutative diagram \eqref{eq:phiNdiagram}, we conclude that the \'{e}tale homotopy types of the generic fibres $X'_N$ and $X'_{\ph(N)}$ of $\cX'_N$ and $\cX'_{\ph(N)}$ are isomorphic in
$\Ho(\pross)\downarrow k_\et$. As in the proof of Proposition~\ref{prop:tamagawa-argument}  we deduce that
\[
\cX'_N(\bF)\ne \emptyset \Leftrightarrow \cX'_{\ph(N)}(\bF)\ne \emptyset.
\]
By the choice of $\cW'$, any point of $\cX'_N(\bF)$ and of $\cX'_{\ph(N)}(\bF)$ maps to $x\in \cX(\bF)$.
Summing up, we deduce that the following statements are equivalent:
\begin{enumeral}
\item there is a point in $\cX_N(\bF)$ over $x$,
\item there is a point in $\cX_N'(\bF)$ over $x$,
\item $\cX'_N(\bF)\ne \emptyset$,
\item $\cX'_{\ph(N)}(\bF)\ne \emptyset$,
\item there is a point in $\cX'_{\ph(N)}(\bF)$ over $x$,
\item there is a point in $\cX_{\ph(N)}(\bF)$ over $x$. \qedhere
\end{enumeral}
\end{proof}

\subsection{Preservation of decomposition groups in finite quotients}

In order to talk about open subgroups (and not only about open subgroups up to conjugation), we now rigidify the situation.
Let $\bar k$ be an algebraic closure of $k$,  and let $\bp{x}: \Spec(\bar k) \to X$ be a geometric point of $X$.
Let $\gamma_0$ be a preimage of $\gamma$ under the surjection of Proposition~\ref{prop:pointed_schemes2}
\[
\begin{tikzcd}
\Aut_{\Ho(\pross_*) \downarrow (k_\et,\bar k_\et)}(X_\et,\bp{x}_\et)\arrow[->>]{r}&\Aut_{\Ho(\pross) \downarrow k_\et}(X_\et),
\end{tikzcd}
\]
i.e., $\gamma_0$ is determined by $\gamma$ up to the natural $\pi_1^\et(X_{\bar k},\bp{x})$-action. Let $\ph=\pi_1(\gamma_0)\in \Aut_{\Gal_k}(\pi_1^\et(X,\bp{x}))$. Then $\ph$ is determined by $\gamma$ up to an inner automorphism of $\pi_1^\et(X,\bp{x})$ given by an element of its subgroup $\pi_1^\et(X_{\bar k},\bp{x})$.

\begin{lemma}
Under the assumptions of Proposition~\ref{prop:NR-normal}, the automorphism $\ph : \pi^\et_1(X,\bp{x}) \to \pi^\et_1(X,\bp{x})$ sends every element $g \in \pi_1(X,\bp{x})$ to a conjugate element raised to a power:
\[
\ph(g) = h_g g^{m(g)} h_g^{-1}
\]
with $h_g \in \pi^\et_1(X,\bp{x})$ and   $m(g) \in \hZ^\times$.
\end{lemma}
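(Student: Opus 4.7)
The strategy is to reformulate the desired identity as a conjugacy statement between closed procyclic subgroups of $G := \pi_1^\et(X,\bp{x})$ and verify it finite-quotient by finite-quotient, combining the normality of $\ph$ from Proposition~\ref{prop:NR-normal} with the hyperbolic curve constraint coming from $r(\gamma) = \id_X$ via Theorem~\ref{thm:main-retraction}~\ref{thmitem:r_and_map_to_hyperbolic_curve}.

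Observe first that the identity $\ph(g) = h_g g^{m(g)} h_g^{-1}$ with $m(g) \in \hZ^\times$ is equivalent to the assertion that the closed procyclic subgroups $\overline{\langle \ph(g) \rangle}$ and $\overline{\langle g \rangle}$ are $G$-conjugate. Indeed, every continuous automorphism of a procyclic group is multiplication by a unit of $\hZ$; so once $h_g$ conjugates $\overline{\langle g \rangle}$ onto $\overline{\langle \ph(g) \rangle}$, the element $h_g^{-1}\ph(g)h_g$ is forced to be a topological generator of $\overline{\langle g \rangle}$, hence of the form $g^{m(g)}$.

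By Proposition~\ref{prop:NR-normal}, $\ph$ stabilises every open normal subgroup $N \trianglelefteq G$, and hence descends to an automorphism $\bar\ph$ of $\bar G := G/N$ preserving every normal subgroup. By profinite compactness, it suffices to exhibit, for each such $N$ with $g \notin N$, an element $\bar h \in \bar G$ with $\bar\ph(\langle \bar g \rangle) = \bar h \langle \bar g \rangle \bar h^{-1}$, compatibly as $N$ shrinks. Normality alone will not deliver this conjugacy, so further input is needed, and it comes from $r(\gamma) = \id_X$: by Theorem~\ref{thm:main-retraction}~\ref{thmitem:r_and_map_to_hyperbolic_curve} one has $h_\et \gamma = h_\et$ in $\Ho(\pross)\downarrow k_\et$ for every dominant $h: X \to C$ to a hyperbolic curve, and Proposition~\ref{prop:pointed_schemes2} promotes this to an identity $h_\ast \circ \ph = \Inn(\sigma_h) \circ h_\ast$ at the level of pointed fundamental groups, for some $\sigma_h \in \pi_1^\et(C_{\bar k}, h(\bp{x}))$. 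Applying the analogous reasoning (via Proposition~\ref{basiswechselfuerr}) to the restriction of $\gamma$ along any \'{e}tale cover $X_N \to X$, and using factor-dominant embeddings of such covers into products of hyperbolic curves as in Section~\ref{sec:proofofmain}, produces a rich supply of constraints on $\bar\ph$: in every hyperbolic-curve quotient of $\bar G$ the image of $\langle \bar\ph(\bar g) \rangle$ is conjugate to the image of $\langle \bar g \rangle$.

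The main obstacle I foresee is the passage from these per-curve-quotient local conjugacies to a single global conjugacy of procyclic subgroups inside $\bar G$. I would attempt a \v{C}ebotarev-style argument paralleling the proof of Proposition~\ref{prop:NR-normal}: after spreading out $X_N \to X$ to a model over some $S$, compare closed points with prescribed Frobenius behaviour in further auxiliary coverings attached to $\langle \bar g \rangle$ and to $\bar\ph(\langle \bar g \rangle)$ respectively, and invoke the separation of procyclic decomposition subgroups by maps to hyperbolic curves in the spirit of Tamagawa's \cite{Ta} Cor.~2.10. This is the step I expect to carry the technical weight of the proof.
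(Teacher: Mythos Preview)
Your reformulation as conjugacy of closed procyclic subgroups and the compactness reduction to finite quotients $\bar G = G/N$ are correct and match the paper. The gap is in how you propose to establish the finite-level conjugacy $\bar\ph(\langle\bar g\rangle) \sim \langle\bar g\rangle$. The detour through ``hyperbolic-curve quotients of $\bar G$'' does not work as stated: the maps $\pi_1^\et(X) \to \pi_1^\et(C_i)$ need not factor through $\bar G$, and the constraint $h_\et\gamma = h_\et$ only tells you that $\ph$ is inner on each $\pi_1^\et(C_i)$, which does not obviously descend to a useful statement inside a fixed finite quotient. So the ``rich supply of constraints'' you describe is not available in the form you need, and the obstacle you flag at the end is not the residual difficulty but essentially the whole problem.

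The paper's argument bypasses this entirely. After spreading out, \v{C}ebotarev density realizes $\bar g$ as the Frobenius of a closed point $P' \in \cX_N$ lying over some $P \in \cX$; hence the intermediate cover $\cX_H$ for $H = \langle\bar g\rangle$ has a rational point over $P$ (the image of $P'$). Now the point-counting machinery from the proof of Proposition~\ref{prop:NR-normal} applies verbatim to the (not necessarily normal) open subgroups defining $\cX_H$ and $\cX_{\bar\ph(H)}$: the \'etale homotopy types of their generic fibres are still isomorphic over $W_\et$, so Tamagawa's isolation of rational points together with the Lefschetz trace count forces $\cX_{\bar\ph(H)}$ to have a rational point over $P$ as well. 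Therefore $\bar\ph(H)$ contains a decomposition group at $P$, which is generated by some conjugate $h\bar g h^{-1}$; equality of orders then gives $\bar\ph(H) = \langle h\bar g h^{-1}\rangle$, so $\bar\ph(\bar g) = h\bar g^{m} h^{-1}$ for some unit $m$. The missing idea in your sketch is precisely this conversion of the abstract element $\bar g$ into a geometric Frobenius via \v{C}ebotarev, turning the subgroup-conjugacy question into the rational-point existence question already solved.
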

\begin{proof}
Let again $N\subset \pi_1^\et(X,\bp{x})$ be an open normal subgroup. We have shown that $\ph(N)=N$, hence $\ph$ induces an automorphism $\bar{\ph}$ of
$G=\pi_1^\et(X,\bp{x})/N$. Again we choose a regular connected scheme $S$ of finite type over $\bZ$ with function field $k$ such that the whole situation extends over $S$, i.e., we obtain a Galois covering
\[
\cX_N \to \cX
\]
with Galois group $G$.

Let $g\in G$ be an arbitrary element. Our next goal is to show that $\bar{\ph}(g)$ is some power of some conjugate of $g$. For this consider the subgroup $H=\langle g \rangle \subset G$. By \v Cebotarev density, we can find a closed point $P \in \cX$ and a closed point $P'\in \cX_N$ above $P$ such that $g$ is the Frobenius of $P'$ in $G= \Aut(\cX_N/\cX)$.

Let $P_H$ be the image of $P'$ in the subextension $\cX_H$ associated with $H$. Then $P_H$ has the same residue field as $P$: $k(P_H)=k(P)$. The same argument as above shows that there is some point $Q_H\in \cX_{\bar{\ph}(H)}$ above $P$ with $k(Q_H)=k(P)$. Hence $\bar{\ph}(H)$ contains the decomposition group $G_{Q'}(\cX_N/\cX)$ of some (any) point $Q'\in \cX_N$ over $Q_H$.
Since both groups have the same order, they agree.

The group $G_{Q'}(\cX_N/\cX)$ is generated by another Frobenius over $P$, i.e., for some $h\in G$ by the conjugate $hgh^{-1}$ of $g$. We obtain
\[
\langle \bar{\ph}(g) \rangle = \bar{\ph}(H) = \langle hgh^{-1} \rangle,
\]
hence
\[
\bar{\ph}(g) = h g^m h^{-1}
\]
for some $m\in \hZ^\times$.

Now the assertion follows from the usual compactness argument as follows.
Let $g \in \pi_1(X,\bp{x})$ be an arbitrary element. For an open normal subgroup $N \subset \pi_1(X,\bp{x})$ we denote the image of $g$ in $G = \pi_1(X,\bp{x})/N$ by $g_N$. The automorphism of $G$ induced by $\ph$ will be denoted by $\ph_N$. The closed subset
\[
M_N = \{(h,m) \in \pi_1(X,\bp{x}) \times \hZ^\times  \mid \ph_N(g_N) = h_N (g_N)^m h_N^{-1}\}
\subset \pi_1(X,\bp{x}) \times \hZ^\times
\]
is non-empty by the
\v Cebotarev density argument above. If $N_1 \subset N_2$ are two open normal subgroups, then $M_{N_1} \subset M_{N_2}$. By compactness also the intersection of all $M_N$ is non-empty. Any
\[
(h_g,m(g)) \in \bigcap_N M_N \subset \pi_1(X,\bp{x}) \times \hZ^\times
\]
serves as $h_g$ and $m(g)$ as in the lemma.
\end{proof}

Since $\ph$ is a $\Gal_k$-automorphism, we can show that  the exponent $m(g)$ is always $1$:

\begin{proposition} \label{prop:NR-classpreserving}
Under the assumptions of Proposition~\ref{prop:NR-normal}, the automorphism $\ph : \pi_1^\et(X,\bp{x}) \to \pi_1^\et(X,\bp{x})$ is class preserving: every element is mapped to a conjugate element.
\end{proposition}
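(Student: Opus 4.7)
Our approach combines the $\Gal_k$-equivariance of $\ph$ with the cyclotomic character and a density argument in the profinite topology of $\pi_1^\et(X,\bp{x})$.

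\emph{Galois descent.} Since $\ph$ is a $\Gal_k$-automorphism, it induces the identity on the quotient $\pi_1^\et(X,\bp{x}) \twoheadrightarrow \Gal_k$. Reducing the equality $\ph(g) = h_g g^{m(g)} h_g^{-1}$ modulo $\pi_1^\et(X_{\bar k},\bp{x})$ yields, with $\sigma = \bar g \in \Gal_k$, the relation $\sigma = \bar h_g\, \sigma^{m(g)}\, \bar h_g^{-1}$. Composing with the cyclotomic character $\chi \colon \Gal_k \to \hZ^\times = \prod_\ell \bZ_\ell^\times$ (defined because $\Q \subseteq k$) and using that $\hZ^\times$ is abelian, we obtain $\chi_\ell(\sigma)^{m(g)_\ell - 1} = 1$ in $\bZ_\ell^\times$ for every prime $\ell$. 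Since $k$ is finitely generated over $\Q$, the image $\chi(\Gal_k)$ is an open subgroup of $\hZ^\times$.

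\emph{Density.} The subset $U \subseteq \hZ^\times$ consisting of elements whose projection to $\bZ_\ell^\times$ is non-torsion for every $\ell$ is dense: the torsion of each $\bZ_\ell^\times$ is finite, so non-torsion elements are dense in each $\bZ_\ell^\times$, and since a basic open set in the Tychonoff product is restricted in only finitely many factors it meets $U$. Intersecting with the open subgroup $\chi(\Gal_k)$ and pulling back along the open continuous surjection $\chi$, and then along $\pi_1^\et(X,\bp{x}) \twoheadrightarrow \Gal_k$ (which is surjective because $X$ is geometrically connected), we obtain a dense subset $D \subseteq \pi_1^\et(X,\bp{x})$ consisting of those $g$ whose Galois image $\bar g$ has $\chi_\ell$-component of infinite order in $\bZ_\ell^\times$ for every $\ell$. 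For every $g \in D$, the Galois descent step forces $m(g)_\ell = 1$ for all $\ell$, hence $m(g) = 1$ and $\ph(g)$ is conjugate to $g$.

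\emph{Closure.} The subset $T := \{g \in \pi_1^\et(X,\bp{x}) : \ph(g) \text{ is conjugate to } g\}$ is closed in the profinite topology. Indeed, the conjugacy relation $\{(a,b) : a \sim b\} \subseteq \pi_1^\et(X,\bp{x})^2$ is closed: if $(a_\lambda, b_\lambda) \to (a,b)$ with $c_\lambda a_\lambda c_\lambda^{-1} = b_\lambda$, then by compactness a subnet of $\{c_\lambda\}$ converges to some $c$, and $c a c^{-1} = b$. Consequently $T$ is the preimage of a closed set under the continuous map $g \mapsto (\ph(g), g)$. Since $T$ is closed and contains the dense subset $D$, we conclude $T = \pi_1^\et(X,\bp{x})$, which is the desired class preservation.

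\emph{Main obstacle.} The essential input is the openness of $\chi(\Gal_k)$ in $\hZ^\times$, which comes from the hypothesis $\Q \subseteq k$. Without it, the cyclotomic constraint would only yield $m(g) \equiv 1$ modulo the order of $\chi(\bar g)$ on individual elements, insufficient to produce a \emph{dense} set of $g$ with $m(g) = 1$; the closure argument would then fail to conclude.
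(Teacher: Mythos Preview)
Your proof is correct and follows essentially the same approach as the paper's: project the relation $\ph(g)=h_g g^{m(g)}h_g^{-1}$ to an abelian quotient of $\Gal_k$ arising from cyclotomic theory, deduce $m(g)=1$ on a dense set of $g$, and conclude by the closedness of the conjugacy relation in a profinite group. The only cosmetic difference is that the paper passes to an additive $\hat\bZ$-quotient (the ``cyclic cyclotomic'' $\hat\bZ$-extension) and uses non--zero-divisors there, whereas you stay in $\hat\bZ^\times$ and phrase the same condition as ``non-torsion in each $\bZ_\ell^\times$''; the density and compactness arguments are identical.
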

\begin{proof}
The cyclic cyclotomic extension of $k$ induces a surjection
\[
\pi_1^\et(X,\bp{x}) \to \Gal_k \to \hZ
\]
that is preserved by $\ph$. Therefore $m(g)  = 1$ holds for elements whose images in $\hZ$ generate an open subgroup. This is a dense set of elements in $\pi_1^\et(X,\bp{x})$ because it contains the preimage of $\bZ$.

This means that in every finite quotient we may choose the exponent equal to $1$. Applying the compactness argument again, we conclude the statement.
\end{proof}

\subsection{Rational hyperbolic factors}

We now want to drop the hypothesis (NR) (one of the assumptions of Proposition~\ref{prop:NR-normal}) from Proposition~\ref{prop:NR-classpreserving}.

\medskip
We introduce the following notation: let $k$ be a field, $X$ a connected variety over $k$ and $\ell$ a prime number.  We say a connected pointed \'{e}tale covering $h: (X',\bar x') \to (X,\bar x)$  is \defobjekt{$\ell$-geometric} if the action of  $\pi_1^\et(X_{\bar k},\bar x)$ on the geometric fibre $h^{-1}(\bar x)$ factors through a finite $\ell$-group.

If $H = \pi_1^\et(X',\bar x ') \subset \pi_1(X,\bar x)$ is the corresponding open subgroup, $\bar H = H \cap \pi_1^\et(X_{\bar k},\bar x)$ and $\bar N$ is the maximal normal subgroup of $\pi_1^\et(X_{\bar k},\bar x)$ contained in $\bar H$, then being $\ell$-geometric is equivalent to $\pi_1^\et(X_{\bar k},\bar x)/\bar N$ being an $\ell$-group.

\begin{lemma}\label{goodcov-ell}
Let $k$ be a field, $\ell$ a prime number $\not= \ch(k)$, and let $(C,\bp{c})$ be a geometrically pointed hyperbolic curve over $k$. Then
\[
\pi_1^\et(C,\bp{c}) = \bigcup \  H,
\]
where $H$ runs through the open subgroups of $\pi_1^\et(C,\bp{c})$ such that the associated covering $C_H \to C$ is $\ell$-geometric and $C_H$ has a smooth compactification of genus $\geq 1$.
\end{lemma}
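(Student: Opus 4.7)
My plan is to distinguish two cases according to the genus of the smooth compactification $\bar{C}$ of $C$, and to dispose of the easy case first. If $\bar{C}$ already has genus $\geq 1$, then the whole group $H = \pi_1^\et(C,\bp{c})$ already does the job: the associated cover is $C$ itself, which is trivially $\ell$-geometric (here $\bar{N} = \pi_1^\et(C_{\bar k},\bp{c})$, so $\pi_1^\et(C_{\bar k},\bp{c})/\bar{N}$ is the trivial group) and whose smooth compactification is $\bar{C}$, of genus $\geq 1$ by assumption. Every element $g$ then lies in this single $H$.

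In the remaining case $\bar{C} = \P^1_k$, so that $C = \P^1_k \setminus D$ with $n := |D_{\bar k}| \geq 3$ forced by hyperbolicity, I will work inside the maximal pro-$\ell$ quotient of the geometric fundamental group. Let $K$ be the kernel of the natural surjection $\pi_1^\et(C_{\bar k},\bp{c}) \twoheadrightarrow \pi_1^\et(C_{\bar k},\bp{c})^{(\ell)}$ onto the pro-$\ell$ completion. Since the pro-$\ell$ quotient is characteristic, $K$ is normal in $\pi_1^\et(C,\bp{c})$. The first key observation is that any open subgroup $H \supset K$ is automatically $\ell$-geometric: the maximal $\pi_1^\et(C_{\bar k})$-normal subgroup $\bar{N}$ contained in $\bar{H} := H \cap \pi_1^\et(C_{\bar k},\bp{c})$ contains $K$, so $\pi_1^\et(C_{\bar k},\bp{c})/\bar{N}$ is a finite quotient of the pro-$\ell$ group $\pi_1^\et(C_{\bar k},\bp{c})^{(\ell)}$ and is therefore a finite $\ell$-group.

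Given $g \in \pi_1^\et(C,\bp{c})$, I consider the candidates
\[
H_V := \langle g, V \rangle,
\]
where $V$ ranges over a cofinal system of open normal subgroups of $\pi_1^\et(C,\bp{c})$ containing $K$; such a system arises, for instance, by pulling back characteristic open subgroups of $\pi_1^\et(C_{\bar k},\bp{c})^{(\ell)}$ and intersecting with the preimage of a suitable open subgroup of $\Gal_k$. Each $H_V$ is open, contains $g$ and $K$, and is $\ell$-geometric by the observation above. What remains is to find a $V$ in the system for which the smooth compactification of $C_{H_V}$ has genus $\geq 1$; this will be checked by Riemann--Hurwitz applied to the induced cover of smooth projective curves $\overline{C_{H_V,\bar k}} \to \P^1_{\bar k}$, which is ramified only at the $n$ boundary points with $\ell$-power indices $e_i$, yielding
\[
2g' - 2 = d \cdot \Big(\sum_i (1 - 1/e_i) - 2\Big).
\]
For $n \geq 3$, $g' \geq 1$ follows as soon as the $e_i$ are sufficiently large $\ell$-powers, which one secures by taking $V$ inside a deep enough elementary-abelian quotient $(\bZ/\ell^N)^{n-1}$ of $\pi_1^\et(C_{\bar k},\bp{c})^{(\ell),\mathrm{ab}}$.

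The main obstacle will be the Riemann--Hurwitz bookkeeping on the (possibly non-Galois) intermediate subcover $C_{H_V,\bar k}$ rather than on the full Galois closure inside the pro-$\ell$ tower: the procyclic subgroup generated by the image of $g$ in the finite $\ell$-quotient can collapse the inertia at one or more boundary points, dropping the ramification of the subcover below the threshold needed for positive genus. Overcoming this will require a case analysis of how $\langle g \rangle$ sits inside $(\bZ/\ell^N)^{n-1}$ relative to each inertia line, arguing that for $N$ large enough depending on $g$, enough boundary points retain their $\ell$-power ramification in $C_{H_V,\bar k}$ for Riemann--Hurwitz to force $g' \geq 1$.
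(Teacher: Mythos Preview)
Your reduction to the rational case and your choice to work inside the pro-$\ell$ tower are sound and match the paper's setup. The gap is in the final step: the expectation that taking $V$ deep enough in the abelian tower $(\bZ/\ell^N)^{n-1}$ eventually forces $g' \geq 1$ is false when $n = 3$.

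Take $g \in \pi_1^\et(C_{\bar k},\bar c)$ a generator of inertia at one boundary point $p_1$, so that its image in $\pi_1^\et(C_{\bar k},\bar c)^{(\ell),\ab} \cong \bZ_\ell^{\,2}$ spans the inertia line $I_1$. For every $m \geq 1$ and $V$ the preimage of the kernel of $\pi_1^\et(C_{\bar k},\bar c) \to (\bZ/\ell^m)^{2}$, the geometric cover $C_{H_V,\bar k} \to C_{\bar k}$ has Galois group $(\bZ/\ell^m)^{2}/I_1 \cong \bZ/\ell^m$: it is the cyclic cover of $\P^1$ totally ramified at $p_2, p_3$ and unramified at $p_1$, hence $\P^1$ again for every $m$. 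No depth in the abelianization suffices, so the case analysis you outline cannot close the argument as stated.

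The paper repairs this by \emph{iterating} rather than going deeper in a single abelian step. With $N = \pi_1^\et(C_{\bar k},\bar c)$ one sets $H = \langle [N,N]N^\ell, \sigma\rangle$; the key observation is that $\langle \sigma\rangle \cap N$ is procyclic, so its image in the $\F_\ell$-vector space $N/[N,N]N^\ell$ meets at most one of the pairwise distinct inertia lines, whence at most one boundary point is unramified in $C_{H,\bar k} \to C_{\bar k}$. This gives $n_H \leq d(n-1)/\ell + d$, while tame Riemann--Hurwitz gives $2g(C_H) + n_H - 2 = d(n-2)$. If $g(C_H) = 0$ one replaces $C$ by $C_H$ (which still contains $\sigma$) and repeats; the boundary count grows to $n_H \geq d(n-2)+2 \geq 4$, and after at most two further repetitions one reaches $n \geq 6$, where the two bounds combine to force $g(C_H) \geq 2$. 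In effect the paper descends along the derived pro-$\ell$ series of $N$ rather than its abelianization, and this non-abelian passage is precisely what your plan lacks. (The paper also first enlarges $k$ so that $\langle \sigma\rangle$ surjects onto $\Gal_k$, making each $C_H$ geometrically connected and streamlining the bookkeeping you flagged.)
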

\begin{proof} Let $\sigma \in \pi_1^\et(C,\bp{c})$ be arbitrary. We have to find an open subgroup $H\subset  \pi_1^\et(C,\bp{c})$ with $\sigma \in H$ such that $C_H$ is not rational and $C_H \to C$ is $\ell$-geometric.   For this we may assume that $k$ is perfect (replace $k$ by its perfect hull).
Furthermore, we can replace $k$ by an algebraic extension field such that $\langle \sigma \rangle$ surjects onto $\Gal_k$.
Then, for any $H$ containing $\sigma$, the curve $C_H$ is geometrically connected over $k$.

\smallskip
For any open subgroup $H\subset \pi_1^\et(C,\bp{c})$ we denote the boundary of a smooth compactification of $C_H$ by $S_H$; for simplicity, we write $S=S_{\pi_1^\et(C,\bp{c})}$.

We denote the genus (of a smooth compactification of) $C$ by $g(C)$.
If $g(C) > 0$ we are done, so assume that $C$ is rational. By the hyperbolicity  assumption, we have $n:= \# S (\bar k)\ge 3$, where $\bar k$ denotes  an algebraic closure of $k$.

We set  $N = \pi_1^\et(C_{\bar k},\bp{c})$. The group
\[
G = N/[N,N]N^\ell
\]
is the Galois group of the maximal elementary abelian $\ell$-covering of $C_{\bar k}$. Since $C_{\bar k}$ is isomorphic to the complement of $n$ closed points in $\P^1_{\bar k}$, this group is an $(n-1)$-dimensional $\F_\ell$-vector space and  the inertia groups in $G$ of the points in $S(\bar k)$ are pairwise distinct.
Now we consider
\[
H = \langle [N,N]N^\ell , \sigma \rangle,
\]
which is an open subgroup of $\pi_1^\et(C,\bp{c})$. The curve $C_H$ is geometrically connected over $k$ and $C_{H,\bar k} \to C_{\bar k}$ is the elementary abelian $\ell$-covering associated to the quotient $G \surj G/(\langle \sigma\rangle \cap N)$. In particular,  $C_H \to C$ is $\ell$-geometric.

Let $n_H= \# S_H(\bar k)$, and let $d$ be the degree of $C_{H,\bar k} \to C_{\bar k}$, so $d \geq \ell^{n-1}/\ell \geq \ell$. The structure of inertia groups in $G$ implies, because $\langle \sigma\rangle \cap N$ is cyclic, that all but at most one of the points in $S(\bar k)$ are ramified in $C_{H,\bar k} \to C_{\bar k}$.
This shows
\begin{equation} \label{eq:boundarycount}
n_H \leq \frac{d}{\ell}(n-1) + d.
\end{equation}
By the Riemann--Hurwitz formula,  the Euler--Poincar\'{e} characteristic is multiplicative in \'{e}tale coverings which are at most tamely ramified along the boundary of a smooth compactification. Hence we obtain
\begin{equation} \label{eq:multiplicativeEulerChar}
2g(C_H) + n_H - 2 = d \cdot (2g(C) + n - 2) = d \cdot (n - 2).
\end{equation}
If $g(C_H)>0$ we are done. Otherwise, \eqref{eq:multiplicativeEulerChar} shows $n_H \geq d(n-2) + 2 \geq \ell +2 \geq 4$. Replacing $C$ by $C_H$, we thus can assume that $n \geq 4$. Repeating this step, we either obtain $g(C_H)>0$, or $g(C_H) = 0$, and $n_H \geq d(n-2) + 2 \geq 2 \ell + 2 \geq 6$. We may therefore assume  $n \geq 6$, and in this case \eqref{eq:boundarycount} and \eqref{eq:multiplicativeEulerChar} imply
\[
2g(C_H) - 2 = d(n-2) - n_H \geq d(n-2) - \frac{d}{\ell}(n-1) - d = \frac{d}{\ell} \cdot \big((\ell-1)(n-3) - 2\big) \geq 1,
\]
showing that $C_H$ is not rational.
\end{proof}

\begin{proposition}
Let $k$ be a finitely generated field extension of\/ $\Q$ and let $X$ be a smooth geometrically connected variety over $k$
which can be embedded as a locally closed subscheme of a product of hyperbolic curves over $k$.

Then, for $\gamma \in \Aut_{\Ho(\pross) \downarrow k_\et}(X_\et)$ with $r(\gamma) = \id_X$, the induced map $\ph = \pi_1(\gamma) \in \OutAut_{\Gal_k}(\pi_1(X))$ is class preserving.
\end{proposition}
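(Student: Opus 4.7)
The plan is to drop the hypothesis (NR) from Proposition~\ref{prop:NR-classpreserving} by a covering argument based on Lemma~\ref{goodcov-ell}. Given $g \in \pi_1^\et(X, \bp{x})$, I will construct a factor-dominant embedding $\iota'' : X'' \inj W'$ into a product of non-rational hyperbolic curves, where $X'' \to X$ is a connected étale cover with $g$ contained in the open subgroup $\pi_1^\et(X'', \bp{x''}) \subset \pi_1^\et(X, \bp{x})$, and $X''$ is geometrically connected and smooth over a finitely generated extension of $\Q$. A suitable lift of $\gamma$ to $X''$ will then satisfy the hypotheses of Proposition~\ref{prop:NR-classpreserving}, and class preservation at $g$ will follow by transport through the inclusion $\pi_1^\et(X'', \bp{x''}) \subset \pi_1^\et(X, \bp{x})$.

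To construct $X''$, fix a factor-dominant embedding $\iota : X \inj W = C_1 \times \cdots \times C_n$, and denote by $g_i \in \pi_1^\et(C_i, \bp{c}_i)$ the image of $g$. Applying Lemma~\ref{goodcov-ell} to each $g_i$ with an auxiliary prime $\ell$ produces open subgroups $H_i \subset \pi_1^\et(C_i, \bp{c}_i)$ containing $g_i$ whose associated connected étale coverings $C_i' \to C_i$ are defined over $k$ and have smooth compactifications of positive genus. Set $W' := C_1' \times_k \cdots \times_k C_n'$; this is étale over $W$. Take $X''$ to be the connected component of $X \times_W W'$ containing the base point $\bp{x''}$ lifting $\bp{x}$. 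Then the pullback of $\iota$ is a factor-dominant embedding $\iota'' : X'' \inj W'$ into a product of non-rational hyperbolic curves, and $g \in \pi_1^\et(X'', \bp{x''})$ by construction. Geometric connectedness of $X''$ over its field of definition is arranged by passing, if necessary, to a finite extension $k'/k$, which remains finitely generated over $\Q$; Lemma~\ref{basechange} together with Proposition~\ref{basiswechselfuerr} applied to $\Spec(k') \to \Spec(k)$ preserve the hypothesis $r(\gamma) = \id_X$ after base change.

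Next, because $r(\gamma) = \id_X$ implies $\iota_\et \gamma = \iota_\et$ in $\Ho(\pross) \downarrow k_\et$ (as derived at the start of the proof of Proposition~\ref{prop:NR-normal}), Lemma~\ref{basechange} applied to the étale cover $W' \to W$ produces an automorphism $\gamma'' \in \Aut_{\Ho(\pross)\downarrow k_\et}(X''_\et)$ with $p_\et \gamma'' = \gamma p_\et$, where $p : X'' \to X$ is the projection. Proposition~\ref{basiswechselfuerr} then yields $p \circ r(\gamma'') = r(\gamma) \circ p = p$, so $r(\gamma'') =: \delta$ is a deck transformation of $X'' \to X$. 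Modifying $\gamma''$ to $\gamma'' \circ (\delta^{-1})_\et$ — which still lifts $\gamma$ along $p$ — the retract and functoriality properties from Theorem~\ref{thm:main-retraction}~\ref{thmitem:retraction},~\ref{thmitem:r_functorial} give $r(\gamma'') = \id_{X''}$. Proposition~\ref{prop:NR-classpreserving} now applies to $(X'', \iota'', \gamma'')$ and produces $h \in \pi_1^\et(X'', \bp{x''})$ with $\pi_1(\gamma'')(g) = h g h^{-1}$. Viewing $h$ as an element of $\pi_1^\et(X, \bp{x})$ and accounting for the inner-automorphism adjustment by the deck transformation $\delta^{-1}$ (which leaves the outer class of $\pi_1(\gamma)$ unchanged) delivers the desired class preservation at $g$.

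The main obstacle is the careful tracking of base points and representatives of outer automorphisms. In particular, that a suitable representative of $\pi_1(\gamma)$ actually preserves the (generally non-normal) open subgroup $\pi_1^\et(X'', \bp{x''})$ is ensured by the existence of the lift $\gamma''$ supplied by Lemma~\ref{basechange}, and the connectedness of the fibre product $X \times_W W'$ is handled by the controlled finite base field extension above. The core geometric input — that for any $g \in \pi_1^\et(X)$ one can find an étale cover with non-rational hyperbolic factors still containing $g$ — is precisely Lemma~\ref{goodcov-ell}, so the reduction to the already established non-rational case is clean.
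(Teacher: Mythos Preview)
Your overall strategy --- reduce to the case (NR) via Lemma~\ref{goodcov-ell} --- is the same as the paper's, but there is a real gap in the execution. You apply Lemma~\ref{basechange} to the covering $W' \to W$ and then assert that the resulting $\gamma''$ lies in $\Aut_{\Ho(\pross)\downarrow k_\et}(X''_\et)$, where $X''$ is your chosen connected component of $X \times_W W'$. Lemma~\ref{basechange}, however, only furnishes an automorphism of the full fibre product $(X \times_W W')_\et$; when this scheme is disconnected, $\gamma''$ may permute its components, and nothing in your argument forces it to preserve $X''$. Your subsequent deck-transformation adjustment does not help here: you invoke it only after assuming $\gamma''$ already lives on $X''$, and in any case $W' \to W$ is not Galois, so the available deck transformations of $X \times_W W' \to X$ need not act transitively on components.

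The paper avoids taking components altogether by ensuring that the fibre product is connected from the outset. This is precisely where the $\ell$-geometric clause in Lemma~\ref{goodcov-ell} and a specific choice of $\ell$ enter: one treats the rational factors one at a time and, for the current factor $C_i$, picks $\ell$ strictly larger than the index in $\pi_1^\et(C_i,\bp{c}_i)$ of the normal core $N$ of the (open) image of $\pi_1^\et(X,\bp{x})$. An $\ell$-geometric covering $C_i' \to C_i$ then has geometric degree an $\ell$-power, while the index of the image of $\pi_1^\et(X_{\bar k})$ in $\pi_1^\et(C_{i,\bar k})$ is bounded by $(\pi_1^\et(C_i):N) < \ell$ and hence prime to $\ell$; together these force $X \times_{C_i} C_i'$ to be geometrically connected over the constant field $k'$ of $C_i'$. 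Recursion over the rational factors produces a genuinely connected $X' = X \times_W W'$, so Lemma~\ref{basechange} now yields directly $\iota'_\et \gamma' = \iota'_\et$, whence $r(\gamma') = \id_{X'}$ by the very construction of $r$, and Proposition~\ref{prop:NR-classpreserving} applies without any further adjustment.
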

\begin{proof}
Let $\iota: X \hookrightarrow W= C_1 \times_k \cdots \times_k C_n$ be a factor-dominant embedding into a product of hyperbolic curves over $k$. We choose a geometric point $\bar x$ of $X$, and put $\bar w=\iota(\bar x)$. Let  $\sigma \in \pi_1(X,\bp{x})$ be arbitrary.

If none of the $C_i$ is rational, everything follows from Proposition~\ref{prop:NR-classpreserving}. Assume that one of the $C_i$, say $C_1$, is rational. Put $\bar c_1= p_1(\bar w)$ and let $N$ be the maximal normal subgroup of $\pi_1^\et(C_1,\bar c_1)$ contained in the image of $\pi_1^\et(X,\bar x)$ (which is open by assumption). We choose a prime number $\ell> (\pi_1^\et(C_1,\bar c_1):N)$.  Lemma~\ref{goodcov-ell} provides an $\ell$-geometric connected \'{e}tale covering $(C'_1,\bar c'_1)\to (C_1,\bar c_1)$ such that $\pi_1^\et(p_1\iota)(\sigma)\in \pi_1^\et(C'_1,\bar c'_1)$ and $C_1'$ has positive genus. Let $k'$ be the constant field of $C_1'$. Then
\[
X'= X\times_{C_1} C_1' = X_{k'} \times_{C_{1,k'}} C_1' = X_{k'} \times_{(C_{1,k'}\times_{k'} \cdots \times_{k'} C_{n,k'})} (C_1' \times_{k'} C_{2,k'} \times_{k'} \cdots \times_{k'} C_{n,k'})
\]
is geometrically connected over $k'$.
Proceeding recursively, we find a finite connected \'{e}tale covering $(W',\bp{w}')\to (W,\bp{w})$ such that

\begin{enumeral}
\item $W'=C'_1\times_{k'} \cdots \times_{k'} C'_n$, where $k'$ is a finite extension field of $k$, and the $C_i'$ are smooth geometrically connected curves over $k'$ with compactifications of genus $\ge 1$,
\item $X'=X\times_W W'$ is geometrically connected over $k'$,
\item $\sigma \in \pi_1^\et(X',\bar x')$, where $\bp{x}' := (\bp{x},\bp{w}')$.
\end{enumeral}
Let $\iota': X'\hookrightarrow W'$ be the immersion induced by $\iota$.  By Lemma~\ref{basechange}, there exists $\gamma' \in \Aut_{\Ho(\pross)}(X'_\et)$ such that the diagram
\[
\begin{tikzcd}[column sep=large]
X_\et  \arrow{d}[swap]{\gamma}& X'_\et \arrow{d}[swap]{\gamma'}\arrow{r}{\iota'_\et}  \lar& W'_\et \dar[-, double equal sign distance]\\
X_\et & X'_\et \arrow{r}{\iota'_\et} \lar&W'_\et
\end{tikzcd}
\]
commutes in $\Ho(\pross)$. By Theorem~\ref{modpi1abs}, we can lift $\gamma$ to $\gamma_0\in \Aut_{\Ho(\pross_*)}(X_\et,\bar x_\et)$ and $\gamma'$ to $\gamma'_0\in  \Aut_{\Ho(\pross_*)}(X'_\et,\bar x'_\et)$, and there exists an element $\tau \in \pi_1^\et(X,\bar x)$ with
\[
\pi_1^\et(\gamma_0)(\sigma)= \tau\pi_1^\et(\gamma'_0)(\sigma) \tau^{-1}.
\]
Furthermore, by Proposition~\ref{prop:NR-classpreserving}, there exists $\tau'\in \pi_1^\et(X',\bar x')$ with $\pi_1^\et(\gamma'_0)(\sigma)=\tau'\sigma{\tau'}^{-1}$. Hence $\pi_1^\et(\gamma_0)(\sigma)$ is a conjugate of $\sigma$. This completes the proof.
\end{proof}

In order to complete the proof of Theorem~\ref{kernel-thm}, it remains to show that $\ph$ is class preserving by elements of $\pi_1^\et(X_{\bar k})$. Since for every finite extension $k'$ of $k$ in $\bar k$, the arguments  above also apply to the automorphism $\ph'$ of $\pi_1^\et(X_{k'})$ induced by $\ph$, this follows from the next lemma.

\begin{lemma}
Let
\[
1 \lang \bar G \lang G \stackrel{p}{\lang} \Gamma \lang 1
\]
be an exact sequence of pro-finite groups and let $\ph\in \Aut_\Gamma(G)$. Assume that for every open subgroup $\Gamma'\subset \Gamma$ the induced automorphism $\ph'\in \Aut_{\Gamma'}(p^{-1}(\Gamma'))$ is class preserving. Then $\ph$ is class preserving by elements of $\bar G$.
\end{lemma}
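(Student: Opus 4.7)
The plan is a two-step compactness plus modification argument. First, I would fix $g \in G$ and consider the closed set $C(g) = \{h \in G : \varphi(g) = h g h^{-1}\}$ in the compact profinite group $G$. For every open subgroup $\Gamma' \subset \Gamma$ containing $p(g)$, the hypothesis applied to $\varphi'$ on $p^{-1}(\Gamma')$ furnishes an element of $C(g) \cap p^{-1}(\Gamma')$, so this intersection is nonempty. The open subgroups of $\Gamma$ containing $p(g)$ form a filtered system closed under finite intersection, and their total intersection equals the closure $\overline{\langle p(g) \rangle}$ of the cyclic subgroup generated by $p(g)$, since every closed subgroup of a profinite group is an intersection of open subgroups containing it. The finite intersection property in the compact space $G$ then produces an element $h_0 \in C(g)$ with $p(h_0) \in \overline{\langle p(g) \rangle}$.

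Second, I would remove the remaining nontriviality of $p(h_0)$ by exploiting the procyclic subgroup $\overline{\langle g \rangle} \subset G$. Being abelian, it is contained in the centralizer $Z_G(g)$, and its image $p(\overline{\langle g \rangle})$ is a closed subgroup of $\Gamma$ containing $\langle p(g) \rangle$ and contained in $\overline{\langle p(g) \rangle}$ (since the image of the closure lies in the closure of the image), hence equal to $\overline{\langle p(g) \rangle}$. So I can choose $\tilde g \in \overline{\langle g \rangle}$ with $p(\tilde g) = p(h_0)$, and set $h := h_0 \tilde g^{-1}$. Then $p(h) = 1$, so $h \in \bar G$, and since $\tilde g$ commutes with $g$ we obtain
\[
h g h^{-1} = h_0 \tilde g^{-1} g \tilde g h_0^{-1} = h_0 g h_0^{-1} = \varphi(g),
\]
exhibiting the required conjugator $h \in \bar G$.

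The main obstacle is the gap between what compactness alone gives---a conjugator $h_0$ with $p(h_0) \in \overline{\langle p(g) \rangle}$, which is typically nontrivial---and the target condition $h \in \bar G$. The resolution is that conjugators are only determined up to right multiplication by $Z_G(g)$, and the procyclic subgroup $\overline{\langle g \rangle} \subset Z_G(g)$ surjects via $p$ onto $\overline{\langle p(g) \rangle}$, providing precisely the correction needed to kill the $\Gamma$-component of $p(h_0)$.
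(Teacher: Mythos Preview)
Your proof is correct and follows essentially the same route as the paper. The paper fixes $x\in G$, sets $H=\langle x,\bar G\rangle$, and uses compactness over the open subgroups $U\supset H$ of $G$ (which are exactly your $p^{-1}(\Gamma')$) to find a conjugator $h\in H$; it then writes $h=\bar g\,x^m$ with $m\in\hat{\bZ}$ and observes $(\bar g x^m)x(\bar g x^m)^{-1}=\bar g x\bar g^{-1}$, which is precisely your correction step with $\tilde g=x^m\in\overline{\langle x\rangle}$.
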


\begin{proof}
Let $x\in G$ be arbitrary. We consider the closed subgroup $H=H_x \subset G$ generated by $x$ and $\bar G$. Since $\bar G$ is normal, the image of $H$ in $\Gamma$ is the cyclic group generated by the image of $x$.

For every open subgroup $U\subset G$ with  $H \subset U$ the set
\[
C_{U,x} = \{u \in U \mid \ph(x) = uxu^{-1}\}
\]
is nonempty and compact. Hence also the set
\[
C_{H,x} = \{h \in H \mid \ph(x) = hxh^{-1}\}  = \bigcap_{H \subset U} C_{U,x}
\]
is nonempty and therefore $\ph(x)=hxh^{-1}$ for some $h\in H$. The element $h\in H$ can be written as $h= \bar g x^m$ with $\bar g \in \bar G$ and $m \in \hat{\Z}$.
We obtain
\[
\ph(x) = (\bar{g} x^m) x (\bar{g} x^m)^{-1} = \bar g x \bar g^{-1}. \qedhere
\]
\end{proof}

\section{Strongly hyperbolic Artin neighbourhoods} \label{sec:strongArtin}

In this section we prove Theorem~\ref{anabkpi1}.

\begin{definition}\label{stronghyperbolicartin:def}
A \defobjekt{strongly hyperbolic Artin neighbourhood} is a smooth variety $X$ over $k$ such that there exists a sequence of morphisms
\[
X=X_n \to X_{n-1} \to \cdots \to X_1 \to X_0=\Spec(k)
\]
such that for all $i$
\begin{enumerate}
\item[\rm (i)] the morphism $X_i \to X_{i-1}$ is an elementary fibration into hyperbolic curves, and
\item[\rm (ii)] $X_i$ admits an embedding $X_i \hookrightarrow W_i$ into a product of hyperbolic curves.
\end{enumerate}
\end{definition}

Prominent examples of strongly hyperbolic Artin neighbourhoods are the moduli spaces $\cM_{0,n}$ of curves of genus $0$ with $n$ marked points for $n\ge 4$. The tower of elementary fibrations is the one by forgetting one marked point after the other, and the embedding into a product of hyperbolic curves comes from forgetting all marked points except for $4$ in all possible ways.

A recursive application of Proposition~\ref{prop:elemfibrationpreservesKpi1} shows that strongly hyperbolic Artin neighbourhoods over fields of characteristic zero are of type $K(\pi,1)$.

\begin{theorem}
\label{thm:goodartineighbourhoods}
Let $k$ be a finitely generated field extension of\/ $\bQ$ and let $X$ be a strongly hyperbolic Artin neighbourhood over $k$.

Let $\gamma \in \Aut_{\Ho(\pross) \downarrow k_\et} (X_\et)$ be an automorphism with $r(\gamma)=\id_X$. Then $\gamma=\id_{X_\et}$.
\end{theorem}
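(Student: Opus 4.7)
\smallskip\noindent\textbf{Plan.}
First, iterating Proposition~\ref{prop:elemfibrationpreservesKpi1} shows that every strongly hyperbolic Artin neighbourhood is of type $K(\pi,1)$. Moreover, any finitely generated extension of $\bQ$ embeds (after choosing a transcendence basis and extending along $\bQ \hookrightarrow \bQ_p$) into a finitely generated extension of $\bQ_p$, so $k$ is sub-$p$-adic and $\Gal_k$ is strongly center-free. Combining Propositions~\ref{prop:pointed_schemes2} and~\ref{prop:BGisKpi1} yields a canonical bijection
\[
\Aut_{\Ho(\pross)\downarrow k_\et}(X_\et) \xrightarrow{\sim} \OutAut_{\Gal_k}\big(\pi_1^\et(X)\big), \qquad \gamma \mapsto \ph := \pi_1(\gamma),
\]
so proving $\gamma=\id_{X_\et}$ is equivalent to proving $\ph = 1$. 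By Theorem~\ref{kernel-thm}, the hypothesis $r(\gamma)=\id_X$ already implies that $\ph$ is class-preserving by elements of $\pi_1^\et(X_{\bar k})$. The whole task therefore reduces to showing that such an outer automorphism must be trivial.

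We argue by induction on the length $n$ of the tower $X = X_n \to X_{n-1} \to \cdots \to X_0 = \Spec(k)$ witnessing that $X$ is strongly hyperbolic Artin; the case $n=0$ is trivial. For the inductive step, consider the elementary fibration $f: X \to X'$ with $X' = X_{n-1}$. Its long exact sequence of \'{e}tale homotopy groups (as in the proof of Proposition~\ref{prop:elemfibrationpreservesKpi1}) gives the short exact sequence
\[
1 \to \Pi \to \pi_1^\et(X) \to \pi_1^\et(X') \to 1,
\]
where $\Pi = \pi_1^\et(X_{\bar\eta'})$ is the fundamental group of the geometric generic fibre, itself a hyperbolic curve. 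Since $\Pi$ is normal in $\pi_1^\et(X_{\bar k})$, class-preservation forces $\ph$ to preserve $\Pi$ setwise, hence to descend to $\bar\ph \in \OutAut_{\Gal_k}(\pi_1^\et(X'))$, likewise class-preserving by $\pi_1^\et(X'_{\bar k})$. Under the bijection of the first paragraph, $\bar\ph$ corresponds to $\gamma' \in \Aut_{\Ho(\pross)\downarrow k_\et}(X'_\et)$ with $f_\et\circ\gamma = \gamma'\circ f_\et$, and Proposition~\ref{basiswechselfuerr} applied to the data $(f,f,\gamma,\gamma')$ yields $f\circ r(\gamma) = r(\gamma')\circ f$. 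Combined with $r(\gamma)=\id_X$ and the surjectivity of the elementary fibration $f$, this gives $r(\gamma')=\id_{X'}$, and the induction hypothesis applied to $X'$ yields $\gamma'=\id_{X'_\et}$, equivalently $\bar\ph = 1$. Choosing an appropriate lift to $\pi_1^\et(X_{\bar k})$ of a conjugator in $\pi_1^\et(X'_{\bar k})$ and modifying the representative, we may therefore assume that $\ph$ admits a representative $\tilde\ph \in \Aut_{\Gal_k}(\pi_1^\et(X,\bp x))$ inducing the identity on $\pi_1^\et(X',\bp x')$.

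To finish, we exploit the factor-dominant embedding $\iota: X \inj W = C_1\times\cdots\times C_r$ into a product of hyperbolic curves, guaranteed by the strongly hyperbolic hypothesis. For each projection $p_i: W \to C_i$, Theorem~\ref{thm:main-retraction}~\ref{thmitem:r_and_map_to_hyperbolic_curve} together with $r(\gamma)=\id_X$ gives $(p_i\iota)_\et\circ\gamma = (p_i\iota)_\et$ in $\Ho(\pross)\downarrow k_\et$. Translated via Mochizuki's Theorem~\ref{mochi} and Proposition~\ref{prop:BGisKpi1}, this means that for each $i$ there exists $c_i \in \pi_1^\et(C_{i,\bar k})$ with
\[
(p_i\iota)_\ast\circ\tilde\ph = \Inn(c_i)\circ(p_i\iota)_\ast.
\]
Using that $\tilde\ph$ has already been trivialized modulo $\Pi$, the projection-wise rigidity above --- combined with the Galois equivariance of $\tilde\ph$ and the rigidity of hyperbolic curves from Theorem~\ref{mochi} --- forces $\tilde\ph$ to be inner by a single element of $\pi_1^\et(X_{\bar k})$. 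This proves $\ph = 1$ in $\OutAut_{\Gal_k}(\pi_1^\et(X))$, hence $\gamma = \id_{X_\et}$.

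\emph{Main obstacle.} The subtlest part is the final step, pulling the projection-wise conclusions back to a uniform inner conjugator in $\pi_1^\et(X_{\bar k})$. The class-preservation from Theorem~\ref{kernel-thm} only yields conjugators in the ambient group $\pi_1^\et(X_{\bar k})$ and not in the fibre group $\Pi$, so a direct appeal to Grossman-style rigidity of surface group fundamental groups on $\tilde\ph|_\Pi$ is unavailable; one must combine it with the rich supply of dominant maps to hyperbolic curves furnished by the strongly hyperbolic hypothesis, which is precisely what the definition of strongly hyperbolic Artin neighbourhood is designed to provide.
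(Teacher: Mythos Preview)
Your inductive scaffolding matches the paper almost exactly: reducing to an outer $\pi_1$-statement via the $K(\pi,1)$ property, invoking Theorem~\ref{kernel-thm} for class-preservation, using the last elementary fibration $f:X\to X'$, showing the induced $\gamma'$ on $X'_\et$ satisfies $r(\gamma')=\id_{X'}$ via Proposition~\ref{basiswechselfuerr}, applying induction to get $\bar\ph=1$, and then normalising so that a representative $\tilde\ph$ is the identity on $\pi_1^\et(X',\bp{x}')$. All of this is correct and is precisely what the paper does.

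The genuine gap is your final step. From $(p_i\iota)_\et\gamma=(p_i\iota)_\et$ you deduce, for each $i$, that $(p_i\iota)_\ast\tilde\ph=\Inn(c_i)\,(p_i\iota)_\ast$ with $c_i\in\pi_1^\et(C_{i,\bar k})$. Packaging these together at best yields $\iota_\ast\tilde\ph=\Inn(c)\,\iota_\ast$ for some $c\in\pi_1^\et(W_{\bar k})$. But $\iota_\ast$ need not be injective, and even if it were, the conjugator $c$ lives in $\pi_1^\et(W_{\bar k})$ and has no reason to lie in the image of $\pi_1^\et(X_{\bar k})$. Class-preservation gives you conjugators element by element, not a single global one, and you yourself note that Grossman-type rigidity on $\Pi$ is unavailable here. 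No argument is supplied, only the assertion that these ingredients ``force'' innerness; as stated, they do not.

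The paper closes the gap by a different idea. Instead of looking at the factor curves $C_i$, it applies Mochizuki's theorem to the \emph{generic fibre}. Writing $K=\kappa(\eta)$ for the function field of $X'$, the curve $X_K=X\times_{X'}\eta$ is hyperbolic over the finitely generated field $K$, and the fibre-product description
\[
\pi_1^\et(X_K,\bp{x})=\pi_1^\et(X,\bp{x})\times_{\pi_1^\et(X',\bp{x}')}\Gal_K
\]
lets one lift $\tilde\ph$ (now trivial on $\pi_1^\et(X')$) to $\ph_{\eta,0}=(\tilde\ph,\id)$ on $\pi_1^\et(X_K)$. Theorem~\ref{mochi0} then produces an actual $K$-isomorphism $g_\eta:X_K\to X_K$ with $\pi_1^\et(g_\eta)=\ph_\eta$. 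Since $\underline{\Isom}(X/X',X/X')\to X'$ is finite unramified for an elementary fibration in hyperbolic curves, $g_\eta$ extends uniquely to $g\in\Aut_{X'}(X)$, and surjectivity of $\pi_1^\et(X_K)\to\pi_1^\et(X)$ gives $\pi_1^\et(g)=\ph$, hence $g_\et=\gamma$. Finally $g=r(g_\et)=r(\gamma)=\id_X$, so $\gamma=\id_{X_\et}$. This is the missing mechanism: one does not argue group-theoretically that $\tilde\ph$ is inner, one geometrises it via Mochizuki on the fibre and then kills the resulting automorphism with the retraction.
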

\begin{proof}
We choose a geometric point $\bar x$ of $X$ lying over the generic point. By Proposition~\ref{prop:pointed_schemes2}, there is a lift $\gamma_0\in \Aut_{\Ho(\pross_*) \downarrow (k_\et,\bar k_\et)} (X_\et,\bar x_\et)$ of $\gamma$. Let $\ph_0=\pi_1(\gamma_0)$. Since $X_\et$ is of type $K(\pi,1)$, it suffices to show that $\ph_0$ is an inner automorphism of $\pi_1^\et(X,\bar x)$ induced by an element of $\pi_1^\et(X_{\bar k},\bar x)$. By Theorem~\ref{kernel-thm}, $\ph_0$ is class-preserving by elements of $\pi_1^\et(X_{\bar k},\bar x)$.

We prove the theorem  by induction on the dimension of $X$. The case $\dim X=0$ is trivial, hence we may assume $\dim X\ge 1$.
Let $f:X \to Y$ be the final fibration step, i.e., an elementary fibration into hyperbolic curves with~$Y$ again a strongly hyperbolic Artin neighbourhood.  By induction, the theorem  holds for $Y$.
Let $\bar y=f(\bar x)$.
Since the higher homotopy groups of $Y$ vanish, the long exact homotopy sequence  \cite{Fr} Thm.~11.5, provides the exact sequence
\[
1 \to \pi_1^\et(X_{\bar y},\bar x) \to \pi_1^\et(X,\bar x) \to \pi_1^\et(Y, \bar y) \to 1.
\]
Because $\ph_0$ is class preserving, it preserves the normal subgroup $\Delta = \pi_1^\et(X_{\bar y},\bar x)$ and induces  a $\Gal_k$-automorphism
\[
\psi_0: \pi_1^\et(Y,\bar y) \to \pi_1^\et(Y,\bar y).
\]
Since $Y$ is of type $K(\pi,1)$, there is an element $\delta_0\in \Aut_{\Ho(\pross_*) \downarrow (k_\et,\bar k_\et)} (Y_\et,\bar y_\et)$ corresponding to $\psi_0$.
We denote by  $\delta\in\Aut_{\Ho(\pross) \downarrow k_\et} (Y_\et)$  the underlying morphism of $\delta_0$ and by $\ph=\pi_1^\et(\gamma)$ the outer group homomorphism lying under $\ph_0$.

We have $\delta f_\et = f_\et  \gamma$, and by Proposition~\ref{basiswechselfuerr}, we obtain
\[
f r(\delta) = r(\gamma)  f = f.
\]
Hence $r(\delta)= \id_Y$ and, by induction, $\delta=\id_{Y_\et}$.  Therefore $\psi_0=\pi_1(\delta_0)$ is an inner automorphism of $\pi_1^\et(Y,\bar y)$ given by an element of $\pi_1(Y_{\bar k},\bar y)$. After composing $\ph_0$ with a suitable inner automorphism given by an element of $\pi_1^\et(X_{\bar k},\bar x)$, we may assume that $\psi_0 = \id$.

\smallskip

Let $\eta \in Y$ be the generic point with residue field $K = \kappa(\eta)$, the function field of $Y$. The base change $X_K = X \times_Y \eta$ is a hyperbolic curve over $K$ and we obtain the following  diagram with exact rows
\[
\begin{tikzcd}[column sep=small]
1 \arrow{r} &  \Delta \arrow{r} \arrow[-, double equal sign distance]{d} &  \pi_1^\et(X_K,\bar x) \arrow{r} \arrow{d} &  \Gal_K \arrow{r} \arrow[->>]{d} & 1\phantom{.}\\
1 \arrow{r} &  \Delta \arrow{r} &  \pi_1^\et(X,\bar x) \arrow{r} &  \pi_1^\et(Y,\bar y) \arrow{r} & 1.
\end{tikzcd}
\]
In particular, the right square is a fibre square. Since we have arranged that the automorphism $\ph_0$ induces the identity on $\pi_1^\et(Y,\bar y)$, we may lift it as
\[
\ph_{\eta,0} = (\ph_0, \id) : \pi_1^\et(X_K,\bar x) \to \pi_1^\et(X_K,\bar x).
\]
Now we use anabelian geometry of hyperbolic curves \cite{mochizuki:localpro-p}, Thm.~A. Since with $k$ also $K$ is finitely generated over $\bQ$,
the outer isomorphism $\ph_\eta$ underlying $\ph_{\eta,0}$ comes from geometry: there is a $K$-isomorphism
\[
g_\eta : X_K \to X_K
\]
with $\ph_\eta=\pi_1^\et(g_\eta)$.
Since $X \to Y$ is an elementary fibration in hyperbolic curves, the Isom-scheme
\[
\underline{\Isom}(X/Y,X/Y) \to Y
\]
is finite and unramified by \cite{DM} Thm.~1.11.  Therefore the point
\[
g_\eta  \in \underline{\Isom}(X/Y,X/Y)(K)
\]
extends uniquely to a point
\[
g \in \underline{\Isom}(X/Y,X/Y)(Y),
\]
in other words a $Y$-isomorphism $g :  X \to X$. Since $\pi_1^\et(X_K,  \bar x) \to \pi_1^\et(X, \bar x)$ is surjective,  it follows that $\pi_1^\et(g) = \ph$, hence
\[
g_\et=\gamma
\]
in $\Ho(\pross)\downarrow k_\et$. This implies $g=r(g_\et)= r(\gamma)=\id_X$, and therefore $\gamma=g_\et=\id_{X_\et}$.
\end{proof}

\begin{proof}[Proof of Theorem~\ref{anabkpi1} and Corollary~\ref{anabkpi1kor}]
Let $X$ and $Y$ be strongly hyperbolic Artin neighbourhoods over a finitely generated field extension $k$ of\/ $\Q$. In order to prove Theorem~\ref{anabkpi1}, it suffices to show that the retraction $r$ of Theorem~\ref{main} is an inverse to the map $f \mapsto f_\et$
\[
\Isom_k(X,Y) \to \Isom_{\Ho(\pross) \downarrow k_\et}(X_\et,Y_\et).
\]
For that it suffices to show that $r$ is injective. Let $\alpha, \beta \in \Isom_{\Ho(\pross) \downarrow k_\et}(X_\et,Y_\et)$ with $r(\alpha) = r(\beta)$.  Then $r(\gamma) = \id_X$ for $\gamma = \beta^{-1} \circ \alpha$, which implies $\gamma = \id_{X_\et}$ by
Theorem~\ref{thm:goodartineighbourhoods}, hence $\alpha = \beta$. This proves Theorem~\ref{anabkpi1}.

\smallskip
Next we choose any geometric base points $\bp{x}$ of $X$ and $\bp{y}$ of $Y$. Since $(Y_\et,\bp{y}_\et)$ and $(k_\et,\bar k_\et)$ are of type $K(\pi,1)$, Proposition~\ref{prop:BGisKpi1} shows that the map
\[
\Mor_{\Ho(\pross_\ast) \downarrow (k_\et,\bar k_\et)}\big((X_\et,\bp{x}_\et),(Y_\et,\bp{y}_\et)\big)
\stackrel{\sim}{\longrightarrow}
\Mor_{\Gal_k}(\pi^\et_1(X,\bp{x}),\pi_1^\et(Y,\bp{y})).
\]
is bijective. This map is equivariant with respect to the action by $\pi_1^{\et}(Y_{\bar k},\bp{y}) \cong \pi_1^\tp(Y_{\bar k, \et},\bp{y}_\et)$ on the left hand side and composition by conjugation on the right hand side, see Lemma~\ref{lem:monodromy_acts_by_conjugation}.  By
Proposition~\ref{prop:pointed_schemes2}, the induced map on orbits restricted to isomorphisms is  the bijection
\[
\Isom_{\Ho(\pross) \downarrow k_\et}(X_\et,Y_\et)  \stackrel{\sim}{\longrightarrow}
 \OutIsom_{\Gal_k}\big(\pi_1^\et(X),\pi_1^\et(Y)\big).
\]
We conclude that Corollary~\ref{anabkpi1kor} is equivalent to Theorem~\ref{anabkpi1}.
\end{proof}
Finally, Corollary~\ref{basisofneighbourhoods} follows from the next lemma.
\begin{lemma} \label{lem:enoughstronglyhyperbolicArtinN}
Every  point of a smooth, geometrically connected variety over an infinite perfect field $k$ has a fundamental system of Zariski-open strongly hyperbolic Artin neighbourhoods.
\end{lemma}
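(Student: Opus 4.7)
The plan is to argue by induction on $d = \dim X$. The case $d=0$ is trivial since then $X = \Spec(k)$ and the empty tower together with $W_0 = \Spec(k)$ (a vacuous product of hyperbolic curves) witnesses that $X$ is a strongly hyperbolic Artin neighbourhood.

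For $d \geq 1$, given $x \in X$ and a prescribed Zariski-neighbourhood $U_0 \ni x$, I would first invoke Artin's existence theorem for good neighbourhoods (\cite{sga4}, XI, Prop.~3.3), whose applicability relies on $k$ being infinite and perfect, to produce a Zariski-open $U \subset U_0$ containing $x$ together with an elementary fibration $f: U \to V$ onto a smooth, geometrically connected variety $V$ over $k$ of dimension $d-1$. Applying the inductive hypothesis to $f(x) \in V$ and shrinking, I may further assume $V$ is itself a strongly hyperbolic Artin neighbourhood equipped with an embedding $\iota_V: V \hookrightarrow W_{d-1}$ into a product of hyperbolic curves. After replacing $U$ by $f^{-1}(V)$, two issues remain: (a) upgrade the fibres of $f$ to hyperbolic curves, and (b) produce an embedding $\iota_U: U \hookrightarrow W_d$ of $U$ into a product of hyperbolic curves.

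For (a), let $\bar U \to V$ be the smooth proper compactification of $f$ and $D = \bar U \setminus U \to V$ the finite \'{e}tale boundary. I would enlarge $D$ by a suitable relative divisor $D' \subset \bar U$, finite \'{e}tale over $V$, disjoint from $D$, and missing $x$. Such $D'$ is produced by generic hyperplane sections in a projective embedding of $\bar U$ (possible since $k$ is infinite), after shrinking $V$ around $f(x)$ so that the resulting divisor is smooth and \'{e}tale over $V$. The replacement $D \rightsquigarrow D \cup D'$ preserves the elementary fibration structure and, by adding enough punctures, forces the geometric Euler characteristic of the fibres to become negative, hence the fibres are hyperbolic curves.

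For (b), I would construct finitely many $k$-morphisms $h_1, \ldots, h_m: U \to C_i$ to hyperbolic curves $C_i$ such that the combined map $(f, h_1, \ldots, h_m): U \to V \times_k C_1 \times_k \cdots \times_k C_m$ is a locally closed immersion; composing on the first factor with $\iota_V$ then gives $\iota_U: U \hookrightarrow W_{d-1} \times_k C_1 \times_k \cdots \times_k C_m$, which is the required embedding into a product of hyperbolic curves. The $h_i$ are obtained from generic linear projections of a projective embedding of $\bar U$ to $\P^1$ that are finite on fibres of $\bar U \to V$; removing the images of $D$ together with sufficiently many additional $k$-rational points in general position yields hyperbolic $C_i \subset \P^1$, and $h_i$ is the restriction. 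The main technical obstacle is to guarantee that $(f, h_1, \ldots, h_m)$ is a locally closed immersion rather than merely a set-theoretic injection: this requires enough projections and genericity in their choice to separate points and tangent vectors within each fibre of $f$, which is the place where the infiniteness of $k$ is crucially used. Routine bookkeeping ensures that the repeated Zariski-shrinkings preserve all previously established structure, so the resulting $U$ lies in $U_0$ and is a strongly hyperbolic Artin neighbourhood of $x$, establishing that such neighbourhoods are cofinal.
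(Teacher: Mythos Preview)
Your overall inductive strategy---produce an elementary fibration via \cite{sga4} XI~3.3, apply the inductive hypothesis to the base, and then arrange the two extra conditions---matches the paper's. However, your step~(b) has a genuine gap, and the paper's argument is organised differently precisely to avoid it.

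The problem is the sentence ``removing the images of $D$ together with sufficiently many additional $k$-rational points in general position yields hyperbolic $C_i \subset \bP^1$''. For $d \geq 2$ the divisor $D \subset \bar U$ has dimension $d-1 \geq 1$, so under a generic linear projection $h_i : \bar U \to \bP^1$ its image is all of $\bP^1$, not a finite set. More fundamentally, for a generic projection the image $h_i(U)$ is dense in $\bP^1$, so there is no hyperbolic open $C_i \subset \bP^1$ with $h_i(U) \subset C_i$ unless you first \emph{shrink}~$U$. Shrinking $U$ to $U \cap h_i^{-1}(C_i)$ removes the divisor $h_i^{-1}(\bP^1 \smallsetminus C_i) \cap \bar U$ from the fibres; for the result to remain an elementary fibration you must verify that each such divisor is finite \'etale over~$V$ after shrinking~$V$, and that the fibres stay connected. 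This is not ``routine bookkeeping'': it is the essential content of the construction, and you have not supplied it.

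The paper sidesteps both (a) and (b) by reversing the order. Before constructing any fibration, it shrinks $X$ around the point to an affine $X \hookrightarrow \bP^n$, moves by an element of $\PGL_{n+1}(k)$ so that the point avoids the hyperplanes $x_i = 0$ and $x_i = x_0$, and then observes that the complement embeds in $(\bP^1 \smallsetminus \{0,1,\infty\})^n$. This embedding is inherited by \emph{every} further open subscheme, so once the elementary fibration is built (on a smaller open) condition~(ii) is automatic. Moreover, any fibre of the fibration then admits a nonconstant map to $\bP^1 \smallsetminus \{0,1,\infty\}$, hence is hyperbolic by Riemann--Hurwitz; so your step~(a) is unnecessary as well. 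A secondary point: \cite{sga4}~XI~3.3 is formulated over an algebraically closed field, and the paper does real work (a Grassmannian density argument using that $\Grass_d(V)$ is rational and $k$ is infinite) to produce the fibration over $k$ for a not-necessarily-rational closed point; your appeal to it as a black box over~$k$ hides this.
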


\begin{proof}
Since the assumptions of Lemma~\ref{lem:enoughstronglyhyperbolicArtinN} carry over to open subschemes, it suffices to show that every point has a strongly hyperbolic Artin neighbourhood. We proceed by induction on the dimension of $X$.
Let $a \in X$ be a point, which we may assume to be closed.
We shrink $X$ to an affine open neighbourhood of $a$ so that $X$ becomes quasi-projective, say $X \inj \bP^n$ is an immersion. Let $x_0,\ldots, x_n$ be homogeneous linear coordinates on $\bP^n$.
Since $k$ is infinite, we can move $X$ via $\PGL_{n+1}(k)$  such that $a$ does not meet
the union $H$ of all the hyperplanes $x_i = 0$, and $x_i = x_0$ for all $i \not= 0$.
So $X \smallsetminus H$ can be embedded into a product of hyperbolic curves
\[
X \setminus H \inj \bP^n \setminus H = (\bP^1 \setminus \{0,1,\infty\})^n.
\]
We may replace $X$ by $X \setminus H$ to simplify notation.

We now argue along the lines of  \cite{sga4} XI 3.3, which is formulated over an algebraically closed field and applies without changes to an infinite perfect field and a \emph{rational} point $a \in X$. As in loc.\ cit., we choose a projective compactification $X \subseteq \bar{X} \subseteq \bP(V)$ such that $\bar{X}$ is normal (hence geometrically normal, since $k$ is perfect),  $V$ is a vector space of dimension $n+1$, and  the restriction $\dO(1)|_{\bar X}$ is the $r$-th multiple of an ample line bundle on $\bar{X}$ for some $r \geq 2$.

Let $d$ be the dimension of $X$. We consider for every linear subspace $W \subseteq V$ of dimension $d$ the linear projection $\bP(V) \dasharrow \bP(W)$, which is defined outside the linear subspace
\[
\Delta_W = \{x \in \bP(V) \ ; \ x(w) = 0 \text{ for all } w \in W\}
\]
of codimension $d$. Blowing-up $\Delta_W$ as $\sigma_W : P_W \to \bP(V)$, we obtain a morphism $\pi_W: P_W \to \bP(W)$.

Now we vary $W$ in the Grassmannian $\Grass_d(V)$ of $d$-dimensional subspaces of $V$:  Let
\[
\cW \subseteq V \times \Grass_d(V)
\]
be the universal subspace. We obtain the projection $\pi_\cW: P_\cW \to \bP(\cW)$, where  $\bP(\cW)$ is the projective space relative $\Grass_d(V)$. Let ${\bar \cU}_{\cW}$ be the closure of
\[
\cU_\cW:=X \times \Grass_d(V) \setminus \Delta_{\cW}
\]
in $P_{\cW}$, so that we obtain the diagram
\[
\begin{tikzcd}
\cU_\cW \arrow[hook]{r} \arrow[swap]{dr}{f_\cW} &  \bar{\cU}_\cW \arrow{d}{\bar f_\cW}   \\
 & \bP(\cW).
\end{tikzcd}
\]
As is shown in \cite{sga4} XI 3.3, there is a non-empty Zariski-open $\cS_{\cW} \subseteq \bP(\cW)$ such that the base change of this diagram to $\cS_\cW$ is an elementary fibration. Since $X$ embeds into the product of hyperbolic curves, it is moreover an elementary fibration into \emph{hyperbolic} curves. For every closed point $W\in \Grass_d(V)$, let $S_W \subset \bP(W)$ be the preimage of $\bP(W)$ under the projection $\cS_{\cW}\to \bP(\cW)$. Note that $S_W$ is non-empty and open in $\bP(W)$ for $W$ in an open and non-empty subscheme of $\Grass_d(V)$, namely the image of $\cS_W$ under the open map $\bP(\cW) \to \Grass_d(V)$. For those $W$, the base change to $S_W$ yields an elementary fibration
\[
\begin{tikzcd}
U_W \arrow[hook]{r} \arrow[swap]{dr}{f_W} &  \bar{U}_W \arrow{d}{\bar f_W}   \\
 & S_W.
\end{tikzcd}
\]
It therefore remains to show that every closed point $a$ of $X$ is contained in $U_W\subset X$ for some $W$ defined over $k$. Indeed,  by induction we find a strongly hyperbolic Artin neighbourhood $U_{n-1}$ of $f_W(a)$ in $S_W$. Replacing $U_W$ by $U_n = f_W^{-1}(U_{n-1})$, we are done.

The condition that $a \in U_W$ is an open condition on $W \in \Grass_d(V)$. Choosing a geometric point $\bar a$ above $a \in X$, we deduce from \cite{sga4} XI 3.3 that there is a $W_0 \in \Grass_d(V)(\bar k)$ defined over the algebraic closure $\bar k$ of $k$, such that $\bar a \in U_{W_0}$. Hence there is an open $H_{\bar a} \subset  \Grass_d(V) \times_k \bar k$ where $\bar a \in U_W$ for all $W \in H_{\bar a}$. Since $\Grass_d(V)$ is irreducible, the intersection of all Galois conjugates of $H_{\bar a}$ is a non-empty open $H \subset \Grass_d(V)$ defined  over $k$. Since $k$ is infinite and $\Grass_d(V)$ is a rational variety, we find a $k$-rational point $W \in H$. For this particular choice we have $W_{\bar k} \in H_{\bar k} \subset H_{\bar a}$, hence $\bar a \in U_{W_{\bar k}}$. But since $W_{\bar k}$ is defined over $k$, we finally have $U_W \subset X$ an open that contains $\bar a$. But then also $a \in U_W$, and by the above that concludes the induction step of the proof by induction on the dimension.
\end{proof}
\section{An absolute version of the main result} \label{sec:absolute}
Using the main theorem of birational anabelian geometry proven by F.~Pop \cite{pop:birational}, \cite{pop:alterations},
we can derive the following absolute version of Theorem~\ref{main}.
\begin{theorem}\label{main-absolute}
Let $k$ and $\ell$ be finitely generated extension fields of\/ $\Q$, and let $X/k$ and $Y/\ell$ be smooth geometrically connected varieties which can be embedded as locally closed subschemes into a product of hyperbolic curves over $k$ and $\ell$, respectively.

Then the natural map
\[
\Isom_\text{\rm Schemes}(X,Y) \longrightarrow \Isom_{\Ho(\pross)}(X_\et,Y_\et)
\]
is a split injection with a functorial retraction\/ $r$. If\/ $X$ and $Y$ are strongly hyperbolic Artin neighbourhoods, it is a bijection.
\end{theorem}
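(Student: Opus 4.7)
The strategy is to reduce to the relative theorems already established (Theorem~\ref{thm:main-retraction}, respectively Theorem~\ref{anabkpi1}) by first extracting from any $\gamma \in \Isom_{\Ho(\pross)}(X_\et,Y_\et)$ a canonical field isomorphism $\sigma : \ell \xrightarrow{\sim} k$. The main tool enabling this reduction is Pop's birational anabelian theorem \cite{pop:birational}: for two finitely generated infinite fields $k,\ell$ of characteristic zero, every outer isomorphism of profinite groups $\Gal_\ell \xrightarrow{\sim} \Gal_k$ is induced by a unique isomorphism of fields $\ell \xrightarrow{\sim} k$.

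The first and principal step is to construct such an outer Galois isomorphism from $\gamma$. After choosing compatible geometric base points, $\gamma$ determines an outer isomorphism $\varphi : \pi_1^\et(X) \xrightarrow{\sim} \pi_1^\et(Y)$, and one must show that $\varphi$ identifies the geometric subgroups $\pi_1^\et(X_{\bar k})$ and $\pi_1^\et(Y_{\bar\ell})$, from which the desired outer isomorphism $\Gal_k \xrightarrow{\sim} \Gal_\ell$ descends on quotients. This identification is produced by exploiting the factor-dominant embeddings into products of hyperbolic curves on both sides: for each factor $C_i$ of $X \hookrightarrow \prod_i C_i$, the composite $Y_\et \xrightarrow{\gamma^{-1}} X_\et \to C_{i,\et}$ is a $\pi_1$-open morphism in $\Ho(\pross)$, and combining the various curve-reconstructions provided by Mochizuki's Theorem~\ref{mochi} and Proposition~\ref{prop:geometryfromMochizuki}, applied symmetrically through the embedding of $Y$, forces $\varphi$ to respect geometric subgroups. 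I expect this to be the main obstacle, as in the absence of a pre-existing base field the gluing of the information coming from individual hyperbolic-curve factors into a single coherent identification of Galois quotients is delicate; it requires invoking the Mochizuki-style absolute reconstruction of hyperbolic curves in a way that is compatible across the projections.

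Once $\sigma : \ell \xrightarrow{\sim} k$ has been obtained, let $Y^\sigma$ denote the $k$-scheme obtained from $Y$ by transport of structure along $\sigma$; as a bare scheme, $Y^\sigma$ is canonically identified with $Y$. The field isomorphism $\sigma$ induces a weak equivalence $\ell_\et \xrightarrow{\sim} k_\et$ under which $\gamma$ is promoted to an isomorphism $\tilde\gamma : X_\et \xrightarrow{\sim} (Y^\sigma)_\et$ in the relative category $\Ho(\pross) \downarrow k_\et$. Applying Theorem~\ref{thm:main-retraction} to the pair $(X, Y^\sigma)$ over $k$ yields a unique functorial retraction $r(\tilde\gamma) : X \xrightarrow{\sim} Y^\sigma$, and composition with the canonical scheme-isomorphism $Y^\sigma \cong Y$ defines the absolute retraction $r(\gamma) : X \xrightarrow{\sim} Y$. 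The retraction property $r(f_\et) = f$ for scheme isomorphisms $f$ is immediate from the construction, while functoriality in $\gamma$ follows from combining the functoriality of the relative retraction (Theorem~\ref{thm:main-retraction}~\ref{thmitem:r_functorial}) with the uniqueness and functoriality in Pop's correspondence. Finally, when $X$ and $Y$ are strongly hyperbolic Artin neighbourhoods over $k$ and $\ell$, the scheme $Y^\sigma$ is a strongly hyperbolic Artin neighbourhood over $k$, so Theorem~\ref{anabkpi1} applies in place of Theorem~\ref{thm:main-retraction} in the last step and upgrades the split injection to a bijection.
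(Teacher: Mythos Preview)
Your overall reduction strategy---extract a field isomorphism via Pop, transport $Y$ along it, then invoke the relative Theorems~\ref{thm:main-retraction} and~\ref{anabkpi1}---is exactly the paper's. The final two paragraphs are correct and match the paper's argument.

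The gap is in the crucial step of showing that $\varphi$ carries $\pi_1^\et(X_{\bar k})$ to $\pi_1^\et(Y_{\bar\ell})$. You propose to use Theorem~\ref{mochi} and Proposition~\ref{prop:geometryfromMochizuki} on the composites $Y_\et \to X_\et \to C_{i,\et}$, but both of those results are formulated in $\Ho(\pross)\downarrow k_\et$: they presuppose a common base field, which is precisely what you are trying to produce. Your own phrase ``Mochizuki-style absolute reconstruction of hyperbolic curves'' signals that you need input from outside the paper, and even granting such absolute results, assembling the per-factor information into a single coherent $\sigma:\ell\xrightarrow{\sim}k$ is further work you have not sketched.

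The paper sidesteps this entirely with a short group-theoretic argument that does not use the hyperbolic-curve embeddings at all. Since $k$ is finitely generated over $\bQ$ it is Hilbertian, so $\Gal_k$ has no nontrivial topologically finitely generated closed normal subgroup (\cite{FriedJarden:fieldarithmetic} Prop.~16.11.6); on the other hand $\pi_1^\et(X_{\bar k},\bar x)$ \emph{is} topologically finitely generated (\cite{sga7} Exp.~II Thm.~2.3.1). Hence $\Gal_k$ is intrinsically the quotient of $\pi_1^\et(X,\bar x)$ by its maximal finitely generated closed normal subgroup, and likewise for $\Gal_\ell$. Any isomorphism $\pi_1(\gamma_0)$ therefore descends automatically to $\varphi_c:\Gal_k\xrightarrow{\sim}\Gal_\ell$, after which Pop applies. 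This replaces your ``main obstacle'' with two citations.
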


\begin{proof}
For the geometrically connected variety $X/k$, the field $k$ is uniquely determined as the maximal subfield of $\rH^0(X,\dO_X)$, see \cite{Ta} Lem.~4.2.
In particular, every isomorphism of schemes $f: X \to Y$ restricts to an isomorphism $f_{\rc} : \Spec(k) \to \Spec(\ell)$. The  assignment $f \mapsto f_{\rc}$ defines a functorial map
\[
\Isom_\text{\rm Schemes}(X,Y)  \to \Isom_\text{\rm Schemes}(\Spec(k), \Spec(\ell)).
\]

Let $\gamma : X_\et \to Y_\et$ be an isomorphism. We choose separable closures $\bar k/k$, $\bar \ell/\ell$, a geometric $k$-point
$\bp{x} :\Spec(\bar k)\to X$ and a geometric  $\ell$-point $\bp{y}: \Spec(\bar \ell)\to Y$. By Theorem~\ref{modpi1abs}, we conclude that $\gamma$ lifts to an isomorphism
\[
\gamma_0 : (X_\et,\bp{x}_\et) \longrightarrow (Y_\et,\bp{y}_\et)
\]
in $\Ho(\pross_*)$ unique up to monodromy action by $\pi_1^\tp(Y_\et,\bp{y}_\et) = \pi_1^\et(Y,\bp{y})$. In particular, we obtain an isomorphism
\[
\pi_1(\gamma_0) : \pi_1^\et(X,\bp{x}) \stackrel{\sim}{\lang} \pi_1^\et(Y,\bp{y}).
\]

Because $k$ is Hilbertian, $\Gal_k$ has no nontrivial finitely generated closed normal subgroups by \cite{FriedJarden:fieldarithmetic} Prop.~16.11.6. Since
\[
\pi_1^\et(X_{\bar k},\bp{x}) = \ker\big(\pi_1^\et(X,\bp{x}) \lang \Gal_k\big)
\]
is finitely generated by \cite{sga7} Exp.\ II Thm.~2.3.1, $\Gal_k$ is the quotient of $\pi_1^\et(X,\bp{x})$ by its maximal finitely generated normal subgroup. The same is true for $\Gal_\ell$ as a quotient $\pi_1^\et(Y,\bp{y}) \to G_\ell$. Hence $\pi_1(\gamma_0)$ induces an isomorphism $\ph_{\rc}: \Gal_k\to \Gal_\ell$ such that the following diagram commutes:
\[
\begin{tikzcd}
\pi_1^\et(X,\bp{x}) \arrow{r}{\pi_1(\gamma_0)}[swap]{\sim}\dar & \pi_1^\et(Y,\bp{y})\dar\\
\Gal_k\arrow{r}{\ph_{\rc}}[swap]{\sim} & \Gal_\ell.
\end{tikzcd}
\]
The assignment $\gamma \mapsto \ph_{\rc}$ induces a functorial map
\[
\Isom_{\Ho(\pross)}(X_\et,Y_\et) \lang \OutIsom(\Gal_k,\Gal_\ell)\cong \Isom_{\Ho(\pross)}(k_\et,\ell_\et),
\]
where the right hand isomorphism follows from  Proposition~\ref{prop:BGisKpi1} and Theorem~\ref{modpi1abs}, and determines an isomorphism $\gamma_{\rc} : k_\et \to \ell_\et$ in $\Ho(\pross)$ with $\ph_{\rc} = \pi_1(\gamma_c)$ as outer isomorphisms.

These two constructions are compatible and yield the commutative diagram (independent of the choices involved)
\[
\begin{tikzcd}
\Isom_\text{\rm Schemes}(X,Y)   \arrow{r}{(-)_\et} \dar & \Isom_{\Ho(\pross)}(X_\et,Y_\et)  \dar \\
\Isom_\text{\rm Schemes}(\Spec(k), \Spec(\ell)) \arrow{r}{(-)_\et}[swap]{\sim} & \Isom_{\Ho(\pross)}(k_\et,\ell_\et),
\end{tikzcd}
\]
Moreover, by the main theorem of birational anabelian geometry \cite{pop:birational},\cite{pop:alterations}, the bottom arrow is a bijection.

In order to prove the theorem, we may therefore fix an isomorphism $g: \Spec(\ell) \to \Spec(k)$ and restrict to isomorphisms $f$ and $\gamma$ which induce $f_{\rc} = g$ and $\gamma_{\rc} = g_\et$. We denote these sets of isomorphisms by $\Isom_g(X,Y)$ and $\Isom_{g_\et}(X_\et,Y_\et)$. We set
\[
Y' = Y \times_{\Spec(\ell)}^g \Spec(k).
\]
Then the statement of the theorem follows by applying Theorem~\ref{main} and Theorem~\ref{anabkpi1} to the bottom arrow of the commutative diagram
\[
\begin{tikzcd}
\Isom_g(X,Y)   \arrow{r}{(-)_\et} \dar[-, double equal sign distance] & \Isom_{g_\et}(X_\et,Y_\et)  \dar[-, double equal sign distance] \\
\Isom_k(X,Y') \arrow{r}{(-)_\et} & \Isom_{\Ho(\pross) \downarrow k_\et}(X_\et,Y'_\et).
\end{tikzcd}
\]
\end{proof}

We are now able to relax the geometric connectivity assumptions in Theorem~\ref{main}.

\begin{theorem}\label{main-general}
Let $k$ be a finitely generated extension field of\/ $\bQ$, and let $X$ and $Y$ be smooth varieties over $k$ such that each connected component can be embedded as a locally closed subscheme into a product of hyperbolic curves over the respective field of constants.

Then the natural map
\[
\Isom_k(X,Y) \longrightarrow \Isom_{\Ho(\pross) \downarrow k_\et}(X_\et,Y_\et)
\]
is a split injection with a functorial retraction\/ $r$. If the connected components of\/ $X$ and $Y$ are strongly hyperbolic Artin neighbourhoods over their respective fields of constants, it is a bijection.
\end{theorem}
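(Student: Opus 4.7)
The plan is to deduce Theorem~\ref{main-general} from Theorem~\ref{main-absolute} by decomposing into connected components and carefully tracking the $k$-structure via the maps to $k_\et$.

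First, I would decompose $X = \bigsqcup_{i \in I} X_i$ and $Y = \bigsqcup_{j \in J} Y_j$ into their connected components. Each $X_i$ is smooth and connected, so $k_i := \rH^0(X_i,\dO_{X_i})$ is a finite extension of $k$ over which $X_i$ is geometrically connected; similarly for $Y_j$ with constant field $\ell_j$. Since the functor ``\'{e}tale topological type'' respects connected components (Lemma~\ref{lem:finiteetale} and its proof), we have $X_\et = \bigsqcup_i (X_i)_\et$ in $\pross$, and the structure map $X_\et \to k_\et$ factors on the $i$-th component as $(X_i)_\et \to (k_i)_\et \to k_\et$.

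Second, I would show that any isomorphism $\gamma \in \Isom_{\Ho(\pross)\downarrow k_\et}(X_\et,Y_\et)$ induces, via its action on $\pi_0$, a bijection $\sigma : I \to J$ together with isomorphisms $\gamma_i : (X_i)_\et \stackrel{\sim}{\to} (Y_{\sigma(i)})_\et$ in $\Ho(\pross)$, compatibly with the projections to $k_\et$. Applying the construction in the proof of Theorem~\ref{main-absolute} to $\gamma_i$ (viewed over the absolute base), one first recovers an isomorphism $\gamma_{i,\rc} : (k_i)_\et \stackrel{\sim}{\to} (\ell_{\sigma(i)})_\et$, which by birational anabelian geometry \cite{pop:birational},\cite{pop:alterations} comes from a unique scheme isomorphism $g_i^{-1} : \Spec(\ell_{\sigma(i)}) \to \Spec(k_i)$. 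The compatibility of $\gamma_i$ with the common projection to $k_\et$ forces $g_i$ to be a $k$-algebra isomorphism, so $k_i \cong \ell_{\sigma(i)}$ as $k$-algebras, canonically.

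Third, applying Theorem~\ref{main-absolute} componentwise produces a functorial retraction $r(\gamma_i) : X_i \stackrel{\sim}{\to} Y_{\sigma(i)}$ whose underlying map on spectra of constant fields is $g_i$; hence $r(\gamma_i)$ is a $k$-morphism. Assembling the $r(\gamma_i)$ across $i \in I$ yields a $k$-isomorphism
\[
r(\gamma) : X \stackrel{\sim}{\lang} Y.
\]
The retraction identity $r(f_\et) = f$ for a $k$-isomorphism $f : X \to Y$ reduces immediately to Theorem~\ref{main-absolute} on each component (using that $f$ itself induces a bijection of components). Functoriality $r(\gamma_2\gamma_1) = r(\gamma_2) r(\gamma_1)$ follows from componentwise functoriality combined with the compatibility of the component bijections under composition.

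Finally, in the strongly hyperbolic Artin case, Theorem~\ref{main-absolute} ensures that each $(-)_\et : \Isom_{\text{Schemes}}(X_i, Y_{\sigma(i)}) \to \Isom_{\Ho(\pross)}((X_i)_\et, (Y_{\sigma(i)})_\et)$ is a bijection, and the same componentwise assembly gives bijectivity for $X$ and $Y$. The main technical point to check is that the $k_\et$-compatibility of $\gamma$ forces $g_i$ to be $k$-linear; this is where the extra information in $\Ho(\pross)\downarrow k_\et$ (as opposed to absolute $\Ho(\pross)$) is essential and is the only step that goes beyond an entirely formal reduction to Theorem~\ref{main-absolute}.
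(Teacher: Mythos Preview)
Your outline matches the paper's approach: reduce to connected $X$, $Y$ with constant fields $K$, $L$; apply Theorem~\ref{main-absolute} to obtain $f = r(\gamma) : X \to Y$ with induced $f_{\rc} : \Spec K \to \Spec L$; then verify that $f_{\rc}$ is $k$-linear. You correctly identify this last verification as the crux.

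However, you do not actually carry it out. You write that compatibility with the projection to $k_\et$ ``forces $g_i$ to be a $k$-algebra isomorphism,'' but this is precisely what requires proof, and it is not formal. The difficulty is that $f_{\rc}$ is extracted from $\gamma$ via the quotient of $\pi_1^\et(X,\bar x)$ by its maximal finitely generated closed normal subgroup, followed by Pop's birational anabelian theorem; tracking the $k_\et$-compatibility through this two-step construction requires care with pointed lifts and inner automorphisms. At best one reduces to the statement: if two field embeddings $k \hookrightarrow L$ induce the same open outer homomorphism $G_L \to G_k$, then they coincide. That is a Hom-form anabelian statement for finitely generated fields which you would have to cite or prove separately; Pop's Isom-form does not give it directly.

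The paper instead argues by hand. After lifting $\gamma$ to the pointed category and adjusting the choices so that the relevant squares of fundamental groups commute strictly (not merely up to conjugation), it passes to Galois cohomology with coefficients in $\mu_n$. Kummer theory then yields $f_{\rc}^\ast(\alpha) \equiv \alpha \bmod (K^\times)^n$ for every $\alpha \in k^\times$ and every $n \geq 1$. Since a finitely generated extension of $\bQ$ contains no nontrivial divisible elements, this forces $f_{\rc}^\ast|_k = \id_k$. This Kummer-theoretic step is the substance of the proof, and your proposal omits it.
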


\begin{proof}
Since isomorphisms in $\Ho(\pross)$ respect connected components, we can assume that $X$ and $Y$ are connected.
Let $K$ and $L$ be the fields of constants of $X$ and $Y$, respectively.

For  $\gamma \in \Isom_{\Ho(\pross) \downarrow k_\et}(X_\et,Y_\et)$, Theorem~\ref{main-absolute} yields an isomorphism $f = r(\gamma): X \to Y$. Let as above $f_{\rc} : \Spec(K) \to \Spec(L)$ be the induced isomorphism. It remains to show that $f_{\rc}$ is $k$-linear.

The proof of Theorem~\ref{main-absolute} first constructs an isomorphism $\gamma_{\rc} : K_\et \to L_\et$ in $\Ho(\pross)$ compatible with $\gamma$, and such that $\gamma_{\rc} = f_{\rc,\et}$.
We choose an algebraic closure $\bar k$ of $k$ and a geometric point $\bar x: \Spec(\bar k)\to X$. Let $\bar y=f(\bar x)$ and denote the induced geometric points of $\bar x^\ast : K \to \bar k$ and $\bar y^\ast : L \to \bar k$ by $\bar x$ and $\bar y$ as well.
We denote the given inclusions by $i_K: k \hookrightarrow K$ and $i_L: k \hookrightarrow L$.
Because any two algebraic closures of $k$ are $k$-isomorphic, we can further choose an isomorphism $\psi$ that makes the following diagram commutative:
\[
\begin{tikzcd}
k\arrow{r}{i_L}
\arrow[-, double equal sign distance]{d} &L\arrow{r}{\bar y^*}
& \bar k \arrow[dashed]{d}{\psi}
\\
k\arrow{r}{i_K}
& K\arrow{r}{\bar x^*}
& \bar k.
\end{tikzcd}
\]
Let $\delta: (\Spec(k),\bar x) \to (\Spec(k),\bar y)$ be the pointed scheme morphism induced by $\psi$. Furthermore, by
Theorem~\ref{modpi1abs} we may choose an isomorphism $\gamma_0 \in \Isom_{\Ho(\pross_\ast)}\big((X_\et,\bar x_\et), (Y_\et,\bar y_\et)\big)$ lifting $\gamma$.
Consider the diagram of pro-finite groups
\[
\begin{tikzcd}[column sep=large]
\pi_1^\et(X,\bar x)\arrow[twoheadrightarrow]{d}\arrow{r}{\pi_1(\gamma_0)}
&\pi_1^\et(Y,\bar y)\arrow[twoheadrightarrow]{d}\\
\pi_1(K_\et,\bar x_\et)
\arrow{r}{\pi_1^\et(f_{\rc})}
\arrow[hook]{d}&
\pi_1(L_\et,\bar y_\et)\arrow[hook]{d}\\
\pi_1(k_\et,\bar x_\et)
\arrow{r}{\pi_1^\et(\delta)=\psi^*}
& \pi_1(k_\et,\bar y_\et).
\end{tikzcd}
\]
Note that $f_{\rc,\et}$ considered as a pointed map $(K_\et,\bar x) \to (L_\et,\bar y)$ lifts $\gamma_{\rc}$. Therefore the top square commutes up to conjugation, and after replacing $\gamma_0$ by another lift $\gamma'_0$ of $\gamma$, it commutes.

Since $\gamma : X_\et\to Y_\et$ commutes with the projections to $k_\et$ in $\Ho(\pross)$,
the big square commutes up to conjugation by an element $g\in \pi_1(k_\et,\bar y_\et)$.
After replacing $\psi$ by $\psi g$, it commutes. Since the upper vertical maps are surjections, then also the lower square commutes.

The induced commutative diagram on Galois cohomology with coefficients in $\mu_n$ is by Kummer theory
\[
\begin{tikzcd}[column sep=small]
k^\times/n \arrow[-, double equal sign distance]{r} \arrow{d}{\id} & \rH^1(k,\mu_n) \arrow{d}{\delta^\ast} \arrow{r} & \rH^1(L,\mu_n)  \arrow{d}{f^\ast_{\rc}}\arrow[-, double equal sign distance]{r} & L^\times/n \arrow{d}{f^\ast_{\rc}} \\
k^\times/n \arrow[-, double equal sign distance]{r} & \rH^1(k,\mu_n) \arrow{r} & \rH^1(K,\mu_n) \arrow[-, double equal sign distance]{r} & K^\times/n.
\end{tikzcd}
\]
We obtain the congruences
\[
f_{\rc}^\ast(\alpha) \equiv \alpha  \bmod (K^\times)^n
\]
for every $\alpha \in k^\times$ and any natural number $n$.
Since finitely generated extension fields of $\bQ$ do not contain nontrivial divisible elements, we conclude that $f_{\rc}^\ast$ restricts to the identity on $k$ as claimed.
\end{proof}

\appendix
\section*{Appendix: Geometry in pro-spaces} \label{appendix:homotopyinpross}
\stepcounter{section}
This appendix deals with various aspects of pro-spaces, in particular, the existence of classifying spaces of pro-groups, the relation between pointed and unpointed homotopy equivalences and the theory of covering spaces.
The authors thank J.~Schmidt for helpful discussions on the subject.

\medskip

We will make frequent use of the fact (see \cite{EH}, 2.1.6) that, by re-indexing, every object in a pro-category $\pro\cC$ is isomorphic to a pro-object whose index category $I$ is a cofinite directed set (cofinite means that for any $i\in I$ there are only finitely many $j\in I$ with $j<i$).

\smallskip

We refer to \cite{Is} for the definition of the simplicial model structure on the category $\pross$ of pro-spaces.  The simplicial function complex is given by
\[
\Map(X,Y)_n= \Hom_{\pross}(X\times \Delta[n],Y).
\]
All objects $X$ of $\pross$ are cofibrant. If $Y$ is fibrant, then (cf.\ \cite{Hi} Prop.~9.5.24) $\Hom_{\Ho(\pross)}(X,Y)$ is given as the set of equivalence classes of elements of $\Hom_{\pross}(X,Y)$ modulo strict simplicial homotopy, i.e., deformations along the (constant) $1$-simplex $\Delta[1]$.

\subsection{Coverings of pro-spaces}\label{covappendix}
Recall (cf.\ \cite{GZ}) that a morphism of simplicial sets $p:Y\to X$ is called a covering if any commutative diagram
\[
\begin{tikzcd}
\Delta[0]\rar{u}\dar{i}& Y\dar{p}\\
\Delta[n]\rar{v}\arrow[dashed]{ru}{s}&X
\end{tikzcd}
\]
of solid arrows $u,v,i,p$ can be completed by a unique dotted arrow $s$. Coverings have the unique lifting property with respect to all horns $\Delta[n,k]\to \Delta[n]$, hence for all trivial cofibrations. In particular, they are fibrations in $\ssets$.
A covering $Y\to X$ with $Y$ and $X$ connected is called Galois covering with group $G(Y/X)=\Aut_X(Y)$ if the natural map from the quotient of $Y$ by the action of $\Aut_X(Y)$ to $X$ is an isomorphism.

If $(X,x)$ is a pointed, connected simplicial set, then there exists the universal covering simplicial set $(\tilde X,\tilde x)\to (X,x)$. Its geometric realization is the universal covering space of the geometric realization of $(X,x)$; see \cite{GZ}, Appendix I, \S3.

\begin{definition}
A morphism $Y\to X$ in $\pross$ is a \defobjekt{covering} if it is isomorphic to a level-wise covering. If\/ $Y$ and $X$ are connected, $Y\to X$ is called a \defobjekt{Galois covering} if it is isomorphic to a level-wise Galois covering.
 \end{definition}

Let $(X,x)$ be a pointed, connected pro-simplicial set. The inverse system of the pointed universal coverings of the different levels defines the pointed universal covering $(\tilde X,\tilde x)$ of $(X,x)$. The covering $\tilde X \to X$ is Galois and the fundamental group $\pi_1(X,x)$ is naturally isomorphic to the group $G(\tilde X/X)$.

\medskip
We denote the full subcategory of $\Ho(\pross)$ containing all connected pro-spaces by
\[
\Ho(\pross_*)_c .
\]
For a connected pointed pro-space $(X,x)$ and a sub pro-group $U\subset \pi_1(X,x)$, the pointed pro-covering of $(X,x)$ associated with $U$
\[
(X,x)_U \to (X,x)
\]
is well defined up to natural isomorphism in $\Ho(\pross_*)_c$. This follows from the following proposition which is proved in an analogous way as \cite{AM}, \S 2, (2.7), (2.8).

\begin{proposition} \label{prop:covinhocat}
Let $(X,x)\in \Ho(\pross_{*})_c$ and let $U\hookrightarrow \pi_1(X,x)$ be a monomorphism of pro-groups. Then there is an $(X,x)_U$ in $\Ho(\pross_{*})_c$ together with a morphism $h:(X,x)_U \to (X,x)$ characterized by the property that for each connected $(W,w)$  we have a bijection
\[
[(W,w),(X,x)_U]_{\pross_*} \xrightarrow{\sim} \{ f \in [(W,w),(X,x)]_{\pross_*} \ ; \ \pi_1(f) \text{ factors through } U\}
\]
sending $f' : (W,w) \to (X,x)_U$ to  $f = h  f'$.
\end{proposition}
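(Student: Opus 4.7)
The plan is to follow the strategy of \cite{AM} \S 2 and build $(X,x)_U$ levelwise from classical covering theory of pointed connected simplicial sets, and then verify the universal property in $\Ho(\pross_*)_c$.

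First I would re-index so that $(X,x)$ is represented by a pro-system $(X_i, x_i)_{i \in I}$ indexed by a cofinite directed set with each $X_i$ a pointed connected Kan complex, using \cite{EH} 2.1.6 and functorial fibrant replacement. Since $U \hookrightarrow \pi_1(X,x)$ is a monomorphism of pro-groups, a further re-indexing lets us assume that it is represented by a compatible family of subgroups $U_i \subset \pi_1(X_i, x_i)$: for every $j \geq i$ the transition map $\pi_1(X_j, x_j) \to \pi_1(X_i, x_i)$ carries $U_j$ into $U_i$. For each $i$ let $h_i\colon (X_i, x_i)_{U_i} \to (X_i, x_i)$ be the pointed covering associated to $U_i$ from \cite{GZ}, Appendix I, \S 3. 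By the strict unique pointed lifting property of coverings, each transition map $X_j \to X_i$ promotes uniquely to a pointed map $(X_j, x_j)_{U_j} \to (X_i, x_i)_{U_i}$, because the composition $(X_j, x_j)_{U_j} \to X_j \to X_i$ induces on $\pi_1$ a map landing in $U_i$. Setting $(X,x)_U := ((X_i, x_i)_{U_i})_{i \in I}$ yields a pro-object in $\pross_*$ together with a pro-map $h$ that is levelwise a covering, hence a fibration; in particular $(X,x)_U$ is fibrant and $h$ is a fibration in the Isaksen model structure.

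For the universal property, let $(W,w)$ be a pointed connected pro-space. Since $(X,x)_U$ and $(X,x)$ are fibrant, morphisms in $\Ho(\pross_*)$ from $(W,w)$ into them are computed as pointed simplicial homotopy classes of strict pro-maps. Given $f \in [(W,w), (X,x)]_{\pross_*}$ whose $\pi_1(f)$ factors through $U$, choose a level representative; for each $i$ this is a pointed simplicial map $W_{\alpha(i)} \to X_i$ whose induced homomorphism on $\pi_1$ factors through $U_i$ (after possibly refining $\alpha$). The classical pointed lifting criterion for coverings then produces a unique pointed lift $W_{\alpha(i)} \to (X_i, x_i)_{U_i}$, and the uniqueness forces compatibility with the transition maps constructed above, so these lifts assemble into a genuine pro-map $\tilde f\colon (W,w) \to (X,x)_U$ with $h \tilde f = f$ strictly. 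Conversely, any $f' \in [(W,w), (X,x)_U]_{\pross_*}$ clearly has $\pi_1(h f')$ factoring through $U$, giving a well-defined inverse. Strict unique pointed lifting also guarantees that two lifts $\tilde f, \tilde f'$ of the same $f$ which agree on basepoints agree on the nose, from which one deduces the bijection on homotopy classes.

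The main obstacle is the transition from levelwise lifts to a well-defined map of pro-spaces, and the control of the ambiguity inherent to representing pro-maps and pro-subgroups only up to isomorphism. This is handled by the strictness of the unique pointed lifting property for coverings: once basepoints are pinned down, the lifts of pointed maps are determined on the nose rather than merely up to homotopy, so the levelwise lifts automatically commute with the transition maps. A secondary technical point, the need to simultaneously re-index $(X,x)$ and $U$ so that $U$ becomes a strict levelwise subsystem of the fundamental pro-group, is resolved by the standard pro-category re-indexing procedure of \cite{EH} applied to the pair $(U \hookrightarrow \pi_1(X,x))$.
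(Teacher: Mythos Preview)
Your approach is exactly what the paper has in mind: the paper gives no argument beyond the reference to \cite{AM} \S2, (2.7)--(2.8), and your levelwise covering construction together with the unique pointed lifting property is precisely the Artin--Mazur strategy transported to $\pross_*$.

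There is, however, a gap in your execution. You conclude that $(X,x)_U$ is fibrant from the fact that $h$ is a fibration, but this inference requires the base $(X,x)$ itself to be fibrant in Isaksen's model structure, and replacing each $X_i$ by a Kan complex does \emph{not} achieve that (fibrant pro-objects must satisfy much stronger matching conditions, cf.\ \cite{Is} Def.~6.5 and Prop.~14.5). Without fibrancy of the targets you cannot compute $[(W,w),(X,x)]_{\pross_*}$ and $[(W,w),(X,x)_U]_{\pross_*}$ as simplicial homotopy classes of strict pro-maps, so your strict lifting argument does not directly produce the claimed bijection in $\Ho(\pross_*)$. The repair is routine: either first replace $(X,x)$ by a fibrant object of $\pross_*$---then $(X,x)_U$, as the source of a fibration over a fibrant base, is fibrant and your argument goes through verbatim---or bypass fibrancy by exploiting that coverings have the \emph{unique} right lifting property against trivial cofibrations, which lets you lift homotopies and weak equivalences strictly and hence verify the universal property directly.
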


In the unpointed case, the situation is more involved. We start with the following observations.

\begin{lemma}
\label{lem:covviaweak}
If\/ $f: Y \to X$ is a weak equivalence of connected pro-spaces, then the pull-back
\[
(X'\to X)\mapsto (X'\times_XY\to Y)
\]
induces an equivalence between the categories of connected coverings of\/ $X$ and of\/ $Y$.
\end{lemma}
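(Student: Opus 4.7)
The plan is to reduce the statement to a classification of connected coverings in terms of sub-pro-groups of the fundamental group, which is supplied by Proposition~\ref{prop:covinhocat}, and then to exploit the fact that a weak equivalence induces an isomorphism on $\pi_1$. First I would fix compatible base points: pick $y \in Y$ and set $x = f(y) \in X$. Since $f: Y \to X$ is a weak equivalence of connected pro-spaces, the induced map $\pi_1(f): \pi_1(Y,y) \to \pi_1(X,x)$ is an isomorphism of pro-groups.

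Next I would verify that the pullback functor is well defined. Using a level-wise representation of $f$ (possible after suitable re-indexing of the underlying pro-systems), the pullback of a level-wise covering is a level-wise covering, so $X' \times_X Y \to Y$ is automatically a covering in $\pross$ whenever $X' \to X$ is. For connectedness, I would choose $x' \in X'$ over $x$ and base the pullback at $(x', y)$; the classical computation (applied level-wise and passed to the cofiltered limit) identifies $\pi_1(X' \times_X Y, (x',y))$ with the fibre product of $\pi_1(X', x') \hookrightarrow \pi_1(X,x)$ along $\pi_1(f)$. Because $\pi_1(f)$ is surjective, $\pi_1(Y,y)$ acts transitively on the fibre $\pi_1(X,x)/\pi_1(X',x')$, so $X' \times_X Y$ has a single connected component.

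Finally I would conclude the equivalence by invoking Proposition~\ref{prop:covinhocat} twice. Every connected covering of $X$ admits, after choosing a lift $x'$ of $x$, the structure of a pointed connected covering $(X,x)_U$ associated to the sub-pro-group $U = \pi_1(X', x') \subset \pi_1(X,x)$; the analogous description holds for coverings of $Y$. Under the isomorphism $\pi_1(f)$ these subgroup data correspond bijectively, and by the previous paragraph the pullback functor intertwines these identifications via $U \mapsto \pi_1(f)^{-1}(U)$. Essential surjectivity and fully faithfulness then follow from the universal property in Proposition~\ref{prop:covinhocat}, together with the standard description of unpointed morphisms between coverings as conjugacy data in the fundamental group (which is transported faithfully by $\pi_1(f)$).

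The main technical obstacle lies in the middle step: computing the fundamental group of a pullback in $\pross$ requires a careful compatibility argument between level-wise pullback of coverings and the formation of $\pi_1$ as a pro-object, as well as a verification that the notion of \emph{connected pro-space} interacts correctly with the action of $\pi_1(Y,y)$ on the fibre over the chosen base point. Once this compatibility is in place, the remainder of the argument is essentially formal and reduces to transport of structure along the pro-group isomorphism $\pi_1(f)$.
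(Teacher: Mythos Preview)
Your approach is substantially more elaborate than the paper's, which disposes of the lemma in a single sentence: ``This follows straightforward from the definition of weak equivalences in $\pross$ and standard covering theory in $\ssets$.'' The intended argument is that, by the description of weak equivalences in Isaksen's model structure, $f$ may (after re-indexing) be represented by a levelwise weak equivalence $(f_i: Y_i \to X_i)_{i \in I}$; since coverings of pro-spaces are by definition isomorphic to levelwise coverings, the claimed equivalence of categories reduces to the classical statement for each pair $X_i$, $Y_i$ in $\ssets$, assembled compatibly across the index system.

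Your route through Proposition~\ref{prop:covinhocat} introduces a genuine gap. That proposition constructs, for each sub-pro-group $U \hookrightarrow \pi_1(X,x)$, an object $(X,x)_U$ in the \emph{homotopy category} $\Ho(\pross_\ast)_c$, characterised by a universal property for pointed homotopy classes of maps. It does not furnish a classification of the category of connected coverings in $\pross$: you would still need that (i) every connected covering of $X$ is isomorphic \emph{in $\pross$ over $X$} (not merely in $\Ho(\pross_\ast)$) to some $(X,x)_U$, and (ii) morphisms of coverings over $X$ coincide with morphisms in $\Ho(\pross_\ast)_c$ over $(X,x)$. Both statements are plausible, but neither is supplied by Proposition~\ref{prop:covinhocat} as stated, and establishing them essentially reproduces the levelwise argument the paper has in mind. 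Your own ``main technical obstacle'' paragraph in effect concedes this: the compatibility you flag between levelwise pullback of coverings and the formation of $\pi_1$ as a pro-object is exactly what the paper absorbs into ``standard covering theory in $\ssets$'' applied level by level, and once you have it the detour through Proposition~\ref{prop:covinhocat} becomes unnecessary.
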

\begin{proof} This follows  straightforward from the definition of weak equivalences in $\pross$ and standard covering theory in $\ssets$.
\end{proof}

By definition, coverings in $\pross$ have the unique lifting property with respect to level-maps which are level-wise trivial cofibrations.

\begin{lemma}
Coverings are fibrations in $\pross$.
\end{lemma}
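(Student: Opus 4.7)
The plan is to represent $p$ as a level-wise covering with cofinite directed index and then invoke the matching-map characterization of fibrations in Isaksen's model structure. By the definition of a covering in $\pross$ and the standard reindexing for pro-objects (\cite{EH} 2.1.6), after replacing $p$ by an isomorphic pro-map, I may assume $p = (p_i : Y_i \to X_i)_{i \in I}$ with $I$ a cofinite directed set and each $p_i$ a covering in $\ssets$. According to Isaksen's model structure \cite{Is}, a level map indexed by a cofinite directed set is a fibration provided that for every $t \in I$ the matching map
\[
Y_t \longrightarrow X_t \times_{\lim_{s<t} X_s} \lim_{s<t} Y_s
\]
is a Kan fibration in $\ssets$. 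It therefore suffices to show that each such matching map is itself a covering of simplicial sets, since coverings are Kan fibrations.

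To verify the unique lifting property of the matching map against $\Delta[0] \hookrightarrow \Delta[n]$, take a compatible pair of maps $v_X : \Delta[n] \to X_t$ and $v_Y : \Delta[n] \to \lim_{s<t} Y_s$ together with a basepoint lift $u : \Delta[0] \to Y_t$. Unique lifting for the covering $p_t : Y_t \to X_t$ yields a unique $\sigma : \Delta[n] \to Y_t$ with $p_t  \sigma = v_X$ and $\sigma|_{\Delta[0]} = u$. For every $s < t$, the composite $\Delta[n] \xrightarrow{\sigma} Y_t \to Y_s$ and the $s$-component of $v_Y$ both lift the same map $\Delta[n] \to X_s$ through the covering $p_s$ and agree on $\Delta[0]$; by the unique lifting property of $p_s$ they must coincide. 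Hence $\sigma$ factors through the pullback and furnishes the required diagonal, which is automatically unique. This shows the matching map is a covering, and therefore a Kan fibration.

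The main obstacle is identifying the correct characterization of fibrations in Isaksen's model structure in terms of matching maps. Once that characterization is in place, the rest is a routine diagram chase relying only on the unique lifting property of coverings in $\ssets$ applied compatibly at each index $s < t$.
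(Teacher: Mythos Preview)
Your approach is the same as the paper's: reindex over a cofinite directed set, show that each matching map
\[
Y_t \longrightarrow X_t \times_{\lim_{s<t} X_s} \lim_{s<t} Y_s
\]
is itself a covering in $\ssets$, and conclude. Your verification that the matching map is a covering is correct and is exactly what the paper does (more tersely, by appealing to ``the uniqueness in the defining lifting property of a covering'').

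There is, however, a gap in the criterion you invoke. In Isaksen's model structure on $\pross$ it is \emph{not} true that a level map is a fibration as soon as all matching maps are Kan fibrations; that would be the criterion in a strict model structure, not in the one of \cite{Is}. What is needed is that the map be a \emph{strong fibration} in the sense of \cite{Is} Def.~6.5, which requires each matching map to be both a Kan fibration \emph{and} a co-$n$-equivalence in the sense of \cite{Is} Def.~3.1; then \cite{Is} Prop.~14.5 yields that strong fibrations are fibrations. The paper therefore records that a covering in $\ssets$, having discrete fibres, is automatically a co-$1$-equivalence as well as a Kan fibration. Your computation already establishes that the matching map is a covering, so the missing co-$1$-equivalence is in fact present; you just need to cite the correct criterion (Def.~6.5 and Prop.~14.5 of \cite{Is}) rather than the bare Kan-fibration condition.
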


\begin{proof}
Let $I$ be a cofinite directed index set and $(Y_i)_I \to (X_i)_I$ a level-wise covering. For any $t\in I$, the uniqueness in the defining lifting property of a covering shows that
\[
Y_t \to X_t \times_{\lim_{s<t} X_s} \lim_{s<t} Y_s
\]
is a covering, hence a fibration and a co-$1$-equivalence (in the sense of \cite{Is} Def.~3.1) in $\ssets$. We conclude that $Y\to X$ is a strong fibration in the sense of \cite{Is} Def.~6.5, hence a fibration by \cite{Is} Prop.~14.5.
\end{proof}

Since the model structure on $\pross$ is  proper (see \cite{Is} Prop.~17.1 and the correction \cite{Is3} Rmk.~4.14),
\cite{Hi} Lemma 13.3.2 yields the following.

\begin{lemma}\label{hopull}
Let $W'\to W$ be a fibration in $\pross$.
Assume that $f: X \to W$ and $g: Y \to W$ are maps in $\pross$ and $h: Y\to X$ is a weak equivalence with $g = f  h$. Then the natural map
\[
h \times \id : Y\times_W W' \to X\times_W W'
\] is a weak equivalence.
\end{lemma}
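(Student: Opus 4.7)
The plan is to recognize this statement as a direct instance of right properness of Isaksen's model structure on $\pross$, which is exactly the content of \cite{Hi} Lemma~13.3.2. Right properness says that in any pullback square in which one leg is a fibration and the parallel leg is a weak equivalence, the opposite legs are again a fibration and a weak equivalence, respectively. Thus the task reduces to producing the right pullback square.

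First, I would observe that fibrations in $\pross$ are stable under pullback (this is a formal consequence of the simplicial model category axioms). Hence, since $W' \to W$ is a fibration, so is the base change
\[
p: X\times_W W' \to X.
\]
Next I would identify
\[
Y\times_W W' \;\cong\; Y\times_X (X\times_W W'),
\]
which is immediate from the universal property of fiber products together with the factorization $g = f\circ h$: a pair $(y,w')$ with $g(y)$ and the image of $w'$ in $W$ equal is the same datum as a triple $(y,x,w')$ with $h(y)=x$ and $f(x)$ matching $w'$ in $W$. Under this identification, the map $h\times \id$ is nothing but the projection from the pullback of the fibration $p$ along the weak equivalence $h:Y\to X$.

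Finally, right properness applied to the pullback square
\[
\begin{tikzcd}
Y\times_W W' \arrow{r}{h\times \id}\arrow{d} & X\times_W W' \arrow{d}{p}\\
Y \arrow{r}{h} & X
\end{tikzcd}
\]
shows that the top horizontal arrow is a weak equivalence, which is the assertion of the lemma. The only nontrivial input is that the model structure on $\pross$ is indeed right proper; this has already been invoked in the excerpt via \cite{Is} Prop.~17.1 together with the correction \cite{Is3} Rmk.~4.14, so no further work remains and there is no genuine obstacle. The main conceptual point is simply the rewriting of $Y\times_W W'$ as a pullback over $X$, which makes the right properness criterion directly applicable.
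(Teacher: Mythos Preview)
Your proposal is correct and follows exactly the paper's approach: the paper simply records that the model structure on $\pross$ is proper (citing \cite{Is} Prop.~17.1 and \cite{Is3} Rmk.~4.14) and then invokes \cite{Hi} Lemma~13.3.2, while you unpack this citation by explicitly rewriting $Y\times_W W'$ as $Y\times_X(X\times_W W')$ so that right properness applies to the pullback of the fibration $X\times_W W'\to X$ along the weak equivalence $h$. There is no substantive difference.
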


We will frequently use the fact that homotopy equivalences between pro-spaces over a common base can be base-changed along coverings of the base:

\begin{proposition}\label{basechangepross}
Let\/ $W$, $X$, $Y$ be pro-spaces and let $f:  X \to W$, $g:Y\to W$ be maps of pro-spaces. Assume that there exists
$\gamma \in \Isom_{\Ho(\pross)}(X,Y)$
such that  $g \gamma=f$ in $\Ho(\pross)$.
Let $p: W'\to W$ be a covering.

Then there exists  $\gamma'\in  \Isom_{\Ho(\pross)}(X\times_WW', Y\times_WW')$ such that the diagram
\[
\begin{tikzcd}[column sep=large]
X \arrow{d}[swap]{\gamma}&X\times_WW'\arrow{d}[swap]{\gamma'}\arrow{r} \lar& W'\dar[-, double equal sign distance]\\
Y& Y\times_WW'\arrow{r}\lar&W'
\end{tikzcd}
\]
commutes in $\Ho(\pross)$. The construction can be made functorial in $W'$ with respect to morphisms of coverings of\/ $W$ in $\pross$. In particular, if\/ $W'\to W$ is a Galois covering of connected pro-spaces, then, for all $i\ge 0$ and every abelian group $A$, the  induced isomorphisms
\[
\rH^i(Y\times_WW',A) \stackrel{(\gamma')^*}{\lang} \rH^i(X\times_WW',A)
\]
are $G(W'/W)$-equivariant.
\end{proposition}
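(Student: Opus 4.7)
My plan is to combine the two preceding results---coverings are fibrations in $\pross$, and base change of a weak equivalence along a fibration is again a weak equivalence (Lemma~\ref{hopull})---to transport the homotopy-commutative triangle $g\gamma = f$ down to $W'$. The strategy is to first strictify the triangle into a zigzag of weak equivalences in $\pross$ commuting on the nose over $W$, then pull that zigzag back along $p: W' \to W$, and finally read off $\gamma'$ from the resulting zigzag.

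For the strictification, I would factor $g$ via the model-category axiom as $Y \xrightarrow{j} Y^\circ \xrightarrow{q} W$ with $j$ a trivial cofibration and $q$ a fibration. The morphism $j\gamma \in [X,Y^\circ]_\pross$ is then represented by an honest map $\tilde{\gamma}: X \to Y^\circ$ in $\pross$, which is a weak equivalence because $\gamma$ and $j$ are both invertible in $\Ho(\pross)$. The composites $q\tilde{\gamma}$ and $f$ agree in $\Ho(\pross)$, hence are connected by a simplicial homotopy $H: X \times \Delta[1] \to W$. Because $X$ is cofibrant and the inclusion $X \times \{0\} \hookrightarrow X \times \Delta[1]$ is a trivial cofibration in $\pross$ by the simplicial structure, the right lifting property of the fibration $q$ against this inclusion yields a lift $\tilde{H}: X \times \Delta[1] \to Y^\circ$ with $\tilde{H}|_0 = \tilde{\gamma}$ and $q\tilde{H} = H$. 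Replacing $\tilde{\gamma}$ by $\tilde{H}|_1$, I may assume $q\tilde{\gamma} = f$ strictly in $\pross$, producing a strictly commuting zigzag $X \xrightarrow{\tilde{\gamma}} Y^\circ \xleftarrow{j} Y$ of weak equivalences over $W$.

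Next I apply Lemma~\ref{hopull} to the fibration $p: W' \to W$, once to $\tilde{\gamma}$ and once to $j$: pulling the zigzag back along $p$ yields weak equivalences $X \times_W W' \xrightarrow{\sim} Y^\circ \times_W W' \xleftarrow{\sim} Y \times_W W'$ in $\pross$. Setting $\gamma' := (\tilde{\gamma} \times \id)^{-1}(j \times \id)$ in $\Ho(\pross)$ gives the required isomorphism; compatibility with the projections to $W'$ already holds strictly in $\pross$. For functoriality in $W'$, a morphism of coverings $u: W'' \to W'$ over $W$ induces pullback squares that strictly intertwine the zigzags constructed for $W'$ and $W''$, simply by naturality of the strict pullback; hence the diagram involving $\gamma'$ and $\gamma''$ and the base-change morphisms commutes in $\Ho(\pross)$. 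For a Galois covering $W' \to W$ with group $G = G(W'/W)$, applying this to the automorphisms $\sigma \in G$ of $W'$ over $W$ shows that $\gamma'$ is $G$-equivariant in $\Ho(\pross)$, whence applying $\rH^i(-,A)$ gives the claimed $G$-equivariance on cohomology.

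The main technical obstacle is the strictification step: promoting a homotopy-commutative triangle in $\Ho(\pross)$ to one that commutes on the nose in $\pross$ requires combining the factorization axiom with the covering homotopy property of the fibration $q$. Once this is achieved, everything else is formal consequences of Lemma~\ref{hopull} (that coverings are fibrations is needed to apply it) together with naturality of the strict pullback.
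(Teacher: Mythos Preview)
Your overall strategy is sound and close in spirit to the paper's, but there is one genuine gap and the strictification step is organized differently.

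\textbf{The gap.} You assert that $j\gamma \in [X,Y^\circ]_{\pross}$ is represented by an honest map $\tilde{\gamma}: X \to Y^\circ$. This requires $Y^\circ$ to be fibrant, but the factorization $g = q \circ j$ only makes $q: Y^\circ \to W$ a fibration; $Y^\circ$ itself is fibrant only if $W$ is. The fix is exactly the paper's opening reduction: first replace $W$ by a fibrant approximation, using Lemma~\ref{lem:covviaweak} to transport the covering $W' \to W$ and Lemma~\ref{hopull} to identify the relevant fiber products up to weak equivalence. Once $W$ is fibrant, your $Y^\circ$ is fibrant and the rest of your argument goes through. (Also, your formula for $\gamma'$ is written in the wrong order; you want $\gamma' = (j \times \id)^{-1} \circ (\tilde{\gamma} \times \id)$.)

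\textbf{Comparison with the paper.} After the same fibrant replacement of $W$, the paper takes $\gamma$ itself as an actual weak equivalence $X \to Y$ and does \emph{not} factor $g$. Instead it confronts directly the two fiber products $X \times_W^f W'$ and $X \times_W^{g\gamma} W'$: the homotopy $F: X \times \Delta[1] \to W$ between $f$ and $g\gamma$ is lifted through the covering $p: W' \to W$ via its \emph{unique} lifting property, producing a strict isomorphism $\varphi: X \times_W^f W' \to X \times_W^{g\gamma} W'$ in $\pross$; then a single application of Lemma~\ref{hopull} to $\gamma$ gives the second leg. Your route instead lifts the homotopy through the auxiliary fibration $q: Y^\circ \to W$ to strictify the triangle over $W$ before pulling back, and then applies Lemma~\ref{hopull} twice. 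Both are valid; the paper's version exploits the special feature of coverings (unique lifting, yielding an honest iso rather than merely a weak equivalence), while yours is pure model-category bookkeeping and would work for any fibration $p$, not just a covering. The functoriality and Galois-equivariance conclusions are obtained identically in both approaches.
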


\begin{proof}
By Lemmas~\ref{lem:covviaweak} and
\ref{hopull}, we can replace $W$ and then $X$ and $Y$ by fibrant approximations.
Hence we may assume that $\gamma: X\to Y$ is a weak equivalence in $\pross$ such that  $g \gamma=f$ in $\Ho(\pross)$, and that $p: W' \to W$ is a level-wise covering.

We choose a homotopy $F: X\times \Delta[1] \to W$ between $f$ and $g\gamma$ and denote the vertices of $\Delta[1]$ by $0$ and~$1$.  The outer square in the following diagram commutes because $F$ restricts to $f$ on $X \times \{0\}$.
\[
\begin{tikzcd}[column sep=large]
\big(X\times^f_W W'\big) \times \{0\}  \arrow[hook]{d}  \arrow{r}{\pr_{W'}}
&  W'  \arrow{d}{p}  \\
\big(X\times^f_W W'\big) \times \Delta[1]  \arrow[dashed]{ur}[swap]{F'}
\arrow{r}[swap]{F \pr_{X\times \Delta[1]}}  & W.
\end{tikzcd}
\]
The unique lifting property of a covering induces a unique map $F'$.  
Hence the assignment
\[
(x,w') \mapsto (x,F'(x,w',1))
\]
defines a  map
\[
\ph: X\times_W^fW' \lang X\times^{g\gamma}_W W'
\]
which commutes in $\Ho(\pross)$ with the respective projections to $W'$ and is an isomorphism in $\pross$ (the inverse homotopy to $F$ gives the inverse to $\ph$).
Another application of Lemma~\ref{hopull} shows that
\[
\gamma \times \id :  X \times^{g\gamma}_WW'  \lang Y\times^g_W W'
\]
is a weak equivalence. We obtain the required weak equivalence  as the composite $\gamma' = (\gamma \times \id) \ph$.  Indeed, $\gamma'$ is compatible with $\gamma$, and  $\pr_{W'} \gamma' = \pr_{W'}$ in $\Ho(\pross)$ holds, because $F'$ provides a homotopy.

The construction is obviously functorial in $W'$ and all other assertions follow immediately.
\end{proof}

\subsection{Pointed versus unpointed}
\label{sec:pointedvsunpointed}

In this section we consider the question under which conditions two connected pointed pro-spaces which are isomorphic in the unpointed homotopy category are also isomorphic in the pointed homotopy category. In general, this is not true: there are examples of connected pro-spaces whose fundamental group depends on the base point. However we will show that the problem disappears under some finiteness assumptions.

\subsubsection{Some homological algebra of limits}

For an abelian pro-group $G$, we have the derived inverse limit groups $\lim^i G$, $i\geq 0$. If $G$ is non-abelian, we have the limit group $\lim G=\lim^0 G$, and, following  \cite{BK}, XI, 6.5, the first derived inverse limit ${\lim}^1 G$, which is a pointed set. If $G$ is abelian, the Bousfield-Kan $\lim^1 G$ carries the structure of an abelian group in a natural way and coincides with the usual $\lim^1 G$.

For a pointed pro-space $(X,x)$ and  $0\le i \le j$ we thus can consider
\[
{\lim}^i \, \pi_j (X,x),
\]
which is a pointed set for $i=j=0$ and $i=j=1$, a group for $i=0$, $j=1$ and an abelian group in all other cases.
We could not find the reference for the following.

\begin{lemma} \label{higher lim of finite}
Let $G=(G_i)_{i\in I}$ be a pro-finite (resp.\ a pro-finite abelian) group. Then
\[
{\lim}^s \, G =*
\]
for $s=1$ (resp.\ for all $s\ge 1$).
\end{lemma}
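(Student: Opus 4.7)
I would prove the lemma in two stages: first reduce to surjective transition maps, then handle the non-abelian case ($s=1$) and the abelian case ($s\geq 2$) separately.

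\emph{Reduction.} Using \cite{EH} 2.1.6, I would reindex $G = (G_i)_{i \in I}$ so that $I$ is a cofinite directed set. For each $i \in I$, the descending family of finite subgroups $\mathrm{im}(G_j \to G_i) \subseteq G_i$ (indexed by $j \geq i$) stabilizes by finiteness; call this common stable image $G'_i$. Then $G' = (G'_i)_{i\in I}$ is a sub-pro-system with surjective transition maps, and since $G'_i = \mathrm{im}(G_j \to G_i)$ for some $j \geq i$, the inclusion $G' \hookrightarrow G$ is an isomorphism in the pro-category, so it induces isomorphisms on all $\lim^s$. I may therefore assume the transitions $\alpha_{ji} : G_j \twoheadrightarrow G_i$ are surjective for $j \geq i$.

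\emph{Case $s = 1$, possibly non-abelian.} A class in $\lim^1 G$ is represented by a compatible family $(c_{ij}) \in \prod_{i \leq j} G_i$; one seeks $(g_i) \in \prod G_i$ with $c_{ij} = g_i \cdot \alpha_{ji}(g_j)^{-1}$. I would Zornify a partial trivialization on an upward-closed subset of $I$, using surjectivity of $\alpha_{ji}$ at each successor step to extend the trivialization, and Tychonoff compactness of the inverse system of finite (hence compact) non-empty sets of admissible lifts at limit steps. This is the classical surjective Mittag--Leffler vanishing, carried out in detail in \cite{BK} IX.

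\emph{Abelian case, $s \geq 2$.} I would invoke Pontryagin duality $(-)^{\vee} := \mathrm{Hom}(-,\bQ/\bZ)$, which restricts to an exact contravariant equivalence between pro-finite abelian groups and discrete torsion abelian groups, intertwining cofiltered limits on the left with filtered colimits on the right. Since filtered colimits in $\mathrm{Ab}$ are exact, $\colim^s = 0$ for $s \geq 1$; dualizing back gives $\lim^s G = 0$ for $s \geq 1$.

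The main obstacle will be making the last step fully rigorous: one must check that Pontryagin duality genuinely identifies the right-derived functors, since the cobar complex computing $\lim^s$ involves infinite products of the $G_i$ which are themselves pro-finite abelian groups (not finite), and $(-)^{\vee}$ does not automatically interchange such products with the coproducts appearing in the dual bar complex. A safer but less slick alternative is to quote directly Jensen's theorem that cofiltered Mittag--Leffler systems of finite abelian groups have vanishing higher derived limits.
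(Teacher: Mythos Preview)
Your approach is genuinely different from the paper's. The paper does not reduce to surjective transition maps; instead it embeds $G$ level-wise into the system $G^\ast$ with $G^\ast_i = \prod_{j \leq i} G_j$ and projection transitions. This $G^\ast$ is \emph{strongly} Mittag--Leffler in the sense of \cite{EH}~(4.8.3) (the maps $G^\ast_i \to \lim_{j<i} G^\ast_j$ are surjective, which is strictly stronger than pairwise surjective transitions), so $\lim^s G^\ast = *$ for $s=1$ (resp.\ all $s \geq 1$) by \cite{EH}~(4.8.5). For $s=1$ the paper then uses the six-term sequence of pointed sets together with compactness of the fibres of $\lim G^\ast \to \lim G^\ast/G$; for abelian $G$ and $s \geq 2$ it uses the dimension shift $\lim^s G \cong \lim^{s-1}(G^\ast/G)$ and inducts on $s$, noting that $G^\ast/G$ is again pro-finite abelian. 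This sidesteps entirely the duality issue you flag.

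Your $s=1$ sketch is essentially correct in spirit, though note that your reduction only yields pairwise surjective transitions, which over a general cofinite directed set does \emph{not} imply the strong condition $G_i \twoheadrightarrow \lim_{j<i} G_j$ (take $G_i = \bZ/4$ mapping onto two incomparable copies of $\bZ/2$); the finiteness is doing essential work at every extension step, not merely at ``limit steps''. More seriously, your $s \geq 2$ argument via Pontryagin duality has exactly the gap you diagnose: the Roos complex computing $\lim^s$ in $\mathrm{Ab}$ has terms that are infinite products of finite groups, and $(-)^\vee$ does not carry these to the direct sums in the bar complex for $\colim^s$. Equivalently, you are implicitly using that $\lim$ is exact on cofiltered diagrams of \emph{compact} abelian groups, but the derived functors in the lemma are taken in $\mathrm{Ab}$, not in compact abelian groups, and the comparison is precisely what is at stake. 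The paper's dimension-shifting induction via $G^\ast$ is the clean fix; your fallback of citing Jensen would also work but is less self-contained.
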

\begin{proof} We may assume that the index category $I$ is a cofinite directed set.
We have a natural injection $G\hookrightarrow {G^*}$, where $G^*=(G_i^*)_{i\in I}$ is the pro-group defined by
\[
G^*_i = \prod_{j\le i} G_j
\]
with the projection maps as transition maps.
The cokernel ${G^*}/G$  is a pointed pro-finite set. We obtain an exact sequence of pointed sets, cf.\ \cite{BK} XI 6.5:
\[
*\to \lim G \to \lim {G^*} \to \lim {G^*}/G \to {\lim}^1 G \to {\lim}^1 {G^*}.
\]
The pro-group ${G^*}$ is strongly Mittag-Leffler in the sense of \cite{EH} (4.8.3), because the natural maps
\[
G_i^* = \prod_{j\le i} G_j \longrightarrow \prod_{j < i} G_j = \lim_{k<i} G^*_k
\]
are surjective. Therefore we have ${\lim}^1 {G^*}=*$ by  \cite{EH} (4.8.5). Since $G$ is pro-finite, the usual compactness argument shows that $\lim {G^*} \to \lim {G^*}/G$ is surjective. We conclude that ${\lim}^1 G =*$.

If $G$ is abelian, then so is ${G^*}$ and again by \cite{EH} (4.8.5), we have ${\lim}^s {G^*}=*$ for all $s\geq 1$. We obtain ${\lim}^s G={\lim}^{s-1} {G^*}/G$ for $s\geq 2$ and the result follows by induction.
\end{proof}

\subsubsection{Topological homotopy groups}

Let $(S^n,s_n)$ be the pointed constant pro-space given by the simplicial $n$-sphere. For a pointed pro-space $(X,x)=(X_i,x_i)_{i\in I}$, we put
\[
\pi_n^\tp(X,x):=\pi_n(\holim (X,x))=\Mor_{\Ho(\pross_*)}((S^n,s_n),(X,x))
\]
(see \cite{Is} Prop.~8.2 for the second equality), and call these the \defobjekt{topological homotopy groups} of $(X,x)$.
We call $(X,x)$ \defobjekt{path-connected} if $\pi_0^\tp(X,x)=*$.
The projections $(X,x)\to (X_i,x_i)$ induce a natural homomorphism
\[
\pi_n^\tp(X,x) \to \pi_n(X,x)
\]
from the constant group $\pi_n^\tp(X,x)$ to the pro-group $\pi_n(X,x)$ for all $n$.

\begin{theorem}\label{top=prof}
Let $(X,x)$ be a pointed pro-space such that $\pi_0(X,x)=*$ and $\pi_n(X,x)$ is pro-finite for all $n\ge 1$. Then the natural homomorphism
\[
\pi_n^\tp(X,x) \longrightarrow \lim \pi_n(X,x)
\]
is an isomorphism for all $n\geq 0$.
\end{theorem}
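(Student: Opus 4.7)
The plan is to run the Bousfield--Kan type homotopy-limit spectral sequence for a fibrant model of $(X,x)$ and kill every derived-limit contribution using Lemma~\ref{higher lim of finite}.

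Re-indexing so that $(X,x)=(X_i,x_i)_{i\in I}$ with $I$ cofinite directed, and replacing $(X,x)$ by a strongly fibrant, levelwise Kan model in $\pross_*$, we have $\pi_n^\tp(X,x)=\pi_n(\holim(X,x))$ by \cite{Is} Prop.~8.2. The Bousfield--Kan totalisation of the pointed mapping complexes $\Map_*(S^n,X)$ produces a spectral sequence
\[
E_2^{s,t} = {\lim}^s \pi_t(X,x) \Longrightarrow \pi_{t-s}^\tp(X,x),
\]
whose edge homomorphism in total degree $n$ is precisely the natural map $\pi_n^\tp(X,x)\to\lim\pi_n(X,x)$.

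For $n\geq 2$ all contributions $E_2^{s,n+s}$ with $s\geq 1$ are derived limits of the abelian pro-finite pro-groups $\pi_{n+s}(X,x)$, hence vanish by the abelian part of Lemma~\ref{higher lim of finite}. The spectral sequence therefore collapses onto the $s=0$ column and yields the claimed isomorphism. The two remaining cases $n=0,1$ live in the non-abelian fringe, where one has the Bousfield--Kan exact sequence of pointed sets
\[
* \to {\lim}^1 \pi_{n+1}(X,x) \to \pi_n^\tp(X,x) \to \lim \pi_n(X,x) \to *,
\]
which is a short exact sequence of groups for $n=1$. For $n=1$, Lemma~\ref{higher lim of finite} applied to the abelian pro-finite group $\pi_2(X,x)$ kills the left-hand term, and we obtain $\pi_1^\tp(X,x)\cong\lim\pi_1(X,x)$. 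For $n=0$ the right-hand term equals $\lim\pi_0(X,x)=*$ by hypothesis, while the non-abelian part of Lemma~\ref{higher lim of finite} gives ${\lim}^1\pi_1(X,x)=*$; therefore $\pi_0^\tp(X,x)=*$.

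The main obstacle is having the Bousfield--Kan spectral sequence, and in particular its non-abelian fringe, available for a general cofinite directed index set rather than only for countable towers (where this is classical Milnor theory). This is taken care of by working inside Isaksen's model structure on $\pross$ \cite{Is}, whose strongly fibrant replacement supplies the Reedy-type towers needed to run the totalisation argument; once that framework is in place, Lemma~\ref{higher lim of finite} does all the remaining work.
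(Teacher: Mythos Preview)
Your approach is essentially the paper's: both run the Bousfield--Kan spectral sequence for $\holim X$ and annihilate all $E_2^{s,t}$ with $s\ge 1$ via Lemma~\ref{higher lim of finite}. Two points need tightening, however.

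First, for $n\ge 2$ you write that the spectral sequence ``collapses onto the $s=0$ column and yields the claimed isomorphism.'' Degeneration at $E_2$ does not by itself give convergence of the Bousfield--Kan spectral sequence; one must check complete convergence. The paper does this explicitly: since $E_2^{s,t}=0$ for $1\le s\le t$, the systems $(E_r^{s,t})_r$ are eventually constant, hence ${\lim_r}^1 E_r^{s,t}=0$, and then the complete convergence lemma \cite{BK} IX,~\S5.4 applies. You should insert this step.

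Second, the three-term sequence
\[
* \to {\lim}^1 \pi_{n+1}(X,x) \to \pi_n^\tp(X,x) \to \lim \pi_n(X,x) \to *
\]
that you invoke for $n=0,1$ is the Milnor sequence for \emph{towers}; it is not available in this form for a general cofinite directed $I$. For arbitrary $I$ the homotopy limit is computed as the $\Tot$-tower of the cosimplicial replacement $\Pi^\ast X$ (\cite{BK} XI,~\S5.2), and the resulting fringe behaviour at $n=0,1$ is governed not by a single ${\lim}^1$ term over $I$ but by the full spectral sequence together with the connectivity lemma \cite{BK} IX,~5.1 (for $n=0$) and the complete convergence lemma (for $n=1$). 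Your final paragraph gestures at this difficulty but does not resolve it; the fix is simply to drop the three-term sequence and argue uniformly: once all $E_2^{s,t}$ with $s\ge 1$ vanish (using both the abelian and the non-abelian case of Lemma~\ref{higher lim of finite}), the connectivity and complete convergence lemmas from \cite{BK} give the result for all $n\ge 0$ at once, exactly as in the paper.
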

\begin{proof} We drop the base point $x$ from the notation. By \cite{BK}  XI, 5.2, $\holim X$ is the total space of some cosimplicial space $\Pi^* X$, i.e.\
\[
\holim X = \lim_n  \Tot^n \Pi^* X ,
\]
where
\[
\Tot^n \Pi^* X= \prod_{u\in I_n} X_{i_0}, \qquad  u=(X_{i_n}\stackrel{\alpha_n}{\to} \cdots \stackrel{\alpha_1}{\to} X_{i_0}),
\]
and the differentials $\Tot^n \Pi^* X \to \Tot^{n-1}\Pi^* X$ are defined in the usual manner.
Associated with the tower of fibrations $\Tot^n \Pi^* X$ we have the Bousfield-Kan spectral sequence
\[
E_2^{ij}= \pi^i \pi_j \Pi^* X \Rightarrow \pi_{j-i} \holim X \quad (0\leq i \leq j)
\]
($\pi^i$ are the cohomotopy groups, see \cite{BK} X, \S7.2 for the description of the $E_2$-terms). By \cite{BK} XI, \S7.1, we have
\[
\pi^i \pi_j \Pi^* X = {\lim}^i \pi_j X \quad (0\leq i \leq j).
\]
Lemma~\ref{higher lim of finite} implies $E_2^{ij}=0$ for $1\le i\leq j$, hence the inverse systems $(E_r^{ij})_r$ are constant and
\[
{\lim_r}^1 E_r^{ij} = 0 \quad (0\leq i \le j).
\]
By \cite{BK} IX, 5.1 (connectivity lemma), we conclude that $\holim X$ is connected and  \cite{BK} IX, \S5.4 (complete convergence lemma) implies that  $\lim \pi_n(X) \cong \pi_n \holim X$ for all $n\geq 1$.
\end{proof}

\subsubsection{Monodromy action on pointed homotopy classes of maps.}
\label{sec:pointedvsunpointed1}

Let \{0,1\} be the constant pro-simplicial set consisting of two points. We consider the
under category $\{0,1\} {\downarrow}\, \pross$ of pro-simplicial sets with two distinguished points together with its induced model structure, cf.\ \cite{DS}, Rmk.\,3.10.  Let
\[
I=(\Delta[1], 0,1)
\]
be the simplicial unit interval considered as a constant pro-simplicial set with the two distinguished points $0$ and $1$. For $(Y,y_0,y_1)\in \{0,1\} {\downarrow}\, \pross$ we define the set
\[
\pi_1^\tp(Y,y_0,y_1)= \Hom_{\Ho(\{0,1\} {\downarrow}\, \pross)}((\Delta[1], 0,1), (Y,y_0,y_1)).
\]
For $(X,x_0,x_1)$ in $\{0,1\} {\downarrow}\, \pross$, consider the object $ (X,x_0,x_1)\wedge I$ of $\{0,1\} {\downarrow}\, \pross$ obtained from the product $X\times I$ by collapsing  $\{x_0\} \times I$ and $\{x_1\}\times I$ to distinguished points. Then  $X\wedge I$
is a cylinder object for $X$ such that $i_0,i_1: X\to X\wedge I$ are cofibrations and the projection $X\wedge I \to X$ is a trivial fibration. Hence, if $Y$ is fibrant, then $\pi_1^\tp(Y,y_0,y_1)$ is the set of maps $u: I \to Y$ with $u(0)=y_0$, $u(1)=y_1$, modulo start and endpoint preserving homotopies.

For another point $y_2$ of $Y$, composition of paths induces a natural map
\[
\pi_1^\tp(Y,y_1,y_2) \times \pi_1^\tp(Y,y_0,y_1) \longrightarrow \pi_1^\tp(Y,y_0,y_2).
\]
The simplicial $1$-sphere $S^1$ is obtained by identifying the vertices of $\Delta[1]$. Therefore the special case $y_0=y_1$ is consistent with the definition of the topological fundamental group:
\[
\pi_1^\tp(Y,y,y)= \pi_1^\tp(Y,y).
\]

\medskip
Let $(X,x)$ be a pointed pro-space and $(Y,y_0, y_1) \in \{0,1\}{\downarrow}\,\pross$. Let $\epsilon: Y\to Z$ be a fibrant approximation and $z_i=\epsilon(y_i)$, $i=1,2$. For a path $u: I \to Z$ from $z_0$ to $z_1$ and a map $f:(X,x) \to (Z,z_0)$
we define a map $u(f): (X,x_0) \to (Z,z_1)$ by
\[
u(f) (x) = F(x,1)
\]
where $F : X \times I \to Z$ is an extension
\[
\begin{tikzcd}[column sep=small]
(X,x) \vee (I,0) \arrow[hook]{d} \arrow{rr}{u \vee f} && Z  \arrow{d} \\
X \times I  \arrow{rr} \arrow[dashrightarrow]{urr}{F}  && *
\end{tikzcd}
\]
which exists since $Z$ is fibrant. Formally in the same way as in the classical case of well-pointed topological spaces (cf.\ \cite{Whitehead}, Chapter III, \S1) one shows

\begin{lemma}\label{lem:pathaction}
The above constructions yields a well-defined composition
\[
\pi_1^\tp(Y,y_0,y_1) \times [(X,x), (Y,y_0)]_{\pross_*} \longrightarrow [(X,x), (Y,y_1)]_{\pross_*}.
\]
For another point  $y_2$ of $Y$, the composition
\[
\pi_1^\tp(Y,y_1,y_2) \times \pi_1^\tp(Y,y_0,y_1) \times [(X,x), (Y,y_0)]_{\pross_*} \longrightarrow [(X,x), (Y,y_2)]_{\pross_*}
\]
is associative. In the special case $y_0=y_1$, we obtain a natural
$\pi_1^\tp(Y,y)$-action on  $[(X,x), (Y,y)]_{\pross_*}$.
\end{lemma}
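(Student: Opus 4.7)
My plan is to model the argument on the classical construction of the monodromy action of the fundamental groupoid on pointed homotopy classes of maps from a well-pointed space, transferring it to the pro-simplicial setting by means of Isaksen's model structure. After replacing $Y$ by a fibrant approximation $Z$ (as is implicit in the setup), the essential structural input is that the inclusion
\[
(X,x)\vee (I,0)\ \hookrightarrow\ X\times I
\]
is a cofibration in $\pross_*$: level-wise it is the inclusion of $X\times\{0\}\cup\{x\}\times I$ into $X\times I$, hence a monomorphism of pointed simplicial sets, and such level-wise cofibrations are cofibrations in Isaksen's model structure. Since $Z$ is fibrant, the extension $F$ defining $u(f)$ therefore exists.

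The first step is to show that $(u,f)\mapsto u(f)$ descends to homotopy classes. I would do this by considering two paths $u_0,u_1$ representing the same class in $\pi_1^\tp(Z,z_0,z_1)$ through an endpoint-preserving homotopy $H\colon I\times I\to Z$, and two representatives $f_0,f_1$ of the same class in $[(X,x),(Z,z_0)]_{\pross_*}$ through a pointed homotopy $G\colon X\times I\to Z$. Gluing $H$ and $G$ along with the extensions $F_0,F_1$ to a map defined on a suitable subcomplex of $X\times I\times I$, and then extending once more using fibrancy of $Z$ against the cofibration given by the inclusion of that subcomplex, produces a pointed homotopy between $u_0(f_0)$ and $u_1(f_1)$. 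Specialising $u_0=u_1$, $f_0=f_1$ yields independence of the auxiliary extension $F$. Independence of the chosen fibrant approximation $Z$ is then formal, using that any two such are connected by a zig-zag of weak equivalences and the construction is functorial up to homotopy along weak equivalences of fibrant targets.

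For the second (associativity) statement, given paths $u\in\pi_1^\tp(Z,z_0,z_1)$ and $v\in\pi_1^\tp(Z,z_1,z_2)$ one forms the concatenation $v\ast u$ by the usual subdivision of $I$. A choice of extension $F_u$ defining $u(f)$ and a choice of extension $F_v$ defining $v(u(f))$ glue along $X\times\{1\}$ to an extension representing $(v\ast u)(f)$, which gives $(v\ast u)(f)=v(u(f))$ on the nose before taking homotopy classes. The special case $y_0=y_1=y$ then gives an action on $[(X,x),(Y,y)]_{\pross_*}$, with the class of the constant path at $y$ acting as the identity (take $F$ to be the constant homotopy of $f$).

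The main obstacle will be the bookkeeping for the combined well-definedness argument: one must identify, at the level of simplicial sets, the correct pushout subcomplex of $X\times I\times I$ whose inclusion is a cofibration in $\pross_*$, so that the homotopy $H$ between paths and the homotopy $G$ between maps can be simultaneously absorbed into a single extension against the fibration $Z\to\ast$. This is standard for CW-pairs in topology, but in $\pross_*$ one has to verify the cofibration property level-wise and check that the needed lifts can be chosen to respect the base points; once this fibration-cofibration pairing is in place, the remainder is a direct transcription of the classical argument cited from \cite{Whitehead}.
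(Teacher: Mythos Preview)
Your proposal is correct and follows precisely the route the paper takes: the paper does not give a detailed proof but simply states that the argument proceeds ``formally in the same way as in the classical case of well-pointed topological spaces (cf.\ \cite{Whitehead}, Chapter III, \S1)'', and your outline is exactly a transcription of that classical argument to the pro-simplicial setting using the fibrant replacement $Z$ and the lifting property against the (trivial) cofibration $(X,x)\vee(I,0)\hookrightarrow X\times I$. One small sharpening: this inclusion is in fact a \emph{trivial} cofibration level-wise (an anodyne extension), which is why the extension $F$ against the fibration $Z\to\ast$ exists; you implicitly use this but only call it a cofibration.
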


Again, formally in the same way as in the classical case of well-pointed topological spaces (cf.\ \cite{Whitehead} Chap.~III, \S1) one obtains the following:

\begin{theorem} \label{modpi1abs}
Let  $(X,x)$ and $(Y,y)$ be pointed pro-spaces and assume that $Y$ is path-connected.
Then the map induced by forgetting the base points induces a natural bijection of the orbit space
\[
\big([(X,x), (Y,y)]_{\pross_*}\big)_{\pi_1^\tp(Y,y)} \xrightarrow{\sim} [X, Y]_{\pross}
\]
with the set of morphisms of\/ $X$ to $Y$ in the unpointed homotopy category $\Ho(\pross)$.
\end{theorem}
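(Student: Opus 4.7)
The plan is to mimic the classical proof for well-pointed topological spaces, as the authors indicate. Two things must be established: first, that the forgetful map $[(X,x),(Y,y)]_{\pross_\ast} \to [X,Y]_{\pross}$ factors through the $\pi_1^\tp(Y,y)$-action on the source and induces a surjection; second, that two pointed maps with the same unpointed class differ by the action of a single element of $\pi_1^\tp(Y,y)$.

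I would first reduce to the case of a fibrant target by picking a fibrant approximation $\epsilon : Y \to Z$. Then $[X,Y]_{\pross} = [X,Z]_{\pross}$ and $[(X,x),(Y,y)]_{\pross_\ast} = [(X,x),(Z,\epsilon(y))]_{\pross_\ast}$ are computed by honest morphisms modulo strict simplicial homotopy along $\Delta[1]$, and $\pi_1^\tp(Y,y) = \pi_1^\tp(Z,\epsilon(y))$. That the target of the forgetful map factors through orbits is then immediate from Lemma~\ref{lem:pathaction}: the extension $F : X \times I \to Z$ used there to construct $u(f)$ is in particular an unpointed homotopy from $f$ to $u(f)$.

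For surjectivity, given $g : X \to Z$ representing a class in $[X,Z]_{\pross}$, the point $g(x)$ need not equal $\epsilon(y)$, but path-connectedness of $Y$ (hence of $Z$) supplies a path $v \in \pi_1^\tp(Z,\epsilon(y),g(x))$. Applying exactly the same extension property as in Lemma~\ref{lem:pathaction}, with $g$ prescribed on $X \times \{0\}$ and $v$ prescribed on $\{x\} \times I$, produces $F : X \times I \to Z$ with $F({-},0)=g$ and $F(x,t)=v(t)$; then $g' := F({-},1) : (X,x) \to (Z,\epsilon(y))$ is pointed and unpointed-homotopic to $g$ via $F$. For injectivity on orbits, assume $f_0, f_1 : (X,x) \to (Z,\epsilon(y))$ and an unpointed homotopy $H : X \times \Delta[1] \to Z$ between them. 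Its restriction $u := H|_{\{x\} \times \Delta[1]}$ is a loop at $\epsilon(y)$, defining $[u] \in \pi_1^\tp(Z,\epsilon(y))$, and $H$ itself exhibits $f_1$ as $u(f_0)$ in the sense of Lemma~\ref{lem:pathaction}. Hence $[f_0]$ and $[f_1]$ lie in a single $\pi_1^\tp(Y,y)$-orbit.

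The main technical obstacle, implicit already in Lemma~\ref{lem:pathaction}, is the extension property of the inclusion $(X,x) \vee (I,0) \hookrightarrow X \times I$ into a fibrant target. Since $I = \Delta[1]$ is a constant cofibrant pro-space, $X$ is cofibrant, and the inclusion is a level-wise acyclic cofibration in $\ssets$, this can be verified in the Isaksen model structure by checking the appropriate level-wise lifting; once granted, all remaining steps are formal, concern only constant pro-objects on the source side, and transfer verbatim from the case of well-pointed topological spaces in \cite{Whitehead}, Chapter III, \S1.
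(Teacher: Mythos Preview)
Your proposal is correct and follows exactly the approach the paper indicates: the paper gives no argument beyond the sentence ``formally in the same way as in the classical case of well-pointed topological spaces (cf.\ \cite{Whitehead}, Chapter III, \S1),'' and you have written out precisely that classical argument, reduced to a fibrant target so that homotopy classes are computed by strict simplicial homotopies. One small slip: in the surjectivity step your path $v$ should lie in $\pi_1^\tp(Z,g(x),\epsilon(y))$ rather than $\pi_1^\tp(Z,\epsilon(y),g(x))$, so that $v(0)=g(x)$ agrees with $g$ at the wedge point $(x,0)$; this is immaterial since path-connectedness supplies paths in either direction.
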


\begin{corollary}\label{cor:pointed-unpointed-abs}
Let $(X,x)$ and $(Y,y)$ be pointed connected pro-spaces,
Assume that
\begin{enumeral}
\item $X$ and $Y$ are isomorphic in $\Ho(\pross)$,
\item $\pi_i(Y,y)$ is pro-finite for all $i\ge 1$.
\end{enumeral}
Then $(X,x)$ and $(Y,y)$ are isomorphic in the pointed homotopy category $\Ho(\pross_* )$.
\end{corollary}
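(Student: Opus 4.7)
The plan is to promote an unpointed isomorphism $\gamma \in \Isom_{\Ho(\pross)}(X,Y)$ to a pointed one in three steps: first establish that both $(X,x)$ and $(Y,y)$ are path-connected so that Theorem~\ref{modpi1abs} applies in both directions; second, lift $\gamma$ and $\gamma^{-1}$ to pointed morphisms $\tilde\gamma$ and $\tilde\delta$; third, use the $\pi_1^\tp$-action supplied by Lemma~\ref{lem:pathaction} to rigidify $\tilde\delta$ into a genuine pointed homotopy inverse of $\tilde\gamma$.

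For the first two steps, since $\pi_0(Y,y) = \ast$ and $\pi_n(Y,y)$ is pro-finite for all $n \ge 1$, Theorem~\ref{top=prof} gives $\pi_0^\tp(Y,y) = \lim\pi_0(Y,y) = \ast$, so $(Y,y)$ is path-connected. Theorem~\ref{modpi1abs} then produces a pointed lift $\tilde\gamma : (X,x) \to (Y,y)$ of $\gamma$. Its underlying unpointed morphism is $\gamma$, an isomorphism in $\Ho(\pross)$ and hence a weak equivalence of pro-spaces; because weak equivalences in the pointed model structure on $\pross_*$ are reflected from $\pross$, the map $\tilde\gamma$ is itself a pointed weak equivalence and therefore induces isomorphisms $\pi_n(X,x) \cong \pi_n(Y,y)$ of pro-groups for every $n \ge 1$. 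In particular each $\pi_n(X,x)$ is pro-finite, so Theorem~\ref{top=prof} applied to $(X,x)$ yields $\pi_0^\tp(X,x) = \ast$, and $(X,x)$ is path-connected as well. A second application of Theorem~\ref{modpi1abs} produces a pointed lift $\tilde\delta : (Y,y) \to (X,x)$ of $\gamma^{-1}$.

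For the third step, the composite $\tilde\delta \tilde\gamma$ lifts $\id_X$, so by Theorem~\ref{modpi1abs} there exists $v \in \pi_1^\tp(X,x)$ with $\tilde\delta \tilde\gamma = v(\id_{(X,x)})$. The extension $F : X \times I \to X$ defining this action, precomposed with any pointed $g$, provides an extension for $fg$, giving the identity $u(f) \circ g = u(f \circ g)$. Setting $\tilde\delta' := v^{-1}(\tilde\delta)$ we obtain another pointed lift of $\gamma^{-1}$ (same $\pi_1^\tp$-orbit), and
\[
\tilde\delta' \tilde\gamma \;=\; v^{-1}(\tilde\delta)\tilde\gamma \;=\; v^{-1}(\tilde\delta \tilde\gamma) \;=\; v^{-1}\!\bigl(v(\id_{(X,x)})\bigr) \;=\; \id_{(X,x)}.
\]
On the other side, $\tilde\gamma \tilde\delta'$ is a pointed lift of $\id_Y$, so equals $w(\id_{(Y,y)})$ for some $w \in \pi_1^\tp(Y,y)$; such a map is an automorphism of $(Y,y)$ in $\Ho(\pross_*)$ with inverse $w^{-1}(\id_{(Y,y)})$ by the same associativity of the action. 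A morphism possessing a left inverse whose right composite is invertible is itself an isomorphism, so $\tilde\gamma$ is an isomorphism in $\Ho(\pross_*)$, as desired.

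The main obstacle lies in the middle step: making sure the pro-finiteness hypothesis transfers from $(Y,y)$ to $(X,x)$, so that Theorem~\ref{top=prof} can be invoked at the source to guarantee that $(X,x)$ is path-connected. Without this transfer the very pathology alluded to in section~\ref{basicprop} could occur, and the second application of Theorem~\ref{modpi1abs} would be unavailable. Once this is secured, the rigidification via the $\pi_1^\tp$-action in the final step is a formal manipulation inside the orbit structure provided by Theorem~\ref{modpi1abs}, relying only on the basic compatibility $u(f) \circ g = u(f \circ g)$ which is immediate from the extension-square definition of the action.
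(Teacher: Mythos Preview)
Your argument is correct, but it takes an unnecessary detour past the finish line. In your second step you already observe that ``weak equivalences in the pointed model structure on $\pross_*$ are reflected from $\pross$, [so] the map $\tilde\gamma$ is itself a pointed weak equivalence.'' That sentence \emph{is} the proof: in any model category, weak equivalences are exactly the morphisms that become isomorphisms in the homotopy category, so $\tilde\gamma$ is already an isomorphism in $\Ho(\pross_*)$ and you are done. This is precisely the paper's argument, which stops right there.

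Everything after that point---transferring pro-finiteness to $(X,x)$, showing $(X,x)$ is path-connected, lifting $\gamma^{-1}$ separately, and then rigidifying via the $\pi_1^\tp$-action---is a correct but redundant construction of an explicit pointed inverse. Your identity $u(f)\circ g = u(f\circ g)$ follows from \eqref{eq:monodromy_action_is_natural}, and the fact that $w(\id_{(Y,y)})$ is invertible is exactly the group homomorphism $\pi_1^\tp(Y,y)\to\Aut_{\Ho(\pross_*)}((Y,y))$ noted just before Lemma~\ref{lem:monodromy_acts_by_conjugation}; so nothing is wrong. But the ``main obstacle'' you flag---transferring pro-finiteness to the source---is not an obstacle at all, because you never need Theorem~\ref{modpi1abs} applied with $(X,x)$ as target.
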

\begin{proof}
Let $f: X \to Y$ be an isomorphism in $\Ho(\pross)$.  Theorem~\ref{top=prof} implies that $\pi_0^\tp(Y,y)$ is trivial, i.e., $Y$ is path-connected. Hence, by Theorem~\ref{modpi1abs} there exists a morphism $f_*: (X,x) \to (Y,y)$ in $\Ho(\pross_*)$ over $f$.  By the definition of the model structure on $\pross_*$, $f_*$ is a isomorphism.
\end{proof}

For  $f \in [(X,x), (Y,y)]_{\pross_*}$ and $\alpha \in \pi_1^\tp(Y,y)$,  the definition of the $\pi_1^\tp(Y,y)$-action implies that
\begin{equation} \label{eq:monodromy_action_is_natural}
\alpha(f)=\alpha(\id_Y)\circ f.
\end{equation}
By Lemma~\ref{lem:pathaction}, the map
\[
\pi_1^\tp(Y,y) \to \Aut_{\Ho(\pross_*)}\big((Y,y)\big), \qquad \alpha \mapsto \alpha(\id_Y),
\]
is a group homomorphism. Hence, for any functor $\mathcal{F}$ on $\Ho(\pross_*)$, there is an induced $\pi_1^\tp(Y,y)$-action on $\mathcal{F}((Y,y))$. In particular, $\pi_1^\tp(Y,y)$ acts on the set of group homomorphisms
$\ph : \pi_1^\tp(X,x) \to \pi_1^\tp(Y,y)$ by
$
\alpha(\ph) = \pi_1^\tp(\alpha(\id_Y)) \circ \ph$.

\begin{lemma} \label{lem:monodromy_acts_by_conjugation}
Let $(X,x)$ and $(Y,y)$ be pointed connected pro-spaces.
Then the $\pi_1^\tp(Y,y)$-action on
$\Hom\big(\pi_1^\tp(X,x), \pi_1^\tp(Y,y)\big)$
is by composition with conjugation: for $\alpha \in \pi_1^\tp(Y,y)$ and $\ph: \pi_1^\tp(X,x) \to \pi_1^\tp(Y,y)$ we have
\[
\alpha(\ph) (\gamma) = \alpha  \ph(\gamma)  \alpha^{-1}.
\]
In particular, the set of $\pi_1^\tp(Y,y)$-orbits is the set of outer homomorphisms:
\[
\Big(\Hom\big(\pi_1^\tp(X,x), \pi_1^\tp(Y,y)\big)\Big)_{\pi_1^\tp(Y,y)} =
\OutHom\big(\pi_1^\tp(X,x),\pi_1^\tp(Y,y)\big).
\]
\end{lemma}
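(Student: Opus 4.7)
The second assertion about $\OutHom$ is formal once the first is established: by definition the set of outer homomorphisms is exactly the quotient of $\Hom(\pi_1^\tp(X,x),\pi_1^\tp(Y,y))$ by post-composition with inner automorphisms of $\pi_1^\tp(Y,y)$, which by the first assertion is precisely the $\pi_1^\tp(Y,y)$-action under consideration. So the real content is to show
\[
\pi_1^\tp(\alpha(\id_Y)) \;=\; \mathrm{conj}_\alpha : \pi_1^\tp(Y,y) \longrightarrow \pi_1^\tp(Y,y);
\]
by \eqref{eq:monodromy_action_is_natural} and the definition of the action on $\Hom$, this implies $\alpha(\ph)(\gamma) = \alpha\,\ph(\gamma)\,\alpha^{-1}$ for every $\ph$ and every $\gamma$.

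First I would replace $Y$ by a fibrant approximation (the constructions in Section~\ref{sec:pointedvsunpointed1} are already insensitive to such a replacement). A class $\alpha \in \pi_1^\tp(Y,y)$ then has a representative map of pro-spaces $\alpha : I \to Y$ with $\alpha(0)=\alpha(1)=y$, and by the construction of Lemma~\ref{lem:pathaction} the automorphism $\alpha(\id_Y)$ in $\Ho(\pross_*)$ is represented by the map $z \mapsto F(z,1)$, where
\[
F : Y \times I \longrightarrow Y
\]
is any extension of $\id_Y \vee \alpha : Y \vee I \to Y$ (such an $F$ exists because $Y\vee I \hookrightarrow Y \times I$ is a trivial cofibration and $Y$ is fibrant).

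For any loop $\gamma : S^1 \to (Y,y)$ representing a class in $\pi_1^\tp(Y,y) = [S^1,Y]_{\pross_*}$, I would then form the pro-simplicial map
\[
H : S^1 \times I \longrightarrow Y, \qquad H(s,t) = F(\gamma(s),t).
\]
Restriction to $S^1 \times \{0\}$ gives $\gamma$; restriction to $S^1 \times \{1\}$ gives $\alpha(\id_Y)\circ \gamma$; restriction to $\{s_0\}\times I$ at the basepoint $s_0$ of $S^1$ gives the loop $\alpha$. This is precisely the classical configuration of a free homotopy whose basepoint traces out $\alpha$, and the standard reparameterization of the cylinder $S^1 \times I$ (concatenating $\alpha$ in front, $H$ in the middle, and $\alpha^{-1}$ at the end) produces a \emph{based} homotopy between $\alpha\cdot\gamma\cdot\alpha^{-1}$ and $\alpha(\id_Y)\circ \gamma$, yielding the desired identity in $\pi_1^\tp(Y,y)$.

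The main technical obstacle is to execute this reparameterization rigorously in the pro-simplicial setting, where the usual topological pictures are not directly available. The cleanest way is to observe that the entire diagram above is preserved by $\holim$: since $\pi_1^\tp(-) = \pi_1(\holim(-))$ by definition and the constant pro-space $I$ is sent to $\Delta[1]$ under $\holim$, the data $(F, H, \gamma, \alpha)$ become the corresponding data for the single simplicial set $\holim(Y,y)$, where the classical lemma from elementary homotopy theory of pointed simplicial sets (e.g.\ \cite{Whitehead}, Chap.~III) applies verbatim. This reduces the pro-space statement to the well-known statement about simplicial sets and finishes the proof.
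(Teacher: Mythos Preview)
Your argument is correct and follows the same essential idea as the paper: both construct the map $G = F\circ(g\times\id): I\times I \to Y$ (equivalently your $H: S^1\times I\to Y$), observe that its restriction to the four sides of the square gives bottom $= g$, left $=a$, top $=a(g)$, right $=a$, and conclude that this square exhibits $a(g)$ as homotopic to $a\cdot g\cdot a^{-1}$.

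The one difference is in the final step. The paper stays inside $\pross$: once the square $G:I\times I\to Y$ with the indicated boundary is produced, it is itself (after a trivial reparametrisation) a homotopy rel endpoints in the under-category $\{0,1\}\downarrow\pross$ between $a(g)$ and $aga^{-1}$, so no transfer to $\holim$ is needed. Your route through $\holim$ is also fine, but you should note that it relies on $\holim$ commuting with the product by the constant pro-space $I$; this holds (filtered index categories have contractible nerves, so $\holim$ of a constant diagram is weakly equivalent to the constant value), but it is an extra verification the direct argument avoids.
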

\begin{proof} By functoriality, we may assume that $\ph = \id$. We may assume that $Y$ is fibrant and that $\gamma$ is represented by $g: I \to Y$, $g(0)=y=g(1)$ and $\alpha$ is represented by $a: I \to Y$, $a(0)=y=a(1)$. Then $a(g)$ represents $\alpha(\id)(\gamma)$ by \eqref{eq:monodromy_action_is_natural}.

The definition of $a(g)$ uses a map $G: I\times I \to Y$ whose restriction to the boundaries is as follows:
\[
\begin{tikzpicture}
\draw[->] (0,0) -- (1,0);
\draw[-] (1,0) -- (2,0);
\draw[->] (0,1) -- (1,1);
\draw[-] (1,1) -- (2,1);
\draw[->] (0,0) -- (0,0.5);
\draw[-] (0,0.5) -- (0,1);
\draw[->] (2,0) -- (2,0.5);
\draw[-] (2,0.5) -- (2,1);
\node at (1,-.3) {$g$};
\node at (1,1.3) {$a(g)$};
\node at (-.3,.5) {$a$};
\node at (2.3,.5) {$a$};
\node at (2.5,0) {.};
\end{tikzpicture}
\]
Hence the map $G$ provides a homotopy between $a(g)$ and $aga^{-1}$. \end{proof}

\subsubsection{Pointed versus unpointed in the relative case}
\label{sec:pointedvsunpointed2}
In Theorem~\ref{modpi1abs} we analysed the effect of forgetting the base point in $\Ho(\pross_\ast)$. Now we turn our attention to the same problem in $\Ho(\pross_\ast) \downarrow (B,b)$.

\begin{theorem} \label{modpi1secondcase}
Let $(B,b)$ be a pointed pro-space, and let $(X,x)$ and $(Y,y)$ be pointed pro-spaces over $(B,b)$.
Assume that $Y$ and $B$ are path-connected and that $\pi_1^\tp(Y,y) \to \pi_1^\tp(B,b)$ is surjective.

\begin{enumeral}
\item \label{modpi1secondcase:a}
The map induced by forgetting the base points yields a surjection
\[
\Mor_{\Ho(\pross_\ast) \downarrow (B,b)}((X,x), (Y,y)) \twoheadrightarrow
\Mor_{\Ho(\pross) \downarrow B}(X,Y).
\]
In particular, if\/  $X$ and $Y$ are isomorphic in $\Ho(\pross) \downarrow B$, then $(X,x)$ and $(Y,y)$ are isomorphic in $\Ho(\pross_*) \downarrow (B,b)$.
\item   \label{modpi1secondcase:b}

Let $S_X \subseteq \pi_1^\tp(B,b)$ be the stabilizer of the structure map $(X,x) \to (B,b)$ in $\Ho(\pross_\ast)$ and
\[
\Delta_{X,Y} \subset \pi_1^\tp(Y,y)
\]
 the preimage of\/ $S_X$ under $\pi_1^\tp(Y,y) \to \pi_1^\tp(B,b)$.

Then $\Delta_{X,Y}$ acts on $\Mor_{\Ho(\pross_\ast) \downarrow (B,b)}((X,x), (Y,y))$ and the map forgetting the base points induces a natural bijection
\[
\left(\Mor_{\Ho(\pross_\ast) \downarrow (B,b)}((X,x), (Y,y))\right)_{\Delta_{X,Y}} \xrightarrow{\sim}
\Mor_{\Ho(\pross) \downarrow B}(X,Y).
\]
\end{enumeral}
\end{theorem}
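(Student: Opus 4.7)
The plan is to bootstrap from the absolute statement Theorem~\ref{modpi1abs} by tracking how the monodromy action interacts with the structure maps to $(B,b)$. The key compatibility I will need is: for any pointed map $g : (Y,y) \to (B,b)$ and any $\beta \in \pi_1^\tp(Y,y)$ with image $\alpha := \pi_1^\tp(g)(\beta) \in \pi_1^\tp(B,b)$,
\[
g \circ \beta(\id_Y) \;=\; \alpha(g)
\]
in $\Ho(\pross_*)$. After passing to a fibrant model of $Y$, any extension $F : Y \times I \to Y$ realising $\beta(\id_Y)$ at $t=1$ as in Lemma~\ref{lem:pathaction} yields, by post-composition, the extension $g \circ F$ realising $\alpha(g)$ at $t=1$, which gives the formula.

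For part~\ref{modpi1secondcase:a}, I start with $f \in \Mor_{\Ho(\pross) \downarrow B}(X,Y)$. Since $Y$ is path-connected, Theorem~\ref{modpi1abs} supplies a pointed lift $f_* : (X,x) \to (Y,y)$ of $f$. The two pointed maps $p_Y \circ f_*$ and $p_X$ from $(X,x)$ to $(B,b)$ share the same unpointed class, so, $B$ being path-connected, Theorem~\ref{modpi1abs} again yields an $\alpha \in \pi_1^\tp(B,b)$ with $p_Y \circ f_* = \alpha(p_X)$. Surjectivity of $\pi_1^\tp(Y,y) \to \pi_1^\tp(B,b)$ lets me lift $\alpha$ to some $\beta \in \pi_1^\tp(Y,y)$; setting $\tilde f := \beta^{-1}(f_*)$, it still lifts $f$ (same $\pi_1^\tp(Y,y)$-orbit), and the key formula together with the homomorphism property from Lemma~\ref{lem:pathaction} gives
\[
p_Y \circ \tilde f \;=\; \alpha^{-1}(\id_B) \circ p_Y \circ f_* \;=\; \alpha^{-1}(\id_B) \circ \alpha(\id_B) \circ p_X \;=\; p_X,
\]
so $\tilde f$ is a morphism in $\Ho(\pross_*) \downarrow (B,b)$.

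For part~\ref{modpi1secondcase:b}, the key formula shows immediately that $\Delta_{X,Y}$ preserves $\Mor_{\Ho(\pross_*) \downarrow (B,b)}((X,x),(Y,y))$: if $\beta \in \Delta_{X,Y}$ and $p_Y \circ f_* = p_X$, then $\alpha := \pi_1^\tp(p_Y)(\beta)$ lies in $S_X$ by definition, so $p_Y \circ \beta(f_*) = \alpha(\id_B) \circ p_X = \alpha(p_X) = p_X$. Conversely, two pointed lifts $f_*, g_*$ of a single $f$ over $B$ are related by $g_* = \beta(f_*)$ for some $\beta \in \pi_1^\tp(Y,y)$ by Theorem~\ref{modpi1abs}, and the same calculation forces $\pi_1^\tp(p_Y)(\beta) \in S_X$, i.e.\ $\beta \in \Delta_{X,Y}$; combined with the surjectivity from part~\ref{modpi1secondcase:a} this produces the desired bijection. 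The main technical point is establishing the compatibility formula $g \circ \beta(\id_Y) = \alpha(g)$; once it is in place, the remaining argument is a careful but routine bookkeeping with Theorem~\ref{modpi1abs} and Lemma~\ref{lem:pathaction}.
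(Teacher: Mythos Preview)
Your proof is correct and follows essentially the same route as the paper: both bootstrap from Theorem~\ref{modpi1abs}, use the surjectivity of $\pi_1^\tp(Y,y)\to\pi_1^\tp(B,b)$ to adjust an arbitrary pointed lift into the fibre over $p_X$, and identify the fibres of the forgetful map via $\Delta_{X,Y}$. The paper organises this by decomposing $[(X,x),(Y,y)]_{\pross_*}$ and $[X,Y]_\pross$ into components indexed by the composite with $p_Y$ (respectively its unpointed class), whereas you isolate and prove the underlying compatibility $g\circ\beta(\id_Y)=\alpha(g)$ directly; this formula is precisely what the paper uses implicitly when it asserts that $\tau(f_0)$ lands in the $p_X$-component.
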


\begin{proof}
We show (a) and (b) simultaneously. Let $p_X$ and $p_Y$ be the given maps from $(X,x)$ and $(Y,y)$ to $(B,b)$.  For the moment, we consider $(X,x)$ as an absolute object, forgetting about $p_X$.
There is a disjoint union decomposition
\begin{equation} \label{eq:partionHomrelBpointed}
[(X,x),(Y,y)]_{\pross_\ast}
= \coprod_{p\in [(X,x),(B,b)]_{\pross_\ast}}
\Mor_{\Ho(\pross_\ast)  (B,b)}((X,x,p),(Y,y,p_Y)),
\end{equation}
where a morphism $f_0: (X,x) \to (Y,y)$ in $\Ho(\pross_\ast)$ maps to itself, considered as a morphism over $(B,b)$ in the $p = p_Y  f_0$-component on the right hand side. We have a similar decomposition in the unpointed case
\begin{equation} \label{eq:partionHomrelBunpointed}
[X,Y]_\pross = \coprod_{p \in [X,B]_\pross}
\Mor_{\Ho(\pross) }((X,p),(Y,p_Y)).
\end{equation}
By Theorem~\ref{modpi1abs}, we have a natural $\pi_1^\tp(Y,y)$-action on the left hand side of \eqref{eq:partionHomrelBpointed} whose orbits identify with the left hand side of \eqref{eq:partionHomrelBunpointed}.

We first show surjectivity. Let $f\in \Hom_{\Ho(\pross)\downarrow B}(X,Y)$ be given, i.e., $f$ lies in the $p_X$-component of \eqref{eq:partionHomrelBunpointed}. Let $f_0 \in [(X,x),(Y,y)]_{\pross_\ast}$ be a pre-image of $f$. By definition, $f_0$ lies in the $p_Y  f_0$-component of \eqref{eq:partionHomrelBpointed}. Since $p_Y  f_0$ and $p_X$ agree as morphisms in $\Ho(\pross)$, Theorem~\ref{modpi1abs} provides a $\sigma\in \pi_1^\tp(B,b)$ with
$\sigma (p_Y  f_0)=p_X$ in $\Hom_{\Ho(\pross_*)}((X,x),(B,b))$.
Let $\tau \in \pi_1^\tp (Y,y)$ be a pre-image of $\sigma$. Then $\tau(f_0)$ lies in the $p_X$-component of \eqref{eq:partionHomrelBpointed}. This shows surjectivity of
\[
\Mor_{\Ho(\pross_\ast) \downarrow (B,b)}((X,x), (Y,y)) \xrightarrow{}
\Mor_{\Ho(\pross) \downarrow B}(X,Y).
\]
To finish the proof, note that $f_0$ and $f'_0$ have the same image if and only if $f'_0=\tau(f_0)$ for some $\tau \in \pi_1^\tp(Y,x)$, which moreover must map to $S_X\subset \pi_1^\tp(B,b)$.

Note that in (a), if $f: X \to Y$ is an isomorphism in $\Ho(\pross) \downarrow B$ that lifts to a morphism $f_*: (X,x) \to (Y,y)$ in $\Ho(\pross_*) \downarrow (B,b)$, then by definition of the model structure on $\pross_*$, the morphism $f_*$ is also an isomorphism.
\end{proof}

\subsection{Eilenberg MacLane spaces in degree 1}
\label{sec:kpi1}

In this part of the appendix, our main goal is Proposition~\ref{prop:BGisKpi1}
which shows the existence of classifying spaces of pro-groups  in $\Ho(\pross_*)$.

\subsubsection{Pro-classifying spaces}
For a (discrete) group $G$ we consider the category with one object and automorphism group $G$ and we denote the nerve of this category by $BG$. The space $BG$ is connected and pointed by its unique $0$-cell. As is well-known, we have
\begin{equation} \label{eq:pi1ofBG}
\pi_n(BG)= \left\{\begin{array}{ll}
G, & \text{for } n=1,\\
0, & \text{for } n\ge 2,
\end{array} \right.
\end{equation}
hence, $BG$ is a functorial model for a $K(G,1)$-space.
If $f: G'\to G$ is a surjective group homomorphism, then the induced map $B(f): BG'\to BG$ is a fibration in $\ssets_*$, in particular, $BG$ is fibrant for every group~$G$.

By functoriality, this construction extends to the pro-category: for a pro-group $G$, we obtain the connected pointed pro-space $BG$ for which \eqref{eq:pi1ofBG} holds in $\progp$.
In contrast to the case of discrete groups however, the pro-space $BG$ is not a fibrant object in $\pross_*$ in general.
Nonetheless, Proposition~\ref{prop:BGisKpi1} below shows that $BG$ represents the functor $\Hom_\progp(\pi_1(-),G)$ on the pointed homotopy category. In particular, any connected pointed pro-space $(X,x)$ with  $\pi_1(X,x)=G$ and $\pi_n(X,x)=0$ for $n\ge 2$, is canonically isomorphic to $BG$ in $\Ho(\pross_*)$.

\subsubsection{A fibrant model}\label{sec:afribrant}
Our first goal is to construct a good fibrant replacement of $BG$. Let, for the moment $G$ be discrete.
Consider the category with one object for every element $g \in G$ and all objects uniquely isomorphic. We denote its nerve by $EG$. It comes along with a free right $G$-action by $g \in G$ permuting the objects $h \leadsto hg$. The functor that maps the isomorphism $h \to gh$ to the automorphism $g$ induces a $G$-covering map $EG \to BG$. The space $EG$ comes along with a natural pointing by the $0$-cell attached to the neutral element of $G$.
Moreover, $EG$ is contractible.
Again, all constructions are functorial so that we obtain a contractible pointed pro-space $EG$ associated to every pro-group $G$. For every $i\in I$ the projection $EG_i\to BG_i$ is a $G_i$-covering map.

In the following we assume without loss of generality that all occurring index categories are cofinite directed sets.
For a pro-group $G$ consider the pro-group $G^*=(G^*_i)_{i\in I}$ given by
\[
G^*_i = \prod_{j\le i} G_j
\]
with the obvious transition maps.
We have a natural injection $G\hookrightarrow G^*$ and consider the pointed pro-space
\[
B^*G :=  EG^*/G.
\]

\begin{lemma} \label{lem:fibrantreplacementforBG}
$B^\ast G$ is fibrant in $\pross_\ast$, and the natural map $BG \to B^\ast G$ is a weak equivalence.
\end{lemma}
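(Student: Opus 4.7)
The plan is to prove the two assertions separately, starting with the easier weak equivalence statement.

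For the weak equivalence, I would argue level by level. At index $i$, the space $EG_i^*$ is the nerve of the connected groupoid with a unique morphism between any two of its vertices, hence is contractible, and the free $G_i^*$-action restricts to a free $G_i$-action. Therefore $B^*G_i = EG_i^*/G_i$ is again the nerve of a groupoid, which stays connected in the quotient (the unique-morphism property descends) with automorphism group $G_i$ at every vertex, so it is a $K(G_i,1)$. The natural map $BG_i = EG_i/G_i \to EG_i^*/G_i$ is induced by the $G_i$-equivariant inclusion $EG_i \hookrightarrow EG_i^*$ and realizes the identity on $\pi_1 = G_i$, hence is a weak equivalence. Since a level map of pro-spaces that is a level-wise weak equivalence is a weak equivalence in $\pross_*$ in Isaksen's model structure, this finishes the second assertion.

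For fibrancy I would invoke \cite{Is} Prop.~14.5, which reduces the claim to checking that $B^*G$ is a strong fibration over the terminal object: namely that each $B^*G_i$ is fibrant (clear, as the nerve of a groupoid) and that each matching map $B^*G_t \to \lim_{s<t} B^*G_s$ is a Kan fibration. Using the product decomposition $G_t^* = G_t \times H$, where $H = \lim_{s<t} G_s^* = \prod_{j<t} G_j$ by the strong Mittag-Leffler property of $G^*$, together with $E$'s preservation of products and limits, one obtains $EG_t^* \cong EG_t \times EH$, with the subgroup $G_t \subset G_t^*$ embedded as the graph of the canonical homomorphism $\alpha : G_t \to H$; consequently $B^*G_t \cong EG_t \times_{G_t} EH$ with $G_t$ acting diagonally via $\alpha$. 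The matching map then factors as
\[
EG_t \times_{G_t} EH \lang EH/G_t \lang \lim_{s<t} EG_s^*/G_s,
\]
and the first arrow is a trivial Kan fibration with contractible fibre $EG_t$.

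The main obstacle is the analysis of the second arrow. My plan is to handle it through the fibre sequence of pro-spaces
\[
G^*/G \lang B^*G \lang BG^*,
\]
each level of which is a Kan fibration: the second map arises from the free action of $G_i^*$ on $EG_i^*$ restricted along the subgroup $G_i$, and for a free action of a group $H$ on a simplicial set $X$ with subgroup $K \subset H$, the projection $X/K \to X/H$ is a Kan fibration with fibre the pointed set $H/K$. The outer terms in the fibre sequence are fibrant: $BG^*$ because the strong Mittag-Leffler property of $G^*$ translates, under $B$ (which preserves products and hence limits of groups), into split surjections between matching objects realized as Kan fibrations; and $G^*/G$, a pro-set concentrated in simplicial degree zero, because its matching map $G_t^*/G_t \to \lim_{s<t} G_s^*/G_s$ is surjective, as can be verified using the identification $G_t^*/G_t \cong H$ coming from the graph embedding together with the surjectivity of $H \to \lim_{s<t}G_s^*/G_s$. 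Fibrancy of $B^*G$ then follows by careful matching-map bookkeeping in the cofiltered strong fibration diagram associated to this fibre sequence, which is the technically most delicate step.
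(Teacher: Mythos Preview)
Your weak equivalence argument is correct and matches the paper's: both note that $BG_i \to B^*G_i$ is a level-wise weak equivalence of $K(G_i,1)$'s.

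For fibrancy, however, your factorization approach has a genuine error and is much more involved than necessary. The claim that the first arrow
\[
EG_t \times_{G_t} EH \lang EH/G_t
\]
is a \emph{trivial} Kan fibration with contractible fibre $EG_t$ is false in general. The action of $G_t$ on $EH$ is via $\alpha : G_t \to H$, which need not be injective; the fibre over a point of $EH/G_t$ is $EG_t/\ker(\alpha) \simeq B(\ker\alpha)$, not $EG_t$. For instance, if the transition maps out of $G_t$ all have a common nontrivial kernel, the fibre is a nontrivial $K(\pi,1)$. Your subsequent analysis of the second arrow is then left as ``careful matching-map bookkeeping'', which does not actually complete the argument. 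You also do not address the co-connectivity part of Isaksen's strong fibrancy condition (Def.~6.5 requires the matching maps to be fibrations \emph{and} co-$c$-equivalences for a suitable function $c$).

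The paper's argument avoids all of this by working directly with the contractible cover. Since $G_i^\ast \to \lim_{j<i} G_j^\ast$ is a (split) surjection, the induced map $\lambda_i : EG_i^\ast \to \lim_{j<i} EG_j^\ast$ is a fibration. In the square
\[
\begin{tikzcd}[column sep=small]
 EG_i^\ast \arrow{rr}{\lambda_i} \arrow{d} && \displaystyle\lim_{j<i} EG_j^\ast  \arrow{d}  \\
 (B^*G)_i \arrow{rr}{\mu_i} && \displaystyle\lim_{j<i} (B^*G)_j
\end{tikzcd}
\]
both vertical maps are surjective coverings (the right one because a limit of coverings, having unique lifts, is again a covering). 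A short lifting argument then shows that $\mu_i$ is a fibration. Finally, since both $(B^*G)_i$ and $\lim_{j<i}(B^*G)_j$ admit contractible coverings, their higher homotopy groups vanish, which supplies the co-connectivity needed for strong fibrancy. This is both shorter and avoids any hypothesis on the transition maps of $G$.
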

\begin{proof}
For all $i$, the map $BG_i=EG_i/G_i \to B^*G_i= EG^*_i/G_i$ is a weak equivalence in $\semis_*$, hence $BG \to B^\ast G$ is a level-wise weak equivalence, in particular, a weak equivalence in $\pross_*$.
The pro-group $G^*$ has the property that
\[
\prod_{j\le  i} G_j= G^*_i \longrightarrow \prod_{j< i} G_j = \lim_{j<i} G_j^*
\]
is surjective, hence $\lambda_i: E G_i^\ast \to E(\lim_{j<i} G_j^*) = \lim_{j<i} EG_j^\ast$ is a fibration. In the commutative diagram
\[
\begin{tikzcd}[column sep=small]
 & E G_i^\ast \arrow{rr}{\lambda_i} \arrow{d} && \ds\lim_{j<i} EG_j^\ast  \arrow{d} & \\
(B^*G)_i  \arrow[-, double equal sign distance]{r} & EG^*_i/G_i \arrow{rr}{\mu_i} && \ds\lim_{j< i} EG^*_j/G_j \arrow[-, double equal sign distance]{r} & \lim_{j< i} (B^*G)_j
\end{tikzcd}
\]
the vertical maps are surjective coverings, in particular fibrations. Hence also $\mu_i$ is a fibration.

Furthermore, the spaces $(B^*G)_i$ and $\lim_{j< i} (B^*G)_j$, admitting contractible coverings, have trivial homotopy groups in degrees greater or equal to two.
Therefore $B^*G$ is strongly fibrant in the sense of \cite{Is} Def.~6.5, hence fibrant in $\pross$ by \cite{Is} Prop.~14.5, and in particular also fibrant in $\pross_\ast$.
\end{proof}

\subsubsection{Morphisms to pro-classifying spaces}\label{sec:morphisms-to-classifying-space}

For a pro-group $G$, we are going to show that the functor
\[
\Ho(\pross_*) \longrightarrow ({\sf sets}),\quad (X,x) \longmapsto \Hom_\progp(\pi_1(X,x),G)
\]
is represented by $BG$. The space $BG$ has exactly one $0$-cell on every level and the natural map
\begin{equation} \label{eq:BG1}
G=BG_1 \longrightarrow \pi_1( BG)=G
\end{equation}
is the identity of $G$. This identification is used to define the bottom map in diagram~\eqref{eq:BG} below.

\begin{lemma} \label{lem:BG}
For pro-groups $G$ and $G'$, all maps in the commutative diagram
\begin{equation} \label{eq:BG}
\begin{tikzcd}[column sep=small]
\Hom_\progp(G',G) \arrow{rr}{B(-)} && \Hom_{\pross_*}(BG',BG)  \arrow{d}  \\
\Hom_\progp(G',G) \arrow[-, double equal sign distance]{u} && \arrow[swap]{ll}{\pi_1(-)} \Hom_{\Ho(\pross_*)}(BG',BG) .
\end{tikzcd}
\end{equation}
are isomorphisms.
\end{lemma}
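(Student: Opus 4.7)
The plan is to prove that the top map $B(-)$ and the right vertical map are bijections; commutativity of the diagram, which is the identity $\pi_1(B(\varphi)) = \varphi$ implicit in \eqref{eq:BG1}, then forces the bottom map $\pi_1(-)$ to be the common inverse and hence also bijective.

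\emph{Step 1: the map $B(-)$ is a bijection.} By the universal property of pro-morphisms,
\[
\Hom_{\pross_*}(BG', BG) = \lim_i \colim_j \Hom_{\ssets_*}(BG'_j, BG_i).
\]
The pointed nerve functor $B : \mathrm{Grp} \to \ssets_*$ is fully faithful: since $BH$ has a unique $0$-simplex, a pointed simplicial map $BK \to BH$ amounts to a functor between the corresponding one-object groupoids, i.e., a group homomorphism $K \to H$. Hence $\Hom_{\ssets_*}(BG'_j, BG_i) = \Hom(G'_j, G_i)$, so the double limit collapses to $\Hom_\progp(G', G)$ and $B(-)$ realizes this identification.

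\emph{Step 2: the right vertical map is a bijection.} By Lemma~\ref{lem:fibrantreplacementforBG}, the fibrant replacement $BG \to B^*G$ is a weak equivalence with $B^*G$ fibrant, and every object of $\pross_*$ is cofibrant. Therefore
\[
\Hom_{\Ho(\pross_*)}(BG', BG) = \Hom_{\Ho(\pross_*)}(BG', B^*G) = [BG', B^*G],
\]
the set of strict simplicial homotopy classes. Injectivity of $\Hom_{\pross_*}(BG', BG) \to [BG', B^*G]$ is immediate: homotopic maps induce the same map on $\pi_1$, and Step~1 gives $\pi_1 \circ B(-) = \id$, so they must already agree in $\Hom_{\pross_*}(BG', BG)$. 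For surjectivity, each level $B^*G_i = EG^*_i/G_i$ is a genuine $K(G_i,1)$ --- as $EG^*_i$ is contractible with free $G_i$-action --- and each $BG'_j$ is a $K(G'_j,1)$. A Milnor-type computation of $\pi_0 \Map_{\pross_*}(BG', B^*G)$, using the tower-of-fibrations structure of $B^*G$ from Lemma~\ref{lem:fibrantreplacementforBG}, then yields
\[
[BG', B^*G] \ = \ \lim_i \colim_j [BG'_j, B^*G_i]_{\ssets_*} \ = \ \lim_i \colim_j \Hom(G'_j, G_i) \ = \ \Hom_\progp(G', G),
\]
where the level-wise identification $[BG'_j, B^*G_i]_{\ssets_*} = \Hom(G'_j, G_i)$ is the classical fact about maps between $K(\pi,1)$-spaces, and the relevant $\lim^1$ term vanishes since $[\Sigma BG'_j, B^*G_i]_{\ssets_*} = 0$ (suspensions being simply connected, targets aspherical). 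Comparing with Step~1, every class in $[BG', B^*G]$ is represented by a strict map coming from $BG$, proving surjectivity.

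\emph{Main obstacle.} The technically delicate point is the surjectivity in Step~2, where one must assemble homotopies level-wise into a genuine pro-homotopy. The explicit fibrant replacement $B^*G$ was constructed precisely to make this feasible: its transition maps are fibrations, each level is a genuine $K(\pi,1)$, and accordingly the Milnor-type argument collapses with all $\lim^1$-obstructions vanishing.
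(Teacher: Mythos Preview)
Your Step~1 is fine and matches the paper. The issue is in Step~2, specifically the surjectivity argument via the ``Milnor-type computation''. You claim the relevant $\lim^1$ term vanishes because $[\Sigma BG'_j, B^*G_i]_{\ssets_*} = 0$, but this only shows that $\pi_1$ of the pointed mapping space $\Map_*(BG'_j, B^*G_i)$ vanishes at the basepoint given by the \emph{constant map}. The Milnor short exact sequence for a tower of fibrations $(M_i)$ only identifies the fibre of $\pi_0(\lim M_i) \to \lim \pi_0(M_i)$ over the \emph{base class} with $\lim^1 \pi_1(M_i,*)$; to conclude that this map is a \emph{bijection} --- which you need, since an injection $A \hookrightarrow B$ and a surjection $B \twoheadrightarrow C$ with bijective composite do not force $A = B$ --- you must know that $\pi_1(M_i, f)$ vanishes at \emph{every} basepoint $f$. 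This is in fact true: from the evaluation fibration $\Map(X, K(G,1)) \to K(G,1)$ and the identification $\pi_1(\Map(X,K(G,1)),f) \cong Z_G(f_*\pi_1 X)$ one sees that $\pi_1\Map_*(X,K(G,1),f)$ is the kernel of the inclusion $Z_G(f_*\pi_1 X) \hookrightarrow G$, hence trivial. But your stated justification does not cover it. You also silently use that the tower $(M_i)$ consists of fibrations, which needs the cofinite-directedness of $I$ so that $\lim_{i'<i}$ is a finite limit commuting with the filtered colimit over $j$.

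The paper avoids the Milnor machinery entirely. After your Step~1 it observes that $\pi_1(-)$ is automatically split surjective, and that if $BG$ happens to be fibrant the whole lemma follows at once. For general $G$ it then \emph{bootstraps}: besides the fibrant replacement $B^*G = EG^*/G$ one also has $BG^* = EG^*/G^*$, which is fibrant by the same argument as in Lemma~\ref{lem:fibrantreplacementforBG}, so the lemma is already known for $G^*$. The natural map $B^*G \to BG^*$ is a level-wise covering, so $\Hom_{\pross_*}(BG', B^*G) \hookrightarrow \Hom_{\pross_*}(BG', BG^*)$ is injective by unique lifting. A short diagram chase combining this with the bijections for $G^*$ and the injection $\Hom_{\progp}(G',G) \hookrightarrow \Hom_{\progp}(G',G^*)$ yields the missing injectivity of $\pi_1(-)$ for $G$. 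This trades your analytic computation of the mapping space for a purely categorical reduction to the fibrant case.
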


\begin{proof}
The diagram in the lemma commutes by its definition based on \eqref{eq:BG1}, in particular
\begin{equation} \label{eq:MapsofBGvsMapsofG}
\Mor_{\pross_*}(BG',BG) \to \Hom_\progp(G',G)
\end{equation}
is surjective.
By induction on $n$ and using the face maps $\partial_0,\partial_n : BG_n \to BG_{n-1}$, we see that an $n$-simplex in $BG$ is uniquely determined by its edges, i.e., its faces of dimension $1$ in $BG_1 = G$. Therefore a map $\varphi:BG'\to BG$ is uniquely determined by $\varphi_1: BG'_1 \to BG_1$ and hence the commutative diagram
\[
\begin{tikzcd}[column sep=small]
BG'_1=G' \arrow{d}{\wr} \arrow{rr}{\ph_1} && BG_1= G \arrow{d}{\wr} \\
\pi_1(BG') \arrow{rr}{\pi_1(\ph)} && \pi_1(BG)
\end{tikzcd}
\]
shows that  \eqref{eq:MapsofBGvsMapsofG}  is also injective.
It therefore remains to show that
\[
\Mor_{\pross_*}(BG',BG) \to \Mor_{\Ho(\pross_*)}(BG',BG)
\]
is surjective. This is  obvious if $BG$ is fibrant. Hence,
\[
\pi_1(-): \Mor_{\Ho(\pross_*)}(BG',BG) \to \Hom_{\progp}(G',G)
\]
is always split surjective and, moreover, an isomorphism if $BG$ is fibrant. It remains to show that $\pi_1(-)$ is injective for arbitrary $G$.

We again assume without loss of generality that all occurring index categories are cofinite directed sets. The map
\[
BG \longrightarrow B^* G
\]
of Lemma~\ref{lem:fibrantreplacementforBG} is a natural fibrant replacement.
The same argument as for $B^*G$ in Lemma~\ref{lem:fibrantreplacementforBG}  shows that  $BG^*$ is fibrant in $\pross_\ast$. Since $B^\ast G \to B G^\ast$ is a level-wise covering map, the induced map
\[
 \Mor_{\pross_*}(BG',B^\ast G) \longrightarrow  \Mor_{\pross_*}(BG',BG^\ast)
\]
is injective. Hence the left vertical map $\pi_1(-)$ in the commutative diagram
\[
\begin{tikzcd}[column sep=small]
\Mor_{\pross_*}(BG',B^*G)  \arrow[->>]{d} \arrow[hook]{rr} && \Mor_{\pross_*}(BG',BG^*) \arrow{d}{\wr} \\
\Mor_{\Ho(\pross_*)}(BG',B^*G)  \arrow{rr} && \Mor_{\Ho(\pross_*)}(BG',BG^*)\\
\Mor_{\Ho(\pross_*)}(BG',BG) \uar{\wr}\dar[->>]{\pi_1(-)} \arrow{rr} && \Mor_{\Ho(\pross_*)}(BG',BG^*)\uar[-, double equal sign distance] \dar{\pi_1(-)}[swap]{\wr}\\
\Hom_{\progp}(G',G)\arrow[hook]{rr}&&\Hom_{\progp}(G',G^*) .
\end{tikzcd}
\]
is injective. This completes the proof.
\end{proof}

\begin{proposition} \label{prop:BGisKpi1}
For a connected pointed pro-space $(X,x)$ and a pro-group $G$,  we have a functorial isomorphism
\[
\pi_1(-)  : \Mor_{\Ho(\pross_*)}((X,x),BG) \stackrel{\sim}{\longrightarrow} \Mor_\progp(\pi_1(X,x),G).
\]
\end{proposition}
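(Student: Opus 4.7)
The plan is to reduce to Lemma \ref{lem:BG} by means of a natural classifying map $c_X : (X,x) \to B\pi_1(X,x)$ in $\pross_*$, exploiting the fact that $BG$ is aspherical in degrees $\geq 2$.

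First I would construct $c_X$. For each pointed connected simplicial set $(X_i, x_i)$ there is a functorial classifying map $c_{X_i} : (X_i, x_i) \to B\pi_1(X_i, x_i)$ inducing the identity on $\pi_1$; it can be obtained via the Kan loop group adjunction, or equivalently as the canonical map to the nerve of the fundamental groupoid of $(X_i, x_i)$. Functoriality in $(X_i, x_i)$ lets us assemble these level-wise into a pro-map $c_X : (X,x) \to B\pi_1(X,x)$ satisfying $\pi_1(c_X) = \id$.

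The key intermediate statement is the factorisation property: precomposition with $c_X$ induces a bijection
\[
c_X^* : \Mor_{\Ho(\pross_*)}(B\pi_1(X,x), BG) \xrightarrow{\sim} \Mor_{\Ho(\pross_*)}((X,x), BG).
\]
Granted this, the proposition is immediate: by Lemma \ref{lem:BG} the source of $c_X^*$ is in bijection with $\Hom_\progp(\pi_1(X,x), G)$ via $\pi_1(-)$, and the identity $\pi_1(c_X) = \id$ then forces $\pi_1(-)$ on the target of $c_X^*$ to be a bijection as well, with explicit inverse $\varphi \mapsto B\varphi \circ c_X$ supplied by Lemma \ref{lem:BG}.

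The main obstacle is establishing the factorisation property; this reflects the classical Postnikov-theoretic fact that maps to an aspherical pro-space are determined up to homotopy by their effect on $\pi_1$. My approach is to replace $BG$ by the fibrant model $B^*G$ of Lemma \ref{lem:fibrantreplacementforBG}, so that morphisms in $\Ho(\pross_*)$ from $(X,x)$ to $BG$ are represented by strict pro-maps to $B^*G$ modulo simplicial homotopy, and then to exploit that each $B^*G_i = EG_i^*/G_i$ is the quotient of a contractible space by $G_i$. Classical covering-space theory identifies pointed simplicial homotopy classes $(X_j, x_j) \to B^*G_i$ with $\Hom(\pi_1(X_j, x_j), G_i)$, and passing through the pro-limit on $i$ and pro-colimit on $j$, using the naturality of $c_X$ to ensure compatibility across the pro-system, yields the desired factorisation in $\Ho(\pross_*)$.
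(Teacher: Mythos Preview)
Your overall architecture matches the paper's: construct a classifying map to $B\pi_1(X,x)$, then reduce to Lemma~\ref{lem:BG}. But your argument for the factorisation property has a real gap, and it is precisely the point where the paper's proof is more efficient.

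The problematic step is ``passing through the pro-limit on $i$ and pro-colimit on $j$''. For a fibrant $Y$ in $\pross_*$, homotopy classes $[X,Y]$ are \emph{not} in general given by $\lim_i \colim_j [X_j,Y_i]_{\ssets_*}$: one has $\Map_*(X,Y) = \lim_i \colim_j \Map_*(X_j,Y_i)$ as a simplicial set, but $\pi_0$ does not commute with the outer limit. Even when each $\Map_*(X_j,B^*G_i)$ is homotopy-discrete (which is true, since $B^*G_i$ is a $K(G_i,1)$), one still needs to show that the resulting tower $M_i := \colim_j \Map_*(X_j,B^*G_i)$ is Reedy fibrant in order to identify $\pi_0(\lim_i M_i)$ with $\lim_i \pi_0(M_i)$; this uses the strong fibrancy of $B^*G$ (the matching maps $\mu_i$ are fibrations) together with cofiniteness of the index set. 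None of this is automatic, and you have not supplied it. A smaller issue: a strictly functorial map $(X_i,x_i) \to B\pi_1(X_i,x_i)$ in $\ssets_*$ is not quite available; the fundamental-groupoid construction you mention gives $X_i \to B\Pi X_i$, and one must invert the weak equivalence $B\pi_1(X_i,x_i) \to B\Pi X_i$, so $c_X$ only exists in $\Ho(\pross_*)$.

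The paper avoids the level-wise computation entirely. It only needs \emph{surjectivity} of $p_{1,X}^*$, and this follows in one line from the naturality of the classifying map applied to a given $f:(X,x)\to BG$: since $p_{1,BG}=\id_{BG}$, the naturality square gives $f = B\pi_1(f)\circ p_{1,X}$. Then the commutative square
\[
\begin{tikzcd}
\Mor_{\Ho(\pross_*)}(B\pi_1(X,x),BG) \arrow{r}{\pi_1(-)}[swap]{\sim} \arrow[two heads]{d}{p_{1,X}^*} & \Mor_\progp(\pi_1(X,x),G) \arrow[-,double equal sign distance]{d} \\
\Mor_{\Ho(\pross_*)}((X,x),BG) \arrow{r}{\pi_1(-)} & \Mor_\progp(\pi_1(X,x),G)
\end{tikzcd}
\]
forces all arrows to be bijections. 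You already have all the ingredients for this argument (naturality of $c_X$, Lemma~\ref{lem:BG}); using them this way replaces your unproven limit-colimit interchange by a formal diagram chase.
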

\begin{proof}
For an unpointed simplicial set $X$ we denote by $B\Pi X$ the nerve of its fundamental groupoid $\Pi X$.  If $x$ is a point of $X$, the category with one object and automorphism group $\pi_1(X,x)$ is naturally a full subcategory of $\Pi X$. Hence we have an induced map of nerves $B\pi_1(X,x) \to B\Pi X$. Moreover, there is a natural map $X\to B\Pi X$.

The construction is functorial and thus we can speak about $B\Pi X$ for a pro-simplicial set $X$. If $x$ is a point of $X$, we obtain the natural map
\[
i_{X,x} : B\pi_1(X,x) \longrightarrow (B\Pi X, x).
\]
If $X$ is connected, the map $i_{X,x}$ is a weak equivalence in $\pross_*$. Composing $(X,x) \to (B\Pi X , x)$ with the inverse of $i_{X,x}$ defines a natural morphism
\[
p_{1,X}: (X,x) \lang B\pi_1(X,x)
\]
in $\Ho(\pross_*)$.
In the special case of $X = BG$ there is only one point. Then
\[
BG = B\pi_1(BG,\ast) \xrightarrow{\sim} (B\Pi BG,\ast)
\]
is an isomorphism, and $p_{1,BG} : BG \to BG$ is the identity.

Let $f: (X,x) \to BG$ be a morphism in $\Ho(\pross_\ast)$, and consider the induced map
\[
B\pi_1(f): B\pi_1(X,x) \lang B(\pi_1(BG)) = BG.
\]
Since the morphism $p_{1,X}$ is natural, the diagram
\[
\begin{tikzcd}
(X,x) \arrow{r}{f} \arrow{d}{p_{1,X}} & BG \arrow[-, double equal sign distance]{d}{p_{1,BG}} \\
B\pi_1(X,x) \arrow{r}{B\pi_1(f)} & BG
\end{tikzcd}
\]
commutes, hence $f =  B\pi_1(f) \circ p_{1,X}$ and precomposition with $p_{1,X}$ yields a surjection
\[
p_{1,X}^* : \Mor_{\Ho(\pross_*)}(B \pi_1(X,x),BG) \lang \Mor_{\Ho(\pross_*)}((X,x),BG).
\]
Next we consider the commutative diagram
\[
\begin{tikzcd}[column sep=large]
\Mor_{\Ho(\pross_*)}(B \pi_1(X,x),BG) \arrow{r}{\pi_1(-)}  \arrow{d}{p^*_{1,X}} & \Mor_\progp(\pi_1(X,x),G) \arrow[-, double equal sign distance]{d} \\
\Mor_{\Ho(\pross_*)}((X,x),BG) \arrow{r}{\pi_1(-)} & \Mor_\progp(\pi_1(X,x),G).
\end{tikzcd}
\]
Since the map $\pi_1(-)$ in the top row is a bijection by  Lemma~\ref{lem:BG}, the surjectivity of $p_{1,X}^*$ shows that all maps in the diagram are bijective.
\end{proof}

\begin{definition} Let $(X,x)$ be a pointed connected pro-space.
We call the morphism
\[
(X,x)\to B\pi_1(X,x)
\]
in $\Ho(\pross_*)$ associated via Proposition~\ref{prop:BGisKpi1} with the identity of $\pi_1(X,x)$ the \defobjekt{classifying morphism} of $(X,x)$.

We say that $(X,x)$ is of \defobjekt{type $K(\pi,1)$} if\/ $\pi_i(X,x)=0$ for all $i\geq 2$.
\end{definition}

\begin{corollary} A pointed connected pro-space $(X,x)$ is of type $K(\pi,1)$ if and only if its classifying morphism is an isomorphism in $\Ho(\pross_*)$.
\end{corollary}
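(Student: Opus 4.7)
My plan is to verify both directions of the equivalence. The implication $\Leftarrow$ is essentially formal: if the classifying morphism $c : (X,x)\to B\pi_1(X,x)$ is an isomorphism in $\Ho(\pross_*)$, then the induced map $\pi_n(c)$ is an isomorphism of pro-groups for every $n\ge 0$. Each level $BG_i$ of the nerve construction is classically a $K(G_i,1)$, so $\pi_n(BG_i) = 0$ for $n\ge 2$, and passage to pro-groups gives $\pi_n(B\pi_1(X,x)) = 0$ for $n\ge 2$. Hence $\pi_n(X,x)=0$ for $n\ge 2$, i.e., $(X,x)$ is of type $K(\pi,1)$.

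For the converse, assume $\pi_n(X,x)=0$ for all $n\ge 2$. From the construction in the proof of Proposition~\ref{prop:BGisKpi1}, the classifying morphism factors in $\Ho(\pross_*)$ as
\[
(X,x) \xrightarrow{\ p\ } (B\Pi X,x) \xleftarrow{\ i_{X,x}\ } B\pi_1(X,x),
\]
with $i_{X,x}$ a weak equivalence since $X$ is connected. Hence it suffices to establish that the natural map $p$ is a weak equivalence in $\pross_*$.

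I would apply Isaksen's cohomological criterion (\cite{Is} Prop.~18.4): $p$ is a weak equivalence if and only if $p^*: \rH^n(B\Pi X, A) \to \rH^n(X, A)$ is an isomorphism for every $n\ge 0$ and every local system $A$ on $B\Pi X$. Since $p$ induces the identity on $\pi_0$ and on $\pi_1$, the cases $n=0,1$ are immediate and local systems pull back correctly. For $n\ge 2$, one uses a Leray--Serre-type spectral sequence for $p$, whose contributions beyond the base are controlled by the higher homotopy pro-groups $\pi_n(X,x)$ for $n\ge 2$; by hypothesis these all vanish, so the spectral sequence degenerates onto the base and yields the desired isomorphism.

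The main obstacle will be to make the Serre spectral sequence argument rigorous in the pro-simplicial setting. A clean way to proceed is to replace $X$ by a fibrant model in Isaksen's structure and to run a Postnikov tower decomposition of $X$ over $B\pi_1(X,x)$, inductively trivialising each Postnikov layer via the cohomological criterion. The essential content is that the hypothesis $\pi_n(X,x)=0$ for $n\ge 2$ converts pro-homotopical vanishing into a cohomology equivalence, which forces $p$ to be a weak equivalence and hence $c$ to be an isomorphism in $\Ho(\pross_*)$.
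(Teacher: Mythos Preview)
Your $\Leftarrow$ direction is fine and matches the paper. For the $\Rightarrow$ direction, however, you are taking a long detour that you yourself flag as incomplete (``the main obstacle will be to make the Serre spectral sequence argument rigorous in the pro-simplicial setting''). The paper avoids this entirely by invoking Isaksen's homotopy-group criterion directly: by \cite{Is} Cor.~7.5, a map of connected pointed pro-spaces is an isomorphism in $\Ho(\pross_*)$ if and only if it induces an isomorphism on $\pi_n$ for every $n\ge 1$. Since the classifying morphism induces the identity on $\pi_1$ by definition, and both $(X,x)$ and $B\pi_1(X,x)$ have trivial $\pi_n$ for $n\ge 2$ (the latter always, the former by hypothesis), the classifying morphism is an isomorphism. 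That is the whole argument.

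Your route through the cohomological criterion \cite{Is} Prop.~18.4 and a Leray--Serre/Postnikov analysis could in principle be made to work, but it is genuinely harder: the vanishing hypothesis is on \emph{pro}-homotopy groups, not level-wise, so the spectral sequence must be handled in the pro-category, and degeneration is not automatic from level-wise considerations. There is no reason to pay this cost when the homotopy-group characterisation of weak equivalences gives the result in one line.
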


\begin{proof}
By \cite{Is} Cor.~7.5, the classifying morphism $(X,x)\to B\pi_1(X,x)$ is an isomorphism if and only if it induces isomorphisms on $\pi_n$ for all $n\ge 1$. By definition, the induced map on $\pi_1$ is the identity of $\pi_1(X,x)$. Since the higher homotopy groups of $B\pi_1(X,x)$ vanish, the result follows.
\end{proof}


\end{document}